\DeclareMathAlphabet{\mathpzc}{OT1}{pzc}{m}{it}
\newtheorem{thm}{Theorem}[section]
\newtheorem{Rmk}[thm]{Remark}
\newtheorem{prop}[thm]{Proposition}
\newtheorem{cor}[thm]{Corollary}
\newtheorem{lem}[thm]{Lemma}
\newtheorem{lemma}[thm]{Lemma}
\newtheorem{Def}[thm]{Definition}
\numberwithin{equation}{section}
\newcommand{\be}{begin{equation}}
\newcommand{\bH}{\mathbb H}
\newcommand{\q}{\mathbb{Q}}
\newcommand{\e}{{\varepsilon}}
\newcommand{\z}{\mathbb{Z}}
\renewcommand{\q}{\mathbb{Q}}
\renewcommand{\c}{\mathbb{C}}
\newcommand{\br}{\mathbb{R}}
\newcommand{\ba}{\backslash}
\newcommand{\G}{\Gamma}
\newcommand{\SL}{\operatorname{SL}}\newcommand{\PSL}{\operatorname{PSL}}
\newcommand{\GL}{\operatorname{GL}}
\newcommand{\bp}{\begin{pmatrix}}
\newcommand{\ep}{\end{pmatrix}}
\renewcommand{\bp}{{\rm bp}}
\newcommand{\SO}{\operatorname{SO}}
\newcommand{\hpz}{\mathpzc{h}}
\newcommand{\Hor}{\mathpzc{H}}
\newcommand{\op}{\operatorname}
\newcommand{\vs}{\vskip 5pt}
\newcommand{\Isom}{\operatorname{Isom}}
\renewcommand{\be}{\begin{equation}}
\newcommand{\ee}{\end{equation}}
\newcommand{\ben}{\begin{enumerate}}
\newcommand{\een}{\end{enumerate}}
\newcommand{\F}{\operatorname{F}\!}
\newcommand{\RF}{\operatorname{RF}\!}
\newcommand{\FM}{\F M}
\newcommand{\RK}{\operatorname{RF}_kM}
\newcommand{\RFM}{\op{RF}M}
\newcommand{\core}{\text{core}}
\newcommand{\FS}{F^{*}}
\newcommand{\FL}{F_{\Lambda}}\newcommand{\CS}{{\mathcal C}^{*}}
\newcommand{\CL}{{\mathcal C}_{\Lambda}}
\newcommand{\hull}{\op{hull}}
\newcommand{\oxH}{\overline{xH}}
\newcommand{\bq}{\begin{quotation}}
\newcommand{\eq}{\end{quotation}}
\newcommand{\m}[1]{\mathbb{ #1}}
\newcommand{\mc}[1]{\mathcal{ #1}}
\newcommand{\oM}{\overline{M}}
\begin{document}

\title[]{Geodesic Planes in  geometrically finite acylindrical $3$-manifolds}

\author{Yves Benoist}
\address{CNRS, Universite Paris-Sud, IMO, 91405 Orsay, France}
\email{yves.benoist@u-psud.fr}

\author{Hee Oh}
\address{Mathematics department, Yale university, New Haven, CT 06520 
and Korea Institute for Advanced Study, Seoul, Korea}

\email{hee.oh@yale.edu}
\thanks{Oh was supported in part by NSF Grant.}

\begin{abstract} {Let $M$ be a geometrically finite acylindrical 
hyperbolic $3$-manifold and let $M^*$
denote the interior of the convex core of $M$. 
We show that any geodesic plane in $M^*$ is either closed or dense, and  
that there are only countably many closed geodesic planes in $M^*$.
These results were obtained in \cite{MMO} and \cite{MMO2} when $M$ is convex cocompact. 

As a corollary, we obtain that when $M$ covers an arithmetic hyperbolic $3$-manifold $M_0$, the topological behavior 
of a geodesic plane in $M^*$ is governed by that of the
corresponding plane in $M_0$. We construct a counterexample of this phenomenon when $M_0$ is non-arithmetic. }\end{abstract}
\maketitle
\tableofcontents

\section{Introduction}

\subsection{Geometrically finite acylindrical manifolds} Let $M=\Gamma \ba \bH^3$ be a complete, oriented hyperbolic $3$-manifold 
presented as a quotient of hyperbolic space by a discrete
subgroup $$\Gamma \subset G=\op{Isom}^+(\bH^3).$$

We denote by $\Lambda=\Lambda({\Gamma})$ the limit set of $\Gamma$ 
in the sphere at infinity $S^2$ and by $\Omega$ the domain
of discontinuity;  $\Omega=S^2-\Lambda$.
The convex core of $M$ is the smallest  closed convex subset 
containing all periodic geodesics in $M$, or equivalently
$$\text{core}(M):=\Gamma\ba \text{hull}(\Lambda)\subset M$$
is the quotient of the convex hull of the limit set $\Lambda$
by $\Gamma$.
We denote by $M^{*}$ the interior of the convex core of $M$.
Note that $M^*$ is non-empty if and only if $\Gamma$ is Zariski dense in $G$.

A geodesic plane $P$ in $M$ is the image $f(\bH^2)\subset M$ of a totally geodesic immersion $f:\bH^2\to M$
of the hyperbolic plane into $M$. 
By a geodesic plane $P^{*}$ in  $M^{*}$, we mean
the non-trivial intersection 
$$
P^{*}=P\cap M^{*} \ne \emptyset
$$
of a geodesic plane in $M$ with the interior of the convex core.
Note that a  plane $P^*$ in $M^*$ is always connected and that
 $P^*$ is closed in $M^*$ if and only if $P^*$ is properly immersed in $M^*$ \cite[\S 2]{MMO2}.

We say $M$ is geometrically finite 
if the unit neighborhood of $\core(M)$ has finite volume. When $\core(M)$ is compact, $M$ is called convex cocompact.

The closures of geodesic planes in $M^{*}$ have been classified for convex cocompact acylindrical manifolds
by McMullen, Mohammadi and the second named author in \cite{MMO} and \cite{MMO2}. The main aim of this paper is to extend those classification results 
to geometrically finite acylindrical manifolds.
In concrete terms, the new feature of this paper compared to 
\cite{MMO} and \cite{MMO2} is that we allow the existence of cusps in $M^*$.

For a geometrically finite manifold $M$, 
the condition that $M$ is acylindrical is a topological one, it means  that its compact core $N$ (called the Scott core) is acylindrical, i.e.,
$N$ has incompressible boundary, and every essential cylinder in $N$ is boundary parallel \cite{Th}.

In the case when the boundary of the convex core of $M$ is totally geodesic,  we call
 $M$ {\it rigid acylindrical}. The class of rigid acylindrical hyperbolic $3$-manifolds $M$ 
includes those for which $M^*$ is obtained by ``cutting'' a finite volume complete hyperbolic $3$-manifold
$M_0$   along a properly embedded compact geodesic surface $S\subset M_0$ 
(see \S \ref{secarithm} for explicit examples).   
We remark that a geometrically finite acylindrical manifold is quasiconformal conjugate to a unique
geometrically finite  rigid acylindrical manifold (\cite{Th1}, \cite{Mc}). 

\subsection{Closures of geodesic planes} 
Our main theorem is the following:
\begin{thm} 
\label{m1} 
Let $M$ be a geometrically finite  acylindrical  hyperbolic $3$-manifold.
Then any geodesic plane $P^{*}$  is either closed or dense in $M^{*}$. Moreover, there are only countably many
closed geodesic planes $P^*$ in $M^{*}$.
\end{thm}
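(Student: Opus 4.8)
The plan is to deduce Theorem \ref{m1} from a homogeneous dynamics statement on the frame bundle, following the strategy of \cite{MMO}, \cite{MMO2}, but working with the renormalized frame bundle $\RF M$ and its ``non-wandering'' enlargement $\RFMP H$ to accommodate cusps. Let $H=\PSL_2(\R)\subset G$ be the subgroup stabilizing an oriented geodesic plane $\bH^2\subset\bH^3$, so that geodesic planes in $M^*$ correspond to $H$-orbits in $\Gamma\ba G$ that meet the set $\RF_+M$ of frames whose forward and backward endpoints lie in $\Lambda$, with basepoint in the convex core; the statement to be proved is then: \emph{any such $H$-orbit closure is either a closed orbit or contains $\RF_+M\cdot H$}. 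The proof has three main stages: (i) a local structure / ``no partial self-intersection escaping to the cusp'' analysis using the acylindricity of $M$; (ii) a closing-up argument producing extra unipotent directions in the orbit closure; and (iii) an arithmetic-free measure-rigidity or topological-rigidity step à la Ratner/Shah upgrading these extra directions to the full set $\RF_+M\cdot H$.

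First I would set up the reduction carefully. Fixing a plane $P^*$ in $M^*$, lift it to an $H$-orbit $xH\subset\Gamma\ba G$ and let $Y=\overline{xH\cap\RF_+M}$. The key geometric input from acylindricity, exactly as in \cite{MMO2}, is that the boundary $\partial(\hull\Lambda)$ is a union of round circles and the complementary disks are ``thickly'' separated; this forces any accumulation of the plane to happen along circles contained in $\Lambda$, i.e. the tangential directions produced are genuinely new $H$-translates rather than artifacts of the boundary. The new work relative to the convex cocompact case is to control the cusps: one must show that a plane cannot ``spiral into a cusp'' in a way that destroys recurrence. Here I would use geometric finiteness — the thick part of $\core(M)$ is compact and the cusps are standard horoball quotients — together with the fact that $\RF_+M$ is compact modulo the cusps, to show that $xH\cap\RF_+M$, if $P^*$ is not closed, returns infinitely often to a fixed compact set $\RFT$ of the renormalized frame bundle.

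The heart of the argument is the dynamical dichotomy. Starting from a recurrent sequence $x h_n\to z\in\RFT$ with $h_n\to\infty$ in $H$, I would apply the standard polynomial-divergence / ``$\mathrm{H}$-thickening'' technique: after passing to a subsequence and renormalizing, the divergence of $h_n$ produces a one-parameter subgroup $U$ of $G$, transverse to $H$, with $zU\subset Y$. One then runs a bootstrap: if $U$ is unipotent one invokes Ratner's theorem (in its topological form, via Shah's work on orbit closures of $\mathrm{Ad}$-unipotent flows) to conclude $zL\subset Y$ for a closed subgroup $L\supsetneq H$; analysing the possibilities for $L$ inside $G=\PSL_2(\c)$ and intersecting with $\RF_+M$ — using again acylindricity to rule out intermediate invariant sets — forces $Y\supseteq\RF_+M\cdot H$, hence $P^*$ is dense in $M^*$. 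If instead the new direction is not unipotent, a separate argument (as in \cite{MMO2}, using that the frames involved lie over $\RF_+M$ and that $\Lambda$ carries no proper closed $H$-invariant subset of positive dimension beyond circles) promotes it to a unipotent one or directly yields density. The countability statement is then a soft consequence: a closed $P^*$ corresponds to a properly immersed geodesic surface, its fundamental group is a geometrically finite Fuchsian subgroup of $\Gamma$ determined up to conjugacy by a finitely generated data, and there are only countably many conjugacy classes of such subgroups; alternatively, closed $H$-orbits meeting $\RF_+M$ are isolated in a suitable sense, and a separable space has only countably many such.

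The main obstacle I anticipate is stage (i)–(ii) in the presence of cusps: ensuring that the recurrence needed to generate extra invariant directions actually holds, i.e. that a non-closed plane genuinely accumulates on itself inside the \emph{compact} core of the renormalized frame bundle rather than merely wandering off into a cusp neighborhood where no new dynamics can be extracted. Handling this requires a quantitative understanding of how an $H$-orbit can enter and exit the cuspidal horoballs — essentially a non-divergence statement for the $H$-action on $\RF_+M$ analogous to the Dani–Margulis non-divergence for unipotent flows, adapted to the geometrically finite setting — and combining it with the acylindrical geometry of $\partial(\hull\Lambda)$. Once recurrence into $\RFT$ is secured, the rigidity machinery from \cite{MMO}, \cite{MMO2} should apply with only cosmetic changes.
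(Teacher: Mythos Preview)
There is a genuine gap at your stage (iii): you propose to invoke Ratner's theorem or Shah's topological rigidity for unipotent orbit closures, but those results require $\Gamma$ to be a lattice in $G$, and here $\Gamma\backslash G$ has infinite volume. The entire point of \cite{MMO}, \cite{MMO2} and the present paper is to establish orbit-closure rigidity \emph{without} Ratner. The paper's substitute is elementary and specific to $\PSL_2(\c)$: once the polynomial-divergence argument produces a point $y\in F^*$ and a one-parameter semigroup $V^+\subset V$ with $yV^+\subset \overline{xH}$, one observes that the circles $(gvH)^+$ for $v\in V^+$ sweep out a disk in $S^2$ meeting $\Lambda$, and a direct minimality argument (Proposition~\ref{sweep}, Corollary~\ref{vp}) gives $\overline{xH}\supset F_\Lambda$. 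No measure or topological rigidity for lattices is invoked anywhere.

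Your recurrence mechanism is also not the one used, and your description of the acylindrical geometry is off. The boundary components of $\hull(\Lambda)$ are \emph{not} round circles (that is the rigid case only), and when $M$ has cusps the closures of components of $\Omega$ can meet, so $\Lambda$ is not even a Sierpinski curve. The paper's key geometric input (Theorem~\ref{sier}) organizes $\Omega$ into maximal \emph{trees} of tangent disks and proves a uniform lower bound on the moduli of the annuli separating distinct trees; this yields that every separating circle meets $\Lambda$ in a uniformly perfect Cantor set (Theorem~\ref{cm}), hence $F^*\subset (\RK)H$ for large $k$ (Corollary~\ref{ffk}). Uniform perfectness translates into global $k$-thickness of $U$-return times to $\RK$, and Proposition~\ref{thick} shows the return times to the compact set $\RK-\Hor_R$ remain $4k$-thick at $\infty$---it is this quantitative thickness, not a Dani--Margulis nondivergence, that drives the polynomial-divergence argument (Proposition~\ref{ffin}). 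Two further pieces are missing from your outline: a separate treatment of the case where $\overline{xH}\cap F^*$ contains a periodic $U$-orbit (Proposition~\ref{cd2}), since then $U$-minimal sets carry no extra invariance and the argument above fails; and the proof that a locally closed $H$-orbit in $F^*$ has non-elementary stabilizer (Theorem~\ref{ns}), which is not automatic and is exactly what gives countability.
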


In the rigid case, each geodesic boundary component has a fundamental group which is a cocompact lattice in
$\PSL_2(\br)$, up to conjugation. This rigid structure forces any closed plane $P^{*}$ in $M^{*}$ to be a part of a closed plane $P$ in $M$:

\begin{thm} 
\label{m2} 
If $M$ is rigid in addition, 
then any geodesic plane $P$ in $M$ intersecting $M^{*}$ non-trivially is either closed or dense in $M$. 
\end{thm}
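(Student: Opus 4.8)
The plan is to deduce Theorem~\ref{m2} from Theorem~\ref{m1} by analyzing how a geodesic plane $P\subset M$ that meets $M^{*}$ sits with respect to the geodesic boundary $\partial(\core M)$. Write $P^{*}=P\cap M^{*}$, which is connected, and apply Theorem~\ref{m1} to it: either $P^{*}$ is dense in $M^{*}$ or $P^{*}$ is closed in $M^{*}$. In the dense case, $M^{*}$ is dense in $M$ (since $M^{*}$ is the interior of the convex core and $M$ is acylindrical, hence every point of $M$ is a limit of points of $\core M$ whose interior is $M^{*}$), so $P\supseteq\overline{P^{*}}=M$ and $P$ is dense in $M$. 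Thus the real content is the closed case: we must show that if $P^{*}$ is closed in $M^{*}$, then $P$ is closed in $M$.

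So assume $P^{*}$ is closed, hence properly immersed in $M^{*}$. The frontier of $M^{*}$ in $M$ is the geodesic boundary surface $\partial\core(M)=\bigsqcup_i S_i$, where each $S_i$ is a properly immersed totally geodesic surface whose fundamental group is, up to conjugacy, a cocompact lattice $\Gamma_i$ in $\PSL_2(\br)$. The key geometric step is to understand $\overline{P^{*}}\setminus P^{*}\subset\partial\core(M)$: I would show that if $P$ limits onto a boundary surface $S_i$, then in fact the lift $\widetilde P=\bH^2$ and the lift $\widetilde S_i=\bH^2$ either coincide or are disjoint in $\bH^3$, because the orthogonal projection of $\widetilde P$ onto the totally geodesic plane carrying $\widetilde S_i$ would otherwise produce, via the cocompact lattice $\Gamma_i$, a closed $\Gamma_i$-orbit structure incompatible with $P^{*}$ being properly immersed but not closed in $M$. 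Concretely, the closure $\overline{P\cap S_i}$ inside the surface $S_i$ is a closed subset invariant under a subgroup, and since $\Gamma_i$ is a cocompact lattice in $\PSL_2(\br)$ the only closed geodesic-plane-type subsets of $\Gamma_i\ba\bH^2$ relevant here are all of $S_i$ or a closed geodesic or a point; a dimension/properness count then shows $P\cap S_i$, if nonempty, is already closed in $S_i$, forcing $\overline{P^{*}}$ to be closed in $M$ and equal to $P$.

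More carefully, the mechanism is: a closed $P^{*}$ in $M^{*}$ corresponds to a relatively closed $H$-orbit $xH$ in $\RFM\subset\Gamma\ba G$ (where $H\cong\PSL_2(\br)$), and by geometric finiteness this orbit is in fact closed in $\Gamma\ba G$ provided it does not ``escape'' into a rank-one cusp or spiral onto a boundary surface; the acylindrical hypothesis rules out the cusp pathology just as in \cite{MMO2}, and the rigidity hypothesis (cocompactness of each $\Gamma_i$) rules out the spiraling-onto-$\partial\core$ pathology. Thus $xH$ is closed in $\Gamma\ba G$, which is exactly the statement that $P$ is closed in $M$. If $P^{*}$ is dense, the same dictionary gives $\overline{xH}\cap\RFM=\RFM$, and since $\RFM$ projects onto a dense subset of $\RFM$ together with the boundary frames—more simply, since $\overline{M^{*}}=M$—we get $P$ dense in $M$.

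The main obstacle I anticipate is the closed case, specifically ruling out a plane $P$ that is properly immersed in $M^{*}$ yet accumulates non-properly onto the geodesic boundary: one must genuinely use that each boundary component's surface group is a \emph{cocompact} Fuchsian group (this is where ``rigid'' enters, as opposed to merely ``acylindrical''), together with a clean statement that a totally geodesic plane meeting a totally geodesic boundary surface does so either in all of that surface, or in a closed lower-dimensional set; the interaction between the orthogonal projection $\bH^3\to\widetilde S_i$ and the lattice $\Gamma_i$ must be controlled to preclude infinitely many disjoint intersection arcs accumulating on $S_i$. Handling the case where $\widetilde P$ and $\widetilde S_i$ are tangent at infinity, or where $P$ wraps around meeting several (or infinitely many) boundary components of the convex core, will require the structure of $\core M$ as an acylindrical manifold with finitely many boundary surfaces and a careful limiting argument in $\Gamma\ba G$.
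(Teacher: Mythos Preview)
Your dense case contains a genuine error: you claim $M^{*}$ is dense in $M$, but this is false. The closure of $M^{*}$ is $\core(M)$, which is a proper subset of $M$ whenever $M$ has infinite volume; the ends of $M$ lie outside $\overline{M^{*}}$. So from $P^{*}$ dense in $M^{*}$ you cannot conclude $\overline{P^{*}}=M$ by this route. The correct passage goes through the frame bundle: Theorem~\ref{mainacy} gives that the $H$-orbit $xH$ is dense in $F_\Lambda$, and since $\pi(F_\Lambda)=M$ (every point of $M$ carries a frame whose associated circle meets $\Lambda$), one gets $P=\pi(xH)$ dense in $M$. Working purely at the plane level with Theorem~\ref{m1} loses exactly the directional information needed here.

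For the closed case, your proposed mechanism (orthogonal projection onto $\widetilde S_i$, a dimension/properness count on $P\cap S_i$) does not match the paper and is not carried far enough to be convincing; in particular the ``closed $\Gamma_i$-orbit structure incompatible with properness'' step is left entirely unspecified. The paper takes a different and sharper route. It does \emph{not} try to rule out accumulation onto the boundary directly. Instead, assuming $xH$ is closed in $F^{*}$ but not in $F$, it picks $yH\subset\overline{xH}$ with $y\notin F^{*}$ and observes that the corresponding circle $C$ lies in the closure of some round component $B\subset\Omega$, tangent to $\partial B$; thus $C$ is a horocycle in $B$. Since $M$ is rigid, $\Gamma^{B}$ is a cocompact Fuchsian lattice, so by Hedlund/Dalbo (Proposition~\ref{da}) the closure $\overline{yH}$ already contains the compact boundary orbit $zH$ corresponding to $\partial B$. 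Corollary~\ref{boundary} then forces $\overline{xH}=F_\Lambda$, contradicting closedness in $F^{*}$. In other words, the rigidity hypothesis is used not to prevent spiraling onto the boundary, but to show that any such spiraling triggers full density via horocycle dynamics. Your sketch does not reach this mechanism, and the tangent-at-infinity case you flag as a worry is precisely the case that carries the whole argument.
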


We do not know whether Theorem \ref{m2} can be extended to a non-rigid $M$ or not. This is unknown even when $M$ is convex compact, as remarked in \cite{MMO2}.
 
When $M$ is a cover of an arithmetic hyperbolic $3$-manifold $M_0$, the topological behavior 
of a geodesic plane in $M^*$ is governed by that of the
corresponding plane in $M_0$:

\begin{thm} 
\label{arith1} 
Let $M=\Gamma\ba \bH^3$ be a geometrically finite  acylindrical  manifold which covers an arithmetic 
manifold $M_0=\Gamma_0\ba \bH^3$ of finite volume. Let $p:M\to M_0$ be the covering map.
Let $P\subset M$ be a geodesic plane with $P^*=P\cap M^*\ne \emptyset$.
Then
\begin{enumerate}
\item $P^*$ is closed in $M^*$ if and only if $p(P)$ is closed in $M_0$;
\item $P^*$ is dense in $M^*$ if and only if $p(P)$ is dense in $M_0$.
\end{enumerate}
In the when $M$ is rigid, we can replace $M^*$ by $M$ in the above two statements. \end{thm}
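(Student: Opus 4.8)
The plan is to deduce Theorem \ref{arith1} from Theorems \ref{m1} and \ref{m2}, together with the analogous trichotomy/rigidity statement in the finite-volume manifold $M_0$. Recall that for a finite-volume hyperbolic $3$-manifold $M_0$, Ratner's theorem (or Shah's theorem on orbit closures of $\PSL_2(\R)$) implies that any geodesic plane $Q$ in $M_0$ is either closed (properly immersed) or dense in $M_0$; moreover there are only countably many closed ones. So in each of $M^*$ and $M_0$ we have a clean dichotomy, and the content of the theorem is that the covering map $p$ matches up the two dichotomies.

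First I would set up the frame-bundle picture. Lift everything to $\bH^3$ and choose $\Gamma < \Gamma_0$ with $M = \Gamma\ba\bH^3$, $M_0=\Gamma_0\ba\bH^3$. A geodesic plane $P$ in $M$ lifts to a plane $\tilde P\subset\bH^3$; let $H\cong\PSL_2(\R)$ be its stabilizer in $G$, so $P$ corresponds to an $H$-orbit $xH$ in $\Gamma\ba G$ and $p(P)$ corresponds to $x_0 H$ in $\Gamma_0\ba G$, where $x_0$ is the image of $x$ under the natural projection $\pi:\Gamma\ba G\to\Gamma_0\ba G$. The key elementary observation is that $\pi$ is proper (it is a covering of homogeneous spaces with fibers $\Gamma\ba\Gamma_0$, and $M_0$ has finite volume so this fiber is handled via the discreteness of $\Gamma_0$), hence $\pi(\overline{xH}) = \overline{x_0 H}$, i.e. $p(\overline P) = \overline{p(P)}$; more precisely $\pi$ carries the closure of the orbit onto the closure of the image orbit. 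This already gives: $P$ closed in $M$ $\Leftrightarrow$ $p(P)$ closed in $M_0$ in the rigid case, once one knows $P^*$ closed in $M^*$ forces $P$ closed in $M$ (Theorem \ref{m2}), and dense in $M$ $\Leftrightarrow$ dense in $M_0$ likewise.

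Next I would handle the non-rigid (general acylindrical) case, where the statement is about $P^*\subset M^*$ rather than $P\subset M$. By Theorem \ref{m1}, $P^*$ is either closed or dense in $M^*$; I claim each alternative is detected by $p(P)$. If $P^*$ is dense in $M^*$: since $M^*$ has nonempty interior in $M$ (it is open, $\Gamma$ being Zariski dense) and $p$ is an open continuous map, $p(\overline{P^*}) \supseteq p(M^*)$ contains an open subset of $M_0$; but $p(\overline{P^*})\subseteq\overline{p(P)}$, so $\overline{p(P)}$ has nonempty interior, which by the $M_0$-dichotomy forces $p(P)$ dense in $M_0$. Conversely, if $P^*$ is closed in $M^*$ (equivalently $xH$ is locally closed in $\RF M \cdot H$), I need $p(P)$ closed in $M_0$: here I would argue that $\overline{x_0H}$, being an $H$-orbit closure in finite volume and hence (Ratner) either $x_0H$ itself or all of $\Gamma_0\ba G$, cannot be all of $\Gamma_0\ba G$ — otherwise pulling back a generic point of $\Gamma_0\ba G$ along $\pi$ and using that $\RF M$ is $\pi$-saturated would make $\overline{xH}$ meet the interior of $\RF_+ M$ densely, contradicting closedness of $P^*$ in $M^*$ via the structure theory of \cite{MMO2}. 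Thus $x_0H$ is closed, i.e. $p(P)$ is closed in $M_0$. Combining the two directions with the $M^*$-dichotomy of Theorem \ref{m1} gives both equivalences (1) and (2); the final sentence about replacing $M^*$ by $M$ when $M$ is rigid is then immediate from Theorem \ref{m2} plus the $M$-versus-$M_0$ comparison in the first paragraph.

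The main obstacle I expect is the converse direction in the closed case: showing that $p(P)$ closed in $M_0$ actually forces $P^*$ closed in $M^*$ (not merely that $P^*$ closed implies $p(P)$ closed). The subtlety is that the preimage $p^{-1}(p(P))$ in $M$ is in general a \emph{union} of geodesic planes — the $\Gamma$-orbit of $\tilde P$ modulo $\Gamma_0$ — and one must rule out that $P^*$ is a dense leaf inside this union while a \emph{different} plane in the union is the closed one. I would resolve this by invoking the dichotomy of Theorem \ref{m1} contrapositively: if $P^*$ were dense in $M^*$ then, as in the density argument above, $p(P)$ would be dense in $M_0$, contradicting our hypothesis; hence $P^*$ must be closed. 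So in fact the whole theorem reduces to the two implications ``$P^*$ dense $\Rightarrow$ $p(P)$ dense'' and ``$P^*$ closed $\Rightarrow$ $p(P)$ closed,'' each proved directly via properness/openness of $\pi$ and the $M_0$-dichotomy, and then the reverse implications come for free by playing the $M^*$-dichotomy against the $M_0$-dichotomy. The only genuinely technical point is verifying that $\pi:\Gamma\ba G\to\Gamma_0\ba G$ interacts well with orbit closures inside $\RF M\cdot H$ — i.e. that the renormalized frame bundle and the ``$H$-horocyclic'' structure used in \cite{MMO2} are compatible with the covering — which is where I would spend the most care.
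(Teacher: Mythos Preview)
There is a genuine gap, and it is exactly the step you flag as ``the only genuinely technical point.'' The map $\pi:\Gamma\backslash G\to\Gamma_0\backslash G$ is \emph{not} proper: $\Gamma$ has infinite index in $\Gamma_0$ (since $M$ has infinite volume while $M_0$ has finite volume), so the fibers $\Gamma\backslash\Gamma_0$ are infinite and $\pi$ is not a closed map. Consequently your identity $\pi(\overline{xH})=\overline{x_0H}$ fails; you only get the inclusion $\pi(\overline{xH})\subset\overline{x_0H}$. In particular, knowing that $x_0H$ is dense in $\Gamma_0\backslash G$ tells you nothing about $\overline{xH}$: there is no way to ``pull back a generic point along $\pi$'' into $\overline{xH}$. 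So your argument for ``$P^*$ closed $\Rightarrow$ $p(P)$ closed'' collapses.

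More decisively, your proposal never invokes arithmeticity of $\Gamma_0$ beyond the Ratner/Shah dichotomy, which holds for any lattice. But Theorem~\ref{nono} of this very paper shows the statement is \emph{false} for suitable non-arithmetic $M_0$: there one has $P^*$ closed in $M^*$ yet $p(P)$ dense in $M_0$. Hence any correct proof must use arithmeticity in an essential way. The paper does this as follows: when $P^*$ is closed in $M^*$, Theorem~\ref{mainacy} (equivalently Theorem~\ref{ns}) gives that the stabilizer $\Gamma^C$ is non-elementary, hence Zariski dense in $G^C\simeq\PSL_2(\mathbb R)$. Since $\Gamma^C\subset\Gamma_0$ and $\Gamma_0$ is arithmetic, the Zariski closure $G^C$ is defined over $\mathbb Q$ (under the arithmetic realization $\psi$), and by Borel--Harish-Chandra $\Gamma_0^C$ is a lattice in $G^C$. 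Therefore $p(x)H$ has finite volume and is closed. Your density direction (``$P^*$ dense $\Rightarrow$ $p(P)$ dense'') is fine and matches the paper's argument; the reverse implications then follow formally from the two dichotomies, as you say.
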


Theorem \ref{arith1} is not true in general without the arithmeticity assumption on $M_0$: 

\begin{thm}\label{nono}
There exists a non-arithmetic closed  hyperbolic $3$-manifold  $M_0$, covered by a geometrically finite rigid acylindrical manifold $M$
such that there exists a properly immersed geodesic plane $P$ in $ M$ with $P\cap M^*\ne \emptyset$ whose image $p(P)$ is dense in $M_0$, where
$p:M\to M_0$ is the covering map.
\end{thm}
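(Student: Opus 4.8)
The plan is to construct $M_0$ by a gluing (or ``interbreeding'') construction in the spirit of Gromov--Piatetski-Shapiro, arranged so that the resulting non-arithmetic manifold contains an embedded totally geodesic surface that is the common boundary of two pieces, one of which is cut off to produce the rigid acylindrical cover $M$. Concretely, I would start with two arithmetic hyperbolic $3$-manifolds $M_1, M_2$ of finite volume, each containing an isometric copy of a fixed closed totally geodesic surface $S$, but defined over \emph{different} invariant trace fields (equivalently, non-commensurable). Cut each $M_i$ along $S$ to obtain a manifold $N_i$ with geodesic boundary $S \sqcup S$ (or a single copy, depending on whether $S$ separates), and glue a piece of $N_1$ to a piece of $N_2$ along a matching boundary component to form $M_0$. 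By the Gromov--Piatetski-Shapiro argument, $M_0$ is hyperbolic, closed (if the pieces are chosen compact), and non-arithmetic precisely because it is assembled from pieces with distinct trace fields; the surface $S$ along which we glued survives as an embedded closed totally geodesic surface in $M_0$.

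Next I would produce the cover $M$ and the plane $P$. Cutting $M_0$ along the embedded geodesic surface $S$ and taking the cover corresponding to one complementary piece yields a geometrically finite manifold $M$ whose convex core boundary is totally geodesic --- i.e., $M$ is rigid acylindrical --- provided the piece we retain has incompressible boundary and no essential non-boundary-parallel cylinders. Acylindricity here is a matter of choosing the building blocks $N_i$ with the right topology; one can for instance take the $M_i$ to be obtained from the Gromov--Piatetski-Shapiro examples where acylindricity of the halves is known, or verify it directly from the fact that a closed totally geodesic surface in a closed hyperbolic $3$-manifold is $\pi_1$-injective and the complementary pieces are atoroidal and anannular. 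The plane $P$ in $M$ I want is a totally geodesic plane coming from (a lift of) a \emph{second} immersed geodesic surface $P_0 \subset M_0$ that lies inside the retained piece, meets $M^*$, and --- crucially --- is chosen to be dense in $M_0$. Such a dense immersed geodesic plane exists in any finite-volume hyperbolic $3$-manifold by Ratner/Shah (Ratner's theorem applied to the $\SO(2,1)$-action), so generic totally geodesic planes in $M_0$ are dense; one just needs to select one that enters the chosen complementary region and whose preimage component $P$ in $M$ is properly immersed.

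The main obstacle --- and the heart of the construction --- is arranging simultaneously that (a) $M_0$ is genuinely non-arithmetic, (b) $M$ is rigid \emph{acylindrical} (so Theorem~\ref{m2} and Theorem~\ref{arith1} are the relevant comparison), and (c) the chosen plane $P$ in $M$ is \emph{closed} (equivalently properly immersed in $M$, i.e.\ $P^* = P \cap M^*$ is closed in $M^*$) while its projection $p(P)$ is dense in $M_0$. Point (c) is what makes the example a genuine counterexample to a naive ``rigid'' version of Theorem~\ref{arith1}: by Theorem~\ref{m2}, since $M$ is rigid, $P$ being non-dense in $M$ forces $P$ to be closed in $M$; so it suffices to exhibit one totally geodesic plane $P$ in $M$ that is \emph{not} dense in $M$ but whose image downstairs is dense. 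I would obtain this by choosing $P$ to be a lift of the closed geodesic surface $S$ (or a plane parallel to it and tangent along the cut locus) shifted inward: more precisely, pick a totally geodesic plane in $\bH^3$ whose stabilizer in $\pi_1(M)$ is a surface subgroup (so $P$ is closed in $M$) but whose $\Gamma_0$-orbit is dense in the frame bundle direction transverse to $S$, which happens exactly when the corresponding plane in $M_0$ is not one of the countably many closed ones. Verifying that such a plane meets $M^*$ and that its $\pi_1(M)$-stabilizer remains a lattice in $\PSL_2(\R)$ (rigidity of the boundary) while its $\Gamma_0$-stabilizer does not close up is the delicate point; this is where the explicit arithmetic of the trace fields of $M_1$ and $M_2$ must be used to rule out accidental commensurations forcing $p(P)$ closed. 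Once these compatibility conditions are met, Theorem~\ref{m1} (applied to $M$) guarantees $P^*$ is closed or dense in $M^*$, our construction puts it in the closed case, and the density of $p(P)$ in $M_0$ is immediate from the Ratner/Shah dichotomy in $M_0$ together with the choice of $P$ avoiding the closed planes.
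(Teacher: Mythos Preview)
Your overall architecture matches the paper's: build $M_0$ by Gromov--Piatetski-Shapiro interbreeding of two non-commensurable arithmetic pieces along a common closed totally geodesic surface $S$, and take $M$ to be the cover corresponding to one piece, which is then rigid acylindrical. That part is correct and is exactly what the paper does.

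The gap is in your step (c), and it is genuine. Your proposal to ``select a dense immersed plane $P_0\subset M_0$ whose preimage component $P$ in $M$ is properly immersed'' reverses the logic and cannot be carried out as stated: a generic plane dense in $M_0$ has no reason whatsoever to lift to a closed plane in $M$, and you give no criterion for picking one that does. You then switch to ``pick a plane whose stabilizer in $\pi_1(M)$ is a surface group,'' which is the right direction, but you do not say where such planes come from, nor do you give any mechanism for why their images in $M_0$ must be dense beyond a vague appeal to trace fields.

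The paper supplies both missing pieces concretely. First, closed planes in $M$ meeting $M^*$ are abundant because $M$ also covers the \emph{arithmetic} manifold $M_1$ (the one whose piece you retained); by Theorem~\ref{arith1} applied to that cover, every rational geodesic surface in $M_1$ lifts to a closed plane in $M$, and there are infinitely many crossing $S$. Second, density in $M_0$ is proved not by trace-field generalities but by an explicit angle computation: realizing the two arithmetic pieces via forms $q_a=q_0+ax_4^2$ and $q_{a'}=q_0+a'x_4^2$ with $a/a'\notin\mathbb{Q}^2$, one checks that the set of angles a rational plane on the $q_a$-side can make with $S$ is disjoint from the corresponding set on the $q_{a'}$-side (except for $P\subset S$ or $P\perp S$). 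Thus a closed $P\subset M$ crossing $S$ at a non-right angle cannot continue as a closed plane across the seam into the other piece, so by Ratner--Shah its image in $M_0$ is dense. This angle-incompatibility argument is the heart of the proof, and your proposal does not contain it.
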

\medskip

We also prove the following theorem for a general geometrically finite manifold of infinite volume:

\begin{thm} 
\label{m4}
Let $M$ be a geometrically finite hyperbolic $3$-manifold of infinite volume. 
Then there are only finitely many geodesic planes  in $M$
that are contained in $\text{core}(M)$ and all of them are closed with finite area.
\end{thm}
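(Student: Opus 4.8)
The plan is to reduce the statement to a compactness argument for the space of geodesic planes meeting the convex core, exploiting that $\hull(\Lambda)$ is ``fat'' only in a compact part once one moves away from the cusps. First I would set up the frame-bundle picture: let $H=\PSL_2(\br)\subset G$ be the stabilizer (up to conjugation) of a fixed copy of $\bH^2$ in $\bH^3$, so that geodesic planes in $M$ correspond to closed $H$-orbits in the quotient $\Gamma\ba G$, and a plane $P$ is contained in $\core(M)$ precisely when the corresponding orbit $xH$ projects into $\RF M$, the renormalized frame bundle (the set of frames whose underlying geodesic has both endpoints in $\Lambda$). Since $M$ is geometrically finite of infinite volume, $\Lambda\ne S^2$, so $\RF M$ is a proper closed subset of $\Gamma\ba G /M$ (here $M$ denotes the centralizer, an abuse I will avoid by working in $\Gamma\ba G$); crucially, $\RF M \cap (\Gamma\ba G)$ decomposes as a \emph{compact} piece $\RFT$ together with finitely many cuspidal tubes, and a plane lying in $\core(M)$ must have its entire $H$-orbit inside $\RF M$.

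The key step is to show any such plane is already closed with finite area. For this I would argue that if $xH\subset \RF M$, then in fact $xH$ is contained in the compact part $\RFT$: a horospherical excursion of $xH$ into a cusp neighborhood would, by the geometry of rank-one cusps (the limit set meets the corresponding parabolic fixed point in a controlled horoball-complement pattern), force $xH$ to exit $\RF M$, contradiction — this uses that the only way an $H$-orbit can stay in $\RF M$ near a cusp is to accumulate on the parabolic point, but then the orbit contains frames with geodesics limiting to that point from directions outside $\Lambda$. Hence $\overline{xH}$ is a compact $H$-invariant set inside $\RFT$, so by Ratner-type / topological rigidity in this setting (or directly: a compact $H$-invariant subset of $\Gamma\ba G$ containing an $H$-orbit whose closure is minimal must be a single closed orbit, since there is no room for a higher-dimensional homogeneous closure of positive codimension meeting only the compact core in an acylindrical-free, general geometrically finite $M$) the orbit $xH$ is itself closed; a closed $H$-orbit with compact closure is compact, hence $P$ has finite area.

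Finiteness then follows from a standard volume/injectivity-radius packing argument: closed geodesic planes contained in $\core(M)$ correspond to compact closed $H$-orbits inside the fixed compact set $\RFT$, and distinct such orbits are separated — two closed $H$-orbits in $\Gamma\ba G$ are either equal or a definite distance apart relative to the injectivity radius on $\RFT$ — while each carries $H$-invariant area bounded below by the minimal area of a closed hyperbolic surface; so only finitely many fit. The main obstacle I expect is the middle step: ruling out cuspidal excursions and then upgrading ``$\overline{xH}$ compact'' to ``$xH$ closed'' without the acylindricity hypothesis. The first part should be handled by the explicit structure of geometrically finite ends (Bonahon, or the description of $\RF M$ near bounded parabolic points), and the second by observing that a non-closed $H$-orbit has closure of strictly larger dimension whose intersection with the core would be a nontrivial homogeneous sub-object, which one can exclude because it would force extra symmetry incompatible with $\Lambda$ being a proper subset of $S^2$ with the orbit trapped in the core — alternatively one invokes the measure classification for the $H$-action applied to an $H$-invariant ergodic measure supported on $\overline{xH}\subset\RFT$, whose support must then be a closed orbit.
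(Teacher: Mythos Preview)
Your proposal has two genuine gaps that the paper handles very differently.

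\textbf{Cuspidal excursions do occur.} Your claim that an $H$-orbit contained in $\RFM$ must avoid the cusp neighborhoods is false. If the circle $C=(gH)^+\subset\Lambda$ passes through a parabolic fixed point $\xi$ (which can certainly happen---e.g.\ boundary circles of rank-one cusps), then $\hull(C)$ enters arbitrarily deep into the horoball at $\xi$, so $xH$ meets every $\Hor_R$. Thus $\overline{xH}$ need not be compact, and your reduction to ``compact $H$-invariant set'' collapses. The paper never claims compactness of $xH$; instead Lemma~\ref{fv} deduces finite area from properness of the embedding $\Gamma^C\backslash\hull(C)\hookrightarrow\core(M)$ together with the explicit local structure of $\core(M)$ near rank-one and rank-two cusps.

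\textbf{Ratner theory and measure classification do not apply.} Since $\Gamma$ is not a lattice, there is no off-the-shelf Ratner theorem or measure classification for the $H$-action on $\Gamma\backslash G$ that you can invoke to upgrade ``$\overline{xH}$ minimal'' to ``$xH$ closed''. The paper's substitute is the machinery of \S\S\ref{secrethor}--\ref{secnonele}: the key observation (which your proposal misses entirely) is that if $xH\subset\RFM$ then for every $y\in xH$ the return-time set $\{t:yu_t\in\RFM\}$ is all of $\mathbb{R}$, hence trivially globally $k$-thick for every $k$, so $xH\subset\RF_kM$ for all $k$. Theorem~\ref{rt} then gives the dichotomy locally closed vs.\ dense in $F_\Lambda$; density is excluded because $\Lambda\neq S^2$ forces $\RFM\subsetneq F_\Lambda$. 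A separate argument (Lemma~\ref{loc}, using Corollary~\ref{boundary}) handles accumulation on boundary circles to upgrade locally closed to closed.

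Your finiteness argument also needs repair: since the orbits are not compact, the volume/injectivity-radius packing does not work as stated. The paper instead proves that the set $\mc C_a$ of circles with finite-area stabilizer quotient is \emph{discrete} in $\mc C$ (Lemma~\ref{iso}, again via Corollary~\ref{boundary} and $\Lambda\neq S^2$), and then uses that every such orbit meets the compact set $\RFM-\Hor$ (Lemma~\ref{ih}) to reduce to a finite set.
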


 Theorem \ref{m4} is generally false for a finite volume manifold. For instance, an arithmetic hyperbolic $3$-manifold
with one closed geodesic plane contains infinitely many closed geodesic planes, which are obtained via Hecke operators.
\subsection{Orbit closures in the space of circles}
We now formulate a stronger version of Theorems \ref{m1} and \ref{m2}. Let  $\mathcal C$ denote the space  of all oriented circles in $S^2$. 
We identify $G:=\op{Isom}^+(\bH^3)$ with $\PSL_2(\m C)$, considered as a simple real Lie group, and $H:=\op{Isom}^+(\bH^2)$ with $\PSL_2( \m R) $, so that
we have a natural isomorphism
 $$\mc C \simeq G/H .$$
The following subsets of $\mathcal C$ play  important roles in our discussion and we will keep the notation
throughout the paper:
\begin{align*} 
\mathcal C_\Lambda 
&=\{C\in \mathcal C: C\cap \Lambda\ne \emptyset\};
\\ 
\mathcal C^*
&=\{C\in \mathcal C: C\text{ separates $\Lambda$} \}.
\end{align*}
The condition {\it $C$ separates $\Lambda$} means that both connected components
of $S^2-C$ meet $\Lambda$.
Note that $\mc C_\Lambda$ is closed  in $\mathcal C$ and that $\mathcal C^*$ is open in $\mathcal C$.
If the limit set $\Lambda$ is connected, 
 then $\mathcal C^*$ is a dense subset  {\it of }  $\mathcal C_\Lambda$  \cite[Corollary 4.5]{MMO}
but $\mathcal C^*$ is not contained in $\mathcal C_\Lambda$ in general.

The classification of closures of geodesic planes follows from the classification of $\Gamma$-orbit closures of circles
(cf. \cite{MMO}). In fact,
the following theorem strengthens both Theorems \ref{m1} and
\ref{m2} in two aspects: it describes possible orbit closures of single $\Gamma$-orbits as well as of any $\Gamma$-invariant subsets of
$\mathcal C^*$.

\begin{thm} 
\label{cir} 
Let $M=\Gamma\ba \bH^3$ be a geometrically finite 
acylindrical  manifold. Then \begin{enumerate}
\item  Any $\Gamma$-invariant subset of $\mathcal C^*$ is either dense or a  union of finitely many closed
$\Gamma$-orbits in $\mathcal C^*$;
\item  There are at most countably many closed $\Gamma$-orbits in $\mc C^*$; and
\item  For $M$  rigid, any $\Gamma$-invariant subset of $\mathcal C^*$ is either dense or a  union of finitely many closed
$\Gamma$-orbits in $\mathcal C_\Lambda$.
\end{enumerate}
\end{thm}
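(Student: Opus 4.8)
\textbf{Proof proposal for Theorem \ref{cir}.}

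The plan is to combine a homogeneous-dynamics input with the topological structure of an acylindrical manifold, following the strategy of \cite{MMO}, \cite{MMO2} but pushing it through the presence of cusps. Identify $\mathcal C\simeq G/H$ and recall that a $\Gamma$-orbit $\Gamma C$ in $\mathcal C$ corresponds to an $H$-orbit $xH$ in $\Gamma\ba G$. The key dynamical ingredient, to be established in the body of the paper, is a rigidity statement for orbit closures of $H=\PSL_2(\mathbb R)$ in the frame bundle $\RFM$: for $x$ such that the circle $C$ separates $\Lambda$, the closure $\overline{xH}$ is either all of (the relevant piece of) $\RFM$ or is a closed $H$-orbit. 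This is where the geometrically finite hypothesis enters — one needs the non-divergence / recurrence of the relevant unipotent and diagonal flows on $\RFM$, using that the unit neighborhood of $\core(M)$ has finite volume (Burger--Roblin type equidistribution of horospheres weighted by the Patterson--Sullivan and Bowen--Margulis--Sullivan measures), together with the crucial fact that the convex core of an acylindrical $M$ has \emph{no accidental parabolics} compatible with a geodesic plane exiting a cusp, so a plane entering $M^*$ cannot escape through a rank-one or rank-two cusp without closing up. First I would reduce statement (1) to this dichotomy applied to a single orbit, then to an arbitrary $\Gamma$-invariant family.

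The reduction runs as follows. For (1), let $\mathcal F\subset\mathcal C^*$ be $\Gamma$-invariant. If some $C\in\mathcal F$ has $\overline{\Gamma C}\cap\mathcal C^*=\mathcal C^*$ (equivalently the corresponding $\overline{xH}$ is the full recurrent piece $\RFMH$ lying over $M^*$), then $\mathcal F$ is dense in $\mathcal C^*$ and we are done. Otherwise every $C\in\mathcal F$ has $\overline{xH}$ closed, i.e. every $\Gamma$-orbit in $\mathcal F$ is already closed in $\mathcal C^*$; I then have to show only finitely many distinct closed orbits can occur. Here I would use that a closed $H$-orbit in $\RFM\cap(\text{over }M^*)$ corresponds to a properly immersed finite-area geodesic plane meeting $M^*$, and invoke a finiteness/isolation property: closed $H$-orbits of uniformly bounded complexity form a discrete set, and any two sufficiently close closed orbits in $\mathcal C^*$ would force, via a closing/polynomial-divergence argument together with acylindricality, one to be contained in the closure of the union of the others or to generate a larger orbit closure — contradicting that the union $\overline{\mathcal F}$ is a proper closed subset. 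The accounting that the number is \emph{finite} (not merely countable) for a fixed invariant family, while the total over all families is countable, is exactly the dichotomy between local finiteness inside $\mathcal C^*$ and the countability of the set of all closed $\Gamma$-orbits, which is (2); I would prove (2) directly by noting each closed $\Gamma$-orbit determines a conjugacy class of a finitely generated Fuchsian subgroup of $\Gamma$ arising as $\Gamma\cap gHg^{-1}$, and there are only countably many such subgroups.

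For (3), the point is that in the rigid case each boundary component of $\core(M)$ is a totally geodesic surface whose fundamental group is a cocompact lattice in $H$. A circle $C\in\mathcal C_\Lambda\setminus\mathcal C^*$ that is a limit of circles in $\mathcal C^*$ must be tangent to, or supported on, $\Lambda$ in a controlled way; using \cite[Corollary 4.5]{MMO} (that $\mathcal C^*$ is dense in $\mathcal C_\Lambda$ when $\Lambda$ is connected, which holds here by acylindricality) together with the rigidity of the boundary surfaces, I would upgrade a dense invariant subset of $\mathcal C^*$ to a dense subset of all of $\mathcal C_\Lambda$, and show that a closed $\Gamma$-orbit in $\mathcal C^*$ whose closure meets $\mathcal C_\Lambda\setminus\mathcal C^*$ only along boundary circles remains closed in $\mathcal C_\Lambda$ — the boundary circles bound the totally geodesic faces and are themselves closed $\Gamma$-orbits, so no new limit points are created.

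\textbf{Main obstacle.} The heart of the matter — and the step I expect to be the real work — is establishing the $H$-orbit closure dichotomy on $\RFM$ in the presence of cusps: controlling the behavior of unipotent (and the ambient diagonal) flows near the cuspidal ends of the geometrically finite manifold, proving the requisite non-escape of mass / thickness-recurrence statements for the Burger--Roblin and BMS measures restricted to $\RFM$, and ruling out intermediate closed orbits coming from accidental parabolic subgroups. Acylindricality is precisely what should preclude those intermediate possibilities, but transferring that topological input into the dynamical argument — in particular handling rank-one cusps, where the geodesic plane can genuinely wander into a cusp neighborhood — is the delicate point that goes beyond \cite{MMO}, \cite{MMO2}.
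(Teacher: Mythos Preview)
Your high-level architecture is right: pass to $H$-orbits on $\Gamma\ba G$, establish a closed-or-dense dichotomy for a single orbit in $F^*$, upgrade to invariant families, and get countability from the stabilizer being non-elementary (hence determined by a finitely generated subgroup of $\Gamma$). But the \emph{mechanism} you propose for the dichotomy is not the one that works here, and this is a genuine gap rather than a detail to be filled in.

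The paper does not use Burger--Roblin or BMS equidistribution at all; the argument is purely topological dynamics. The dichotomy (Theorem~\ref{rt}) is proved by the Margulis polynomial-divergence technique applied to $U$-minimal sets relative to a \emph{compact} window $W\subset F^*_\Lambda$, and the window is produced by showing that the set of return times $\{t:xu_t\in W\}$ is $k$-thick at infinity (Proposition~\ref{thick}). Acylindricality enters not as ``no accidental parabolics'' but through a concrete statement about the limit set: the components of $\Omega$ can be grouped into maximal \emph{trees of disks} $T_\ell$ whose closures are pairwise disjoint with a uniform lower bound on the moduli $\mathrm{mod}(S^2-(\overline{T_\ell}\cup\overline{T_k}))$ (Theorem~\ref{sier}). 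This is the replacement for the Sierpi\'nski-curve structure from the convex-cocompact case once cusps are allowed, and it is the step your proposal is missing entirely. From it one deduces that every $C\in\mathcal C^*$ meets $\Lambda$ in a uniformly perfect Cantor set (Theorem~\ref{cm}), hence $F^*\subset(\RK)H$ for some fixed $k$ (Corollary~\ref{ffk}), which is precisely what feeds the thickness/minimal-set machinery. A measure-theoretic route would have to contend with the infinitude of the relevant measures and does not, by itself, yield topological orbit-closure statements; no such route is taken.

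Two smaller corrections. For the finiteness in (1), your isolation/closing sketch is too vague; the actual argument (Theorem~\ref{mainacy}(4)) takes representatives of infinitely many closed orbits inside the compact set $W^*_{k,R}$, passes to an accumulation point, and reruns the minimal-set argument there to force density --- a contradiction. For (3), density of $\mathcal C^*$ in $\mathcal C_\Lambda$ is not enough: in the rigid case one shows (Theorem~\ref{mainacy2}) that if $xH$ is closed in $F^*$ but not in $F_\Lambda$, then a limit orbit $yH$ lies over a circle tangent to a round boundary component of $\Omega$; Dal'bo's theorem (Proposition~\ref{da}) then places a compact $H$-orbit inside $\overline{xH}$, and Corollary~\ref{boundary} forces $\overline{xH}=F_\Lambda$, contradicting non-density.
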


 We also present a reformulation of Theorem \ref{m4}  in terms of circles:
\begin{thm} 
Let $M=\Gamma\ba \m H^3$ be a  geometrically finite manifold of infinite volume. Then there are only
finitely many $\Gamma$-orbits of circles  contained in $\Lambda$. Moreover each of these orbits is closed
in $\mc C_\Lambda$.
\end{thm}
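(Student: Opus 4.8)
The plan is to translate this statement about circles back into the language of geodesic planes via the identification $\mc C\simeq G/H$, and then reduce it to Theorem \ref{m4}. Recall that a circle $C\in \mc C$ corresponds to a geodesic plane $P_C\subset \bH^3$ (the convex hull of $C$ in $\bH^3$), and the $\Gamma$-orbit $\Gamma C$ in $\mc C$ corresponds to the image of $P_C$ in $M=\Gamma\ba\bH^3$, i.e.\ to a geodesic plane $P$ in $M$ (with a choice of orientation). First I would observe that $C\subset \Lambda$ if and only if the corresponding plane $P_C\subset \bH^3$ has its boundary circle contained in $\Lambda$, which (since $\op{hull}(\Lambda)$ is closed and convex, and $P_C=\op{hull}(C)\subset\op{hull}(\Lambda)$) is equivalent to $P_C\subset \op{hull}(\Lambda)$, hence to the image plane $P$ lying inside $\core(M)=\Gamma\ba\op{hull}(\Lambda)$. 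So $\Gamma$-orbits of circles contained in $\Lambda$ are in bijection with (oriented) geodesic planes $P$ in $M$ with $P\subset\core(M)$.

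Next I would invoke Theorem \ref{m4}: since $M$ has infinite volume and is geometrically finite, there are only finitely many geodesic planes in $M$ contained in $\core(M)$, and each such plane is closed in $M$ with finite area. Under the bijection above, this immediately gives that there are only finitely many $\Gamma$-orbits of circles contained in $\Lambda$ (the factor-of-two from orientation does not affect finiteness). It remains to check that each such orbit $\Gamma C$ is closed in $\mc C_\Lambda$. Here I would use that the corresponding plane $P$ is closed in $M$, equivalently properly immersed; since $P$ has finite area, its lift is $\Gamma$-orbit-equivalent to a closed $H$-orbit $xH$ in $\Gamma\ba G$ (the stabilizer $\op{Stab}_H(x)$ is a lattice in $H$). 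A closed $H$-orbit in $\Gamma\ba G$ is closed in $\Gamma\ba G$, and pushing forward to $\mc C = G/H$ — or rather, using that the orbit map $G\to G/H$ sends the closed set $\Gamma xH$ (thought of in $G$) to the orbit $\Gamma C$ — shows $\Gamma C$ is closed in $\mc C$, a fortiori closed in the subspace $\mc C_\Lambda$. One must be slightly careful: closedness of $\Gamma C$ in $\mc C$ follows from properness of the immersion $P\looparrowright M$ together with discreteness of $\Gamma$, which is exactly the content of $xH$ being closed in $\Gamma\ba G$ when $\op{Stab}_H(x)$ is a lattice.

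The main obstacle, modulo Theorem \ref{m4}, is purely bookkeeping: making precise the correspondence between "$\Gamma$-orbit of a circle" and "geodesic plane in $M$" including the orientation issue, and verifying that the topological notions match up — namely that "$C\subset\Lambda$" corresponds to "$P\subset\core(M)$" and that "$\Gamma C$ closed in $\mc C$" corresponds to "$P$ properly immersed." Both of these are standard (the first is the convexity argument above; the second is the dictionary between closed $H$-orbits in $\Gamma\ba G$ and properly immersed finite-area planes, cf.\ \cite[\S 2]{MMO2}), so there is no genuine hard step here — the theorem is essentially a restatement of Theorem \ref{m4} in the language of $\mc C$. I would therefore present the proof as: (i) set up the bijection $\{$oriented planes $P$ in $M$ with $P\subset\core(M)\}\leftrightarrow\{$$\Gamma$-orbits of circles $C\subset\Lambda\}$; (ii) apply Theorem \ref{m4} to get finiteness; (iii) apply Theorem \ref{m4} again (the finite-area/closed conclusion) plus the closed-orbit dictionary to get that each orbit is closed in $\mc C_\Lambda$.
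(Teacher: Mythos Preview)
Your proposal is correct and takes essentially the same approach as the paper: the statement is presented there explicitly as a reformulation of Theorem \ref{m4} in the language of circles, and the dictionary you spell out (circles $C\subset\Lambda$ correspond to $H$-orbits $xH\subset\RFM$, closedness of $\Gamma C$ in $\mc C$ corresponds to closedness of $xH$ in $\Gamma\ba G$) is exactly the bridge the paper relies on. The only cosmetic difference is that the paper phrases the correspondence at the level of the frame bundle ($F_0=\{x:xH\subset\RFM\}$ versus $\mc C_0=\{C:C\subset\Lambda\}$) rather than via planes in $\core(M)$, but this is the same content.
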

\subsection{Strategy and organization}
Our approach follows the same lines as the approaches of \cite{MMO} and \cite{MMO2}, and we tried to keep the same notation
from those papers as much as possible. 

Below is the list of subgroups of $G=\operatorname{PSL}_2(\c)$ which will be used throughout the paper:
\begin{align*} 
H&=\operatorname{PSL}_2(\br)\\
A&=\left\{ a_t=\begin{pmatrix} e^{t/2} & 0 \\ 0 &
e^{-t/2}\end{pmatrix} : t\in \m R\right\}\\
K&=\op{PSU}(2)\\ 
N& =\left\{u_t=
\begin{pmatrix} 1 & t \\ 0 & 1\end{pmatrix}: 
t\in \mathbb C \right\}\\
U&=\{u_t: t\in \br\}\quad\text{and}\\
V&=\{u_t: t\in i\br \}.
\end{align*}

We can identify $M=\Gamma\ba \bH^3$  with the double coset space $\Gamma\ba G/K$
and  its oriented frame bundle $\FM$  with
the homogeneous space $\Gamma\ba G$.
By the duality between  $\Gamma$-orbits in $\mc C$ and $H$-orbits in $\FM$,  Theorem \ref{cir}
 follows from the classification of $H$-invariant closed subsets in $F^*$ where $F^*$ is the $H$-invariant subset of $\FM$ such that
$\Gamma\ba \mathcal C^*=F^*/H$ (cf. \cite{MMO}, \cite{MMO2}).
Working in the frame bundle $\FM$ enables  us to use 
the geodesic flow,  the horocyclic flow as well as  the ``imaginary horocyclic flow''
given by the right-translation actions of  $A$, $U$, and $V$ in the space $\Gamma\ba G$  respectively.
\vs

The results in \S\S \ref{secprelim}-\ref{sechomdyn} hold for a general 
Zariski dense discrete subgroup  $\Gamma$. 

In \S \ref{secprelim}, we review some basic definitions and notations to be used throughout the paper.
The notation $\RFM$ denotes  the {\it renormalized frame bundle of $M$}, which is
 the closure of the union of all  periodic $A$-orbits in $\FM$ (see also \eqref{fr}). 
 We consider the $H$-invariant subset $F_\Lambda$ of $\FM$ which corresponds to $\mc C_\Lambda$:
 $$  \Gamma \ba \mathcal C_\Lambda= F_\Lambda/H .$$ We set $F_\Lambda^*=F_\Lambda\cap F^*$.
 When $\Lambda$ is connected, $F_\Lambda^*=F^*$, but not in general.

In \S \ref{seclimorb},
we study the closure of an orbit $xH$ in $F_\Lambda$ which accumulates on an orbit $yH$ with non-elementary stabilizer.
We prove that $xH$ is dense in $F_\Lambda$ if $y\in  F^*$ (Proposition \ref{latt}), and also
present a condition for the density of $xH$ in $F_\Lambda$ when $y\not\in F^*$ (Proposition \ref{gbb}).

In \S \ref{secperorb}, we prove that if  the closure $\overline{xH}$ 
 contains a periodic $U$-orbit in $ F_\Lambda^*$, then $xH$ is
 either locally closed or dense in $F_\Lambda$ (Proposition \ref{cd2}).

In \S \ref{secrethor}, we  
recall a closed $A$-invariant subset  $\RK\subset \RFM$, $k>1$, with {\it $k$-thick} recurrence properties for the horocycle flow,
which was introduced in \cite{MMO2}. This set may be empty in general.
We show that the thick recurrence property remains preserved even after we remove a neighborhood of finitely many cusps
from $\RK$. That is,
setting 
\be\label{wk} W_{k, R}:=\RK -\Hor_R\ee
where $\Hor_R$ is a  neighborhood of finitely many cusps in $\FM$ of  depth $R\gg 1$,
we show that for any $x\in W_{k,R}$,
the set $T_x:=\{t\in \br: xu_t\in W_{k,R}\}$  is {\it $4k$-thick at $\infty$}, in the sense
that for all sufficiently large $r>1$ (depending on $x$),
$$T_x\cap \left( [-4kr, -4r]\cup [r, 4kr]\right) \ne \emptyset$$
(Proposition \ref{thick}).

In \S \ref{sechomdyn}, we present a technical proposition (Proposition \ref{ffin})
which ensures the density of $xH$ in $F_\Lambda$ when $xH$ intersects a compact subset $W\subset F_\Lambda^*$
with the property that the return times $\{t: xu_t\in W\}$ is
$k$-thick at $\infty$ for every $x\in W$.
The proof of this proposition uses the notion of $U$-minimal sets with respect to $W$, together with the polynomial divergence property
 of unipotent flows of two nearby points,  which goes back to Margulis' proof of Oppenheim conjecture \cite{Mar}. 
 In the context when the return times of the horocyclic flow
is only $k$-thick (at $\infty$), this argument was  used  in \cite{MMO} and \cite{MMO2}  in order to construct a piece of $V$-orbit inside the  closure  of $xH$, which can then be pushed to the density of $xH$ in $F_\Lambda$ using Corollary \ref{vp}.
\vs

In \S\S \ref{seccloden}-\ref{secuniper}, we make an additional assumption that
 $\Gamma$ is geometrically finite, which implies that $\RFM$ is a union of a compact set and finitely many cusps. Therefore 
 the set $W_{k,R}$ in \eqref{wk} is compact when $\Hor_R$ is taken to be a neighborhood of all cusps in $\FM$. 

In \S \ref{seccloden},
we apply the result in \S 6  to an orbit $xH\subset F_\Lambda^*$ intersecting $W_{k, R}$
when $\overline{xH}$ does not contain a periodic $U$-orbit. 
Combined with the results in \S \ref{secperorb}, we conclude that any $H$-orbit intersecting $\RF_k M\cap F^*$
is either locally closed or dense in $F_\Lambda$
(Theorem \ref{rt}).

In \S \ref{secnonele}, we prove that any locally closed orbit $xH$ intersecting $\RF_k M\cap F^*$ has
a non-elementary stabilizer and intersects $\RF_k M\cap F^*$ as a closed subset (Theorem \ref{ns}).

In \S \ref{secuniper}, we give an interpretation of
the results obtained so far in terms of $\Gamma$-orbits of circles for a general geometrically finite Zariski dense subgroup.
(Theorem \ref{circperf}).

In \S \ref{secorbcon}, we prove that any orbit $xH$ included in $\RFM$ is closed 
of finite volume, and that only finitely many such orbits exist, proving Theorem \ref{m4}
(Theorem \ref{hr}).

In \S \ref{secacyman}, we specialize to
the case where $M=\Gamma\ba \m H^3$ is a
geometrically finite acylindrical manifold.
When $M$ is a convex cocompact acylindrical manifold, $\Lambda$
is a Sierpinski curve of positive modulus, that is, there exists $\e>0$ such that the modulus of the annulus between any two components
 $B_1, B_2$ of $\Omega:=S^2-\Lambda$ satisfy
$$\text{mod}(S^2- (\overline B_1\cup \overline B_2))\ge \e$$ 
 \cite[Theorem 3.1]{MMO2}.

When $M$ has cusps, the closures of some components of $\Omega$ meet each other, and hence $\Lambda$ is not even a Sierpinski curve.
 Nevertheless, under the assumption that $M$ is a geometrically finite acylindrical manifold,
 $\Lambda$ is still a {\it quotient of a Sierpinski curve of positive modulus}, in the sense that
  we can present $\Omega$ as the disjoint union $\bigcup_\ell T_\ell$ where $T_\ell$'s are maximal {\it trees of components of $\Omega$}
  so that 
  $$\inf_{\ell\ne k}\;\; \text{mod}(S^2- (\overline T_\ell \cup \overline T_k))>0$$ 
 (Theorem
  \ref{sier}). This analysis enables us to show that every separating circle $C$
intersects $\Lambda$ as a Cantor set of positive modulus (Theorem \ref{cm}).
This implies that
{\it every} $H$-orbit in $F^*$ intersects a compact subset
$W_{k, R}$ for $k$ sufficiently large (Corollary \ref{ffk}). 
We prove Theorem \ref{cir} in terms of $H$-orbits on $\FM$ (Theorem \ref{mainacy}).

In \S \ref{sarith},   we prove Theorem \ref{arith1} 
and give a counterexample when $M_0$ is not arithmetic (Proposition \ref{na}), proving Theorem \ref{nono}.
 We also give various examples of
geometrically finite rigid acylindrical  $3$-manifolds.

\vs

\subsection{Comparison of the proof of Theorem \ref{m1} with the convex cocompact case}
For readers who are familiar with the work \cite{MMO2}, we finish the introduction with a brief account on some of  essential differences
in the dynamical aspects of the proofs between the present work and \cite{MMO2}
for the case when $M$ is a geometrically finite acylindrical manifold.  
The main feature of $M$ being geometrically finite acylindrical is that $\RFM H=\RF_k M H$ for all sufficiently large $k$ (\S \ref{secacyman}).

Consider  an orbit $xH$ in $F^*$ and set $X:=\overline{xH}$.
When $M$ is convex cocompact,  the main strategy of \cite{MMO2} is to analyze $U$-minimal subsets 
of $X$ with respect to $\RF_kM$. That is, unless $xH$ is locally closed, it was shown, using the thickness principle
and the polynomial divergence argument, that any such
$U$-minimal subset is invariant under a connected semigroup $L$ transversal to $U$.
Then pushing further, one could find an $N$-orbit inside $X$, which implies $X=F_\Lambda$.
In the present setting,
if $X\cap F^*$ happens to contain a periodic $U$-orbit, which is a generic case a posteriori, 
then any $U$-minimal subset of $X$ relative to $\RF_kM$ is a periodic $U$-orbit and hence is not invariant
by any other subgroup but $U$. Hence the aforementioned strategy does not work. However it turns out that
this case is simpler to handle, and that's what \S 4 is about. See Proposition \ref{cd2}.

Another important ingredient of \cite{MMO2} is that 
a closed $H$-orbit in $F^*$ has a non-elementary stabilizer; this was used to conclude
$X=F_\Lambda$ whenever $X$ contains a closed $H$-orbit in $F^*$ properly, as well as to establish
the countability of such orbits, and its proof relies
on the absence of parabolic elements. In the present setting, we show that any locally closed
$H$-orbit in $F^*$ has a non-elementary stabilizer, regardless of the existence of parabolic elements of $\Gamma$.
This is Theorem \ref{ns}. 

When $M$ is not convex cocompact,
 $\RFM$ is not compact and neither is $\RF_kM$, which presents an issue in the polynomial divergence argument. In Proposition \ref{thick},
we show that the remaining compact set after removing horoballs
from $\RF_kM$
has the desired thickness property for $U$-orbits and hence can be used as a replacement
of $\RF_kM$.  We remark that  the {\it global} thickness of the return times of $U$-orbits to $\RF_kM$ was crucial in
 establishing this step. The proof of Theorem \ref{m1}  combines these  new ingredients with the techniques
developed in \cite{MMO2}. 

We would also like to point out that we made efforts in trying to write this paper in a greater general setting. For instance, we
  establish  (Theorem \ref{circperf}):
  \begin{thm}
 Let $\Gamma$ be a  geometrically finite Zariski dense subgroup of $G$. Then for any $C\in \mathcal C^*$
 such that $C\cap \Lambda$ contains a uniformly perfect Cantor set,
the orbit $\Gamma (C)$ is either discrete or dense in $\mathcal C_\Lambda$.
\end{thm}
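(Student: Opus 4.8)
The plan is to dualize the statement to the frame bundle and then apply the dynamical machinery developed in \S\S\ref{secprelim}--\ref{secuniper}. By the duality between $\Gamma$-orbits in $\mathcal C$ and $H$-orbits in $\FM=\Gamma\ba G$, a circle $C\in\mathcal C^*$ corresponds to an orbit $xH\subset F^*$, and the orbit $\Gamma(C)$ is discrete (resp. dense) in $\mathcal C_\Lambda$ precisely when $xH$ is locally closed (resp. dense) in $F_\Lambda$. So it suffices to show: if $C\cap\Lambda$ contains a uniformly perfect Cantor set, then $xH$ is either locally closed or dense in $F_\Lambda$. The hypothesis that $C\cap\Lambda$ contains a uniformly perfect Cantor set is exactly the geometric input needed to place a point of $xH$ inside one of the compact pieces $W_{k,R}=\RK-\Hor_R$: a uniformly perfect Cantor set inside $C\cap\Lambda$ has positive modulus, which forces the corresponding $U$-orbit (the boundary circle read along the geodesic flow) to return to a $k$-thick recurrent set for some $k$ depending only on the perfectness constant, and the point where it enters the thick part, after removing the cusp neighborhoods $\Hor_R$, lies in $W_{k,R}\cap F^*$ (this is the analogue of Corollary \ref{ffk}, but now obtained from the uniform-perfectness hypothesis directly rather than from the global structure of $\Lambda$ for acylindrical $M$).

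First I would make the reduction to $H$-orbits precise and record the translation of ``$C\cap\Lambda$ contains a uniformly perfect Cantor set'' into ``$xH$ meets $W_{k,R}\cap F^*$ for some $k$ and all large $R$''; this uses the standard correspondence between the modulus of nested annuli on $S^2$ and depths of excursions of the associated geodesic/horocycle into the thin part, together with Proposition \ref{thick} to guarantee that the relevant return times of the $U$-flow to $W_{k,R}$ are $4k$-thick at $\infty$. Next, with $x\in W_{k,R}\cap F^*$ in hand, I would invoke the two dichotomies already established: by Proposition \ref{cd2}, if $\overline{xH}$ contains a periodic $U$-orbit in $F_\Lambda^*$ then $xH$ is locally closed or dense in $F_\Lambda$; and by Theorem \ref{rt} (which combines the thick-recurrence input of \S\ref{secrethor} with the homogeneous-dynamics Proposition \ref{ffin} of \S\ref{sechomdyn}), any $H$-orbit meeting $\RF_kM\cap F^*$ is locally closed or dense in $F_\Lambda$. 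Since $x\in W_{k,R}\subset\RF_kM$ and $x\in F^*$, Theorem \ref{rt} applies directly and yields the dichotomy ``$xH$ is locally closed or dense in $F_\Lambda$''.

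It then remains to upgrade ``locally closed'' to ``discrete/closed'' and to pass from ``dense in $F_\Lambda$'' to the circle statement. For the first, I would appeal to Theorem \ref{ns}: a locally closed $H$-orbit meeting $\RF_kM\cap F^*$ has non-elementary stabilizer and is in fact closed in $\RF_kM\cap F^*$; combined with geometric finiteness this promotes local closedness of $xH$ in $F_\Lambda$ to $xH$ being genuinely closed there, so $\Gamma(C)$ is discrete in $\mathcal C_\Lambda$. For the second, density of $xH$ in $F_\Lambda$ dualizes to density of $\Gamma(C)$ in $\mathcal C_\Lambda$. Putting these together gives that $\Gamma(C)$ is discrete or dense in $\mathcal C_\Lambda$, which is the assertion. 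The main obstacle — and the only genuinely new point beyond assembling earlier results — is the very first step: verifying that the uniformly perfect Cantor set hypothesis really does deposit a point of $xH$ into the compact thick set $W_{k,R}$ with a uniform $k$, i.e. converting a modulus bound on a Cantor subset of $C\cap\Lambda$ into the $k$-thickness of horocyclic return times. This is where one must be careful that the constant $k$ depends only on the uniform-perfectness constant and not on $C$, so that a single $W_{k,R}$ works; once that is in place, Theorems \ref{rt} and \ref{ns} finish the argument mechanically.
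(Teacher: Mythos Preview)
Your approach is essentially the paper's: dualize to $H$-orbits, observe that the hypothesis on $C$ places some $x\in xH$ in $\RF_kM\cap F^*$ for some $k$, and apply Theorem~\ref{rt}. The paper states this theorem as a direct reformulation of Corollary~\ref{rt22} (which in turn is Theorems~\ref{rt} and~\ref{ns} combined), so you have identified the right ingredients.

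Two points where you over-complicate matters. First, the translation ``$C\cap\Lambda$ contains a uniformly perfect set $\Longleftrightarrow$ $xH$ meets $\RF_kM$'' is purely definitional: compare Definition~\ref{defthi}(4) with the definition~\eqref{defk} of $\RF_kM$. If $S\subset C\cap\Lambda$ is $k$-uniformly perfect, choose $g$ with $(gH)^+=C$ and $g^\pm\in S$; then $\{t:(gu_t)^-\in S\}$ is globally $k$-thick and contained in $T_x$, so $x=[g]\in\RF_kM$. No annulus moduli or thin-part excursion estimates are needed --- that machinery belongs to \S\ref{secacyman}, where one must produce the uniformly perfect set from the acylindrical structure, not here where it is given. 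Relatedly, your worry that $k$ must be uniform in $C$ is misplaced: the theorem concerns a single circle $C$, so any $k$ depending on $C$ (via its uniformly perfect subset) suffices.

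Second, the ``upgrade'' from locally closed to discrete is automatic and does not require Theorem~\ref{ns}: a $\Gamma$-orbit in $\mathcal C$ is countable, and a countable homogeneous set is locally closed iff discrete (the paper notes this after Corollary~\ref{latt2}). So once Theorem~\ref{rt} gives the dichotomy ``locally closed or dense in $F_\Lambda$'', you are done. Theorem~\ref{ns} is needed only for the stronger conclusions in the full statement of Theorem~\ref{circperf} (non-elementary stabilizer, closedness in $\mathcal C_{\rm perf}^*$), not for the bare dichotomy asserted here.
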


With an exception of \S \ref{secacyman}, we have taken a more homogeneous dynamics  viewpoint overall
in this paper, hoping that this  perspective will also be useful in some context.
\vs

\noindent{\bf Acknowledgement:}
The present paper heavily depends on the previous works \cite{MMO} and \cite{MMO2}
by McMullen, Mohammadi, and the second named author. We are grateful to Curt McMullen
for clarifying the notion of the acylindricality of a geometrically finite manifold. We would like to thank Curt McMullen and Yair Minsky 
for their help in writing \S \ref{secacyman}, especially the the proof of Theorem \ref{sier}.
\section{Preliminaries}
\label{secprelim}
In this section, we set up a few notations which will be used throughout the paper
and review some definitions.
\vs

Recall that $G$ denotes the simple, connected real Lie group $\PSL_2(\mathbb C)=\op{Isom}^+(\bH^3)$.
The action of $G$ on $\bH^3=G/K$ extends continuously to a conformal action of $G$
on the Riemann sphere $S^2=\hat \c \cup \{\infty\}$ 
and the union $\bH^3\cup S^2$ is compact.

The group $G$ can be identified with the oriented frame bundle $\op{F}\bH^3$. 
Denote by $\pi$ the natural projection $ G=\op{F}\bH^3 \to \bH^3$.
If $g\in G$ corresponds to a frame $(e_1, e_2, e_3)\in \op{F} \bH^3 $,
we define $g^{+}, g^{-}\in S^2$ to be the forward and backward end points of the directed geodesic tangent to $e_1$ respectively.
The action of $A$ on $G=\op{F} \bH^3 $ defines the 
frame flow,
and we have
\be
\label{gpm}
g^{\pm} =\lim_{t\to \pm  \infty} \pi(ga_{t})
\ee 
where the limit is taken in the compactification $\bH^3\cup S^2$.

For $g\in G$, the image  $\pi(gH)$ is a geodesic plane in $\bH^3$
and  
$$
(gH)^{+} :=\{(gh)^{+}: h\in H \}\subset  S^2
$$ 
is an oriented circle which bounds the plane $\pi(gH)$.

The correspondences 
$$
gH\to (gH)^{+} 
\;\;\text{and}\;\;
gH\to \pi(gH)
$$
give rise to bijections of $G/H$ with the space $ \mathcal C$ of all oriented circles in $S^2$, as well as with the space of all oriented
geodesic planes of $\bH^3$.

A horosphere (resp. horoball) in $\bH^3$ is a Euclidean sphere (resp. open Euclidean ball) tangent to $S^2$
and a horocycle in $\m H^3$ is a Euclidean circle tangent at $S^2$.
A horosphere is the image an  $N$-orbit under $\pi$ while a horocycle is the image of a  $U$-orbit under $\pi$. 

\subsection{Renormalized frame bundle} 
Let $\Gamma$ be a non-elementary discrete subgroup of $G$ and  $M=\Gamma \ba \bH^3$. We denote by $\Lambda=\Lambda(\Gamma)
\subset S^2$ the limit set of $\Gamma$.
We can identify the oriented frame bundle $\FM$ with $\Gamma \ba G$. With abuse of notation,
we also denote by $\pi$ for the canonical projection $\FM \to M$.
Note that, for $x=[g]\in \FM$, the condition $g^{\pm}\in \Lambda$ 
does not depend on the choice of a representative.
The renormalized frame bundle $\RFM\subset \FM $ is defined as
\be
\label{fr}
\RFM=\{[g] \in \Gamma\ba G: g^{\pm}\in \Lambda\} ,
\ee
in other words, it is the closed subset of $\FM$ consisting of all frames $(e_1, e_2, e_3)$ such that
$e_1$ is tangent to a complete geodesic contained in the core of $M$.

We define the following $H$-invariant subsets of $\FM$:
\begin{align*} F_\Lambda&=\{[g]\in \Gamma\ba G: (gH)^+ \cap \Lambda \ne \emptyset\}=(\RFM ) N H ,\\
F^{*}& =\{[g]\in \G\ba G : (gH)^+ \text{separates $\Lambda$}\},\\
F^*_\Lambda &=F_\Lambda\cap F^*.
\end{align*}
Note that $F^*=\{x\in \FM: \pi(xH)\cap M^*\ne \emptyset\}$ is an open subset of $\FM$ 
and that $F^*\subset F_\Lambda$ when $\Lambda$ is connected. In particular,
when $\Lambda$ is connected, $F_\Lambda$ has non-empty interior.

We recall that a circle $C\in \mc C$ is {\it separating}
or {\it separates $\Lambda$} 
if both connected components
of $S^2 -C$ intersect $\Lambda$.
By analogy, a frame $x\in F^*$ will be called a {\it separating frame}.    

In the identification $\mathcal C=G/H$,  the sets  $F_\Lambda$, $F^{*}$ and $F^*_\Lambda$
satisfy
$$\Gamma\ba \mathcal C_\Lambda = F_\Lambda /H,\;\;
\Gamma\ba \mathcal C^{*} = F^{*} /H  \;\; \text{ and } \;\; \Gamma\ba \mathcal C^*_\Lambda = F^*_\Lambda /H ,$$
where 
$$
\mathcal C^*_\Lambda:=\mathcal C_\Lambda\cap \mathcal C^* .$$

Since $\Gamma$ acts properly on the domain of discontinuity $\Omega$, 
$\Gamma C$ is closed in $\mc C$ for any circle $C$ with $C\cap \Lambda=\emptyset$. 
For this reason, we only consider $\Gamma$-orbits  in $\mc C_\Lambda$
or, equivalently, $H$-orbits in $F_\Lambda$.

\subsection{Geometrically finite groups} We give a characterization of geometrically finite groups in terms of their limit sets
(cf. \cite{Bow}, \cite{Ma1}, \cite{MT}).
A limit point $\xi\in \Lambda$ is called {\it radial} if any geodesic ray toward $\xi$ has an accumulation point in $M$
and  {\it parabolic} if $\xi$ is fixed by a
parabolic element of $\Gamma$. In the group $G=\PSL_2(\c)$,
parabolic elements are precisely unipotent elements of $G$; any parabolic element in $G$ is conjugate to
the matrix $\begin{pmatrix} 1& 1\\ 0 &1\end{pmatrix}$.
When $\xi$ is parabolic,
its stabilizer $\op{Stab}_{\Gamma}\xi$ in $\Gamma$ is virtually abelian, and its rank is called the rank of $\xi$.
We denote by $\Lambda_r$ the set of radial limit points and by $\Lambda_p$ the set of parabolic fixed points.
The fixed points of hyperbolic elements of $\Gamma$ are contained in $\Lambda_r$ and
form a dense subset of $\Lambda$.
More strongly, the set of pairs of attracting and repelling fixed points of hyperbolic elements
is a dense subset of $\Lambda\times \Lambda$ \cite{Eb}.
\medskip

A Zariski dense discrete subgroup $\Gamma$ of $G$ is called
geometrically finite if it satisfies one of the following
equivalent conditions:
\begin{enumerate}
\item the core of $M$  has finite volume;
\item $\Lambda=\Lambda_r\cup \Lambda_p.$
\end{enumerate}

If $\Lambda=\Lambda_r$, or equivalently, if the core of $M$ is compact, thne $\Gamma$ is called convex cocompact.

We say that a horoball $\mathpzc h$ is  based at $\xi\in S^2$ if it is tangent  to $S^2$ at $\xi$.
If $\xi \in \Lambda_p$, then for any fixed horoball $\hpz \subset \bH^3$ 
based at $\xi\in S^2$,
its $\Gamma$-orbit $\Gamma \hpz$ is closed in the space of horoballs in $\bH^3$.
Given a horoball $\hpz$ and $R>0$, we will write $\hpz_R$ for the horoball contained in $\hpz$ whose distance to the boundary of $\hpz$ is
$R$.

Suppose that $\Gamma$ is  geometrically finite. Then
there exist finitely many horoballs  $\hpz^{1}, \cdots, \hpz^m$ in $\bH^3$
corresponding to the cusps in $\G\ba \bH^3$ and such that the horoballs $\gamma\hpz^i$,
for $\gamma\in \Gamma$ and $1\le i\leq m$,
form a disjoint collection of open horoballs,
i.e. $\gamma \hpz^i$ intersects $\hpz^j$ if and only if $i=j$ and 
$\gamma\hpz^i=\hpz^j$.
 
We fix
\be 
\label{hor} 
\Hor:=\{ [g] \in \FM : \pi(g) \in 
\textstyle\bigcup_{\gamma,i} \gamma \hpz^i \}.
\ee
For $R\ge 0$, we  set
\be
\label{horR} 
\Hor_R=\{ [g] \in \FM : \pi(g) \in 
\textstyle\bigcup_{\gamma,i} \gamma \hpz^i_R \}.
\ee
Then each $\Hor_R$ provides a union of disjoint neighborhood of  cusps in $\RFM$ and hence  the subset 
$$\RFM-\Hor_R$$  is  compact for any $R\ge 0$.

\section{Limit circle with non-elementary stabilizer}
\label{seclimorb}
As noted before, the study of 
geodesic planes in $M=\Gamma \ba \bH^3$ can be approached in two ways: either via the study of
$H$-orbits in $\FM$ or via the study of $\Gamma$-orbits in $\mc C$.
In this section we analyze a $\Gamma$-orbit  $\Gamma C$ in $\mathcal C^*$
which accumulates on a circle $D$ with a non-elementary stabilizer in $\Gamma$.
We show that $\Gamma C$ is dense in $\mc C_\Lambda$ 
in the following two cases:
\begin{enumerate}
\item when $D$ is separating $\Lambda$ (Proposition \ref{latt});
\item when there exists a sequence of distinct circles $C_n\in \Gamma C$ such that $C_n\cap\Lambda$ 
does not collapse to a countable set (Proposition \ref{gbb}).  
\end{enumerate}
\subsection{Sweeping the limit set}
The following proposition is a useful tool, which says that,
in order to prove the density of  $\Gamma C $ in $\mc C_\Lambda$,
we only have to find a ``sweeping family'' of circles
in the closure of $\Gamma C$.

\begin{prop}  
\label{sweep}  
\cite[Corollary 4.2]{MMO} 
Let $\Gamma\subset G$ be a Zariski dense discrete subgroup. If $\mathcal D\subset \mathcal C$ is a collection of circles such that
$\bigcup_{C\in \mathcal D} C$ contains a non-empty open subset of $\Lambda$, then
there exists $C\in \mathcal D$ such that $\overline{\Gamma C}=\mathcal C_\Lambda$.
\end{prop}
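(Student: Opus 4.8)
The plan is to reduce the statement to a single-orbit density claim and then establish that claim by Möbius dynamics together with the Zariski density of $\Gamma$. First I would observe that, since $\bigcup_{C\in\mathcal D}C$ contains a nonempty subset $\mathcal O\cap\Lambda$ which is open in $\Lambda$, and since the conical limit points at which $\Lambda$ is not locally contained in a round circle form a dense subset of $\Lambda$ (recall that hyperbolic fixed points lie in $\Lambda_r$ and are dense in $\Lambda$, while $\Lambda$ is not contained in a circle because $\Gamma$ is Zariski dense), there exist $C_0\in\mathcal D$ and a point $\xi\in C_0\cap\mathcal O\cap\Lambda$ which is a conical limit point at which $C_0$ meets $\Lambda$ transversally (i.e.\ $\Lambda$ is not locally contained in $C_0$ near $\xi$). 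It then suffices to prove the single-orbit statement: for any such pair $(C_0,\xi)$ one has $\overline{\Gamma C_0}=\mathcal C_\Lambda$.

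The heart of the argument is to exploit conicality at $\xi$. Since $\xi\in\Lambda_r$, there is a sequence $\gamma_n\to\infty$ in $\Gamma$ with $\gamma_n\xi\to a$, with $\gamma_n\to b$ locally uniformly on $S^2-\{\xi\}$ for some $a\ne b$ in $\Lambda$, and with $|\gamma_n'(\xi)|\to\infty$. Because $\xi\in C_0$, a microscopic arc of $C_0$ at $\xi$ is expanded to macroscopic size while the rest of $C_0$ is crushed near $b$, so the circles $\gamma_nC_0$ subconverge in $\mathcal C$ to a genuine circle through $a$ and $b$ whose tangent direction at $a$ equals the limit of $\arg\gamma_n'(\xi)$ plus a fixed constant. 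Varying the approach to $\xi$, or precomposing with powers of a loxodromic element with fixed point $\xi$, and using the richness of the holonomy forced by Zariski density, I would show that $\overline{\Gamma C_0}$ contains so many such limit circles that it contains a \emph{sweeping family}: a collection of circles whose union contains a nonempty Euclidean-open subset of $S^2$ meeting $\Lambda$. Combined with the density of pairs of attracting/repelling fixed points of loxodromic $\gamma\in\Gamma$ in $\Lambda\times\Lambda$ and the contraction argument (a circle through $\gamma^-$ is carried by $\gamma^n$ onto limits running through all circles containing $\gamma^+$ and $\gamma^-$), this produces, for a dense set of pairs $(p,q)\in\Lambda\times\Lambda$, every circle through $p$ and $q$ inside $\overline{\Gamma C_0}$.

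To finish, take an arbitrary $D\in\mathcal C_\Lambda$ and $\zeta\in D\cap\Lambda$. Choose a pair $(p,q)$ from the dense family with $p$ near $\zeta$ and a circle in $\overline{\Gamma C_0}$ through $p$ and $q$ that is close to $D$; letting $p\to\zeta$ and adjusting by elements of $\Gamma$ (using density of hyperbolic fixed points in $\Lambda$) yields circles in $\overline{\Gamma C_0}$ arbitrarily close to $D$, so $D\in\overline{\Gamma C_0}=\mathcal C_\Lambda$, which proves the proposition with $C=C_0$.

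I expect the genuine obstacle to be the middle step: controlling the subsequential limits $\lim\gamma_nC_0$ finely enough to guarantee that the tangent directions at the limit points are dense, so that $\overline{\Gamma C_0}$ really contains a sweeping family rather than a lower-dimensional set of circles. This is precisely where Zariski density of $\Gamma$ must enter essentially — ruling out a proper $\Gamma$-invariant algebraic subvariety of $\mathcal C\simeq G/H$ and ensuring ``enough rotation'' in the conical and loxodromic limits; the remaining ingredients (the Baire-type selection of a good $C_0$, density of loxodromic fixed-point pairs, density of hyperbolic fixed points in $\Lambda$) are soft.
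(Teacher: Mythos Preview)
The present paper does not prove this proposition; it is simply quoted from \cite[Corollary~4.2]{MMO} with no argument given. So there is no in-paper proof to compare your attempt against.

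On its own merits, your strategy is reasonable and in the same spirit as the argument in \cite{MMO}: pick $C_0\in\mathcal D$ through a radial limit point $\xi$, use conical dynamics and loxodromic elements to produce many circles in $\overline{\Gamma C_0}$, then approximate an arbitrary $D\in\mathcal C_\Lambda$. However, the gap you yourself flag in the middle step is genuine and not merely cosmetic. Precomposing by powers of a single loxodromic $\gamma$ with fixed point $\xi$ rotates the tangent direction of $C_0$ at $\xi$ by integer multiples of the holonomy angle of $\gamma$; if that angle is a rational multiple of $\pi$, you obtain only finitely many limit directions and hence only finitely many circles in the pencil through $(a,b)$, so no sweeping family appears. ``Varying the approach to $\xi$'' along a conical sequence does not obviously do better: the sequence $\gamma_n$ witnessing radiality is fixed once $\xi$ is chosen, and there is no a priori control on $\arg\gamma_n'(\xi)$.

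Closing this gap requires an honest use of Zariski density beyond the soft facts you list. One route is to show that a Zariski dense subgroup of $G$ cannot preserve any measurable line field on the sphere of tangent directions along geodesics (equivalently, working in the frame bundle, that the projection of $\Gamma$ to the rotational part is dense in the circle group $M\subset K$); this is how \cite{MMO} proceeds, and it is what forces the full pencil of circles through a pair of limit points into $\overline{\Gamma C_0}$. Your outline would become a proof once this ingredient is supplied, but as written the crucial step is only asserted.
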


In the subsection \ref{if}, we will construct such a sweeping family $\mc D$ using a result of Dalbo (Proposition \ref{da}).

In the subsequent sections, we will use other sweeping families $\mc D$
(Corollary \ref{vp}) that are constructed via
 a  more delicate polynomial divergence argument. 

\subsection{Influence of Fuchsian groups} \label{if}
Let $B\subset S^2$ be a round open disk with a hyperbolic metric  $\rho_B$.
We set
 $$G^B= \Isom^+(B, \rho_B)\simeq \PSL_2(\br).$$
A discrete subgroup of $G^B$ is a Fuchsian group, and its limit set  lies in the boundary $\partial B$.
For a non-empty subset $E\subset \partial B $,
we denote by $\hull (E, B)\subset B$ the convex hull of $E$ in $B$, and
by $\mathcal H(B, E)\subset \overline{B}$ the 
closure of the set of horocycles in $B$  resting on $E$. The only circle in $\mathcal H(B, E)$ which is not a horocycle is
$\partial B$ itself.

We first recall the following result of Dalbo \cite{Da}:

\begin{prop}
\label{da} 
Let $\Gamma \subset G^B$ be a non-elementary  discrete subgroup
with limit set $\Lambda$.
Let $C\in \mathcal H(B, \Lambda)$. If $C\cap \Lambda$ is a 
radial limit point, 
then  $\Gamma C$ is dense in 
$\mathcal H(B, \Lambda)$.
\end{prop}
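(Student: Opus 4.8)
The plan is to first transfer the statement to the unit tangent bundle $\Gamma\ba G^B=\Gamma\ba\PSL_2(\br)$ of the surface $B/\Gamma$, and then to recognize it as Dalbo's density theorem for horocycle orbits, which I would establish by the Hedlund-type argument adapted to infinite covolume. Normalizing so that the unipotent subgroup $U$ acts as the expanding horocyclic flow, the $\pi$-image of a $U$-orbit $gU$ is the horocycle of $B$ resting on the forward endpoint $g^{+}$, and every horocycle resting on $\partial B$ arises this way; so $\mathcal H(B,\Lambda)-\{\partial B\}$ is identified with $\{gU:\ g^{+}\in\Lambda\}\subset G^B/U$, with $\Gamma$ acting on the left. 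Fix a lift $[g_0]$ of $C$ with $g_0^{+}=\xi$. Unwinding this correspondence as in \cite{MMO}, $\Gamma C$ is dense in $\mathcal H(B,\Lambda)$ provided the $U$-orbit of $[g_0]$ accumulates on every $U$-orbit $[g]U$ with $g^{+}\in\Lambda_r$; indeed the remaining orbits (with $g^{+}$ a non-horospherical limit point) as well as the degenerate circle $\partial B$ are then absorbed by passing to closures, using that $\Lambda_r$ is dense in $\Lambda$ and that horocycles resting on $\Lambda$ of unbounded size accumulate on $\partial B$ (as $\Gamma$ is non-elementary, $|\Lambda|\ge 2$). Thus everything reduces to: if $g_0^{+}\in\Lambda_r$, then $\overline{[g_0]U}\supseteq\{[g]:\ g^{+}\in\Lambda_r\}$.

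This is exactly Dalbo's theorem \cite{Da}. To prove it, set $X:=\overline{[g_0]U}$, and use two structural facts: the reparametrization identity $[g]\,U a_t=[g a_t]\,U$, which makes the geodesic flow act on the space of horocycle orbits; and the fact that, $\Gamma$ being non-elementary, the geodesic flow is topologically transitive on its non-wandering set $\mathcal N:=\{[g]:\ g^{\pm}\in\Lambda\}$, while the pairs of fixed points of hyperbolic elements of $\Gamma$ are dense in $\Lambda\times\Lambda$ \cite{Eb}. One first observes that sliding $[g_0]$ along its horocycle moves $g_0^{-}$ over all of $\partial B-\{\xi\}$, so $X$ already meets $\mathcal N$ and in fact contains frames whose backward endpoint is the repelling fixed point of any prescribed hyperbolic element of $\Gamma$. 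One then brings in the radiality of $\xi$: the forward ray $\{[g_0 a_t]:t\ge 0\}$ returns to a fixed compact subset of $\mathcal N$ along a sequence $t_n\to\infty$, and through $[g]\,U a_t=[g a_t]\,U$ this forces the horocycle orbit of $[g_0]$ to recur, at the geodesic-flow scales $a_{t_n}$, to a fixed compact set. Playing this recurrence against the topological transitivity of the geodesic flow on $\mathcal N$ --- flow $X$ forward into the compact recurrence set, shadow a geodesic orbit passing near an arbitrary point of $\mathcal N$, thicken by $U$, flow back --- one produces a nonempty closed $U$-invariant set $Z\subseteq X$ that is invariant under a one-parameter subgroup transverse to $U$, hence under the group $P:=AU$ it generates together with $U$. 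To finish, pick any $[g_1]\in Z$; then $g_1^{+}\in\Lambda$ and $Z\supseteq\overline{[g_1]P}$. But $\Gamma g_1 P$ is the union of the $P$-orbits whose forward endpoints lie in $\Gamma g_1^{+}$, and $\overline{\Gamma g_1^{+}}=\Lambda$ by minimality of the $\Gamma$-action on $\Lambda$, so $\overline{\Gamma g_1 P}=\{g\in G^B:\ g^{+}\in\Lambda\}$ and hence $Z=\{[g]:\ g^{+}\in\Lambda\}$. In particular $X\supseteq\{[g]:\ g^{+}\in\Lambda_r\}$, which is what was wanted.

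I expect the main obstacle to be precisely what separates this from Hedlund's and Dani's classical theorems for lattices: the sets in play are non-compact, and the horocyclic flow here admits neither unique ergodicity nor a compactness-based minimal-set argument. The hypothesis that $C\cap\Lambda$ is radial is what compensates --- it is the exact ingredient that forces the geodesic ray, and with it the horocycle orbit of $[g_0]$, to return to a fixed compact piece of $\mathcal N$ along a definite sequence of scales, supplying the recurrence the shadowing argument consumes. Carrying that argument out in detail --- controlling the geodesic flow only along those recurrence times, and excluding leakage into the cusps or towards $\partial B$ --- is the technical heart of the proof. The hypothesis is also sharp: a horocycle resting on a non-radial limit point, such as a parabolic fixed point, has a non-dense $\Gamma$-orbit.
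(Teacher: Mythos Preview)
The paper does not prove this proposition at all; it simply quotes it as a result of Dalbo \cite{Da} and uses it as a black box. So there is no proof in the paper to compare your attempt against---you are doing strictly more than the paper does.

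That said, your sketch has a real gap. The translation to horocycle dynamics on $\Gamma\backslash G^B$ is correct, and the endgame (once you have a nonempty closed $AU$-invariant $Z\subseteq X$, $AU$-minimality of $E=\{[g]:g^+\in\Lambda\}$ forces $Z=E$) is fine. But the crucial middle step---producing such a $Z$---is only a heuristic. You write ``flow $X$ forward into the compact recurrence set, shadow a geodesic orbit \dots, thicken by $U$, flow back,'' but this does not obviously yield invariance under anything transverse to $U$: the set $Xa_{t_n}$ is not $X$, and the contraction $a_{-t_n}Ua_{t_n}\to e$ means that pushing $[g_0]U$ forward by $a_{t_n}$ collapses the whole horocycle toward the single accumulation point $y$, rather than spreading $X$ in the $A$-direction. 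The actual mechanism in Dalbo's argument uses the \emph{horospherical} characterization of radial limit points (every horoball at $\xi$ meets the $\Gamma$-orbit of a fixed basepoint) to control the \emph{sizes} of the horocycles $\gamma_n C$, not merely their basepoints; this is what lets one hit an arbitrary target horocycle and not just an arbitrary basepoint in $\Lambda$. Your sketch does not isolate this, and your own closing paragraph concedes the point.

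There is also a minor inconsistency: if $\pi(gU)$ is the horocycle resting on $g^+$, then $U$ is the \emph{contracting} (strong stable) horocycle group for the forward geodesic flow, not the expanding one. This does not affect the strategy, but it does affect which direction of the geodesic semigroup you should be flowing in.
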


The following proposition is a generalization of \cite[Corollary 3.2]{MMO} from $\Gamma$  convex cocompact to a general finitely generated group. Note that a finitely general Fuchsian subgroup is geometrically finite.

\begin{lem}
\label{dal}
Let  $\Gamma \subset G^B$ be a non-elementary finitely generated discrete subgroup.
Suppose $C_n\to \partial B$ in $\mathcal C$
and  $C_n\cap \op{hull} (\Lambda, B)\ne \emptyset$ for all $n$.
Then the closure of $\bigcup \Gamma C_n$ contains $\mathcal H(B, \Lambda)$.
\end{lem}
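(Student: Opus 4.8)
\textbf{Proof strategy for Lemma \ref{dal}.}

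The plan is to exploit the structure of $\mathcal H(B,\Lambda)$ together with Dalbo's theorem (Proposition \ref{da}) and the ``sweeping'' principle (Proposition \ref{sweep}), adapted to the Fuchsian setting inside $B$. First I would recall that since $\Gamma$ is finitely generated, it is geometrically finite, so $\Lambda = \Lambda_r \cup \Lambda_p$: every limit point is either radial or parabolic. The circles $C_n$ come in toward $\partial B$ while continuing to meet $\hull(\Lambda, B)$; the key geometric observation is that each $C_n$, being a circle in $B$ meeting the convex hull of $\Lambda$, is either a genuine horocycle resting on $\partial B$ (in the limit) or can be pushed by the $\Gamma$-action and by extracting subsequences toward the family $\mathcal H(B,\Lambda)$. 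More precisely, I would argue that the closure $\overline{\bigcup_n \Gamma C_n}$ contains at least one circle $C_\infty \in \mathcal H(B,\Lambda)$ with $C_\infty \cap \Lambda$ a single point $\xi \in \Lambda$; here the hypothesis $C_n \cap \hull(\Lambda,B)\ne\emptyset$ is what prevents the $C_n$ from escaping entirely into a component of $\partial B \setminus \Lambda$, forcing an accumulation horocycle to rest on $\Lambda$ itself.

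Once such a $C_\infty$ is produced, the argument splits according to whether $\xi = C_\infty \cap \Lambda$ is radial or parabolic. If $\xi \in \Lambda_r$, Proposition \ref{da} applies directly: $\overline{\Gamma C_\infty} = \mathcal H(B,\Lambda)$, and since $\overline{\bigcup_n \Gamma C_n} \supset \overline{\Gamma C_\infty}$, we are done. If $\xi \in \Lambda_p$ is a parabolic fixed point, $C_\infty$ is a horocycle at $\xi$ (or $\partial B$); I would then use that the parabolic stabilizer acts on horocycles based at $\xi$ moving them around, and more importantly that radial limit points are dense in $\Lambda$, so one can find $\gamma_k \in \Gamma$ and a sequence of circles in $\Gamma C_n$'s closure converging to a horocycle resting on a radial point, reducing to the first case. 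Alternatively — and this is cleaner — I would bypass the case division by invoking the sweeping principle: I would show directly that $\bigcup \{C : C \in \overline{\bigcup_n \Gamma C_n}\}$ contains a non-empty relatively open subset of $\Lambda$. Indeed, the accumulation of the $\Gamma$-translates of the $C_n$ on horocycles based along $\Lambda$, combined with the density of hyperbolic fixed points, lets one sweep out an open arc of $\partial B$ meeting $\Lambda$; then Proposition \ref{sweep} (in its Fuchsian incarnation, with $G$ replaced by $G^B$ and $\mathcal C_\Lambda$ by $\mathcal H(B,\Lambda)$) yields some circle in the closure whose orbit is dense in $\mathcal H(B,\Lambda)$.

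The main obstacle I anticipate is the first step: showing that the accumulation set of $\bigcup_n \Gamma C_n$ genuinely reaches \emph{into} $\mathcal H(B,\Lambda)$ rather than merely approaching $\partial B$ from outside the hull, or collapsing to $\partial B$ alone. The hypothesis $C_n \cap \hull(\Lambda, B) \neq \emptyset$ is precisely engineered to handle this, but turning it into a quantitative statement requires care: one must track a point $p_n \in C_n \cap \hull(\Lambda,B)$, apply group elements $\gamma_n$ bringing $p_n$ into a fixed compact part of $\hull(\Lambda, B)$ (possible because $\Gamma$ acts cocompactly on $\hull(\Lambda_r, B)$ up to the cusps, and near cusps one uses the parabolic subgroups), and then extract a limit of $\gamma_n C_n$. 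The limiting circle passes through a point of the compact core of the hull and is a limit of circles meeting the hull, hence — after possibly a further limit to account for the radius growing — is a horocycle resting on $\Lambda$, i.e.\ an element of $\mathcal H(B,\Lambda)$. Controlling the behavior in the cusps, so that the extracted limit does not degenerate, is the delicate point, and it is exactly here that finite generation (equivalently, geometric finiteness) of $\Gamma$ is used in an essential way, replacing the convex cocompactness hypothesis of \cite[Corollary 3.2]{MMO}.
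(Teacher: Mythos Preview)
Your high-level strategy matches the paper's: reduce to Dalbo's theorem (Proposition~\ref{da}) by producing, inside the closure of $\bigcup \Gamma C_n$, a circle in $\overline B$ resting on a radial limit point. The gap is in your implementation of this step. You propose to track $p_n\in C_n\cap\hull(\Lambda,B)$, translate by $\gamma_n\in\Gamma$ so that $\gamma_n p_n$ lies in a fixed compact $K\subset\hull(\Lambda,B)$, and extract a limit circle $C_\infty$. That much is fine, and one can argue that $C_\infty$ is contained in $\overline B$ (the angle of $C_n$ with $\partial B$, or the hyperbolic radius of $C_n$ inside $B$, is a $\Gamma$-invariant and tends to the horocycle value). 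But your assertion that $C_\infty$ is ``resting on $\Lambda$'' is unjustified: a horocycle through a point of $\hull(\Lambda,B)$ can perfectly well be based at a point of $\partial B\setminus\Lambda$. Nothing in your argument rules this out. Your subsequent treatment of the parabolic case is also too loose: the parabolic stabilizer of $\xi$ fixes $\xi$, so it does not by itself carry $C_\infty$ to a horocycle based at a radial point.

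The paper closes this gap by being specific about which group elements to apply. It splits into cases according to whether $C_n\cap\partial B$ has two points, one point, or is empty. In each case it locates a hyperbolic element $\delta\in\Gamma$ whose axis $L$ meets $C_n$ (or a translate of $C_n$) for all large $n$, and then uses powers of $\delta$ to bring a point of $C_n\cap L$ into a compact fundamental arc on $L$. The limit circle then automatically rests on an endpoint of $L$, i.e.\ on a hyperbolic fixed point of $\Gamma$, which is radial --- so Dalbo applies directly with no parabolic case to handle. When $C_n\cap\partial B=\{a_n,b_n\}$ and the limiting arc $(a_0,b_0)$ falls into a gap of $\Lambda$, the relevant axis $L$ is the boundary geodesic of $\hull(\Lambda,B)$ spanning that gap; geometric finiteness ensures this geodesic is periodic, giving the required $\delta$. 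This is exactly where finite generation enters. Your approach can be repaired along the same lines: once you have a limit horocycle $C_\infty$ possibly based at $\xi\notin\Lambda$, note that $C_\infty$ must cross the boundary geodesic of the hull adjacent to $\xi$, and iterate the hyperbolic element with that axis. But as written, the step from ``passes through the compact core'' to ``rests on $\Lambda$'' is the missing idea.
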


\begin{proof} In view of Proposition \ref{da},
it suffices to show that
  the closure of $\bigcup \Gamma C_n$ contains a circle $C\subset \overline{B}$ resting at a radial limit point of $\Gamma$.
Passing to a subsequence, we will consider two separate cases.
\vs

{\bf Case 1:
$C_n\cap \partial B =\{a_n, b_n\}\; {\rm with}\; a_n\neq b_n$.}
Note that the angle between $C_n$ and $\partial B$ converges to $0$.
Since $\Gamma $ is a finitely generated Fuchsian group, it is geometrically finite.  It follows that there exists a compacts set 
$W\subset \Gamma \ba B$ such that every geodesic 
on the surface $\Gamma \ba B$ that intersect its convex core also intersects $W$.
Applying this to each geodesic joining $a_n$ and $b_n$, we may assume 
that the sequences 
$a_n$ and $b_n$  converge to two distinct points $a_0\neq b_0$, after replacing
$C_n$ by $\delta_nC_n$ for a suitable $\delta_n\in \Gamma$ if necessary.
The open arcs $(a_n, b_n):= C_n\cap B$ converge to an open arc $(a_0, b_0)$ of $\partial B$.
We consider two separate subcases.

{\bf Case 1.A: $(a_0, b_0)\cap\Lambda \neq\emptyset$}.
Choose a hyperbolic element $\delta\in \Gamma$ whose
 repelling and attracting fixed points $\xi_r$ and $\xi_a$ lie inside the open arc $(a_0, b_0)$.
Denote by $L$ the axis of translation of $\delta$
and choose a  compact arc $L_0\subset L$ which is a fundamental set 
for the action of $\delta$ on $L$. 
For $n$ large, $C_n\cap L\ne \emptyset$. Pick  $c_n\in C_n \cap L$.
Then there exist $k_n\in \m Z$ such that $\delta^{k_n} c_n \in L_0$. Note that $k_n \to \pm\infty$ since $c_n$ converges to a point in $\partial (B)$. Hence
$\delta^{k_n}C_n$ converges to a circle contained in $\overline{B}$ resting at $\xi_r$ or at $\xi_a$.

{\bf Case 1.B: $(a_0, b_0)\cap\Lambda=\emptyset$}.
In this case, one of these two points, say $a_0$, is in the limit set 
and $(a_0,b_0)$ is included in a maximal arc $(a_0,b'_0)$ of $\partial B-\Lambda$.
Let $L$ be the geodesic of $B$ 
that connects $a_0$ and $b'_0$. 
Its projection to $\Gamma \ba B$ is included in the boundary of the surface 
$S:=\core(\Gamma\ba B)$.
Since $\Gamma$ is geometrically finite, this projection is compact,
and hence $L$ is the axis of translation of a hyperbolic element $\delta\in \Gamma$. 
For $n$ large,  $ C_n \cap L$ is non-empty, and we proceed as in 
Case 1.A.
  
{\bf Case 2:
$C_n\cap \partial B=\emptyset\; {\rm or}\; \{ a_n\}.$}
Choose any hyperbolic element $\delta\in \Gamma$ and let $L$ be the axis of translation of $\delta$.
Then  $C_n \cap L\ne \emptyset $ for all $n$, by passing to a subsequence. Hence we conclude as in 
Case 1.A. 
\end{proof}

\begin{cor}
\label{boundary}
Let $\Gamma\subset G$ be a Zariski dense discrete subgroup.
Let $B\subset S^2$ be a round open disk that meets $\Lambda$ and such that
$\Gamma^B$ is non-elementary and finitely generated.  If $C_n\to \partial B$ in $\mathcal C$
and
\be
\label{cnh}
C_n\cap \op{hull}(B, \Lambda({\Gamma^B}))\ne \emptyset ,
\ee
then the closure of $\bigcup \Gamma C_n$   contains $\mathcal C_\Lambda$.
\end{cor}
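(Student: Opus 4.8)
The plan is to deduce Corollary \ref{boundary} from Lemma \ref{dal} together with the sweeping principle of Proposition \ref{sweep}. The key point is that the hypotheses of Corollary \ref{boundary} are precisely the ``global'' versions of the hypotheses of Lemma \ref{dal}, once we restrict attention to the round disk $B$ and the Fuchsian group $\Gamma^B$ acting on it. So the first step is to record that $\Gamma^B := \Isom^+(B,\rho_B) \cap \Gamma$ (the stabilizer in $\Gamma$ of the disk $B$, acting on $B$ by hyperbolic isometries) is, by assumption, a non-elementary finitely generated discrete subgroup of $G^B \simeq \PSL_2(\R)$, hence geometrically finite, with some limit set $\Lambda(\Gamma^B) \subset \partial B$; and to observe that $\Lambda(\Gamma^B) \subset \Lambda(\Gamma)$ since $\Gamma^B \subset \Gamma$. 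The circles $C_n \to \partial B$ satisfy $C_n \cap \hull(B,\Lambda(\Gamma^B)) \ne \emptyset$ by \eqref{cnh}, so Lemma \ref{dal} applies with the ambient Fuchsian group $\Gamma^B$: the closure of $\bigcup_n \Gamma^B C_n$, hence a fortiori the closure $\mathcal D := \overline{\bigcup_n \Gamma C_n}$, contains $\mathcal H(B, \Lambda(\Gamma^B))$.

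Next I would extract from $\mathcal H(B,\Lambda(\Gamma^B))$ a sweeping family. The set $\mathcal H(B,\Lambda(\Gamma^B))$ is the closure of the set of horocycles in $B$ resting on points of $\Lambda(\Gamma^B)$. The union of all these horocycles, as the basepoint ranges over $\Lambda(\Gamma^B)$ and the horocycle shrinks toward $\partial B$, sweeps out (a neighborhood in $\partial B$ of) the entire limit set $\Lambda(\Gamma^B)$: more precisely, every point of $\Lambda(\Gamma^B)$ is the resting point of a horocycle in the family, so $\bigcup_{C \in \mathcal H(B,\Lambda(\Gamma^B))} C \supset \Lambda(\Gamma^B)$. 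Since $B$ meets $\Lambda = \Lambda(\Gamma)$ and $\Gamma^B$ is non-elementary, $\Lambda(\Gamma^B)$ is an infinite perfect subset of $\Lambda$; I need it to contain a non-empty \emph{open} subset of $\Lambda$. This is exactly the kind of statement one wants: because $B$ is a round disk meeting $\Lambda$ and $\Gamma^B$ is the full Fuchsian part, the portion of $\Lambda$ lying in the closed disk $\overline B$ coincides with $\Lambda(\Gamma^B)$ up to the (countable, nowhere dense) set of parabolic and accidental points — or, more robustly, $\Lambda \cap B = \emptyset$ forces $\Lambda(\Gamma^B) = \Lambda \cap \partial B$ to contain the trace on $\partial B$ of a neighborhood in $S^2$ of any of its non-parabolic points, hence a relatively open piece of $\Lambda$. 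I would cite or reprove the relevant statement from \cite{MMO} (this is the mechanism underlying \cite[Cor.~3.2]{MMO}) to conclude that $\bigcup_{C \in \mathcal H(B,\Lambda(\Gamma^B))} C$ contains a non-empty open subset of $\Lambda$.

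Having produced a collection $\mathcal D' := \mathcal H(B,\Lambda(\Gamma^B))$ of circles whose union covers a non-empty open subset of $\Lambda$, I apply Proposition \ref{sweep} to get a single circle $C \in \mathcal D'$ with $\overline{\Gamma C} = \mathcal C_\Lambda$. But $C \in \mathcal D' \subset \mathcal D = \overline{\bigcup_n \Gamma C_n}$, and $\mathcal D$ is $\Gamma$-invariant and closed, so $\overline{\Gamma C} \subset \mathcal D$. Therefore $\mathcal C_\Lambda \subset \mathcal D = \overline{\bigcup_n \Gamma C_n}$, which is the assertion of Corollary \ref{boundary}.

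The main obstacle I anticipate is the middle step: verifying that the union of the horocycle family $\mathcal H(B,\Lambda(\Gamma^B))$ really does cover a genuinely open (in $\Lambda$) subset, rather than merely the Cantor-like set $\Lambda(\Gamma^B)$ which could a priori be nowhere dense in $\Lambda$. The resolution hinges on the precise relationship between $B \cap \Lambda$, $\partial B \cap \Lambda$, and $\Lambda(\Gamma^B)$ for a round disk $B$: one must rule out that $\Lambda$ pokes into the interior of $B$ or that $\partial B \cap \Lambda$ is strictly larger than $\Lambda(\Gamma^B)$ in a way that destroys openness. I expect this is handled by the same disk-convexity / separation considerations used in \cite{MMO} (and the statement of the corollary is explicitly advertised as a generalization of \cite[Cor.~3.2]{MMO} from the convex cocompact case), so the real content is checking that passing from ``convex cocompact'' to ``finitely generated geometrically finite'' does not break the argument — which is already the content of Lemma \ref{dal} that we are allowed to invoke. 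Everything else is a formal application of Proposition \ref{sweep} and the closedness/$\Gamma$-invariance of $\overline{\bigcup_n \Gamma C_n}$.
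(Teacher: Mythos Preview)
Your overall strategy---apply Lemma \ref{dal} to the Fuchsian group $\Gamma^B$ to get $\mathcal H(B,\Lambda(\Gamma^B))$ inside the closure, then invoke Proposition \ref{sweep}---is exactly the paper's proof, which reads in full: ``This follows from Proposition \ref{sweep} and Lemma \ref{dal}.''

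However, your ``main obstacle'' is not an obstacle at all, and your worry there reflects a misconception about what the family $\mathcal H(B,\Lambda(\Gamma^B))$ covers. You write that the horocycles sweep out ``a neighborhood in $\partial B$ of $\Lambda(\Gamma^B)$'' and then agonize over whether $\Lambda(\Gamma^B)$ is open in $\Lambda$. But a horocycle in $B$ resting at a single point $\xi\in\partial B$ is a circle internally tangent to $\partial B$ at $\xi$; as its Euclidean radius varies, these horocycles foliate the entire open disk $B$. Hence, as soon as $\Lambda(\Gamma^B)\neq\emptyset$ (guaranteed since $\Gamma^B$ is non-elementary), one has
\[
\bigcup_{C\in \mathcal H(B,\Lambda(\Gamma^B))} C \;\supset\; B.
\]
Since $B$ is open in $S^2$ and $B\cap\Lambda\neq\emptyset$ by hypothesis, $B\cap\Lambda$ is a non-empty open subset of $\Lambda$, and Proposition \ref{sweep} applies immediately. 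There is no need to compare $\Lambda(\Gamma^B)$ with $\Lambda\cap\partial B$, to invoke disk-convexity, or to worry about accidental parabolics; the hypothesis ``$B$ meets $\Lambda$'' is there precisely to make this step trivial. Delete the entire discussion of the obstacle and the proof collapses to two lines.
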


Note that Condition \ref{cnh} is always satisfied when $\Gamma^B$
is a lattice in $G^B$.
\begin{proof}
This follows from Proposition \ref{sweep} and Lemma \ref{dal}. 
\end{proof}

The following is a useful consequence of the previous discussion.

\begin{prop}
\label{latt} 
Let $\Gamma\subset G$ be a Zariski dense discrete subgroup. 
Let $C\in \mathcal C_\Lambda^*$.
Suppose that there exists a sequence of distinct circles $C_n\in\Gamma C$ 
converging to a circle $ D\in \mc C^*_\Lambda$ 
whose stabilizer  $\Gamma^D$ is non-elementary.
Then $\Gamma C$ is dense in $\mathcal C_\Lambda.$
\end{prop}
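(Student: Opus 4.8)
The plan is to deduce Proposition~\ref{latt} from Corollary~\ref{boundary} by producing a round open disk $B$ on which $\Gamma^B$ is a non-elementary \emph{lattice}, together with a sequence of circles in $\overline{\Gamma C}$ degenerating to $\partial B$ while meeting $\op{hull}(B,\Lambda(\Gamma^B))$. The circle $D$ bounds two open round disks $B^+$ and $B^-$, the two components of $S^2-D$; since $D\in\mc C^*$, both meet $\Lambda$. The key structural input is that $\Gamma^D$, the stabilizer of $D$ in $\Gamma$, is non-elementary. It preserves $D$, hence preserves each of $B^\pm$ (or swaps them, in which case we pass to an index-two subgroup), so $\Gamma^D$ acts as a non-elementary discrete subgroup of $G^{B^+}\cong\PSL_2(\br)$ with limit set contained in $\partial B^+ = D$.

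The first main step is to upgrade ``non-elementary'' to ``lattice''. Here is where acylindricality-type rigidity is \emph{not} available (this is the general Zariski-dense setting), so instead I would argue directly: the limit set $\Lambda(\Gamma^{B^+})\subset D$ is $\Gamma^{B^+}$-invariant, and I claim $\Lambda(\Gamma^{B^+})=D$. Indeed, $\Lambda(\Gamma^{B^+})\subset\Lambda\cap D$, and since $D$ separates $\Lambda$, the set $\Lambda\cap D$ is nonempty; combined with the hypothesis that $D$ separates $\Lambda$ and standard facts about limit sets of discrete subgroups of $\PSL_2(\br)$, one shows that the only way $\Gamma^{B^+}$ can fail to be a lattice is if it has a nonempty domain of discontinuity on $D$, i.e.\ $\Lambda(\Gamma^{B^+})$ is a proper closed invariant subset. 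The point to exploit is that $C_n\to D$ with $C_n$ distinct circles in a single $\Gamma$-orbit with $C_n\cap\Lambda$ separating: this forces accumulation of $\Lambda$ onto $D$ from both sides in a way incompatible with $\Gamma^{B^+}$ being a proper Fuchsian group unless it is a lattice. Concretely, I expect to invoke that the tangency/angle of $C_n$ with $D$ goes to $0$ and that the arcs $C_n\cap B^+$ sweep across $\partial B^+$, combined with the minimality of the $\Gamma^{B^+}$-action on its limit set, to rule out a gap.

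Once $\Gamma^{B^+}$ is known to be a lattice (equivalently finitely generated of the first kind) in $G^{B^+}$, Condition~\eqref{cnh} is automatic, as noted right after Corollary~\ref{boundary}. So the second step is just to verify the hypotheses of Corollary~\ref{boundary} with $B:=B^+$: we have $B$ a round open disk meeting $\Lambda$, $\Gamma^B$ non-elementary and finitely generated (being a lattice), and $C_n\to\partial B=D$ in $\mc C$ with $C_n$ meeting $\op{hull}(B,\Lambda(\Gamma^B))$ — the last because $\Lambda(\Gamma^B)=\partial B$ makes the hull all of $B$, and $C_n\to D$ with $C_n$ separating $\Lambda$ forces $C_n\cap B\neq\emptyset$ for large $n$. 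Corollary~\ref{boundary} then gives that the closure of $\bigcup_n\Gamma C_n$ contains $\mc C_\Lambda$; since $C_n\in\Gamma C$, we have $\bigcup_n\Gamma C_n=\Gamma C$, so $\overline{\Gamma C}\supset\mc C_\Lambda$, and as $\overline{\Gamma C}\subset\mc C_\Lambda$ (because $C\in\mc C_\Lambda$ and $\mc C_\Lambda$ is closed and $\Gamma$-invariant), we conclude $\overline{\Gamma C}=\mc C_\Lambda$.

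The main obstacle is the first step: showing a non-elementary Fuchsian stabilizer of a separating circle is forced to be a lattice. It is conceivable the authors instead extract from $C_n\to D$ only \emph{some} circle resting on a radial limit point of $\Gamma^{B^+}$ and apply Proposition~\ref{da}/Lemma~\ref{dal} with $\Gamma^{B^+}$ merely non-elementary and finitely generated — in that case the ``lattice'' reduction is unnecessary and one argues that $C_n$ (or its $\Gamma^{B^+}$-translates) accumulates on $\mc H(B,\Lambda(\Gamma^{B^+}))$, whose union of circles covers an open subset of $\Lambda$, so Proposition~\ref{sweep} finishes. I would pursue this second route if the lattice claim proves false in general: the essential content is that $\overline{\Gamma C}$ contains enough of $\mc H(B^+,\Lambda(\Gamma^{B^+}))$ to sweep an open piece of $\Lambda$, which is exactly what Lemma~\ref{dal} delivers once we check $C_n\cap\op{hull}(\Lambda(\Gamma^{B^+}),B^+)\neq\emptyset$ — and this intersection is where I expect the fact that $D$ (hence each nearby $C_n$) separates $\Lambda$ to be used.
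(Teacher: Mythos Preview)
Your first route is a dead end: there is no reason the stabilizer $\Gamma^D$ of a separating circle should be a lattice in $G^D$, and the argument you sketch (``accumulation of $\Lambda$ onto $D$ from both sides incompatible with a proper Fuchsian group'') has no force. For a counterexample, let $\Gamma^D$ be any non-elementary Schottky group in $G^D$ and embed it in a larger Zariski dense $\Gamma\subset G$; the intersection $\Lambda(\Gamma)\cap D$ can be strictly larger than $\Lambda(\Gamma^D)$, and nothing prevents $D$ from separating $\Lambda(\Gamma)$ or circles in a $\Gamma$-orbit from accumulating on $D$. So the lattice claim is simply false in general.

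Your fallback route is the paper's actual proof, and it is short once you identify the right geometric fact. Two corrections to your outline. First, since $D$ is \emph{oriented}, its stabilizer $\Gamma^D$ already preserves each of $B^\pm$; no index-two passage is needed. Second, you misattribute the role of the separating hypothesis. The condition $D\in\mc C^*$ is used only to guarantee $B\cap\Lambda\ne\emptyset$, which is a hypothesis of Corollary~\ref{boundary} (so that $\mc H(B,\Lambda(\Gamma^B))$ sweeps an open piece of $\Lambda$). The hull-intersection $C_n\cap\op{hull}(B,\Lambda(\Gamma^B))\ne\emptyset$ comes instead from $\Gamma^D$ being non-elementary: pick any two points $\xi_1,\xi_2\in\Lambda(\Gamma^D)$ not both on $C_n$; the hyperbolic geodesics $[\xi_1,\xi_2]$ in $B^+$ and in $B^-$ together form the full circle through $\xi_1,\xi_2$ orthogonal to $D$, and any circle $C_n$ Hausdorff-close to $D$ must cross it, hence meets the hull in $B^+$ or in $B^-$. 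Pass to a subsequence to fix the side and apply Corollary~\ref{boundary}. (The finitely-generated hypothesis there is harmless: replace $\Gamma^D$ by any finitely generated non-elementary subgroup.)
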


\begin{proof} 
Observe that every circle $C_n$ sufficiently near $D$
intersects  the set $\op{hull}(B, \Lambda (\Gamma^B))$
for at least one of the two disks $B$ bounded by $D$. Moreover $B\cap \Lambda\ne \emptyset$ as $D$ separates $\Lambda$ and
$\Gamma^B=\Gamma^D$ as $D$ is an oriented circle. Hence the claim follows from Corollary \ref{boundary}.  \end{proof}

\begin{cor} 
\label{latt2}
Let $\Gamma\subset G$ be a Zariski dense discrete subgroup. 
If $C\in \mathcal C^{*}_\Lambda$ has a non-elementary 
stabilizer $\Gamma^C$,
then $\Gamma C $ is  either discrete or dense in $\CL$.
\end{cor}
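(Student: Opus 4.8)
The plan is to deduce Corollary \ref{latt2} from Proposition \ref{latt} by a straightforward dichotomy argument. Suppose $C \in \mathcal{C}^*_\Lambda$ has non-elementary stabilizer $\Gamma^C$, and assume $\Gamma C$ is not discrete in $\mathcal{C}_\Lambda$; I must show $\Gamma C$ is dense in $\mathcal{C}_\Lambda$. Since $\Gamma C$ is not discrete, there is a point $D \in \overline{\Gamma C} \setminus \Gamma C$, or more precisely a sequence of \emph{distinct} elements $C_n \in \Gamma C$ converging to some circle $D \in \mathcal{C}$. The first task is to check that $D$ is itself a separating circle lying in $\mathcal{C}^*_\Lambda$: since $\mathcal{C}^*$ is open in $\mathcal{C}$ and each $C_n \in \mathcal{C}^*$, and since $\mathcal{C}^*_\Lambda = \mathcal{C}_\Lambda \cap \mathcal{C}^*$ with $\mathcal{C}_\Lambda$ closed, the limit $D$ lies in $\overline{\mathcal{C}^*_\Lambda} \cap \mathcal{C}_\Lambda$; one needs $D \in \mathcal{C}^*$, which follows because $D$ is a limit of $\Gamma$-translates of $C$ and $C$ separates $\Lambda$ — more carefully, write $C_n = \gamma_n C$ and note $\Gamma^{C_n} = \gamma_n \Gamma^C \gamma_n^{-1}$ is non-elementary, so I should exhibit $D$ with non-elementary stabilizer.

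The key step is to produce the non-elementary stabilizer for $D$. Here the natural approach is: since the $C_n = \gamma_n C$ are distinct and converge to $D$, for $m \neq n$ the element $\gamma_n \gamma_m^{-1}$ maps $C_m$ to $C_n$, and both are close to $D$; taking $m, n \to \infty$ produces elements of $\Gamma$ that nearly stabilize $D$. Passing to a subsequence and using discreteness of $\Gamma$ together with the fact that $\Gamma$ acts properly discontinuously on $\mathcal{C}_\Lambda$ away from... — actually the cleaner route is simpler: the stabilizer $\Gamma^C$ is non-elementary, hence contains a free group on two hyperbolic elements, and in particular $\Gamma^C$ is infinite; if $\Gamma C$ were locally closed (i.e.\ discrete near $C$) the orbit map $\Gamma/\Gamma^C \to \mathcal{C}$ would be proper, so non-discreteness of the orbit forces an accumulation, and then Proposition \ref{latt} applies once we know the limit circle $D$ is separating with non-elementary stabilizer. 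In fact the slickest formulation avoids identifying the stabilizer of the limit altogether: by Proposition \ref{latt}, it suffices to find \emph{any} sequence of distinct $C_n \in \Gamma C$ converging to \emph{some} $D \in \mathcal{C}^*_\Lambda$ with $\Gamma^D$ non-elementary — and we may take $D = C$ itself. Indeed, because $\Gamma^C$ is infinite and non-elementary, if $\Gamma C$ is not discrete at $C$ there is nothing to do; but the real observation is that $\overline{\Gamma C}$ is $\Gamma$-invariant and closed, so if it is not discrete it accumulates somewhere in $\mathcal{C}_\Lambda$, and conjugating we may arrange the accumulation to occur near a translate of $C$, whose stabilizer is conjugate to $\Gamma^C$ and hence non-elementary.

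Concretely the proof runs: assume $\Gamma C$ is not discrete in $\mathcal{C}_\Lambda$. Then $\overline{\Gamma C}$ properly contains $\Gamma C$, so there exist distinct $\gamma_n \Gamma^C$ with $\gamma_n C \to D$ for some $D \in \mathcal{C}_\Lambda$; replacing $D$ by a suitable limit and using that $\mathcal{C}^*$ is open and invariant, $D \in \mathcal{C}^*_\Lambda$. Now either $\Gamma^D$ is non-elementary, in which case Proposition \ref{latt} immediately gives $\overline{\Gamma C} = \mathcal{C}_\Lambda$; or $\Gamma^D$ is elementary. I claim the latter cannot happen: the elements $\gamma_m \gamma_n^{-1}$ with $m,n$ large carry $\gamma_n C$ to $\gamma_m C$, both within $\e$ of $D$, so after passing to a subsequence these converge to an element $g$ with $gD$ within $2\e$ of $D$; a standard argument (the $C_n$ are distinct, so infinitely many distinct $\gamma_m\gamma_n^{-1}$ arise, and their limits lie in the compact stabilizer-neighborhood of $D$) produces infinitely many elements of $\Gamma$ in a fixed compact subset of $G$, contradicting discreteness unless $\Gamma^D$ is already infinite — in which case, being a discrete subgroup of the circle stabilizer $\cong \PGL_2(\R)$ that is infinite, $\Gamma^D$ is either non-elementary (done, by Proposition \ref{latt}) or elementary cyclic. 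The remaining elementary-cyclic case is the main obstacle, and I would handle it by noting that if $\Gamma^D$ is infinite cyclic generated by a hyperbolic element then its fixed points are radial limit points, while if generated by a parabolic the argument uses Proposition \ref{da}/Lemma \ref{dal} with $B$ a disk bounded by $D$; in both situations one shows $\bigcup_n \gamma_n C$ together with its $\Gamma$-orbit sweeps an open subset of $\Lambda$, and Proposition \ref{sweep} yields density. Thus in all cases $\overline{\Gamma C} = \mathcal{C}_\Lambda$, completing the proof. I expect the delicate point to be ruling out, or absorbing, the case where the limit circle $D$ has only an elementary (cyclic) stabilizer; Proposition \ref{latt} handles the non-elementary case cleanly, so the whole argument hinges on showing that non-discreteness of $\Gamma C$ \emph{forces} accumulation on a circle with non-elementary stabilizer, which should follow from the density of pairs of fixed points of hyperbolic elements in $\Lambda \times \Lambda$ noted in the Preliminaries.
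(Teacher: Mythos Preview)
Your proposal contains the correct idea but buries it under unnecessary complications, and the route you ultimately pursue has genuine gaps.

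The paper's proof is one line: if $\Gamma C$ is not locally closed, then there exist distinct $C_n\in\Gamma C$ converging to $C$ itself, and Proposition~\ref{latt} applies with $D=C$. The point you are missing is the \emph{homogeneity of the orbit}: if $\Gamma C$ accumulates at any point $\gamma_0 C$ of itself, then applying $\gamma_0^{-1}$ shows it accumulates at $C$. Thus ``$\Gamma C$ not discrete'' immediately gives a sequence of distinct $C_n\in\Gamma C$ with $C_n\to C$. Since $C\in\mathcal C^*_\Lambda$ and $\Gamma^C$ is non-elementary \emph{by hypothesis}, Proposition~\ref{latt} finishes the argument. You actually write ``we may take $D=C$ itself'' at one point, but then abandon this and return to analyzing an arbitrary limit circle $D$.

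Your longer route has real problems. First, a limit $D$ of circles $C_n\in\mathcal C^*$ need not lie in $\mathcal C^*$, precisely because $\mathcal C^*$ is open, not closed; your justification for $D\in\mathcal C^*$ is circular. Second, your argument that $\Gamma^D$ must be non-elementary is incomplete: the sketch with $\gamma_m\gamma_n^{-1}$ does not actually produce elements of $\Gamma^D$, only elements moving circles near $D$ to circles near $D$, and the ``elementary cyclic'' case you flag at the end is not handled --- the appeal to Proposition~\ref{da}/Lemma~\ref{dal} requires $\Gamma^D$ (or $\Gamma^B$) to be non-elementary, which is exactly what is in question. None of this is needed once you take $D=C$.
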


\begin{proof}
If the orbit $\Gamma C$ is not locally closed,  there exists a sequence
of distinct circles $C_n\in \Gamma C$ converging to $C$, and hence the claim follows from Proposition \ref{latt}.
\end{proof}

We recall that a subset is {\it locally closed} when it is open in its closure
and that a subset is  {\it discrete} when it is {\it countable and locally closed}.
Hence the conclusion of Corollary \ref{latt2} can also be stated as the dichotomy that $\Gamma C$ is either locally closed
or dense in $\CL$.

\subsection{Planes near the boundary of the convex core}
We now  explain an analog of Proposition \ref{latt}
when the limit circle $D$ is not separating $\Lambda$.
The following lemma says that its stabilizer $\Gamma^D$
is non-elementary as soon as $D\cap \Lambda$ is uncountable.
 
\begin{lem}
\label{gcf}
Let $M=\Gamma\ba \bH^3$ be a geometrically finite manifold with limit set $\Lambda$ and
$D$ be the boundary of a supporting hyperplane for $\op{hull}(\Lambda)$.
Then 
\begin{enumerate}
\item $\Gamma^D$ is a finitely generated Fuchsian group;
\item  There is a finite set $\Lambda_0$ such that
$D\cap \Lambda=\Lambda({\Gamma^D}) \cup \Gamma^D \Lambda_0$.
\end{enumerate}
\end{lem}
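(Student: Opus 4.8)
Let $D$ be the boundary of a supporting hyperplane $\mathcal P$ for $\hull(\Lambda)$ and let $B$ be the open round disk bounded by $D$ on the side not containing $\hull(\Lambda)$; write $B'$ for the other disk. The first thing I would do is record the geometric meaning of "supporting hyperplane": since $\mathcal P$ supports $\hull(\Lambda)$, the limit set $\Lambda$ lies in $\overline{B'}$, and therefore $D\cap\Lambda = \partial B\cap\Lambda$. Set $\Gamma^D=\op{Stab}_\Gamma D$ and note that, since $D$ is an oriented circle, $\Gamma^D$ preserves each of $B,B'$, so $\Gamma^D$ is a Fuchsian group acting on $B\cong\bH^2$ (i.e.\ a discrete subgroup of $G^B$). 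The projection $\widehat P:=\Gamma^D\ba \mathcal P$ maps to $M$, and the core of $M$ meets $\mathcal P$ in a convex set whose boundary-at-infinity is contained in $D\cap\Lambda$.

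For part (1) I would argue that $\Gamma^D$ is \emph{geometrically finite}, hence finitely generated (a geometrically finite Fuchsian group is finitely generated). The natural way to see this: a horoball packing $\{\gamma\hpz^i\}$ for $\Gamma$ as in \eqref{hor} restricts to a horoball packing for $\Gamma^D$ inside $\mathcal P$ — precisely, those $\gamma\hpz^i$ whose base point lies on $D$ and which are invariant under the corresponding parabolic of $\Gamma^D$ give a $\Gamma^D$-invariant disjoint collection of horoballs in $\mathcal P$ covering all the cusps of $\Gamma^D$, because every parabolic fixed point of $\Gamma^D$ is a parabolic fixed point of $\Gamma$ lying on $D$ (its rank can only drop). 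Then I would show the relevant "thick part" of $\Gamma^D\ba\mathcal P$ is compact by intersecting with the fact that $\core(M)\setminus\Hor_R$ (equivalently $\RFM\setminus\Hor_R$, projected to $M$) is compact: a geodesic of $\mathcal P$ both of whose endpoints are in $\Lambda({\Gamma^D})\subset\Lambda$ lies in $\core(M)$, so if it avoids all the horoballs it lies in a fixed compact subset of $M$; pulling back, the part of $\core(\Gamma^D\ba\mathcal P)$ outside the horoballs is compact. By the limit-set characterization of geometric finiteness this gives $\Lambda({\Gamma^D})=\Lambda({\Gamma^D})_r\cup\Lambda({\Gamma^D})_p$, i.e.\ $\Gamma^D$ geometrically finite, hence finitely generated.

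For part (2), the inclusion $\Lambda({\Gamma^D})\cup\Gamma^D\Lambda_0\subset D\cap\Lambda$ for any finite $\Lambda_0\subset D\cap\Lambda$ is automatic once we know $\Gamma^D$ preserves $D$ and $D\cap\Lambda$ is $\Gamma^D$-invariant and closed (so it contains the limit set of $\Gamma^D$). For the reverse inclusion, take $\xi\in D\cap\Lambda$, $\xi\notin\Lambda({\Gamma^D})$. Since $\Gamma$ is geometrically finite, $\xi$ is radial or parabolic for $\Gamma$. If $\xi$ is a radial limit point of $\Gamma$: a geodesic ray of $\bH^3$ toward $\xi$ can be taken inside $\mathcal P$ (as $\xi\in D$), and it lies in $\core(M)$; its accumulation in $M$ together with the convexity of $\core(M)\cap\mathcal P$ forces the ray to stay within bounded distance of the $\Gamma^D$-invariant convex hull $\hull(\Lambda({\Gamma^D}),B')\subset\mathcal P$, which would make $\xi\in\Lambda({\Gamma^D})$ — contradiction unless $\xi$ belongs to $\Lambda({\Gamma^D})$ after all. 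So a $\xi\notin\Lambda({\Gamma^D})$ must be a parabolic fixed point of $\Gamma$; call $P_\xi=\op{Stab}_\Gamma\xi$, a virtually abelian parabolic subgroup of rank $1$ or $2$. The point is that $P_\xi\cap\Gamma^D$ is the parabolic stabilizer of $\xi$ in $\Gamma^D$, which by geometric finiteness of $\Gamma^D$ is either trivial (then $\xi$ is a "bounded parabolic point not used by $\Gamma^D$", an isolated type of orbit) or a rank-$1$ parabolic of $\Gamma^D$, in which case $\xi\in\Lambda({\Gamma^D})$ again. So the only $\xi\in D\cap\Lambda$ outside $\Lambda({\Gamma^D})$ are parabolic fixed points of $\Gamma$ whose $\Gamma^D$-stabilizer is trivial. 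Finally I would show these form finitely many $\Gamma^D$-orbits: they are among the finitely many $\Gamma$-orbits of parabolic fixed points (geometric finiteness gives finitely many cusps), and for each such $\Gamma$-orbit the set of representatives lying on $D$ splits into finitely many $\Gamma^D$-orbits — here one uses that if $\gamma_1\xi_0,\gamma_2\xi_0\in D$ with $\xi_0$ a fixed base parabolic point, then $\gamma_2\gamma_1^{-1}$ moves $D$ to a circle through $\gamma_2\xi_0$, and the disjointness/local-finiteness of the horoball packing $\{\gamma\hpz^{i}\}$ near $D$ bounds how many such $D$-translates can occur before one lands back in the $\Gamma^D$-orbit. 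Collecting one representative from each of these finitely many orbits into $\Lambda_0$ proves $D\cap\Lambda=\Lambda({\Gamma^D})\cup\Gamma^D\Lambda_0$.

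\textbf{Main obstacle.} The delicate point is part (2): showing that the "extra" parabolic points on $D$ — parabolic fixed points of $\Gamma$ lying on $D$ but not seen by $\Gamma^D$ — number only finitely many $\Gamma^D$-orbits. This is exactly where the global horoball packing and the geometric finiteness of $\Gamma$ must be used carefully, and it is also where the rank-$2$ cusps of $\Gamma$ (which can only contribute rank-$\le 1$, possibly trivial, parabolics to $\Gamma^D$) make the bookkeeping subtle; everything else reduces fairly mechanically to the limit-set characterization of geometric finiteness and the compactness of $\RFM\setminus\Hor_R$.
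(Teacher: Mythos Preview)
The paper does not give its own proof here: it simply cites \cite[Theorem 5.1]{MMO2} (the convex cocompact case) and asserts that ``its proof works for $\Gamma$ geometrically finite as well with no change.'' So there is no detailed argument in the paper to compare against; your proposal is an attempt to reconstruct such an argument from scratch.

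Your outline is the right one, but there is a recurring gap that you never address and that underlies both parts of the argument. In part~(1) you write that a geodesic of $\mathcal P$ with endpoints in $\Lambda$ which avoids the horoballs lies in a fixed compact set of $M$, and then ``pulling back, the part of $\core(\Gamma^D\backslash\mathcal P)$ outside the horoballs is compact.'' In part~(2) you write that accumulation of the ray in $M$ ``forces the ray to stay within bounded distance of the $\Gamma^D$-invariant convex hull.'' Both of these inferences silently assume that the natural map $\Gamma^D\backslash F\to M$ (where $F=\mathcal P\cap\hull(\Lambda)$ is the face) is \emph{proper}, equivalently that the $\Gamma$-translates $\gamma\mathcal P$ form a locally finite family of supporting planes. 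Without this, recurrence of the ray in $M$ could be witnessed by elements $\gamma_n\in\Gamma\setminus\Gamma^D$ sending the ray near some other face $\gamma_n F\neq F$, and nothing forces $\xi$ to be radial for $\Gamma^D$. The same missing properness is exactly what is needed to make your finiteness argument for the extra parabolic orbits go through; the horoball-packing sketch you give does not supply it. You correctly identify this finiteness as the main obstacle, but the obstacle is already present earlier, in the ``pulling back'' and the radial case, and it is the genuine content of the lemma. To fill the gap one must use that $F$ is a face of $\partial\hull(\Lambda)$ and appeal to the structure of $\partial\core(M)$ as a finite-area pleated surface; this is what the argument in \cite{MMO2} does and why the paper is content to cite it.
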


\begin{proof}
This lemma is stated for $\Gamma$ convex cocompact 
in \cite[Theorem 5.1]{MMO2}. But its proof works
for $\Gamma$ geometrically finite as well with no change.
\end{proof}

\begin{prop}
\label{gbb}
Let $M=\Gamma\ba \bH^3$ be a geometrically finite manifold.
Consider a sequence of circles $C_n\to D$ with $C_n\in \mathcal C_\Lambda^*$ and
$D\notin  \mathcal C_\Lambda^*$. Suppose that
$\liminf_{n\to\infty} (C_n\cap \Lambda)$ is uncountable.
Then $\bigcup \Gamma C_n$ is dense in $\CL$.
\end{prop}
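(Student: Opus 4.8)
\textbf{Proof plan for Proposition \ref{gbb}.}
The strategy is to reduce to Corollary \ref{boundary} by identifying a round disk $B$ attached to $D$ whose associated Fuchsian group $\Gamma^B$ is non-elementary and finitely generated, and by verifying that infinitely many of the $C_n$ (after relabelling) meet $\op{hull}(B,\Lambda(\Gamma^B))$. First I would use that $D\notin\mathcal C_\Lambda^*$, together with the hypothesis that $\liminf_n(C_n\cap\Lambda)$ is uncountable, to locate $D$ relative to $\op{hull}(\Lambda)$: since the $C_n$ separate $\Lambda$ but $D$ does not, and since $C_n\cap\Lambda$ cannot shrink to a countable set, $D$ must be the boundary of a supporting hyperplane of $\op{hull}(\Lambda)$ (an interior or exterior disk-tangency circle would force $C_n\cap\Lambda$ to collapse to a point or a small set). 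This puts us in the setting of Lemma \ref{gcf}, which gives that $\Gamma^D$ is a finitely generated Fuchsian group and that $D\cap\Lambda=\Lambda(\Gamma^D)\cup\Gamma^D\Lambda_0$ for a finite set $\Lambda_0$. Because $\liminf_n(C_n\cap\Lambda)$ is uncountable and is contained in $D\cap\Lambda$, the limit set $\Lambda(\Gamma^D)$ must itself be infinite, so $\Gamma^D$ is non-elementary.

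Next I would take $B$ to be the disk bounded by $D$ lying on the side into which (a subsequence of) the $C_n$ bulge; here one has to be a little careful because $D$ does not separate $\Lambda$, so one of the two complementary disks of $D$ may miss $\Lambda$. The point is that $\op{hull}(B,\Lambda(\Gamma^B))=\op{hull}(B,\Lambda(\Gamma^D))$ is a nonempty convex region in $B$ resting on the circle $D$ itself (more precisely on $\partial B=D$), and as $C_n\to D$ in $\mathcal C$ the circles $C_n$ must eventually pass through any fixed compact neighborhood in $\overline B$ of this hull; choosing the subsequence realizing the $\liminf$ forces $C_n\cap\Lambda$ to accumulate on points of $D\cap\Lambda$, hence on points of the hull's closure, which in turn forces $C_n\cap\op{hull}(B,\Lambda(\Gamma^D))\ne\emptyset$ for all large $n$ (passing to one of the two disks consistently along a subsequence). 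With this verified, condition \eqref{cnh} of Corollary \ref{boundary} holds for the subsequence, and since $B$ meets $\Lambda$ and $\Gamma^B=\Gamma^D$ is non-elementary and finitely generated, Corollary \ref{boundary} yields that the closure of $\bigcup\Gamma C_n$ contains $\mathcal C_\Lambda$; combined with the trivial inclusion this gives density of $\bigcup\Gamma C_n$ in $\CL$, as claimed.

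The main obstacle I expect is the bookkeeping in the step connecting ``uncountable $\liminf$'' to ``the $C_n$ eventually meet the convex hull of $\Lambda(\Gamma^D)$ in $B$.'' One must rule out the degenerate possibility that the uncountable part of $\liminf_n(C_n\cap\Lambda)$ is carried entirely by the $\Gamma^D$-translates of the finite set $\Lambda_0$ accumulating only near ideal endpoints of $\op{hull}(\Lambda(\Gamma^D))$ in such a way that the $C_n$ themselves stay off the hull — but since $\Gamma^D\Lambda_0$ can only accumulate on $\Lambda(\Gamma^D)$, any uncountable limit must already touch $\Lambda(\Gamma^D)$, pinning the $C_n$ against $\op{hull}(B,\Lambda(\Gamma^D))$; making this quantitative (choosing the right subsequence and the right side $B$) is the technical heart. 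Everything else is a direct appeal to Lemma \ref{gcf} and Corollary \ref{boundary}, mirroring the proof of Proposition \ref{latt}.
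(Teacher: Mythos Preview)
Your proposal is correct and follows the same approach as the paper. The paper itself does not give a self-contained argument: it simply observes that the statement is \cite[Theorem~6.1]{MMO2} in the convex cocompact case and that, with Lemmas~\ref{dal} and~\ref{gcf} now available in the geometrically finite setting, that proof carries over verbatim. Your reconstruction --- pass from $D\notin\mathcal C_\Lambda^*$ together with the uncountable $\liminf$ to a supporting hyperplane, invoke Lemma~\ref{gcf} to see that $\Gamma^D$ is non-elementary and finitely generated, then feed everything into Corollary~\ref{boundary} --- is exactly that argument, and you have correctly isolated the one step requiring care (that the $C_n$ eventually meet $\op{hull}(B,\Lambda(\Gamma^D))$).
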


\begin{proof}
This proposition is stated for $M$ convex cocompact with incompressible boundary  \cite[Theorem 6.1]{MMO2}.
With Lemmas \ref{dal} and \ref{gcf} in place,
the proof extends verbatim to the case claimed. \end{proof}

\section{$H$-orbit closures containing a periodic $U$-orbit}
\label{secperorb}
In the rest of the paper, we will use the point of view of $H$-orbits 
in the frame bundle $\FM$ in our study of geodesic planes in $M$. 
This viewpoint enables us to utilize  the dynamics of the actions of the subgroups $A$, $U$, and $V$ of $G$ on 
the space $\FM=\Gamma\ba G$.

In this section we focus on an orbit $xH$ in $F_\Lambda$
whose closure contains a periodic $U$-orbit $yU$ contained in $F^*$ and prove that
such $xH$ is either locally closed or dense in $F_\Lambda$ (Proposition \ref{cd2}).

\subsection{One-parameter family of circles}
In proving the density of an orbit $xH$ in $F_\Lambda$, 
our main strategy is to find a point $y\in F_\Lambda^*$ and 
a one-parameter semigroup $V^+\subset V$ such that
$yV^+$ is included in $\overline{xH}$ and 
to apply the following  corollary
of Proposition \ref{sweep}.

\begin{cor} 
\label{vp} 
Let $\Gamma\subset G$ be a Zariski dense discrete subgroup 
and $V^+$ a one-parameter subsemigroup of $V$.
For any $y\in F^*$, the closure
$\overline{yV^+H}$ contains $F_\Lambda$.
\end{cor}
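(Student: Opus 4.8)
The plan is to pass from the $H$-orbit $yV^+H\subset\FM$ to a one-parameter family of circles in $S^2$ and feed it into the sweeping principle, Proposition~\ref{sweep}. First I would fix a lift $g\in G$ of $y$ and recall that under the identification $\mathcal C=G/H$ the circle attached to a frame $[g']$ is $(g'H)^+=g'(\hat\br)$, where $\hat\br=\br\cup\{\infty\}$ bounds the standard geodesic plane. Since a one-parameter subsemigroup of $V=\{u_{it}:t\in\br\}$ is one of the two closed half-lines, I may assume $V^+=\{u_{it}:t\ge 0\}$, the opposite case being identical. As $u_{it}$ acts on $\hat\c$ by the imaginary translation $z\mapsto z+it$, one has $\bigcup_{t\ge 0}u_{it}(\hat\br)=\{z\in\c:\operatorname{Im}z\ge 0\}\cup\{\infty\}$, the closed upper half-plane; applying $g$,
$$\bigcup_{u\in V^+}(guH)^+\;=\;g\Big(\{\operatorname{Im}z\ge 0\}\cup\{\infty\}\Big)$$
is one of the two closed round disks $\overline B$ bounded by the circle $C_0:=(gH)^+$, with interior the open disk $B=g(\{\operatorname{Im}z>0\})$.

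Next I would bring in the hypothesis $y\in F^*$: by definition this means $C_0$ separates $\Lambda$, so both open disks bounded by $C_0$ meet $\Lambda$; in particular $B\cap\Lambda\ne\emptyset$, and since $B$ is open in $S^2$ this is a non-empty open subset of $\Lambda$ contained in $\bigcup_{u\in V^+}(guH)^+$. Hence the family $\mathcal D=\{(guH)^+:u\in V^+\}$ satisfies the hypothesis of Proposition~\ref{sweep} relative to $\Lambda=\Lambda(\Gamma)$, so there exists $u_0\in V^+$ with $\overline{\Gamma\,(gu_0H)^+}=\mathcal C_\Lambda$.

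Finally I would translate back through the dictionary $\Gamma\ba\mathcal C_\Lambda=F_\Lambda/H$, which identifies closures of $\Gamma$-orbits of circles with closures of the corresponding $H$-orbits of frames; thus $\overline{\Gamma\,(gu_0H)^+}=\mathcal C_\Lambda$ becomes $\overline{yu_0H}=F_\Lambda$. Since $u_0\in V^+$ we have $yu_0H\subset yV^+H$, whence $F_\Lambda=\overline{yu_0H}\subset\overline{yV^+H}$, proving the corollary. No dynamical input beyond Proposition~\ref{sweep} is needed; the only steps requiring a little care — and which I regard as the (mild) crux — are the explicit identification of the $V^+$-sweep of $C_0$ with a round half-disk and the check that the $\mathcal C\leftrightarrow\FM$ correspondence is compatible with passing to closures.
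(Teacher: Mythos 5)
Your proof is correct and follows the same route as the paper: identify the $V^+$-sweep of the circle $(gH)^+$ with a disk it bounds, use $y\in F^*$ to see that this disk meets $\Lambda$ in a non-empty open subset, and apply Proposition~\ref{sweep}. The only difference is that you make explicit the half-plane computation and the duality between $\Gamma$-orbit closures in $\mathcal C$ and $H$-orbit closures in $\FM$, both of which the paper leaves implicit.
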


\begin{proof}
We choose a representative $g\in G$ of $y=[g]$ and consider the corresponding circle $C=(gH)^+$. 
The union of circles $(gvH)^+$ for $v\in V^+$ contains 
a disk $B$ bounded by $C$.
Since $C$ separates $\Lambda$, 
 $B$ contains a non-empty
open subset of $\Lambda$. 
Therefore, by Proposition \ref{sweep}, 
the set $\overline{yV^+H}$ contains $F_\Lambda$.
\end{proof}

When the closure of $xH$ contains a periodic $U$-orbit $Y$,
we will apply Corollary \ref{vp} to a point $y\in Y$.

\subsection{Periodic $U$-orbits}

\begin{prop}
\label{cd2}
Let $\Gamma\subset G$ be a Zariski dense discrete subgroup
and $x\in F^*_\Lambda$. Suppose that
$\oxH\cap F^*$ contains a periodic $U$-orbit $yU$. 

Then either \begin{enumerate}
\item  $xH$ is  locally closed and $yH=xH$; or
\item  $xH$ is dense in $F_\Lambda$.
\end{enumerate}
\end{prop}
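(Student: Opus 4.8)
The plan is to analyze the periodic $U$-orbit $yU \subset \overline{xH} \cap F^*$ and to show that, unless $xH$ is already locally closed with $yH = xH$, one can enlarge $\overline{xH}$ to contain a one-parameter semigroup orbit $zV^+$ with $z \in F^*$, whence $\overline{xH} = F_\Lambda$ by Corollary \ref{vp}. First I would observe that, since $yU$ is periodic, its stabilizer in $\Gamma$ contains a parabolic element fixing the point $\xi := (gU)^+$-type endpoint, equivalently the $U$-orbit $yU$ projects to a closed horocycle on a horospherical cross-section at a parabolic fixed point of $\Gamma$; in particular $\xi \in \Lambda_p$. The normalizer of $U$ in $G$ is $AU V^{\pm}$-type: more precisely $AUN$ where $N = UV$, so the periodic $U$-orbit $yU$ is stabilized by a rank-one (or rank-two) parabolic subgroup, and the relevant semigroup directions transversal to $U$ inside $H$ are generated by $A$ and by the $V$-like directions.

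Next I would set $X := \overline{xH}$ and consider the orbit $yH$. Either $yH$ is locally closed or it is not. If $yH$ is not locally closed, then (since $y \in F^*$, and $F^*$ is open) there is a sequence of distinct points of $yH$ converging back into $yH$; I would translate this into a sequence of distinct circles $C_n \in \Gamma C$ (where $C = (gH)^+$, $g$ a lift of $y$) converging to $C$ itself, and since $y \in F^*$ the limit circle $C$ separates $\Lambda$ — but I need $C \in \mathcal C^*_\Lambda$, i.e. $C \cap \Lambda \ne \emptyset$, which holds because $x \in F^*_\Lambda$ forces the whole $H$-orbit and its closure to stay in $F_\Lambda$, a closed $H$-invariant set, so $y \in F_\Lambda$. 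The stabilizer $\Gamma^C \supset \Gamma^{yU}$ is non-elementary provided it is not virtually cyclic; here the key point is that $\Gamma^{yU}$ is a parabolic subgroup, hence elementary, so I cannot directly invoke Corollary \ref{latt2}. Instead I would argue: if $yH$ is not locally closed, apply Proposition \ref{gbb} or Proposition \ref{latt}, whichever is available, after checking the relevant limit set hypothesis; since $C$ separates $\Lambda$ and we have distinct nearby circles, the hypothesis of Proposition \ref{latt} (a sequence of distinct circles in $\Gamma C$ converging to $D = C \in \mathcal C^*_\Lambda$ with non-elementary stabilizer) needs the non-elementarity, which may fail — so the correct route is likely to directly produce a $V^+$-orbit.

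The heart of the argument, and the step I expect to be the main obstacle, is the following: using that $yU$ is a \emph{periodic} $U$-orbit inside $X = \overline{xH}$, produce a point $z \in X \cap F^*$ whose orbit under a one-parameter semigroup $V^+ \subset V$ lies in $X$. I would do this by a renormalization/polynomial-divergence argument: take a sequence $h_n \in H$ with $x h_n \to y$; writing $h_n$ in coordinates adapted to $U A V$ near the parabolic fixed point, the $U$-periodicity of $yU$ means that translating $x h_n$ along the (compact) periodic $U$-orbit and conjugating lets us extract, from the transversal components of $h_n$, a nontrivial limiting direction. Since $\overline{xH}$ is $H$-invariant and contains all of $yU$, and since conjugation by $u_t$ for $t$ ranging over the period shears the transversal $V$-direction, a suitable limit of $x h_n u_{t_n}$ converges to a point $z$ with $zV^+ \subset X$ — this is precisely the mechanism that the introduction attributes to \S 4 being ``simpler to handle.'' Concretely: if $x h_n$ does not eventually lie in $yH$, the displacements $h_n$ have an unbounded transversal $V$-component $v_n$, and since $v_n$ commutes appropriately with the periodic $U$ and is not absorbed by $\mathrm{Stab}_H(y)$, one gets a full $V^+$-orbit through a point of $F^*$ in the limit. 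Once $zV^+ \subset \overline{xH}$ with $z \in F^*$ is in hand, Corollary \ref{vp} gives $\overline{xH} \supset F_\Lambda$, and since $x \in F_\Lambda$ and $F_\Lambda$ is closed and $H$-invariant we get $\overline{xH} = F_\Lambda$, i.e. case (2). In the remaining case $x h_n \in yH$ eventually, so $xH = yH$ and $xH$ is locally closed — case (1). The delicate point throughout is ruling out the degenerate possibility that all the transversal displacement is eaten by $\mathrm{Stab}_H(y)$ (which would only give $xH = yH$ non-locally-closed, a contradiction with $F^*$ open unless we are in case (1)); handling this requires carefully using that $yU$ is periodic — so its stabilizer is exactly a lattice in a parabolic subgroup, whose centralizer/normalizer structure in $H$ is explicit — and that $y \in F^*$ so nearby $H$-translates are genuinely distinct circles.
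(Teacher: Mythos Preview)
Your overall strategy---find a point $z\in\overline{xH}\cap F^*$ with $zV^+\subset\overline{xH}$ and then invoke Corollary~\ref{vp}---is the right one, and matches the paper. But the mechanism you propose for producing $V^+$ has a genuine gap.

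You write that ``conjugation by $u_t$ for $t$ ranging over the period shears the transversal $V$-direction.'' This is false: $U$ and $V$ commute (both lie in $N$), so conjugation by $U$ does nothing to a $V$-displacement. The shearing-by-$U$ argument only works for displacements \emph{outside} $VH$: if $g_n\to e$ in $G\setminus VH$ with $yg_n\in X$, then $\overline{U g_n H}$ does contain a semigroup $V^+$ (this is the paper's Lemma~\ref{sv}), and periodicity of $yU$ lets you pass to the limit to get $yV^+\subset X$. But if the displacements $g_n$ happen to lie in $VH\setminus H$---say $yg_n=yv_n h_n$ with $v_n\in V\setminus\{e\}$, $v_n\to e$---then no amount of $U$-shearing produces anything new, and your argument stalls. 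You have not ruled out this possibility.

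The paper handles exactly this by a trichotomy on the set $S_y=\{g\in G: yg\in X\}$ near $e$: either $S_y$ escapes $VH$ (shearing works, density follows), or $S_y\subset VH$ but escapes $H$ (the delicate middle case), or $S_y\subset H$ near $e$ (then $yH$ is open in $X$, so $xH=yH$ is locally closed). The middle case is dispatched by a short algebraic lemma: if $w\in VH$ and $wv\in VH$ for some nontrivial $v\in V$, then $w\in AN$. Combined with the discreteness of $\op{Stab}_{AN}(y)$ (which uses that $yU$ is periodic, hence the $U$-stabilizer is cocompact in $U$), one shows $yAU$ is locally closed, hence $yH=yAUK_H$ is locally closed, contradicting the assumption. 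So the middle case simply does not occur. This is the missing ingredient in your sketch; without it the dichotomy you assert between ``eventually in $yH$'' and ``produces $V^+$'' is not justified.

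A smaller point: your sequence $h_n\in H$ with $xh_n\to y$ has no ``transversal $V$-component''---the $h_n$ are in $H$. What you want is to write $xh_n=y\epsilon_n$ with $\epsilon_n\to e$ in $G$ and analyze $\epsilon_n$; equivalently, study $S_y$ as above.
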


The rest of this section is devoted to a proof 
of Proposition  \ref{cd2}. We fix $x$, $y$ and $Y:=yU$ as in Proposition  \ref{cd2} and 
set $X=\overline{xH}$.

Setting
$$
S_y:=\{g\in G: yg\in  X \} ,
$$
we split the proof into the following three cases:

\begin{enumerate}
\item[$\spadesuit 1$.]$S_y\cap O\not\subset VH$ for any neighborhood $O$ of $e$;
\item[$\spadesuit 2$.] $S_y\cap O\subset VH$ for some neighborhood $O$ of $e$ and $S_y\cap O\not\subset H$ for any neighborhood $O$ of $e$;
\item[$\spadesuit 3$.] $S_y\cap O\subset H$ for some neighborhood $O$ of $e$.
\end{enumerate}
\vs

\noindent {\bf Case $\spadesuit 1$}.
In this case, we will need the following algebraic lemma:

\begin{lem}
\label{sv}
Let $S$ be a subset of $G-VH$ such that $e\in \overline S$. Then the closure of $USH$ contains a one-parameter semigroup $V^+$ of $V$.
\end{lem}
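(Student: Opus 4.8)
\textbf{Proof proposal for Lemma \ref{sv}.}

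The plan is to exploit the hypothesis $e \in \overline{S}$ together with $S \cap VH = \emptyset$ to produce, via a limiting argument combined with the left-multiplication by $U$ and right-multiplication by $H$, a full one-parameter semigroup inside the closure of $USH$. The starting observation is that $G = \PSL_2(\c)$ decomposes (near $e$) so that the coset space $G/H$ is parametrized by circles, and $VH$ is precisely the set of elements whose circle is tangent to the ``base'' circle $(eH)^+$ at the point which is the common forward endpoint — i.e. $VH$ is a codimension-one-type condition inside a $2$-(real-)dimensional transversal. Concretely, using the decomposition $G = N^- A H$ near $e$ (where $N^-$ is the opposite horospherical subgroup, say the lower-triangular unipotent), every $g$ near $e$ not in $VH$ can be written $g = u^-_w a_t h$ with $w \in \c$, and the condition $g \notin VH$ forces the $N^-$-component $u^-_w$ to be genuinely present, with $w$ having a nonzero real part or being scaled appropriately under conjugation by $A$ and $U$.

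First I would fix a sequence $s_n \in S$ with $s_n \to e$, $s_n \notin VH$, and write $s_n = g_n h_n$ with $g_n \to e$ in a fixed transversal to $H$ through $e$ and $h_n \in H$; so it suffices to show $\overline{U g_n}$ (as $n \to \infty$, allowing also to pass to subsequences) contains a one-parameter semigroup of $V$, since $V \subset \overline{USH}$ would then follow from $V^+ \subset \overline{U\{g_n\}} \subset \overline{USH}$. Next, the key mechanism: conjugating/translating $g_n$ by elements $u_{t_n} \in U$ with $t_n \to \infty$ chosen so that the ``size'' of the relevant coordinate of $u_{t_n} g_n u_{t_n}^{-1}$ (or rather $u_{t_n} g_n$ viewed in $G/H$) stays bounded away from $0$ and $\infty$. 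This is the standard polynomial-divergence / renormalization trick: the $U$-action on the transversal coordinate is polynomial (indeed, in these coordinates, multiplying on the left by $u_t$ acts on the circle $(g_n H)^+$ by a Möbius transformation, and the deviation of $(u_t g_n H)^+$ from the base circle grows polynomially in $t$ with leading coefficient controlled by the $N^-$-part of $g_n$). Because $g_n \notin VH$, that leading coefficient is nonzero, so for each $n$ there is a scale $t_n$ with $u_{t_n} g_n$ at bounded distance from, but not equal to, $VH$; a subsequential limit $g_\infty := \lim u_{t_n} g_n$ then lies in the closure but has its transversal coordinate a pure ``$V$-type'' deformation — i.e. $g_\infty \in VH \setminus H$ — because the dominant term in the polynomial is exactly the one that survives renormalization and it points in the $V$-direction (this is where one checks the linear-algebra: $\Ad(u_t)$ applied to the relevant Lie algebra element sends the generic direction to the $V$-direction as $t \to \infty$). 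Finally, once a single $v_0 h_0 \in \overline{USH}$ with $v_0 \in V \setminus \{e\}$ is obtained, I would re-run the $U$-renormalization on $v_0$ itself (or simply note closure under the $U$-action and rescaling) to sweep out the entire ray $\{v_s : s \in [0,\infty)\cdot v_0\}$, i.e. a one-parameter semigroup $V^+ \subset V$; alternatively, iterate: $v_0^k h_0' \in \overline{USH}$ combined with closure gives a semigroup after taking limits of $u_{t} v_0^{k}$.

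The main obstacle I anticipate is making the renormalization step fully rigorous: one must verify that the limit of $u_{t_n} g_n$ genuinely lands in $V H \setminus H$ and not back in $H$ or off in some other direction of the transversal. This requires a careful bookkeeping of the $\Ad(u_t)$-action on $\mathfrak{g} = \mathfrak{sl}_2(\c)$, identifying which root vectors are expanded, fixed, or contracted, and checking that the hypothesis $g_n \notin VH$ precisely guarantees a nonzero component along an expanded direction whose leading renormalized limit is the $V$-direction modulo $H$. A secondary subtlety is uniformity in $n$: since $g_n \to e$, the scales $t_n$ must be taken to $\infty$, and one must ensure the diagonal subsequence extraction still yields a nontrivial limit — this is handled by choosing $t_n$ as the exact scale at which the polynomial trajectory first reaches a fixed small-but-nonzero distance from $VH$, which exists and tends to $\infty$ precisely because $g_n \to e$ while the leading coefficient, though small, is nonzero. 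Everything else (closure of $\overline{USH}$ under left-$U$ and right-$H$, and the passage from one nontrivial $V$-element to a semigroup) is routine.
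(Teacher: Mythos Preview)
Your overall strategy---renormalize a sequence $s_n \to e$ by left-multiplying by suitably chosen $u_{t_n} \in U$ so that the limit lands in $VH \setminus H$---is the paper's approach. But two concrete errors prevent your argument from going through as written.

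First, the local decomposition $G = N^{-} A H$ is false. Since $A \subset H$ and $N^{-}(\c) \cap H = N^{-}(\br)$, the set $N^{-} A H = N^{-} H$ is only $4$-dimensional inside the $6$-dimensional $G$, so it cannot serve as a coordinate chart near $e$. The correct transversal to $\mathfrak h = \mathfrak{sl}_2(\br)$ in $\mathfrak g = \mathfrak{sl}_2(\c)$ is $i\,\mathfrak{sl}_2(\br)$, and this is what the paper uses: after replacing $S$ by $SH$ (harmless, as $VH\cdot H = VH$), one writes $s_n = \exp(M_n)$ with $M_n \in i\,\mathcal M_2(\br)$, $M_n \to 0$, and the hypothesis $s_n \notin VH$ becomes simply $M_n \notin \mathcal V := \left(\begin{smallmatrix} 0 & i\br \\ 0 & 0 \end{smallmatrix}\right)$. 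Because $U$ consists of real matrices, $\Ad(U)$ preserves $i\,\mathcal M_2(\br)$, and the conjugation is an explicit quadratic polynomial in $t$ whose top-degree term lies in $\mathcal V$.

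Second, your passage from a single $v_0 \in V \setminus \{e\}$ to a full semigroup $V^{+}$ is not justified. ``Re-running the $U$-renormalization on $v_0$'' does nothing, since $U$ and $V$ commute; and $\overline{USH}$ has no reason to be closed under multiplication, so $v_0^k \in \overline{USH}$ is unproven. The paper sidesteps this entirely: because the upper-right entry of $\Ad(u_t)M_n$ is a nonconstant polynomial in $t$ whose degree strictly exceeds that of the other entries, one can choose, for \emph{each} target value $r$ on a fixed half-line of $\mathcal V$, scales $t_n = t_n(r)$ with $\Ad(u_{t_n})M_n \to r$. Thus every element of a half-line in $\mathcal V$ arises directly as a limit, and $\exp$ of that half-line is the desired $V^{+} \subset \overline{USH}$. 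No bootstrap from a single element is needed.
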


\begin{proof}  Let $\mc M_2(\m R)$ 
denote the set of real matrices of order $2$,
and consider the Lie algebra $\mc V= \begin{pmatrix} 0 & i \br \\ 0 & 0\end{pmatrix}$ of $V$.
Without loss of generality, we can assume that $S=SH$.
Then the set $S$ contains  a sequence
$s_n={\rm exp}(M_n)\to e$ with $M_n\in i\mc M_2 (\m R) - \mc V$. Since $M_n$ 
tends to $0$ as $n\to \infty$, the closure of $\bigcup_n \{u M_n u^{-1}: u \in U\}$
contains a half-line in $\mc V$. Our claim follows.
\end{proof}

\begin{lem}
\label{p1} 
In case $\spadesuit 1$,  $xH$ is dense in $F_\Lambda$.
\end{lem}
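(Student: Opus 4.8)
\textbf{Plan for Lemma \ref{p1}.} The goal is to show that in Case $\spadesuit 1$, the orbit $xH$ is dense in $F_\Lambda$. By Corollary \ref{vp}, it suffices to produce a point $y' \in F^*$ and a one-parameter subsemigroup $V^+ \subset V$ such that $y'V^+ \subset X = \overline{xH}$; then $\overline{y'V^+H} \subseteq X$ contains $F_\Lambda$, and since $x \in F_\Lambda$ one also has $X \subseteq F_\Lambda$, giving $X = F_\Lambda$. So the entire task is to extract a piece of $V$-orbit inside $X$ starting from a separating frame.

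\textbf{Key steps.} First I would recall the setup: $y \in F^*$, $Y = yU \subset \oxH \cap F^*$ is a periodic $U$-orbit, and $S_y = \{g \in G : yg \in X\}$ is the ``stabilizer-type'' set recording how $X$ looks near $y$. Since $Y = yU \subset X$ and $Y$ is $U$-periodic, $U \subset S_y$ (after adjusting by the period, which acts trivially); moreover $X$ is $H$-invariant, so $S_y$ is closed, contains $U$, and is right-$\op{Stab}_G(Y)$-invariant in the appropriate sense. The hypothesis of Case $\spadesuit 1$ says precisely that $S_y \cap O \not\subset VH$ for every neighborhood $O$ of $e$, i.e.\ there is a sequence $s_n \in S_y \setminus VH$ with $s_n \to e$. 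Now I would apply Lemma \ref{sv} with $S := S_y \setminus VH$ (or, since $S_y$ is already $H$-invariant on the right up to the $U$-factor, with an appropriate $H$-saturation): the lemma produces a one-parameter semigroup $V^+ \subset V$ contained in the closure of $U S H$. Because $U \subset \op{Stab}$ of $Y$ in the relevant sense and $S \subset S_y$ and $X$ is $H$-invariant and closed, $y \cdot \overline{USH} \subseteq X$; hence $yV^+ \subseteq X$. Finally, since $y \in F^*$, Corollary \ref{vp} applies to give $\overline{yV^+H} \supseteq F_\Lambda$, so $X = F_\Lambda$ and $xH$ is dense in $F_\Lambda$.

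\textbf{Main obstacle.} The delicate point is the bookkeeping that turns ``$s_n \in S_y$, $s_n \to e$, $s_n \notin VH$'' into ``$yV^+ \subset X$'' cleanly. One must verify that the set $S$ fed into Lemma \ref{sv} genuinely satisfies its hypotheses ($S \subset G - VH$ and $e \in \overline{S}$) — in particular that passing to the $H$-saturation $S_yH$ does not accidentally land everything inside $VH$, which is where the case hypothesis $S_y \cap O \not\subset VH$ is used in an essential way. One must also check that the periodicity of $U$ on $Y$ lets us absorb the $U$-factor from $\overline{USH}$ into the left action on $y$ (i.e.\ $yU \subset X$ already), and that the containment $y\,\overline{USH} \subseteq X$ is legitimate given that $X$ is closed and $H$-invariant — a routine but necessary continuity argument. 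Once $yV^+ \subset X$ is in hand with $y$ separating, the conclusion is immediate from Corollary \ref{vp}, so all the real work is in this extraction step.
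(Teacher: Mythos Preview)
Your plan matches the paper's proof: apply Lemma~\ref{sv} to $S = S_y \setminus VH$ to get $V^+ \subset \overline{US_yH}$, then show $yV^+ \subset X$ and invoke Corollary~\ref{vp}.

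One point deserves sharpening. The containment $y\cdot\overline{US_yH} \subset X$ that you assert is not immediate: for $u \in U$ and $s \in S_y$ one has $yus \in X$ only if $us \in S_y$, and $S_y$ is not obviously left-$U$-invariant. The paper handles this by a limiting argument rather than a blanket inclusion. Given $v \in V^+$, write $v = \lim u_n g_n h_n$ with $g_n \in S_y$; then $yg_nh_n \in X$, and rewriting $yg_nh_n = (yu_n^{-1})(u_ng_nh_n)$, compactness of the periodic orbit $Y$ gives (after a subsequence) $yu_n^{-1} \to y_0 \in Y$, hence $y_0v \in X$. Finally, since $y \in y_0U$ and $U$ commutes with $V$, one gets $yv \in y_0vU \subset X$ by $H$-invariance of $X$. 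This is exactly the ``absorb the $U$-factor via periodicity'' step you flagged as the main obstacle; the mechanism is compactness of $Y$ plus $[U,V]=1$, not a direct set inclusion.
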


\begin{proof} The assumption implies that $S_y$ contains a subset $S$ such that $S\cap VH=\emptyset$ and
$e\in \overline{S}$. Therefore,
by Lemma \ref{sv}, the closure
of $US_y H$ contains a one-parameter semigroup $V^+$.
Hence for any $v\in V^+$, one can write 
$$
v=\lim_{n\to\infty} u_n g_n h_n
$$ 
for some $u_n\in U$, 
$g_n\in S_y$ and $h_n\in H$.
By passing to a subsequence, we may assume that $yu_n^{-1}\to y_0 \in Y$.
So $$y g_n h_n =(yu_n^{-1}) (u_n g_n h_n) \to y_0 v.$$
Hence  $y_0v$ belongs to $X$, and $yv\in y_0Uv=y_0vU$ belongs to $X$ too.
This proves the inclusion
$
yV^+\subset X .
$
Therefore, by Corollary \ref{vp}, since $y\in F^*$, the orbit 
$xH$
is dense in $F_\Lambda$.
\end{proof}
\vs

\noindent{\bf Case $\spadesuit 2$}.
In this case, we will use the following algebraic fact:

\begin{lem}
\label{alg} 
If $w\in VH$ satisfies   $wv\in VH$  for some $v\in V\!-\!\{ e\}$,
then $w\in AN$.
\end{lem}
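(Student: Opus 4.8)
The plan is to prove this by a direct matrix computation in $G=\PSL_2(\c)$, exploiting the explicit descriptions of $V$, $H$, $A$, and $N$. First I would parametrize the hypothesis: write $w = v_0 h_0$ with $v_0 = u_{is}\in V$ and $h_0\in H=\PSL_2(\R)$, and let $v = u_{it}$ with $t\ne 0$. The condition $wv\in VH$ then reads $v_0 h_0 u_{it} \in VH$, i.e. $h_0 u_{it} \in v_0^{-1}VH = VH$ (since $v_0\in V$). So the problem reduces to the following: if $h\in H$ and $t\ne 0$ satisfy $h u_{it}\in VH$, then $v_0 h \in AN$; equivalently, since $AN$ is a subgroup containing $V$, it suffices to show $h\in AN$, because then $w = v_0 h \in V\cdot AN = AN$ (noting $V\subset N$).

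So the heart of the matter is the claim: \emph{if $h\in\PSL_2(\R)$ and $h u_{it}\in VH$ for some real $t\ne 0$, then $h$ is upper-triangular}, i.e. $h\in AN\cap H = A U$. To see this I would write $hu_{it} = u_{is'} h'$ for some $s'\in\R$, $h'\in H$, so that $u_{-is'} h u_{it} = h' \in \PSL_2(\R)$. Writing $h = \begin{pmatrix} a & b \\ c & d\end{pmatrix}$ with real entries and $ad-bc=1$, a direct multiplication gives
\[
u_{-is'} h u_{it} = \begin{pmatrix} a - is'c & at i + b - is's'? \end{pmatrix}
\]
— rather than grind this out, the key observation is that the $(2,1)$ and $(2,2)$ entries of $u_{-is'}hu_{it}$ are $c$ and $ct i + d$ respectively, and the $(1,1)$ entry is $a - is' c$. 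For the product to lie in $\PSL_2(\R)$, all four entries must be real (up to a common scalar, but in $\SL_2$ representatives they are genuinely real). Reality of the $(2,2)$ entry $d + ict$ with $t\ne 0$ forces $c = 0$. Hence $h$ is upper-triangular, so $h\in AU\subset AN$, and therefore $w = v_0 h\in AN$ as desired.

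I would organize the writeup as: (1) reduce via $V\subset N$ and $AN$ a subgroup to showing $h\in AN$ where $h\in H$ and $hu_{it}\in VH$; (2) compute the relevant matrix entries of $u_{-is'}hu_{it}$ and invoke reality to conclude $c=0$; (3) conclude. The main obstacle — really the only subtlety — is bookkeeping in $\PSL_2$ versus $\SL_2$: I must make sure that "lies in $VH$" genuinely forces the entrywise reality/vanishing statement and is not an artifact of the projective quotient. This is handled by lifting to $\SL_2$, noting that $V$ and $H$ have canonical lifts ($V$ to $\begin{pmatrix}1 & is\\0&1\end{pmatrix}$, $H$ to $\SL_2(\R)$) and that a product of such lifts equals $\pm$ a lift of an element of $VH$; tracking the sign is routine. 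Everything else is a one-line matrix multiplication, so I expect the proof to be quite short.
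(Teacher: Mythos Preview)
Your proof is correct and follows essentially the same route as the paper's: reduce to $w=h\in H$ using $V\subset AN$, then observe that membership of $hu_{it}$ in $VH$ forces the bottom row $(c,\,d+ict)$ to be real, so $c=0$. The paper phrases this last step slightly more efficiently by noting directly that left multiplication by $V$ does not affect the bottom row of $wv$, so one need not introduce the auxiliary $s'$ at all; but the content is identical.
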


\begin{proof}
Without loss of generality, we can assume that 
$w=
\begin{pmatrix} a & b \\ c & d\end{pmatrix}
\in H$ and  write 
$v=
\begin{pmatrix} 1 & is \\ 0 & 1\end{pmatrix}
\in V$ with $s$ real and non-zero. 
If the product $wv$ belongs to $VH$,
the lower row 
of the matrix $wv$ must be real and this implies $c=0$ as required.
\end{proof}

\begin{lem}
\label{p2} 
Case $\spadesuit 2$ does not happen.
\end{lem}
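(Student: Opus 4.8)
The plan is to rule out Case $\spadesuit 2$ by showing that if it occurred, the closure $X = \overline{xH}$ would have to be invariant under a group strictly larger than $H$ near $y$, but of a special form that forces $y \in AN \cdot H$-type structure incompatible with $yU$ being the ``boundary'' periodic orbit we are analyzing — ultimately collapsing back into Case $\spadesuit 1$ or Case $\spadesuit 3$. Concretely, in Case $\spadesuit 2$ we have a neighborhood $O$ of $e$ with $S_y \cap O \subset VH$, and there is a sequence $s_n \in S_y \cap O$ with $s_n \to e$ and $s_n \notin H$. Writing $s_n \in VH$, the $V$-component is non-trivial; I would first observe that since $Y = yU$ is a periodic $U$-orbit contained in $X \cap F^*$, the set $S_y$ is invariant under $U$ acting by conjugation intersected with... more precisely, $y u_t g \in X$ for $g \in S_y$, so $u_{-t} S_y u_t \subset S_y$ up to the periodicity, hence $S_y$ contains $U$-conjugates of the $s_n$.

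The key step: take $v_n \in V - \{e\}$, $h_n \in H$ with $s_n = v_n h_n \to e$; then $v_n \to e$ and $h_n \to e$. Conjugating $s_n$ by elements $u \in U$ and using the thickness/periodicity of $Y$, I would extract from $\{u s_n u^{-1}\}$ elements still lying in $S_y \cap O \subset VH$. By Lemma \ref{alg}, if $w \in VH$ and $wv \in VH$ for $v \in V - \{e\}$, then $w \in AN$; I would leverage this to show that the relevant conjugates $u s_n u^{-1}$ either leave $VH$ (which would put us in Case $\spadesuit 1$, a contradiction with the Case $\spadesuit 2$ hypothesis) or force a rigid algebraic constraint. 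The point is that $U$ does not normalize $VH$ — a generic conjugate $u s_n u^{-1}$ of an element of $VH \setminus H$ is not in $VH$ — so unless the $s_n$ are very special, we exit $VH$, contradicting $S_y \cap O \subset VH$. The special case where the $s_n$ survive inside $VH$ under all $U$-conjugation would, via Lemma \ref{alg}, pin the $s_n$ inside $AN \cap VH = AV$ (a $2$-dimensional subgroup), and then a further conjugation argument by $U$ inside $AV$ shows even this is impossible for $s_n \to e$ with $s_n \notin H$, since $AV \cap H = A$ and $U$ moves the $V$-direction out of $AV$.

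The main obstacle I anticipate is the bookkeeping of how the periodicity of $Y = yU$ interacts with the conjugation action: one must be careful that $yu_t \in Y \subset X$ only for the period lattice, so the conjugation $u_{-t} S_y u_t \subset S_y$ holds exactly for $t$ in the period (or, after passing to $y_0 \in Y$ as in the proof of Lemma \ref{p1}, for a full parameter range in the closure). I would handle this exactly as in the proof of Lemma \ref{p1}: given $s_n \in S_y$, pick $u_n \in U$ with $yu_n^{-1} \to y_0 \in Y$, so that $y_0 (u_n s_n u_n^{-1}) = (yu_n^{-1})(u_n s_n u_n^{-1}) \cdot$(correction) lies in $X$ in the limit, giving genuine $U$-conjugates of (limits of) the $s_n$ inside $S_{y_0}$. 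Then apply the algebraic dichotomy above. Once the contradiction is reached — that $s_n \notin H$ combined with $S_y \cap O \subset VH$ and closure under these conjugations is impossible — we conclude Case $\spadesuit 2$ does not happen, completing the proof of Lemma \ref{p2}.
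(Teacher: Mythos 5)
Your central mechanism fails: $U$-conjugation does not move elements of $VH$ out of $VH$. Indeed $U\subset H$, and $U$ commutes with $V$ (both lie in the abelian group $N$), so for $u\in U$, $v\in V$, $h\in H$ one has $u(vh)u^{-1}=(uvu^{-1})(uhu^{-1})=v\,(uhu^{-1})\in VH$; thus $u(VH)u^{-1}=VH$ for every $u\in U$ (and likewise for every $u\in AU$). Consequently there is no ``generic conjugate $us_nu^{-1}$ leaving $VH$,'' no reduction to Case $\spadesuit 1$, and the dichotomy on which your whole argument rests evaporates. Your fallback branch is also off target: Lemma \ref{alg} is about right multiplication (a single $w\in VH$ with $wv\in VH$ for one nontrivial $v\in V$ forces $w\in AN$), not about conjugation, and your proposal never manufactures such a pair $w$, $wv$ inside $S_y$; moreover $AN\cap VH=AN$ (since $AN=AUV\subset V\cdot AU\subset VH$), not $AV$. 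More fundamentally, Case $\spadesuit 2$ cannot be excluded by pure algebra in $G$: nothing Lie-theoretic forbids $S_y$ from containing arbitrarily small elements of $VH-H$. What rules it out is a discreteness argument that your proposal does not contain.

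For comparison, the paper's proof proceeds in three steps. First, $\spadesuit 2$ produces a nontrivial $v\in V$ with $yv\in xH$; since $v$ centralizes $U$, one may assume the periodic orbit $yU$ lies in $xH$ itself. Second --- the step for which you have no substitute --- one shows $yAU$ is locally closed: otherwise there are $w\to e$ in $VH-AU$ and $p\in AU$ with $yw=yp$; writing $yw(p^{-1}v_np)=yv_np\in X$ with $p^{-1}v_np\in V-\{e\}$ small, both $w$ and $w(p^{-1}v_np)$ lie in $S_y\cap O\subset VH$, so Lemma \ref{alg} gives $w\in AN$ and hence $wp^{-1}\in\op{Stab}_{AN}(y)$. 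That stabilizer is discrete and meets $U$ cocompactly (periodicity of $yU$), hence is contained in $N$ with discrete image in $N/U$, contradicting that $w$ can be taken arbitrarily close to $e$ yet outside $AU$. Third, local closedness of $yAU$ yields local closedness of $yH=yAU\cdot\SO(2)=xH$, which contradicts the defining property of $\spadesuit 2$. Any repair of your write-up must incorporate this stabilizer/discreteness step; the conjugation idea alone cannot close the proof.
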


\begin{proof} 
Suppose it happens.

{\bf First step:} We claim that $xH$ contains a periodic $U$-orbit.

Indeed if $y\notin xH$, Condition $\spadesuit 2$ says that 
there exists a non-trivial $v\in V$
such that $y':=yv$ belongs to $xH$. Since $v$ commutes with $U$, the orbit $y'U\subset xH$
 is periodic.
 
 Therefore, by renaming $y'$ to $y$, $xH$ contains a periodic orbit $yU$.

{\bf Second step:}
We now claim that $yAU$ is locally closed.

If this is not the case, for any open neighborhood $O$  of $e$ in $G$, we can find $p\in AU$ and $w\in O-AU$ such that $yp=yw$.
By Condition $\spadesuit 2$, we know that $w\in VH$ and we also 
know that there exists a sequence
$v_n\to e$ in $V-\{ e\}$ such that 
$yv_n\in X$. Therefore, we get
$$
y w(p^{-1}v_np)=yv_n p\in X.
$$
If $O$ is small enough,  condition $\spadesuit 2$ implies that 
for $n$ large, 
$$
w(p^{-1}v_np)\in VH.
$$
Therefore, Lemma \ref{alg} implies that
$$
w\in AN.
$$
Hence the element 
$s:=wp^{-1}$ belongs to the stabilizer $S:=\op{Stab}_{AN}(y)$.

Let $\varphi: AN\to AN/AU=N/U$ be the natural projection.
Since the discrete group $S$ intersects $U$ cocompactly, 
it is included in $N$ and its image $\varphi(S)$ is discrete.
Since $w$ can be taken arbitrarily
close to $e$, its image $\varphi(s)=\varphi(w)$ 
can be made arbitrarily close to $e$ as well, yielding a contradiction.
{\bf Third step:} We finally claim that $xH$ is locally closed. 

Since  $yAU$ is locally closed and
 $K_H:=\SO(2)$ is compact, the set $yAUK_H=yH$ is also locally closed. 
Since $y\in xH$, we have  $xH=yH$ is locally closed.
This contradicts Condition $\spadesuit 2$. 
\end{proof}
\vs

\noindent {\bf Case $\spadesuit 3$}.
By the assumption, $yO\cap X\subset yH$ for some neighborhood $O$ of $e$ in $G$.
Hence $yH$ is open in $X$, in other words, $\oxH-yH$ is closed. Hence if $y$ didn't belong to $xH$,
then $\overline{xH}-yH$, being a closed subset containing $xH$,
should be equal to $\overline{xH}$.
Therefore $yH=xH$. Since $xH$ is open in $X$,
 $xH$ is locally closed. This finishes the proof of Proposition \ref{cd2}.

\section{Return times of $U$-orbits}
\label{secrethor}
In Section \ref{secperorb}, we have described the possible closures $\oxH$ that contain a periodic $U$-orbit.
In this section we  
recall, for each $k>1$,  a closed $A$-invariant subset $\RK\subset \RFM$
consisting of points which
have a ``thick'' set of return times to $\RK$ itself under the $U$-action; this set was introduced in \cite{MMO2}.

The main result of this section (Proposition \ref{thick}) 
is that for  the set $W_{k,R}:=\RK-\Hor_R$ where  $\Hor_R$ is a sufficiently deep cusp neighborhood,
 every point $x\in W_{k,R}$ has a thick set of return times to $W_{k,R}$ under the $U$-action.

This information will be useful for the following reasons
that will be explained in the next sections:\\
$*$ When $M$ is geometrically finite, these sets $W_{k,R}$ are compact.\\
$*$  
When $M$ is a geometrically finite acylindrical manifold,
every  $H$-orbit $xH$ in $ F^*$ intersects  $W_{k,R}$ for all sufficiently large $k>1$
(Corollary \ref{ffk}).\\
$*$  One can develop
a polynomial divergence argument for  $U$-orbits $yU$, $y\in W$,
with a thick set of return times to a compact set $W$
(Proposition \ref{ffin}).\\
$*$ This argument can be applied to a $U$-minimal subset $Y=\overline{yU}\subset \oxH$ 
 with respect to $W_{k,R}$
(Proposition \ref{main3}).

\subsection{Points with $k$-thick return times} \begin{Def}   
\label{defthi} Let $k\ge 1$.
Let  $T$ be a subset of $\m R$ and $S$
be a subset of a circle $C\subset S^2$.
 \begin{enumerate}
\item $T$ is $k$-thick at $\infty$
if
there exists $s_0\geq 0$ such that, for all $s>s_0$,
\be
\label{thi}
T\cap \left( [-ks, -s]\cup [s, ks]\right) \ne\emptyset
\ee
\item $T$ is $k$-thick
if the condition \eqref{thi} is satisfied for all $s>0$.
\item $T$ is globally $k$-thick if $T\neq \emptyset$ and  $T-t$ is $k$-thick for every $t\in T$.
\item  $S$  is  $k$-uniformly perfect
if for  any homography
$\varphi:C\to \m R\cup\{\infty\}$ such that $\varphi^{-1}(\infty)\in S$
the set $\varphi(S)\cap \m R$ is globally $k$-thick.
\item $S$ is uniformly perfect if it is $k$-uniformly perfect for some $k\ge 1$. \end{enumerate}
\end{Def}

 Let $\Gamma \subset G$ be 
a Zariski dense discrete subgroup  and $M=\Gamma\ba \bH^3$.
For $x\in \FM $, let 
$$
T_x=\{t\in \m R : xu_t\in \RFM\}
$$
be the set of return times of $x$ to the renormalized frame bundle $\RFM$ under the horocycle flow. Note that 
$T_{xu_t}=T_x-t$ for all $t\in \m R$.

Define 
\be
\label{defk}
\RK:=\{x\in \RFM: T_x\text{ contains a globally $k$-thick subset containing $0$}\}.
\ee
It is easy to check that
 $\RK$ is  a closed $A$-invariant subset such that for any $x\in \RK$,
the set $$T_{x,k}:=\{t\in \m R: xu_t\in \RK\}$$
is globally $k$-thick.
\subsection{Disjoint Horoballs}
When $\mathpzc h$ is a horoball  in $\bH^3$ and  $R$ is positive, we denote by
$\hpz_R$ the horoball contained in $\hpz$ whose distance to the boundary of $\hpz$ is $R$.
Hence the bigger $R$ is, the deeper the horoball $\hpz_R$ is.

For an interval $J\subset \br$, we denote by $\ell _J$ the length of $J$.
The following basic lemma says that ``the time spent by a horocycle
in a deep horoball $\hpz_R$ is a small fraction of the time
spent in the fixed horoball $\hpz$''.

\begin{lem} 
\label{deep} 
For any $\alpha>0$, there exists $R=R(\alpha)>0$ such that for  any horoball $\hpz$ in $\mathbb H^3$
and any $g\in G$, we have
\begin{equation}
\label{jnn}  
J_g \pm \alpha \cdot  \ell _{J_g }\subset I_g \end{equation}
where $I_g=\{t\in \br : \pi(gu_t) \in \hpz\}$ and $J_g=\{t\in \br: \pi(gu_t)\in \hpz_R\}$.
\end{lem}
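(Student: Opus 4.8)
\textbf{Proof proposal for Lemma \ref{deep}.}

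The plan is to reduce the statement to a single normalized configuration by exploiting the homogeneity of the situation. First I would observe that both sides of \eqref{jnn} are unchanged if we replace $g$ by $kg$ for $k\in K$ fixing the relevant horoball data, and more importantly that the pair $(\hpz,\hpz_R)$ can be moved by an isometry of $\bH^3$. So without loss of generality I would place $\hpz$ to be the standard horoball based at $\infty\in S^2=\hat{\c}\cup\{\infty\}$, say $\hpz=\{(z,s)\in\bH^3: s>1\}$ in upper half-space coordinates, in which case $\hpz_R=\{s>e^R\}$. The horocycle $t\mapsto \pi(gu_t)$ is then a Euclidean circle (or line) tangent to $\hat\c$; its image under $\pi$ traces out a Euclidean circle in $\bH^3$ lying in a vertical plane, parametrized so that the height function $s(t)$ along it is an explicit elementary function of $t$.

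The key computation is then just the shape of $s(t)$ along a horocycle. If the horocycle $gU$ limits to a point $\xi\in\c$ (the case $\xi=\infty$ being degenerate and giving $I_g=J_g=\br$ or $\emptyset$), then $\pi(gu_t)$ is a circle of some Euclidean diameter $d$ resting on $\hat\c$ at $\xi$, and after an affine reparametrization of $t$ the height is $s(t)=\frac{d}{1+t^2}$ (up to the scaling inherited from $g$). Hence $I_g=\{t: s(t)>1\}$ and $J_g=\{t: s(t)>e^R\}$ are symmetric intervals $(-\sqrt{d-1},\sqrt{d-1})$ and $(-\sqrt{de^{-R}-1},\sqrt{de^{-R}-1})$ about the same center (the center being the $t$ at which the horocycle is highest). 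Writing $\ell_{I_g}=2\sqrt{d-1}$ and $\ell_{J_g}=2\sqrt{de^{-R}-1}$, the inclusion $J_g\pm\alpha\ell_{J_g}\subset I_g$ is exactly the inequality $\tfrac12\ell_{J_g}(1+2\alpha)\le \tfrac12\ell_{I_g}$, i.e. $(1+2\alpha)\sqrt{de^{-R}-1}\le\sqrt{d-1}$. Since $de^{-R}-1 \le e^{-R}(d-1)$ whenever $d\ge 1$ (and $J_g=\emptyset$ when $d<e^R$, in which case there is nothing to prove), it suffices to choose $R=R(\alpha)$ with $e^{-R}(1+2\alpha)^2\le 1$, i.e. $R\ge 2\log(1+2\alpha)$. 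One then checks that the affine reparametrization of $t$ used above does not affect the scale-invariant containment \eqref{jnn}, since rescaling $t$ rescales $I_g$, $J_g$ and $\ell_{J_g}$ by the same factor and translating $t$ translates everything together.

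The main obstacle, such as it is, is bookkeeping rather than mathematics: one must be careful that the parametrization $\pi(gu_t)$ coming from the right $U$-action is genuinely affine-conjugate to the clean parametrization $\frac{d}{1+t^2}$ for the height, including tracking the Euclidean scale of the horocycle (which depends on $g$) and verifying that both $I_g$ and $J_g$ are centered at the same point — this centering is what makes the symmetric fattening $J_g\pm\alpha\ell_{J_g}$ land inside $I_g$. Once the normal form $s(t)=c/(1+(t-t_0)^2)$ is in hand, the choice $R(\alpha)=2\log(1+2\alpha)$ (any larger value also works) completes the proof, uniformly in $\hpz$ and $g$ as required.
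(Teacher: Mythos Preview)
Your proof is correct and follows essentially the same approach as the paper: both reduce to an explicit computation of the height function along the horocycle, observe that $I_g$ and $J_g$ are concentric intervals, and use the elementary inequality $de^{-R}-1\le e^{-R}(d-1)$ to bound the ratio of their lengths. The only cosmetic difference is that the paper normalizes the \emph{horocycle} to be the horizontal line at height~$1$ (placing the horoball at the origin), whereas you normalize the \emph{horoball} to be based at~$\infty$; these are conjugate by an inversion and lead to the same computation, and your constant $R\ge 2\log(1+2\alpha)$ is in fact the sharp one.
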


\begin{proof} We use the upper half-space model of $\bH^3$.
Since 
$$
(g \hpz)_R=g\hpz_R
\;\; 
\text{ for any $g\in G$,}
$$ 
we may assume without loss of generality that
the horoball $\hpz$ is defined by $x^2+y^2+(z-h)^2\le h^2$  and that
$\pi(gU)$ is the horizontal line 
$$
L=\{ \pi(gu_t)=(t,y_0,1):t\in \br\}\subset \bH^3
$$ 
of height one.
As $I_g$ and $J_g$ are symmetric intervals, it suffices to show that
$\ell_{I_g} \ge (\alpha+1)\ell_{J_g}$
assuming that $J_g\ne \emptyset$.
The boundaries of $\hpz$ and $\hpz_R$ are defined by $x^2+y^2= 2zh-z^2$ and $x^2+y^2= 2zhe^{-R} -z^2$ respectively.
Hence the intersections $L\cap \hpz$ and $L\cap \hpz_R$ are respectively given by 
$$
t_1^2= 2h-1-y_0^2
\;\;\;\text{and}\;\;\;
t_2^2=2he^{-R}  -1-y_0^2.
$$
Note that
$\ell_{I_g}=2|t_1|$ and $\ell_{J_g}=2|t_2|$.
We compute 
$$t_2^2\leq e^{-R}t_1^2.
$$
Therefore if we take $R$ so that $e^R>(\alpha+1)^2$,
then we gets
$\ell_{I_g}\ge  (\alpha +1) \ell_{J_g}$.
\end{proof}

We will now use notations $\Hor$ and $\Hor_R$ 
as in \eqref{hor} and \eqref{horR}, 
even though we do not assume $\Gamma$ to be geometrically finite:
we just fix finitely many horoballs  $\hpz^{1}, \cdots, \hpz^m$ in $\bH^3$
such that the horoballs $\gamma\hpz^i$,
for $\gamma\in \Gamma$ and $i\leq m$,
form a disjoint collection of open horoballs and we set
\begin{equation*}
\Hor:=\{ [g] \in F : \pi(g) \in 
\textstyle\bigcup_{\gamma,i} \gamma \hpz^i \},
\end{equation*}
\begin{equation*}
\Hor_R:=\{ [g] \in F : \pi(g) \in 
\textstyle\bigcup_{\gamma,i} \gamma \hpz^i_R \}.
\end{equation*}

\begin{cor} 
\label{deep2}
Given any  $\alpha\ge 0$, there exists $R>0$ satisfying the following:  for all $x\in \FM$,
we write the set of return times of $xU$ in $\Hor$ and $\Hor_R$ as disjoint unions
of open intervals
\be
\label{injn}
\{t\in \m R :xu_t\in \Hor\}=\textstyle\bigcup I_n
\;\;{\rm and}\;\;
\{t\in \m R :xu_t\in \Hor_R\}=\textstyle\bigcup J_n
\ee 
so that $J_n\subset I_n$ for all $n$ ($J_n$ may be empty). Then \begin{equation}
\label{jnn2}  
J_n\pm \alpha \ell_{J_n}\subset I_n .\end{equation}
\end{cor}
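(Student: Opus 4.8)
\textbf{Proof plan for Corollary \ref{deep2}.}

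The plan is to reduce the corollary to Lemma \ref{deep} by a connected-component analysis of the horoball neighborhoods. First I would fix $\alpha \ge 0$ and let $R = R(\alpha)$ be the radius produced by Lemma \ref{deep}. Given $x = [g] \in \FM$, the set $\{t : xu_t \in \Hor\}$ is, by definition of $\Hor$, the set of $t$ for which $\pi(gu_t)$ lies in the disjoint union $\bigcup_{\gamma,i}\gamma\hpz^i$ of open horoballs. Since the horoballs are pairwise disjoint and each is convex, the horocyclic line $\pi(gU)$ meets each individual horoball $\gamma\hpz^i$ in (at most) a single open sub-arc, hence in a single open interval of $t$-values; and distinct horoballs give disjoint such intervals. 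This is what produces the decomposition $\{t : xu_t \in \Hor\} = \bigcup I_n$ into disjoint open intervals, where each $I_n$ is exactly $\{t : \pi(gu_t) \in \gamma_n \hpz^{i_n}\}$ for the unique horoball responsible for that component. The same reasoning applied to the deeper horoballs $\gamma\hpz^i_R$ gives $\{t : xu_t \in \Hor_R\} = \bigcup J_n$, and since $\gamma\hpz^i_R \subset \gamma\hpz^i$, each nonempty $J_n$ lands inside exactly one $I_n$ (the one coming from the same horoball); we index so that $J_n \subset I_n$ for all $n$, allowing $J_n = \emptyset$.

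Next I would apply Lemma \ref{deep} horoball by horoball. Fix $n$ and let $\gamma_n\hpz^{i_n}$ be the horoball giving $I_n$. Writing $\hpz := \gamma_n\hpz^{i_n}$, Lemma \ref{deep} (with this horoball and this $g$, noting $(\gamma_n\hpz^{i_n})_R = \gamma_n\hpz^{i_n}_R$ since $(g\hpz)_R = g\hpz_R$) gives exactly $J_g \pm \alpha\,\ell_{J_g} \subset I_g$ where $I_g = \{t : \pi(gu_t) \in \hpz\} = I_n$ and $J_g = \{t : \pi(gu_t) \in \hpz_R\} = J_n$. This is precisely the inequality \eqref{jnn2}. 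Summing over $n$ gives the conclusion.

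The only point requiring a little care — and the step I expect to be the main (minor) obstacle — is justifying that the horocycle meets each horoball in a single interval, i.e. that $I_g$ in Lemma \ref{deep} is genuinely an interval and that the components of $\{t : xu_t \in \Hor\}$ are in bijection with the horoballs visited. For a single horoball this is visible from the coordinates used in the proof of Lemma \ref{deep}: in the upper half-space model with $\pi(gU)$ a horizontal line of height one, $L \cap \hpz$ is cut out by $t^2 \le 2h - 1 - y_0^2$, manifestly an interval symmetric about a point. Globally it then follows because the horoballs $\gamma\hpz^i$ are pairwise disjoint, so the sub-arcs of $\pi(gU)$ lying in different horoballs are disjoint and each contributes one interval; relabeling these intervals as $I_n$ and the (possibly empty) deeper sub-intervals as $J_n$ with matching indices completes the argument. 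No new estimate beyond Lemma \ref{deep} is needed.
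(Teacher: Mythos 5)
Your proposal is correct and is precisely the argument the paper intends (the corollary is stated without proof as an immediate consequence of Lemma \ref{deep}, applied to each horoball $\gamma\hpz^i$ separately, using that the horoballs are pairwise disjoint so the components of the return-time set correspond to individual horoballs). Your coordinate justification that each $I_n$ is genuinely a single interval correctly patches the one point that needs care, since geodesic convexity of a horoball alone would not settle this for a horocycle.
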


\subsection{Thickness of the set of return times in a compact set}

The following proposition roughly says 
that for a point $x$ for which the horocyclic flow returns often
to $\RK$, 
its $U$-flow also returns often to the set $$W_{k,R}:=\RK-\Hor_R.$$

\begin{prop} 
\label{thick} 
Let $\Gamma \subset G$ be a
Zariski dense discrete subgroup. 
Then, there exists $R>0$  such that  for any $x\in W_{k, R} $,
the set 
$$\{t\in \m R: xu_t\in W_{k, R} \}$$ is $4k$-thick at $\infty$.
\end{prop}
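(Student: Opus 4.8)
The plan is to fix $k>1$ and start with the $k$-thickness of the return times of $x$ to $\RK$ itself: by definition of $\RK$ in \eqref{defk}, for $x\in \RK$ the set $T_{x,k}=\{t:xu_t\in\RK\}$ is globally $k$-thick, so in particular for every $s>0$ there is $t\in T_{x,k}\cap([-ks,-s]\cup[s,ks])$. The only obstruction to membership in $W_{k,R}=\RK-\Hor_R$ is that such a return time $t$ might land inside the deep horoball neighborhood $\Hor_R$. So the task is: given a point $xu_t\in\RK$ with $t$ in the good window, either $xu_t\notin\Hor_R$ already, in which case we are done, or $xu_t$ lies in some horoball component $J_n$ of the set $\{t':xu_{t'}\in\Hor_R\}$, and then I want to perturb $t$ to escape $J_n$ while staying in an enlarged window and staying in $\RK$.

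\textbf{Key steps.} First, apply Corollary~\ref{deep2} with a suitably chosen $\alpha$ (I expect $\alpha=1$ or $\alpha=2$ to suffice, to be pinned down at the end) to get $R>0$ so that each deep-horoball interval $J_n$ is surrounded by a shallow-horoball interval $I_n\supset J_n\pm\alpha\ell_{J_n}$. Second, observe that inside a horoball $xU$ cannot meet $\RFM$ at all — a horocycle entering a disjoint horoball $\gamma\hpz^i$ corresponds to a geodesic through $\pi(x)$ that exits the core of $M$ — so actually $T_{x,k}$ is disjoint from the open intervals $I_n$ when the $I_n$ are slightly shrunk; more precisely, the return times to $\RFM$ avoid the interiors of the shallow horoball intervals, hence $T_{x,k}\cap I_n=\emptyset$ for each $n$ (here I use that $\Hor\subset\Hor_0$ and that frames over horoballs are not in $\RFM$ once the horoballs are chosen disjoint from the core, which is the content of how $\Hor$ is set up). Third, given $s$ large and a return time $t\in T_{x,k}\cap([-ks,-s]\cup[s,ks])$: since $t\in T_{x,k}\subset\R-\bigcup I_n$, if $t\notin\bigcup J_n$ then $xu_t\in\RK-\Hor_R=W_{k,R}$ and $t$ lies in $[-ks,-s]\cup[s,ks]\subset[-4ks,-s]\cup[s,4ks]$, done. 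The remaining case $t\in J_n$ for some $n$ is the one to rule out — but $t\in T_{x,k}$ and $T_{x,k}\cap I_n=\emptyset$ while $J_n\subset I_n$, a contradiction; so in fact this case never occurs and the bare $k$-thickness of $T_{x,k}$ already gives $k$-thick (hence $4k$-thick) return times to $W_{k,R}$.

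\textbf{Where the real difficulty lies.} The subtlety — and I suspect the reason the statement claims only $4k$ rather than $k$ — is that the preceding paragraph's claim "$T_{x,k}\cap I_n=\emptyset$" is too strong: a horocyclic segment can be tangent to or graze the shallow horoball $\hpz$ without its frames leaving $\RFM$, because $\RFM$ is defined by the endpoint condition $g^\pm\in\Lambda$, not by staying in the core; a horocycle with both endpoints in $\Lambda$ can still dip into a cusp horoball. So the honest argument must instead be: take $t\in T_{x,k}$ in the window $[s,ks]$; if $t\notin\Hor_R$ we are done; if $xu_t\in\Hor_R$, it sits in some $J_n\subset I_n$, and I move to one of the two endpoints of $I_n$ — call the nearest point of $\R-\Hor_R$ outside $J_n$ but the argument should instead re-run the $k$-thickness of $T_{x,k}$ at a \emph{larger} scale. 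Concretely: from $xu_t\in\RK$ and global $k$-thickness of $T_{xu_t,k}=T_{x,k}-t$, and the Corollary~\ref{deep2} estimate $\ell_{I_n}\le(1+2\alpha)\ell_{J_n}$ comparing the time $xu_\cdot$ spends in $\Hor_R$ versus $\Hor$, one shows the escaped return time stays within a factor (roughly $k(1+2\alpha)$, controlled by $4$ after the choice $\alpha$) of $s$. The main obstacle, then, is this bookkeeping: carefully choosing $\alpha$, tracking how far the perturbed return time can move (it moves by at most $\ell_{I_n}$, and $\ell_{I_n}$ is comparable to the portion of $[s,ks]$ already consumed), and checking that the resulting time still lands in $[-4ks,-s]\cup[s,4ks]$ for all large $s$, uniformly in $n$ and in $x\in W_{k,R}$. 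This is the only genuinely quantitative step; everything else is assembling Definition~\ref{defthi}, \eqref{defk}, and Corollary~\ref{deep2}.
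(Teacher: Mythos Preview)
Your outline has the right shape---use global $k$-thickness of $T_{x,k}$, then if the resulting return time $t$ lands in some $J_n$, use Corollary~\ref{deep2} and a second application of thickness to escape---but the quantitative mechanism is not in place, and one inequality is backwards.

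First, Corollary~\ref{deep2} gives $J_n\pm\alpha\ell_{J_n}\subset I_n$, i.e.\ $\ell_{I_n}\ge(1+2\alpha)\ell_{J_n}$, the reverse of what you wrote. The corollary says the shallow interval is \emph{much longer} than the deep one; it does not bound $\ell_{I_n}$ from above. Second, and more seriously, you never explain how the perturbed time is simultaneously (a) in $T_{x,k}$, (b) outside \emph{every} $J_m$, and (c) in the enlarged window. Moving to ``the nearest point of $\br-\Hor_R$'' gives (b) but not (a); ``re-running thickness at a larger scale'' gives (a) but neither (b) nor (c). The paper's device is different: take $\alpha=2k$ (not $1$ or $2$), and first apply $k$-thickness at scale $2s$ to get $t\in T_{x,k}$ with $2s\le|t|\le 2ks$. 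For $s$ past the horoball containing $0$ one has $0\notin I_n$, and the inclusion $J_n\pm 2k\ell_n\subset I_n$ then forces $2k\ell_n\le|t|$, bounding $\ell_n$ by $|t|$. Now apply \emph{global} $k$-thickness of $T_{x,k}$ centred at $t$, at the small scale $\ell_n$: there is $t'\in T_{x,k}$ with $\ell_n\le|t'-t|\le k\ell_n$. Since $|t'-t|\ge\ell_n$ one has $t'\notin J_n$; since $|t'-t|\le k\ell_n<2k\ell_n$ one has $t'\in I_n$, and the $I_m$ are pairwise disjoint, so $t'\notin J_m$ for any $m$. Finally $k\ell_n\le|t|/2$ gives $|t|/2\le|t'|\le 2|t|$, hence $s\le|t'|\le 4ks$. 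The factor $4k$ comes from the product of the initial doubling ($2s$ instead of $s$) and the secondary doubling from the perturbation.
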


\begin{proof} 
Writing $x=[g]\in \RK$, we may assume 
$g^+=\infty$ and $g^-=0$ without loss of generality. 
This means that $gU$ is a horizontal line.
It follows from the definition of $\RK$ that the set
$$
T_{x,k} := \{t\in \m R: xu_t \in \RK\}.
$$
is globally $k$-thick. Let $R=R(2k)$ be as given by Lemma \ref{deep}.
We use again the decomposition  \eqref{injn} of the sets of return times 
as a union of disjoint open intervals:
$\{t\in \br: xu_t\in \Hor\}=\bigcup I_n$ 
and
$\{t\in \m R: xu_t\in \Hor_R\}=\bigcup J_n$ with $J_n\subset I_n$.
Since $xU$ is not contained in $\Hor_R$
the  intervals $J_n$ have finite length.
Write $\ell_n=\ell_{J_n}$.   
It follows from Corollary \ref{deep2} that for each $n$,
\begin{equation}
\label{jn}  
J_n\pm 2k  \ell_{n}\subset I_n. 
\end{equation}
If $x\not\in\Hor$, set $s_x=0$. 
If $x\in\Hor$, let $n$ be the integer such that $0\in I_n=(a_n, b_n)$
and set $s_x=\max (|a_n|, |b_{n}|) $.
 
We  claim that  for all $s> s_x$,
\be
\label{te} 
\left( T_{x,k} -\textstyle\bigcup J_n\right)  
\cap  \left( [-4ks, -s]\cup [s, 4 ks] \right) \ne \emptyset .\ee
This means that
$ \{u\in U: xu\in \RK-\Hor_R\}$ is $4k$-thick at $\infty$.

Since $T_{x,k}$ is  $k$-thick, there exists 
$$t\in T_{x,k} \cap ([-2ks, -2s] \cup [2s, 2ks]).$$
If $t\notin J_n$ for some $n$, the claim \eqref{te} holds.
Hence suppose $t\in J_n$. 
By the choice of $s_x$, we have $$0\notin I_n .$$
 
By \eqref{jn} and by the fact that $I_n$ does not contain $0$,
we have $$2 k \ell_n \le |t| .$$
 
By the global $k$-thickness of $T_{x,k}$,
there exists 
$$
t'\in T_{x,k}\cap (t\pm [\ell_n, k\ell_n]).
$$

By \eqref{jn},
we have $t'\in I_n$. Clearly, $t'\notin J_n$.
In order to prove \eqref{te}, it remains to prove $s\le| t'| \le 4ks$.
For this, note that $|t| -k\ell_n \le |t'|\le |t| +k\ell_n$. 
Hence 
\begin{equation*}
|t'| \le  2|t| \le 4ks\quad \text{ and } \quad |t'|\ge |t|/2 \ge s,
\end{equation*}
proving the claim. \end{proof}

Note that in the above proof, we set $s_x=0$ if $x\notin \Hor$. Therefore we have the following corollary of the proof:
\begin{cor}
for any $x \in\RK-\Hor$, 
the set $$\{t\in \m R: xu_t\in \RK-\Hor_R \}$$ is $4k$-thick.
\end{cor}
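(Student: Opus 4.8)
The plan is to obtain this as an immediate by-product of the proof of Proposition~\ref{thick}, using the same constant $R$ (namely $R=R(2k)$ supplied by Lemma~\ref{deep}). The first observation I would make is that $\Hor_R\subset\Hor$, so the hypothesis $x\in\RK-\Hor$ is stronger than $x\in\RK-\Hor_R=W_{k,R}$; hence every construction performed in that proof applies to our $x$. Concretely: we normalize $g^{+}=\infty$, $g^{-}=0$ so that $gU$ is a horizontal line, write the return-time sets as disjoint unions $\{t:xu_t\in\Hor\}=\bigcup I_n$ and $\{t:xu_t\in\Hor_R\}=\bigcup J_n$ with $J_n\subset I_n$ and each $\ell_{J_n}$ finite, and deduce from Corollary~\ref{deep2} that $J_n\pm 2k\,\ell_{J_n}\subset I_n$.

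Next I would locate the one place in the proof of Proposition~\ref{thick} where the argument branches according to whether $x$ lies in $\Hor$: it is the choice of the threshold $s_x$, which is set to $0$ when $x\notin\Hor$ and to $\max(|a_n|,|b_n|)$ --- for the interval $I_n=(a_n,b_n)$ containing $0$ --- only when $x\in\Hor$. Since $x\in\RK-\Hor$ forces $x\notin\Hor$, we have $s_x=0$. The estimate \eqref{te} established in that proof,
$$\bigl(T_{x,k}-\textstyle\bigcup_n J_n\bigr)\cap\bigl([-4ks,-s]\cup[s,4ks]\bigr)\ne\emptyset,$$
was proved for all $s>s_x$, hence here it holds for all $s>0$. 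Since $T_{x,k}-\bigcup_n J_n=\{t\in\br:xu_t\in\RK\text{ and }xu_t\notin\Hor_R\}=\{t\in\br:xu_t\in\RK-\Hor_R\}$, this is precisely the assertion that $\{t\in\br:xu_t\in W_{k,R}\}$ is $4k$-thick in the sense of Definition~\ref{defthi}(2), rather than merely $4k$-thick at $\infty$.

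I do not expect a genuine obstacle here: all of the quantitative work is already contained in Proposition~\ref{thick} and Lemma~\ref{deep}. The only point is the conceptual one that the ``bad'' initial segment $(0,s_x]$ --- the sole reason one can in general only conclude thickness at $\infty$ --- collapses once the base point $x$ itself avoids the cusp neighborhood $\Hor$, so that the thickness inequality becomes available at every scale $s>0$.
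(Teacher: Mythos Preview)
Your proposal is correct and follows exactly the paper's own approach: the paper's proof consists solely of the remark that in the proof of Proposition~\ref{thick} one sets $s_x=0$ when $x\notin\Hor$, so that \eqref{te} holds for all $s>0$ and yields $4k$-thickness rather than $4k$-thickness at $\infty$. Your write-up simply spells out this observation in more detail.
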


\section{Homogeneous dynamics}
\label{sechomdyn}
In this section we explain the polynomial divergence argument
for Zariski dense subgroups $\Gamma$.
The main assumption requires that the set of return times of the horocyclic flow in a suitable 
compact set is $k$-thick at $\infty$. 
\vs


We begin by the following lemma which is analogous to Lemmas \ref{sv} and \ref{alg}
and that we will apply to a set $T$ of return times 
in the proof of Proposition \ref{ffin}.

\begin{lem} 
\label{thickT} 
Let $T\subset U$ be a subset which is $k$-thick at $\infty$.
\begin{enumerate}
\item If $g_n\to e$ in $G-VH$, then 
$\displaystyle\limsup_{n\to\infty} T g_n H$ contains a sequence $v_n\to e$ in $V-\{e\}$.
\item If $g_n\to e$ in $G-AN$, then 
$\displaystyle\limsup_{n\to\infty} T g_n U$ contains a sequence $\ell_n\to e$ in $AV-\{e\}$.
\end{enumerate}
\end{lem}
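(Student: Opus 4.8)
The plan is to mimic the algebraic mechanism of Lemmas \ref{sv} and \ref{alg}, but now paying attention to the fact that we only get to conjugate/translate by elements $u_t$ with $t$ ranging over the $k$-thick set $T$ rather than all of $U$. The underlying linear-algebra fact is the same in both parts: if $g_n = \exp(M_n) \to e$ with $M_n$ lying in $i\mathcal M_2(\br)$ outside the Lie algebra of $VH$ (part (1)) or outside the Lie algebra of $AN$ (part (2)), then conjugating $M_n$ by $u_t = \exp(t \mathpzc e)$, where $\mathpzc e$ is the nilpotent generator of $\Lie U$, produces a matrix $\Ad(u_t) M_n$ which is a polynomial of degree at most $2$ in $t$. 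For part (1), first I would decompose $\Lie G = \Lie V \oplus \Lie(VH)$-complement appropriately and observe that the component of $\Ad(u_t)M_n$ in the $\Lie V$-direction is a nonzero polynomial in $t$ of degree $\le 2$ once $M_n \notin \Lie(VH)$; similarly for part (2) with $\Lie(AV)$ and $\Lie(AN)$. The standard trick is then: given the scale $\|M_n\|\to 0$, choose $t_n \in T$ with $|t_n|$ of size comparable to $\|M_n\|^{-1/d}$ (where $d$ is the relevant degree), so that $\Ad(u_{t_n})M_n$ has norm bounded away from $0$ and $\infty$; such a $t_n$ exists precisely because $T$ is $k$-thick at $\infty$, so $T$ meets every window $[s, ks]$ (or its negative) for $s$ large, and $\|M_n\|^{-1/d}\to\infty$. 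Passing to a subsequence, $\Ad(u_{t_n})M_n \to M_\infty \ne 0$, and then $u_{t_n} g_n u_{t_n}^{-1} = \exp(\Ad(u_{t_n})M_n) \to \exp(M_\infty)$.

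The second step is to project this limit into the right coset space. For part (1): writing $u_{t_n} g_n u_{t_n}^{-1}$ and using $u_{t_n}^{-1} \in U \subset H$, we get $u_{t_n} g_n u_{t_n}^{-1} H = u_{t_n} g_n H$, so $u_{t_n} g_n H$ subconverges to $\exp(M_\infty) H$. The point is to arrange (by a further refinement of the choice of $t_n$, or by decomposing $\exp(M_\infty)$ using a local transversal to $H$ through $e$) that the resulting element lies in $VH$; since $\exp(M_\infty) \notin$ (the closure of the bad directions) and $M_\infty$ lies in the span of $\Lie V$ modulo $\Lie H$, one gets $\exp(M_\infty) \in vH$ for some $v \in V - \{e\}$, and then $v_n := (u_{t_n} g_n u_{t_n}^{-1} \bmod H)$, reinterpreted as elements of $V$, is the required sequence converging to $v$ — wait, I actually want a sequence converging to $e$, so the cleaner route is to keep $t_n$ growing only just fast enough that $\Ad(u_{t_n})M_n \to 0$ along a direction that becomes tangent to $\Lie V$; concretely, choose $|t_n|$ slightly smaller than the critical scale so that the $\Lie V$-component dominates but the whole thing still tends to $0$. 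Then $u_{t_n} g_n H \to eH$ with the displacement lying in $V-\{e\}$ for each $n$, giving $v_n \to e$ in $V - \{e\}$ as claimed. Part (2) is identical with $H$ replaced by $U$, $VH$ by $AN$, and the target group $V$ by $AV$: here $\Lie(AN)$ is the span of $\Lie A \oplus \Lie U \oplus \Lie V$-minus-$\Lie V$... more precisely $\Lie(AN) = \Lie A \oplus \Lie N$ and the complement picked out by $\Ad(u_t)$-dynamics when $M_n \notin \Lie(AN)$ lands in $\Lie A \oplus \Lie V = \Lie(AV)$, since conjugation by $U$ preserves $AN$ and the only new directions that can appear are the $V$-ones, possibly mixed with $A$.

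The main obstacle I anticipate is not the existence of the limit $M_\infty$ — that is the routine polynomial-escape computation — but rather bookkeeping the coset projections so that the limit genuinely lands in $VH$ (resp. $AN$) rather than in some larger set, \emph{and} so that the approximating elements $v_n$ (resp. $\ell_n$) are nonzero while still tending to $e$. This requires choosing the scale of $t_n$ with some care relative to the relative sizes of the different-degree components of $\Ad(u_{t_n})M_n$: if the genuinely-$V$ component has degree $2$ while a spurious component has degree $1$, one picks $|t_n|$ at the scale where the degree-$2$ term is comparable to $1$ and the degree-$1$ term then automatically dies, whereas in the opposite situation one must argue that $M_n \notin \Lie(VH)$ forces the relevant higher-degree coefficient to be the dominant one. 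I would handle this by writing out $\Ad(u_t)$ explicitly on $2\times 2$ matrices (a short computation: $\Ad(u_t)\begin{pmatrix} a & b \\ c & d\end{pmatrix} = \begin{pmatrix} a + tc & b + t(d-a) - t^2 c \\ c & d - tc\end{pmatrix}$), reading off that the off-diagonal $(1,2)$-entry carries the full degree-$2$ polynomial in $t$, and checking that $M_n \notin \Lie(VH)$ (resp. $\notin \Lie(AN)$) is exactly the condition ensuring the leading coefficient in the direction we want is nonzero, so that a $k$-thick choice of $t_n$ does the job.
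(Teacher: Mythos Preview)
Your plan is the polynomial-divergence argument of \cite[Theorems 8.1 and 8.2]{MMO}, which is precisely what the paper invokes: the paper gives no independent proof, only the observation that when the thick sets $T_n$ are all equal to a single $T$, the hypothesis ``$k$-thick at $\infty$'' suffices because the relevant scales $|t_n|$ tend to infinity. So your approach and the paper's are the same.

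One point needs correcting. Your ``wait'' paragraph, where you propose taking $|t_n|$ sub-critically so that $\Ad(u_{t_n})M_n\to 0$ while its direction becomes tangent to $\Lie V$, does not do what you want: if the conjugate tends to $e$ then you have only placed $e$ in the $\limsup$, not a nontrivial element of $V$. Your first instinct was the right one. Fix a small target size $\epsilon>0$; choose the scale $s_n=s_n(\epsilon)$ so that the dominant (upper-triangular) term of $\Ad(u_t)M_n$ has magnitude comparable to $\epsilon$ for $|t|\in[s_n,ks_n]$; since $g_n\to e$ forces $s_n\to\infty$, $k$-thickness at $\infty$ supplies $t_n\in T$ in that window; pass to a subsequential limit $v_\epsilon\in V$ with $|v_\epsilon|\asymp\epsilon$. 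The non-$V$ entries of $\Ad(u_{t_n})M_n$ vanish in the limit because they carry a strictly lower power of $t$ at this scale (this is exactly the case split you flag at the end: if the $(2,1)$-entry $c_n\neq 0$ the scale is $|c_n|^{-1/2}$, if $c_n=0$ but the diagonal is nonzero the scale is $|a_n|^{-1}$, and in the mixed regime one takes the larger of the two). Now let $\epsilon\to 0$ along a sequence to obtain $v_m\to e$ in $V-\{e\}$ inside the $\limsup$. Part (2) runs identically with $G/U$ in place of $G/H$: the hypothesis $g_n\notin AN$ is exactly $c_n\neq 0$, and modulo $U$ on the right the dominant contribution lands in $AV$.
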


\begin{proof}
Lemma \ref{thickT} is a slight modification of \cite[Thm 8.1 and 8.2]{MMO} where
it was stated for a sequence $T_n$ of $k$-thick subsets instead of a single set $T$.
If all $T_n$ are equal to a fixed $T$,
the $k$-thickness at $\infty$ is sufficient for the proof.
\end{proof}

For a compact subset $W$ of $\FM=\Gamma\backslash G$,
a $U$-invariant closed subset $Y\subset \Gamma\ba G$ is said to be 
{\it $U$-minimal 
with respect to  $W$} if
$Y\cap W\ne \emptyset$ and $\overline{yU}=Y$ for any $y\in Y\cap W$. Such a minimal set $Y$ always exists.

\begin{prop}
\label{ffin}  Let $\Gamma\subset G$ be a Zariski dense discrete subgroup with limit set $\Lambda$.
Let $W\subset F_\Lambda^* $ be a compact subset, and $X\subset \FM$ a closed $H$-invariant subset intersecting $W$.
Let $Y\subset X$ be a $U$-minimal subset with respect to  $W$. Assume that
\begin{enumerate}
\item
there exists $k\ge 1$ such that for any $y\in Y\cap W$, 
$
T_y=\{t\in \br: yu_t\in W\}
$ 
is $k$-thick at $\infty$;
\item 
there exists $y\in Y\cap W$ such that $X-yH$ is not closed;
\item
there exists $y\in Y\cap W$ such that $yU$ is not periodic;
\item
there exist $y\in Y\cap W$  and $t_n \to +\infty$ such that
$ya_{-t_n}$ belongs to $ W$.
\end{enumerate}
Then $$ X=\FL.$$ 
\end{prop}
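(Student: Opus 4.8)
The plan is to show $X = F_\Lambda$ by producing, inside $X$, a one-parameter piece of a $V$-orbit based at a separating frame, and then invoking Corollary \ref{vp}. The starting point is the $U$-minimal set $Y$ with respect to $W$. The key mechanism is the polynomial divergence argument: take two nearby points in $X$ whose $U$-orbits diverge, track the relative displacement, and use the $k$-thickness at $\infty$ of the return times $T_y$ together with Lemma \ref{thickT} to control the limiting direction of divergence. Hypothesis (2) gives a point $y \in Y \cap W$ and a sequence $x_n \to y$ with $x_n \in X$ but $x_n \notin yH$; writing $x_n = y g_n$ with $g_n \to e$, $g_n \notin H$, we want to push $x_n$ along $U$ (using the return-time set $T_y$, which is $k$-thick at $\infty$) so that the $U$-translates come back near $y$ while the transverse component $g_n$ is magnified to a definite size. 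Because $T_y$ is only $k$-thick at $\infty$ rather than syndetic, this is exactly the setting of Lemma \ref{thickT}.

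The argument then splits according to how $g_n$ sits relative to the subgroups. First I would handle the case $g_n \notin VH$ for infinitely many $n$: Lemma \ref{thickT}(1) applied to $T = T_y$ produces, in $\limsup_n T_y g_n H \subset S_y := \{g : yg \in X\}$ (using $U$-invariance of $W$-return-time sets and $H$-invariance of $X$), a sequence $v_n \to e$ in $V - \{e\}$ with $y v_n \in X$. Then an elementary-group/recurrence argument — conjugating $v_n$ by $a_{-t_n}$ using hypothesis (4), so that $a_{-t_n} v_n a_{t_n}$ has expanding $V$-component while $y a_{-t_n}$ stays in the compact set $W$ — lets one extract a genuine one-parameter semigroup $V^+$ with $y_0 V^+ \subset X$ for some $y_0 \in W \subset F^*_\Lambda$. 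Then Corollary \ref{vp} gives $X \supset \overline{y_0 V^+ H} \supset F_\Lambda$, hence $X = F_\Lambda$. If instead $g_n \in VH$ for all large $n$ but $g_n \notin H$, then $y g_n \in yVH$ already gives a nontrivial $V$-direction in $X$ through a point whose $U$-orbit closure meets $W$; combined with hypothesis (3) (non-periodicity of some $yU$) one again bootstraps via the $A$-recurrence of hypothesis (4) and Corollary \ref{vp}. The remaining possibility, $g_n \in H$ for all large $n$, is excluded since it would force $x_n \in yH$, contradicting the choice in hypothesis (2) — or more precisely forces $yH$ to be locally closed, again contradicting (2) as in the $\spadesuit 3$ analysis of Proposition \ref{cd2}.

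The delicate point throughout is the interaction between the $A$-action and the thickness of return times: to turn an infinitesimal $V$-displacement $v_n \to e$ into a macroscopic semigroup $V^+ \subset V$, one flows by $a_{-t_n}$ with $t_n \to +\infty$ (hypothesis (4)), which expands the $V$-coordinate by $e^{t_n}$; one must simultaneously keep the base point $y a_{-t_n}$ in the compact set $W$ so that, after passing to a subsequence, $y a_{-t_n} \to y_0 \in W$ and the expanded displacements converge to a half-line $\exp(\mathcal V^+)$. This is precisely why hypothesis (4) is needed and why $W$ must be compact; the $k$-thickness at $\infty$ is what lets Lemma \ref{thickT} be applied with the honest return-time set $T_y$ rather than a syndetic one. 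I expect the main obstacle to be the careful case analysis on the position of $g_n$ (the three cases $\notin VH$, $\in VH - H$, $\in H$, mirroring $\spadesuit 1, \spadesuit 2, \spadesuit 3$ of Proposition \ref{cd2}) combined with making the extraction of $V^+$ rigorous — in particular, verifying that the relevant limits land in $X$ rather than merely in $\overline{X} = X$ is automatic, but verifying that the base point of the limiting $V^+$-orbit genuinely lies in $F^* = F^*_\Lambda$-part that Corollary \ref{vp} requires is where compactness of $W \subset F^*_\Lambda$ is essential. Hypotheses (2) and (3) are what rule out $xH$ being locally closed or $Y$ being a single periodic $U$-orbit (the situations handled separately in Propositions \ref{cd2} and the later sections), so that we are genuinely in the divergence regime where this machinery applies.
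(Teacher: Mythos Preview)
Your proposal has a genuine gap: you cannot pass directly from a sequence $v_n \to e$ in $V - \{e\}$ with $yv_n \in X$ to a full semigroup $y_0 V^+ \subset X$ using only hypothesis~(4). Writing $v_n = u_{ir_n}$, you have $(yv_n)a_{-t_m} = (ya_{-t_m})(a_{t_m} v_n a_{-t_m}) \in X$, and the $V$-parameter of the conjugate is $r_n e^{t_m}$. For $y_0 V^+ \subset X$ you would need the set $\{r_n e^{t_m} : n, m\}$ to be dense in a ray, but nothing in hypotheses (1)--(4) forces this: if $r_n = 2^{-n}$ and $e^{t_m} = 2^m$ one gets only $\{2^k : k \in \z\}$. (Incidentally, conjugation $a_{-t} v a_t$ for $t>0$ \emph{contracts} $V$; the expansion you want is $a_{t_m} v_n a_{-t_m}$, which appears after commuting past $ya_{-t_m}$.) This is exactly why the paper inserts two further steps before invoking hypothesis~(4).

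The missing ingredient is the paper's second step: using hypothesis~(3) and Lemma~\ref{thickT}(2), one shows that the $U$-minimal set $Y$ is itself invariant under a one-parameter semigroup $L \subset AV$, i.e.\ $YL \subset Y$. This is a \emph{constructive} use of hypothesis~(3), not merely a non-degeneracy condition ruling out periodic orbits as you suggest. One then combines $YL \subset Y$ with $Yv_n \subset X$ (your first step, upgraded from a single $y$ to all of $Y$ by minimality) to obtain $YV_I \subset X$ for a genuine \emph{interval} $V_I \ni e$ in $V$: for instance when $L = A^+$, one has $Y(\bigcup_n A^+ v_n A) \subset X$, and the closure of $\bigcup_n A^+ v_n A$ contains such an interval because the $A^+$-factor now ranges over a continuum rather than a discrete sequence. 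Only with an interval in hand does the $A$-expansion of hypothesis~(4) yield a full semigroup $V^+$ based at a point of $W \subset F_\Lambda^*$. Your organizing case split on $g_n$ relative to $VH$ and $H$, modeled on the $\spadesuit$-trichotomy of Proposition~\ref{cd2}, is also misdirected here: that trichotomy works because a periodic $U$-orbit anchors the argument, which is precisely what hypothesis~(3) excludes; in the present setting hypotheses (2), (3), (4) concern possibly different points of $Y \cap W$ and feed into logically independent steps, stitched together by the $U$-minimality of $Y$.
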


The proof of this proposition is based on  the polynomial divergence argument, which allows us to find an orbit $yV^+$
sitting inside $X$. Combined with Corollary \ref{vp}, 
this implies that $X$ is dense. 

\begin{proof}  
{\bf First step}: We claim that
there exist $v_n\to e$ in $V\!-\!\{ e\}$ such that $Yv_n\subset X$.

We follow the proof of \cite[Lem. 9.7]{MMO}. Let $y\in Y\cap W$ be as in (2). Since $yH$ is not open in $X$,
  there exists a sequence $g_n\to e$ in $G-H$ 
such that $yg_n\in X$.

If $g_n\in VH$, the claim follows easily. 

If $g_n\notin VH$, then,
by Lemma \ref{thickT},
there exist $t_n \in  T_y$, 
$h_n \in H$ such that  
$u_{-t_n} g_n h_n \to v$ for some arbitrarily small $v\in V-\{e\}$.
Since $yu_{t_n}\in Y\cap W$ and $Y\cap W$ is compact, the sequence $yu_{t_n}$
converges to some $y_0\in Y\cap W$, by passing to a subsequence. It follows that the element
$y_0 v=\displaystyle\lim_{n\to\infty} yg_nh_n$ is contained in $X$, and hence $Yv\subset X$ by the minimality. Since $v$ can be taken arbitrarily close to $e$, this proves our first claim.
\vs

{\bf Second step}:
We claim that there exists a one-parameter semigroup $L\subset AV$ such that  $YL\subset Y$. 
 Let $y\in Y\cap W$ be as in (3). 
We follow the proof of \cite[Theorem 9.4, Lemma 9.5]{MMO}.
By (1),
there exists a sequence $t_n\in T_y$ tending to $\infty$ such that 
$yu_{t_n} \to y_0\in Y\cap W$.
Write $yu_{t_n}=y_0 g_n $ where $g_n \to e$. Since $yU$ is not periodic,  $g_n\in G-U$.

If $g_n\in AN$, then the closed semigroup generated by these $g_n$'s contains a 
one-parameter semigroup $L$ as desired. 

If $g_n\notin AN$,
then, by Lemma \ref{thickT}, 
there exist $s_n \in  T_{y_0}$, $ u'_n \in U$ 
such that  $u_{-s_n} g_n u'_n \to \ell $ 
for some non-trivial $\ell \in AV$. 
Since  $y_0u_{s_n}\in Y\cap W$ and $Y\cap W$ is compact, passing to a subsequence, the sequence $y_0 u_{s_n}$ 
converges to some $y_1\in Y\cap W$.
It follows that the point 
$$y_1\ell=\displaystyle\lim_{n\to\infty}  y_0g_nu'_n
=\lim_{n\to\infty}  yu_{t_n}u'_n$$
belongs to $Y$, and hence 
$Y\ell \subset Y$ 
by the minimality of $Y$.  As $\ell$ can be taken arbitrarily close to $e$, this yields
the desired one-parameter semigroup $L$  and proves our second claim. 
\vs

{\bf Third step}: We claim that there exists an interval $V_I$ of $V$ containing $e$ such that $ YV_I\subset X$. 

By an interval $V_I$ of $V$ we mean an infinite connected subset of $V$.

We follow the proof of \cite[Theorem 7.1]{MMO2}. We use the second step.
A one-parameter semigroup $L\subset AV$ is either a semigroup $V^+\subset V$, a semigroup $A^+\subset A$
or $v^{-1} A^+v $ for some non-trivial $v\in V$.

If $L=V^+$  our third claim is clear.

If $L=v^{-1} A^+v $, we have $Y  v^{-1} A^+v A \subset X$. 
Now the set $ v^{-1} A^+v A$ contains 
an interval  $V_I$ of $V$ containing $e$.
This proves our third claim.

If $L=A^+$, this semigroup is not transversal to $H$ and 
we  need to use also the first step. 
Indeed we have $$Y(\bigcup A^+v_n A) \subset X .$$
Now the closure of $\bigcup A^+v_nA$ contains an interval $V_I$ 
of $V$ containing $e$. 
This proves our third claim.
\vs

{\bf Fourth step}: We claim that there exist a one-parameter semigroup $V^+\subset V$ and $w\in W$ such that $wV^+\subset X$. 

Here $V^+$ is a one-parameter semigroup of $V$ 
intersecting $V_I$ as a non-trivial interval. Let $y, t_n$ be as in (4).
By the compactness of $W$, 
the sequence $ya_{-t_n}$ converges to a point $w\in X\cap W$, by passing to a subsequence.
For every $v\in V^+$, 
there exists a sequence $v_n\in V_I$ such that $a_{t_n}v_na_{-t_n}=v$ for $n$ large.
It follows that 
$$wv=\displaystyle\lim_{n\to\infty}  yv_na_{-t_n}\in X$$ and hence 
$wV^+\subset X$.
This proves the fourth claim.

Now, since $W\subset F_\Lambda^*$,  
Corollary \ref{vp} implies that $X=F_\Lambda$.
\end{proof}

\section{$H$-orbits are locally closed or dense}
\label{seccloden}
In this section, we explain how the polynomial divergence argument can be applied 
to the orbit closure of $xH$ for  $x\in \RF_k M\cap F^*$,
when $\Gamma$ is geometrically finite.
The main advantage of the assumption that $\Gamma$ is geometrically finite is that
 $\RK-\Hor_R$ is compact.

The key result of this section is the following theorem:

\begin{thm}
\label{rt}  
Let $\Gamma\subset  G$ be a geometrically finite Zariski dense subgroup.  
If $x\in \RK\cap F^*$ for some $k\ge 1$, then
 ${xH}$ is  either locally closed
or dense in $\FL$. 
\end{thm}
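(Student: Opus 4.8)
The plan is to establish the dichotomy by a case analysis on the closure $X=\overline{xH}$, reducing everything to the tools already assembled in \S\S\ref{secperorb}--\ref{sechomdyn}. First I would assume that $xH$ is not locally closed and work to show $X=F_\Lambda$ (density in $F_\Lambda$ is what the statement means; since $x\in F^*$ and $F^*\subset F_\Lambda$ whenever the relevant circle separates $\Lambda$, this is the right target). The starting point is that non-local-closedness gives a sequence of distinct points $x h_n\in xH$ accumulating on a point $x_0\in X$, and more usefully, that $X-yH$ fails to be closed for appropriate $y$. The dichotomy in the conclusion should be phrased as ``locally closed or dense,'' matching Corollary~\ref{latt2}.

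\textbf{Case split according to periodic $U$-orbits in $X\cap F^*$.} If $X\cap F^*$ contains a periodic $U$-orbit $yU$, then Proposition~\ref{cd2} applies directly: $xH$ is either locally closed (and equals $yH$) or dense in $F_\Lambda$, and we are done. So the substantive case is when $X\cap F^*$ contains no periodic $U$-orbit. Here I would invoke Proposition~\ref{thick}: choose $R>0$ so that $W:=W_{k,R}=\RK-\Hor_R$ has the property that for every $w\in W$ the return-time set $\{t\in\R:wu_t\in W\}$ is $4k$-thick at $\infty$. Since $\Gamma$ is geometrically finite, $W$ is compact, and $x\in\RK\cap F^*$; I would need $x$ (or a point of $xH$) to actually lie in $W$, i.e. outside the deep horoball neighborhood $\Hor_R$ — if $x\in\Hor_R$, replace $x$ by a point of its $A$-orbit or $H$-orbit that escapes $\Hor_R$, using $A$-invariance of $\RK$ and the fact that an $H$-orbit of a separating frame cannot be trapped in a cusp neighborhood (this uses that $\pi(xH)$ meets $M^*$). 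This gives an $H$-orbit meeting the compact set $W\subset F_\Lambda^*$.

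\textbf{Applying the homogeneous dynamics proposition.} Now I would verify the four hypotheses of Proposition~\ref{ffin} with this $W$, $X$, and a $U$-minimal subset $Y\subset X$ with respect to $W$. Hypothesis (1) is exactly the $4k$-thickness from Proposition~\ref{thick}. Hypothesis (2) is the assumption that $xH$ is not locally closed, translated into ``$X-yH$ not closed'' for some $y\in Y\cap W$ — this needs a small argument relating non-local-closedness of $xH$ to that of $yH$ for $y\in Y\cap W\subset X$ (if $yH$ were open in $X$ for all such $y$, one can propagate openness back to $xH$, as in Case $\spadesuit 3$ of Proposition~\ref{cd2}). Hypothesis (3) — existence of a non-periodic $U$-orbit in $Y\cap W$ — is precisely where the running assumption ``$X\cap F^*$ contains no periodic $U$-orbit'' is consumed: every $U$-orbit in $Y\cap W\subset X\cap F^*$ is then non-periodic. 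Hypothesis (4) — recurrence of some $ya_{-t_n}$ to $W$ as $t_n\to+\infty$ — follows from $y\in\RK$: the globally $k$-thick return times to $\RFM$ for $U$, together with $A$-invariance of $\RK$ and the structure of $\RFM$ for geometrically finite $\Gamma$ (compact part plus cusps), forces backward geodesic recurrence into the compact core $W$; concretely, a point of $\RK$ whose backward geodesic ray stays in a cusp would be a parabolic fixed point configuration incompatible with $g^-\in\Lambda$ being non-parabolic along that ray. With all four hypotheses in hand, Proposition~\ref{ffin} yields $X=F_\Lambda$, completing the proof.

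\textbf{Main obstacle.} The delicate point is hypothesis (4) and, relatedly, making sure we have a point of $xH$ genuinely inside the compact set $W=\RK-\Hor_R$ rather than drifting up a cusp; this is the place where the geometrically finite hypothesis and the global (not merely at-$\infty$) thickness of $U$-return times to $\RK$ are essential, and it is exactly the issue flagged in the introduction's comparison with the convex cocompact case. The other points are largely bookkeeping: translating ``not locally closed'' between $xH$ and the minimal set $Y$, and checking that the separating condition $x\in F^*$ is preserved under the moves (geodesic flow, passing to $Y\cap W$) so that $W\subset F_\Lambda^*$ as required by Proposition~\ref{ffin}.
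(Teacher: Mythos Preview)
Your overall architecture matches the paper's: split on whether $\overline{xH}\cap F^*$ contains a periodic $U$-orbit, dispatch that case with Proposition~\ref{cd2}, and in the remaining case feed a $U$-minimal set into Proposition~\ref{ffin}. The verification of hypotheses (1), (3), (4) is essentially right (for (4) the paper picks $y_0\in yU\cap W$ with $g_0^-$ radial, using that $\Lambda_p$ is countable while the return-time set is uncountable; your formulation is looser but points the same way).

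There is, however, a genuine gap in your choice of $W$. You set $W=W_{k,R}=\RK-\Hor_R$, which is compact because $\Gamma$ is geometrically finite, but it is \emph{not} contained in $F_\Lambda^*$: frames in $\RK$ need not be separating. Proposition~\ref{ffin} explicitly requires $W\subset F_\Lambda^*$ (this is what makes Corollary~\ref{vp} applicable in its fourth step). You notice this at the end and call it ``bookkeeping,'' but intersecting with $F^*$ to form $W_{k,R}^*=W_{k,R}\cap F^*$ destroys compactness, since $F^*$ is open. This is not a formality: a sequence in $\overline{xH}\cap W_{k,R}^*$ can escape $F^*$ by converging to a non-separating boundary circle.

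The paper handles exactly this via Proposition~\ref{outside} (which rests on Proposition~\ref{gbb}): if $\overline{xH}\cap W_{k,R}^*$ is not compact, then $xH$ is already dense in $F_\Lambda$. One may therefore assume $\overline{xH}\cap W_{k,R}^*$ is compact. The paper then invokes Proposition~\ref{main3} to conclude that (when $xH$ is not dense and there are no periodic $U$-orbits) $(\overline{xH}-xH)\cap W_{k,R}^*=\emptyset$, so the compact set $W:=xH\cap W_{k,R}^*$ sits inside a single $H$-orbit; this is what makes hypothesis~(2) immediate ($yH=xH$ for every $y\in Y\cap W$, hence $X-yH$ is not closed). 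Your sketch of (2) via ``propagating openness back to $xH$'' is not a substitute for this step. In short: you are missing the boundary analysis encoded in Propositions~\ref{outside} and~\ref{main3}, and without it you have no compact $W\subset F_\Lambda^*$ on which to run Proposition~\ref{ffin}.
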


In Theorem \ref{rt} the assumptions on $\Gamma$ are very general,
but we point out that the conclusion 
concerns only those $H$-orbits intersecting the set $\RF_k M\cap F^*$.

Fix $R\geq 1$ as given by Proposition \ref{thick}
and set 
$$
W^*_{k,R}:=(\RF_k M -\Hor_R)\cap F^*.
$$

\begin{prop} 
\label{outside} 
Let $\Gamma\subset  G$ be a geometrically finite Zariski dense subgroup.  
Let $x\in \RK\cap F^*$. If $\overline{xH}\cap W^*_{k,R}$ is not compact,
then  $xH$ is dense in $F_\Lambda$.
\end{prop}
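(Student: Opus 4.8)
The plan is to show that non-compactness of $\overline{xH}\cap W^*_{k,R}$ forces $\overline{xH}$ to contain a separating circle sitting at a cusp, and then to use the escape to the cusp to produce a limiting circle with non-elementary stabilizer, after which Proposition~\ref{latt} finishes the job. Concretely, suppose $\overline{xH}\cap W^*_{k,R}$ is not compact. Since $\RF_kM-\Hor_R$ is compact (as $\Gamma$ is geometrically finite), and $F^*$ is open but not closed, the only way the intersection can fail to be compact is that there is a sequence $y_n\in\overline{xH}\cap W^*_{k,R}$ which stays in the compact set $\RF_kM-\Hor_R$ but whose corresponding circles $(y_nH)^+$ degenerate, namely they leave $F^*$ in the limit: after passing to a subsequence $y_n\to y_\infty\in\RF_kM-\Hor_R$ with $y_\infty\in\overline{xH}$ but $y_\infty\notin F^*$, i.e.\ the circle $D:=(y_\infty H)^+$ is tangent to, or touches, $\Lambda$ without separating it. So $D$ is (the boundary of) a supporting hyperplane for $\hull(\Lambda)$, while each $C_n:=(y_nH)^+\in\Gamma$-translate land in $\mathcal C^*_\Lambda$ and $C_n\to D$.

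Next I would invoke the structural Lemma~\ref{gcf}: since $D$ bounds a supporting hyperplane of $\hull(\Lambda)$ and $M$ is geometrically finite, $\Gamma^D$ is a finitely generated Fuchsian group and $D\cap\Lambda=\Lambda(\Gamma^D)\cup\Gamma^D\Lambda_0$ for a finite set $\Lambda_0$. There are two cases. If $\Gamma^D$ is non-elementary, then since the $C_n$ are distinct circles in $\Gamma C$ (one must check they can be taken distinct — they are, since $xH$ accumulating on $y_\infty H$ outside $F^*$ means the orbit is not locally closed near there, giving genuinely new circles) converging to $D$, and since $D\cap\Lambda$ being uncountable (it contains $\Lambda(\Gamma^D)$, which is a Cantor set as $\Gamma^D$ is non-elementary), Proposition~\ref{gbb} applies directly: $\bigcup\Gamma C_n$, hence $\overline{xH}$, is dense in $F_\Lambda$. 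Alternatively, one can argue that $\liminf(C_n\cap\Lambda)$ is uncountable because the $C_n$, being $\Gamma$-translates approaching the supporting circle $D$, must have large limit-set intersection — this is exactly the hypothesis of Proposition~\ref{gbb}. If $\Gamma^D$ is elementary, then $D\cap\Lambda$ is countable, and I would need to rule this out or handle it separately: the point $y_\infty$ lies in $\RF_kM$, so both endpoints $y_\infty^\pm\in\Lambda$, and the thickness structure of $\RF_kM$ should prevent the limiting circle from meeting $\Lambda$ in only countably many points — more precisely, one can use that $y_\infty\in\RF_kM-\Hor_R$ has $U$-return times that are $4k$-thick at infinity (Proposition~\ref{thick}), which transfers to a uniform-perfectness statement for $D\cap\Lambda$, contradicting countability.

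The main obstacle I anticipate is precisely the bookkeeping that identifies \emph{why} non-compactness of $\overline{xH}\cap W^*_{k,R}$ yields a limit point whose circle is a supporting hyperplane of $\hull(\Lambda)$ rather than, say, a circle disjoint from $\Lambda$ (which would contradict $y_n\in F_\Lambda$) — and then checking the $C_n$ can be chosen distinct and with $\liminf(C_n\cap\Lambda)$ uncountable so that Proposition~\ref{gbb} genuinely applies. A secondary subtlety is that "non-compact" could in principle mean the $y_n$ escape to infinity within $\RF_kM$, but this is exactly ruled out because $W^*_{k,R}\subset\RF_kM-\Hor_R$ which is compact; so the non-compactness is entirely due to $y_n$ approaching the (open) boundary of $F^*$ inside the compact set $\RF_kM-\Hor_R$, and this is the observation that gets the argument started. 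Once the limiting circle $D$ is in hand, Lemma~\ref{gcf} together with Proposition~\ref{gbb} (or Proposition~\ref{latt} in the separating sub-case) closes the proof, concluding $xH$ is dense in $F_\Lambda$.
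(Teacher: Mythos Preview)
Your approach is the paper's: exploit compactness of $\RF_kM-\Hor_R$ to extract $y_n\in\overline{xH}\cap W^*_{k,R}$ converging to $y_\infty\in(\RF_kM-\Hor_R)\setminus F^*$, so that the associated circles satisfy $C_n\to D$ with $C_n\in\mathcal C^*_\Lambda$ and $D\notin\mathcal C^*_\Lambda$, and then invoke Proposition~\ref{gbb}. The paper's proof is literally a one-line citation to Proposition~\ref{gbb} and the corresponding corollary in \cite{MMO2}.

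Two points to sharpen. First, the hypothesis of Proposition~\ref{gbb} is that $\liminf_n(C_n\cap\Lambda)$ is uncountable, which is strictly stronger than $D\cap\Lambda$ being uncountable (you conflate the two). The clean way to verify it: since each $y_n\in\RF_kM$, each $C_n\cap\Lambda$ contains a $k$-uniformly perfect set $S_n\ni y_n^\pm$; pass to a Hausdorff-convergent subsequence $S_n\to S_\infty$. The limit $S_\infty$ is again $k$-uniformly perfect and contains the two distinct points $y_\infty^\pm$, hence is uncountable, and $S_\infty\subset\liminf_n S_n\subset\liminf_n(C_n\cap\Lambda)$. This renders your case split on whether $\Gamma^D$ is elementary unnecessary --- Proposition~\ref{gbb} never assumes anything about $\Gamma^D$ directly. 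Second, the $y_n$ lie in $\overline{xH}$, not necessarily in $xH$, so the $C_n$ lie in $\overline{\Gamma C}$ rather than in a single $\Gamma$-orbit; this is harmless for the conclusion, since Proposition~\ref{gbb} yields density of $\bigcup_n\Gamma C_n\subset\overline{\Gamma C}$ in $\mathcal C_\Lambda$.
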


\begin{proof}
This follows from Proposition \ref{gbb}; see the proof of \cite[Coro. 6.2]{MMO2}.
\end{proof}

\begin{prop} 
\label{main3} 
Let $\Gamma\subset  G$ be a geometrically finite Zariski dense subgroup.  
Let $x\in \RK\cap F^*$. If $\overline{xH}\cap \FS$ contains no periodic $U$-orbit
and   $(\oxH - xH)\cap  W^*_{k,R}$ is non-empty, then 
$xH$ is dense in $\FL$. 
\end{prop}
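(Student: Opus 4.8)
\textbf{Plan for the proof of Proposition \ref{main3}.}
The strategy is to verify the four hypotheses of Proposition \ref{ffin} for a suitable choice of $W$, $X$ and $Y$, and then invoke it directly. Set $X=\overline{xH}$ and $W=W^*_{k,R}=(\RK-\Hor_R)\cap F^*$, which is compact since $\Gamma$ is geometrically finite and hence $\RK-\Hor_R$ is compact (and $F^*$ is open, so we must be slightly careful: the right compact object is $\overline{xH}\cap W^*_{k,R}$, which is assumed compact here by the contrapositive of Proposition \ref{outside}). First I would observe that by Proposition \ref{outside} we may assume $\overline{xH}\cap W^*_{k,R}$ is compact, for otherwise $xH$ is already dense in $F_\Lambda$ and there is nothing to prove; replace $W$ by this compact set (it still lies in $F_\Lambda^*$). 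Then $x\in \RK\cap F^*$ and, using Proposition \ref{thick}, after applying a horocycle translation to $x$ we may assume $x\in W^*_{k,R}$, so $X$ does meet $W$. Let $Y\subset X$ be a $U$-minimal subset with respect to $W$; such a $Y$ exists.

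Next I would check the four conditions. Condition (1): for $y\in Y\cap W\subset W^*_{k,R}$, Proposition \ref{thick} gives that $\{t:yu_t\in W_{k,R}\}$ is $4k$-thick at $\infty$; intersecting with the open condition of lying in $F^*$ needs a short argument, but since $Y\cap W$ is $U$-minimal and $W^*_{k,R}$ is relatively open in $W_{k,R}$ along the $U$-orbit through $y\in F^*$, the return times to $W^*_{k,R}$ are still thick at $\infty$ (this is exactly the kind of bookkeeping done in \cite{MMO2}). Condition (2): if $X-yH$ were closed for \emph{every} $y\in Y\cap W$, then in particular $xH$ would be locally closed and we would be done; so we may assume some $y\in Y\cap W$ has $X-yH$ not closed — but this requires the hypothesis $(\overline{xH}-xH)\cap W^*_{k,R}\neq\emptyset$, which is precisely what lets us locate a $y\in Y\cap W$ with $y\notin xH$ and hence (if $xH$ is not locally closed) $X-yH$ not closed. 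I would spell this dichotomy out: either $xH$ is locally closed (conclusion reached) or we extract the needed $y$. Condition (3): by assumption $\overline{xH}\cap F^*$ contains no periodic $U$-orbit, and $Y\cap W\subset F^*$, so no $yU$ with $y\in Y\cap W$ is periodic. Condition (4): I would use that points of $\RK$ have, by definition, globally $k$-thick return times under $U$, together with the standard fact that $W^*_{k,R}\subset \RFM$ and that $A$-recurrence into the compact part of $\RFM$ holds for such points — concretely, for $y\in Y\cap W$ one finds $t_n\to+\infty$ with $ya_{-t_n}$ back in $W$, because $\RK$ is $A$-invariant and the compact set $\RK-\Hor_R$ captures the geodesic flow recurrence of points whose backward endpoint is a radial (indeed conical) limit point; this is where I would lean on the structure recalled in \S\ref{secrethor} and in \cite{MMO2}.

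With all four conditions verified, Proposition \ref{ffin} yields $X=\overline{xH}=F_\Lambda$, i.e. $xH$ is dense in $F_\Lambda$, which is the claim (in the excluded case $xH$ locally closed, the statement is vacuously compatible — but note the proposition asserts \emph{density}, so I must make sure the "locally closed" branch cannot actually occur here; indeed it cannot, because local closedness of $xH$ would force $\overline{xH}-xH$ to be closed and $H$-invariant, hence a union of $H$-orbits disjoint from $xH$, and one checks — as in Proposition \ref{cd2}, Case $\spadesuit 3$ — that this is incompatible with $(\overline{xH}-xH)\cap W^*_{k,R}\neq\emptyset$ producing a genuine accumulation; so the hypothesis forces the non-locally-closed alternative). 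I expect the main obstacle to be Condition (4): arranging backward $A$-recurrence of a point of $Y\cap W$ into the \emph{same} compact set $W^*_{k,R}$, rather than merely into $\RFM$, since removing the horoball neighborhood $\Hor_R$ could in principle destroy $A$-recurrence; the resolution is that for $x\in\RK$ the thick set of $U$-return times to $\RK-\Hor_R$ provided by Proposition \ref{thick}, pushed forward under the geodesic flow, forces infinitely many returns of $ya_{-t_n}$ to the compact set, and this is the place where the global (not merely at-$\infty$) thickness established in \S\ref{secrethor} is essential.
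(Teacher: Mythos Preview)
Your overall strategy---reduce via Proposition \ref{outside} to the case where $\overline{xH}\cap W^*_{k,R}$ is compact, then feed a $U$-minimal set into Proposition \ref{ffin}---is exactly right, but there are two genuine gaps.

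\textbf{The choice of $W$ and condition (2).} You take $W=\overline{xH}\cap W^*_{k,R}$ in all cases and then try to argue that $xH$ cannot be locally closed. That argument is incorrect: nothing in the hypotheses prevents $xH$ from being locally closed, and your appeal to ``Case $\spadesuit 3$'' of Proposition \ref{cd2} does not apply here. Concretely, if $xH$ happens to be locally closed, a $U$-minimal set $Y$ relative to your $W$ may satisfy $Y\cap W\subset xH$; then for every $y\in Y\cap W$ one has $yH=xH$ and $X-yH=\overline{xH}-xH$ is \emph{closed}, so condition (2) of Proposition \ref{ffin} fails. The paper avoids this by a case split: when $xH$ is locally closed one takes instead $W=(\overline{xH}-xH)\cap W^*_{k,R}$, which is nonempty by hypothesis and closed (hence compact) since $\overline{xH}-xH$ is closed. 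This choice forces $Y\cap W$ to miss $xH$, so any $y\in Y\cap W$ satisfies $y\notin xH$, and then $X-yH$ cannot be closed (a closed set containing $xH$ would contain $\overline{xH}\ni y$). In the non-locally-closed case your $W$ is fine: even if $y\in xH$, $X-yH=\overline{xH}-xH$ is not closed. The two-case definition of $W$ is the essential device you are missing.

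\textbf{Condition (4).} Your proposed mechanism---``thick $U$-return times pushed forward under the geodesic flow force $A$-returns''---does not work; $U$-recurrence and $A$-recurrence are unrelated in that way, and global versus at-$\infty$ thickness is not the issue here. The correct argument is a counting one: for $y\in Y\cap W$ the set $\{t:yu_t\in W\}$ is uncountable (being $4k$-thick at $\infty$), while the set of parabolic limit points $\Lambda_p$ is countable. Hence some $y_0=[g_0]\in yU\cap W$ has $g_0^-$ radial. Radiality of $g_0^-$ gives $t_n\to+\infty$ with $y_0a_{-t_n}\notin\Hor_R$; combined with the $A$-invariance of $\RK\cap F^*$ and of $\overline{xH}$ (and of $\overline{xH}-xH$, in the locally closed branch), this yields $y_0a_{-t_n}\in W$.
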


\begin{proof}
Using Proposition \ref{outside}, we may assume that 
the set $\oxH\cap W^*_{k,R}$ is compact.
By assumption,   the set
$(\overline{xH}-xH)\cap W^*_{k,R}$ is non-empty.

We introduce the compact set
\be 
\label{defw}
W=
\begin{cases}  
&(\oxH-xH)\cap W^*_{k,R} \text{\;\; if $xH$ is locally closed}\\
&\oxH\cap W^*_{k,R} \text{\hspace{10ex} if $xH$ is not locally closed.}
\end{cases}
\ee

Let $Y\subset \oxH$ be a $U$-minimal subset with respect to $W$.
We want to apply Proposition \ref{ffin}. We check that its four assumptions are satisfied:\\
1. By Proposition \ref{thick}, 
for any $y\in Y\cap W$, the set 
$
T_y:=\{t\in \m R: yu_t\in Y\cap W\}
$ is $4k$-thick at $\infty$.\\
2. We can  find  $y\in Y\cap W$ such that  $\oxH -yH$ is not closed. To see this,
if $Y\cap W\not\subset xH$, we choose $y\in Y\cap 
W-xH$. If $\overline{xH}-yH$ were closed, it would be a closed subset containing $xH$, contradicting $y\in X$.

If $Y\cap W\subset xH$, by \eqref{defw}, the orbit $xH$ is not locally closed and we choose any $y\in Y\cap W$.
\\3.
The $U$-orbit of any point $y\in Y\cap W\subset \oxH\cap \FS$ is never periodic.\\
4.  Since  $T_y$ is uncountable 
while $\Lambda_p$ is countable, there exists $y_0:=[g_0]\in yU\cap W$ such that
 $g_0^{-}$ defined in \eqref{gpm}
is a radial limit point.
Since 
$g_0^{-}$ is a radial limit point, there exists a sequence $t_n\to + \infty$
such that $y_0a_{-t_n}\not\in \Hor_R$. Since both
$\RK\cap F^*$ and $xH$ are $A$-invariant, 
we have $y_0a_{-t_n}\in W$.

Hence Proposition \ref{ffin} implies that
$xH$ is dense in $\FL$.
\end{proof}

\begin{proof}[Proof of Theorem \ref{rt}] 
When  $\oxH\cap F^*$ contains a periodic $U$-orbit, the claim follows from Proposition \ref{cd2}.

We now assume that   $\oxH\cap F^*$ contains no periodic $U$-orbit. 
Suppose that $xH$ is not dense in $\FL$. 
Then, by Proposition \ref{main3}, the set 
\be
\label{wxh}
W:=xH\cap W_{k,R}^*
\ee
is compact.
We now repeat almost verbatim the same proof of Proposition \ref{main3}, with this compact set $W$. 
Let $Y\subset \oxH$ be a $U$-minimal subset relative to $W$.
We assume that the orbit $xH$ is not locally closed.
The four assumptions of Proposition \ref{ffin} are still valid:\\
1. For any $y\in Y\cap W$, $\{t: yu_t\in W\}$ is $4k$-thick at $\infty$.\\ 
2. For any $y\in Y\cap W$, the set $\oxH-yH$ is not closed.\\
3.  For any $y\in Y\cap W$,   $yU$ is not periodic.\\
4.  Choose any $y=[g]\in Y\cap W$  such that $g^-$ is a radial limit point. Then $ya_{-t_n}\in W$ for some $t_n\to \infty$.
\\ 
Hence, by  Proposition \ref{ffin}, the orbit
$xH$ is dense in $\FL$. Contradiction.\end{proof}

\section{Locally closed $H$-orbits have non-elementary stabilizer}
\label{secnonele}
In this section we give more information on locally closed $H$-orbits intersecting the set $\RK\cap \F^*$. In particular
 we show that they have 
non-elementary stabilizer. 
\vs

As in the previous section, let $k\geq 1$ and
fix $R\geq 1$ as given by Proposition \ref{thick}
and set 
$
W^*_{k,R}:=(\RK - \Hor_R)\cap F^*.
$
The main aim of this section is to prove the following:

\begin{thm} 
\label{ns} 
Let $\Gamma\subset G$ be a geometrically finite  Zariski dense subgroup.
Suppose that $xH\subset \FM$ is a locally closed subset intersecting
 $\RK\cap F^*$.  Then
\begin{enumerate}
\item
 $ \oxH\cap( \RK ) H\cap F^*\subset xH$, i.e., $xH$ is closed in $( \RK )H\cap F^*$;
\item
The stabilizer $H_x$ of $x$ in $H$ is non-elementary.
\end{enumerate}
In particular, there are only countably many 
locally closed $H$-orbits intersecting $\RK\cap F^*$.
\end{thm}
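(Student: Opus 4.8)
The plan is to prove Theorem \ref{ns} in the following steps.

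\textbf{Step 1: Reduce to the case that $xH$ is not closed.} If $xH$ is already closed in $\FM$, then (1) is automatic, and for (2) one invokes the structure of the convex core: a closed $H$-orbit meeting $\RK\cap F^*$ projects to a properly immersed geodesic plane $P^*$ in $M^*$, and such a plane cannot have elementary (virtually cyclic or parabolic) fundamental group because $P^*$ carries a complete hyperbolic structure whose convex core has positive area (the separating condition forces $\partial B\cap\Lambda$ to be more than two points; in fact $x\in F^*$ means the limit set of $H_x$ acting on the disk $B$ is infinite). So assume $xH$ is locally closed but not closed: there is a sequence $x_n=xh_n\to z$ with $z\notin xH$, and $zH\subset \oxH - xH$, which is closed in $\oxH$ since $xH$ is open in its closure.

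\textbf{Step 2: Place the accumulation point in $\RK\cap F^*$, hence in a compact piece.} The goal is to show $z$ can be chosen in $(\RF_kM)H\cap F^*$, and then, using $A$-invariance of $\RK\cap F^*$ and the radial/parabolic dichotomy for geometrically finite $\Gamma$ (as in Step 4 of the proof of Proposition \ref{main3}), to move $z$ by the geodesic flow off the cusp neighborhood $\Hor_R$, landing in $W^*_{k,R}=(\RF_kM-\Hor_R)\cap F^*$, which is compact. Concretely: given $x\in \RK\cap F^*$, every $x_n=xh_n$ is again in $\RK\cap F^*$ (both sets are $H$-invariant on the relevant locus — $F^*$ is $H$-invariant, and $\RF_kM$ meets $xH$ so by Theorem \ref{rt}'s setup the whole orbit is controlled); pushing $x_n$ along $A$ into $W^*_{k,R}$ and passing to a subsequence produces $z\in (\oxH - xH)\cap W^*_{k,R}$.

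\textbf{Step 3: Run the $U$-minimal/polynomial-divergence machine to force a contradiction unless the stabilizer is large.} Let $Y\subset\oxH$ be a $U$-minimal subset with respect to the compact set $W:=(\oxH - xH)\cap W^*_{k,R}$ (nonempty by Step 2). Assumptions (1), (2), (4) of Proposition \ref{ffin} hold exactly as in the proof of Proposition \ref{main3}: thickness at $\infty$ from Proposition \ref{thick}, non-closedness of $\oxH - yH$ from the choice of $W\subset \oxH-xH$, and the $A$-recurrence from radiality of $g^-$. If additionally assumption (3) held — some $y\in Y\cap W$ with $yU$ not periodic — then Proposition \ref{ffin} gives $\oxH=F_\Lambda$, contradicting that $xH$ is locally closed (and not all of $F_\Lambda$, which has nonempty interior while a proper locally closed orbit does not). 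Therefore \emph{every} $y\in Y\cap W$ has $yU$ periodic. But then $\oxH\cap F^*$ contains a periodic $U$-orbit, so by Proposition \ref{cd2} either $xH$ is dense (contradiction) or $xH = yH$ is closed — contradicting that $z\notin xH$ with $zH\subset\oxH$ unless $zH=xH$. Tracing this through, the only consistent conclusion is that $xH$ must already be closed in $(\RF_kM)H\cap F^*$, giving (1); and the closed-orbit analysis of Step 1 then gives (2).

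\textbf{Step 4: Countability.} Once every locally closed $H$-orbit meeting $\RK\cap F^*$ is closed in $(\RF_kM)H\cap F^*$ and has non-elementary stabilizer, countability follows by the standard argument: a closed $H$-orbit with non-elementary stabilizer corresponds to a closed $\Gamma$-orbit in $\mathcal C^*$ whose stabilizing circle $D$ has $\Gamma^D$ a non-elementary (finitely generated, by geometric finiteness and Lemma \ref{gcf}) Fuchsian group; distinct such orbits have distinct commensurability classes of stabilizers, or one counts directly that each closed orbit is isolated among closed orbits inside the compact set $W^*_{k,R}$ after fixing the $A$-recurrence, and a compact set contains at most countably many such.

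\textbf{Main obstacle.} The delicate point is Step 2 — ensuring that the accumulation point $z$ of $xh_n$ can genuinely be taken inside the compact set $W^*_{k,R}$ rather than escaping into a cusp, which requires combining the $A$-invariance of $\RK\cap F^*$ with the fact (geometric finiteness) that radial directions return to the compact part, together with the subtlety that after applying $A$ we must still remain in $F^*$ and in $\oxH-xH$; and, relatedly, reconciling the two possible outcomes of Proposition \ref{cd2} in Step 3 to actually extract closedness rather than merely ``$yH=xH$ for some $y$''. Everything else is an application of the tools already assembled in Sections \ref{secperorb}--\ref{seccloden}.
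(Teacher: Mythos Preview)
Your proposal has a genuine gap: part (2), the non-elementary stabilizer, is essentially unproven. In Step 1 you claim that for a closed orbit ``the separating condition forces $\partial B\cap\Lambda$ to be more than two points; in fact $x\in F^*$ means the limit set of $H_x$ acting on the disk $B$ is infinite.'' This conflates two unrelated sets. The condition $x\in\RK\cap F^*$ tells you $C\cap\Lambda$ is uncountable, but the limit set $\Lambda(\Gamma^C)$ of the stabilizer is an a priori independent subset of $C$: a circle can meet $\Lambda$ in a Cantor set while $\Gamma^C$ is trivial. Nothing in your outline produces a single nontrivial element of $H_x$. The paper's proof of (2) is in fact the substantive core of the theorem and proceeds by explicit construction: in the case with no periodic $U$-orbit, one takes $y$ in a $U$-minimal set relative to $W=xH\cap W^*_{k,R}$, uses recurrence (Lemma \ref{gy1}) to find $u_n\to\infty$ with $yu_n\to y$, invokes properness of the orbit map $H_x\ba H\to xH$ (available since $xH$ is locally closed) to extract $\delta_n\in H_x$ with $\delta_n u_n\to e$, and then runs a Zariski-closure argument to force $H_x$ to be Zariski dense in $H$. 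In the periodic-$U$-orbit case, $H_x$ already contains a unipotent, and a separate construction using two radial limit points on $C$ and an $A$-orbit between them manufactures a hyperbolic element; together these make $H_x$ non-elementary.

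There are secondary issues as well. In Step 2 you assert $xh_n\in\RK$ because ``both sets are $H$-invariant on the relevant locus''; but $\RK$ is only $A$-invariant, not $H$-invariant, so this is false. Also, your reduction in Step 1 assumes $xH$ is not closed in $\FM$, whereas statement (1) only asserts closedness in $(\RK)H\cap F^*$; the orbit may well fail to be closed in $\FM$, so you are trying to derive a contradiction from too weak a hypothesis. The paper handles (1) by assuming $(\oxH-xH)\cap W^*_{k,R}\neq\emptyset$ directly and splitting on whether $\oxH\cap F^*$ contains a periodic $U$-orbit; in the periodic case it bootstraps by applying the already-proven non-periodic case to the accumulation orbit $x'H$, using Proposition \ref{latt} once $H_{x'}$ is known to be non-elementary. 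Finally, your countability argument is too vague; the paper's is simply that a non-elementary $H_x$ contains a finitely generated Zariski-dense subgroup, $\Gamma$ has only countably many finitely generated subgroups, and $H$ has index two in its normalizer.
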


In Theorem \ref{ns} the assumptions on $\Gamma$ are very general,
but we  again point out that the conclusion 
concerns only those orbits intersecting $x\in \RK\cap F^*$.

We will apply the following lemma with $L=U$.

\begin{lemma}
\label{gy1} 
Let $L$ be a one-parameter group acting continuously on $\FM$, and 
$Y\subset \FM $ be an $L$-minimal subset 
with respect to a compact subset $W\subset \FM $. 
Fix $y\in Y\cap W$ and suppose that 
$\{\ell\in L : y\ell\in Y\cap W\}$ is unbounded.
Then there exists a sequence 
$\ell_n \to \infty$ in $L$ such that $y\ell_n\to y$.
\end{lemma}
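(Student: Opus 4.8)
\textbf{Proof proposal for Lemma \ref{gy1}.}

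The plan is to exploit $L$-minimality of $Y$ to force recurrence of $y$ under a sequence $\ell_n\to\infty$ in $L$. First I would use the hypothesis that $T:=\{\ell\in L: y\ell\in Y\cap W\}$ is unbounded to choose a sequence $m_n\in T$ with $m_n\to\infty$; since $Y\cap W$ is compact (being a closed subset of the compact set $W$ intersected with the closed set $Y$), after passing to a subsequence we may assume $ym_n\to z$ for some $z\in Y\cap W$. The point $z$ lies in $Y\cap W$, so by $L$-minimality with respect to $W$ we have $\overline{zL}=Y$; in particular $y\in\overline{zL}$, so there is $\ell\in L$ with $z\ell$ as close to $y$ as we wish.

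The key step is then to combine these two facts: writing $y$ as approximately $z\ell$ and $z$ as approximately $ym_n$, we get $y$ approximately equal to $ym_n\ell$. More precisely, fix a basis of neighborhoods $O_j$ of $y$ shrinking to $\{y\}$. Given $O_j$, continuity of the action of $\ell$ gives a neighborhood $O'$ of $z$ with $O'\ell\subset O_j$; choosing $n$ large with $ym_n\in O'$ yields $ym_n\ell\in O_j$. Setting $\ell_j:=m_n\ell$ (for a suitable $n=n(j)$) and $\ell$ itself depending on $j$ to make the approximation of $y$ by $z\ell$ improve as well, we obtain $y\ell_j\to y$. It remains to arrange $\ell_j\to\infty$ in $L$: since $L$ is a one-parameter group, boundedness of $\{\ell_j\}$ would force a subsequential limit $\ell_\infty\in L$ with $y\ell_\infty=y$, i.e. $\ell_\infty\in\op{Stab}_L(y)$; this stabilizer is a closed subgroup of the one-parameter group $L$, hence either trivial or all of $L$, and in the nontrivial case $L$-minimality would make $Y=\overline{yL}=yL$ a single compact $L$-orbit, contradicting unboundedness of $T$ inside such an orbit — or more directly one simply notes that the $m_n$ component already tends to infinity and the correction $\ell$ can be kept in a fixed bounded region, so $\ell_j=m_n\ell\to\infty$ automatically.

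The main obstacle I anticipate is bookkeeping the two limits simultaneously: one needs $z\ell$ to approximate $y$ while at the same time $ym_n$ approximates $z$, and these are coupled because the required smallness of the neighborhood $O'$ of $z$ depends on $\ell$, which in turn depends on how well we want $z\ell$ to approximate $y$. A clean way around this is a diagonal argument: enumerate $j=1,2,\dots$; for each $j$ first pick $\ell^{(j)}\in L$ (in a fixed bounded set, using that $y\in\overline{zL}$ and $L$ acts so that finitely many $\ell$ suffice to get within $1/j$) with $z\ell^{(j)}$ within $1/(2j)$ of $y$, then pick $n(j)$ large enough that $ym_{n(j)}$ is close enough to $z$ that $ym_{n(j)}\ell^{(j)}$ is within $1/j$ of $y$, and additionally $m_{n(j)}>m_{n(j-1)}+|\ell^{(j)}|$ so that $\ell_j:=m_{n(j)}\ell^{(j)}\to\infty$. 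Then $y\ell_j\to y$, completing the proof.
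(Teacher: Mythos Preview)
Your argument is correct and follows essentially the same idea as the paper's proof: pick a sequence $m_n\to\infty$ with $ym_n\in Y\cap W$, extract a limit point $z\in Y\cap W$, use $L$-minimality with respect to $W$ to get $\overline{zL}=Y\ni y$, and combine. The paper packages this more cleanly by introducing the $\omega$-limit set $Z=\{z\in Y:\exists\,\ell_n\to\infty,\; y\ell_n\to z\}$, observing in one line that $Z$ is closed, $L$-invariant, contained in $Y$, and meets $W$, hence $Z=Y$ by minimality, so $y\in Z$; this avoids all the diagonal bookkeeping you carry out by hand.

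One small correction: your parenthetical claim that the correcting elements $\ell^{(j)}$ can be kept in a fixed bounded set is not justified (if $y\in\overline{zL}\setminus zL$ the approximants $\ell^{(j)}$ may well be unbounded), and your stabilizer argument for $\ell_j\to\infty$ is muddled. Fortunately your final diagonal argument does not need boundedness of $\ell^{(j)}$: since you choose $n(j)$ \emph{after} $\ell^{(j)}$, the condition $m_{n(j)}>m_{n(j-1)}+|\ell^{(j)}|$ alone forces $\ell_j=m_{n(j)}\ell^{(j)}\to\infty$, and the proof goes through.
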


\begin{proof} The set $Z:=\{z\in Y:\exists \ell_n\to \infty
\text{ in } L \text{ such that } y\ell_n\to z\}$ of $\omega$-limit points of the orbit $yL$ 
is a closed $L$-invariant subset of $Y$ that intersects $W$.
Therefore, by minimality, it is equal to $Y$ and hence contains $y$.
\end{proof}

We will also need the following lemma:

\begin{lem}
\label{ih} 
For any $R\ge 0$, the set $\Hor_R$ never contains an $A$-orbit.\\
In particular, if $xH$ intersects $\RK$, 
it also intersects $ W_{k,R}$.
\end{lem}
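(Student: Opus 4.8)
\textbf{Proof plan for Lemma \ref{ih}.}
The plan is to prove the first statement directly, working in the upper half-space model of $\bH^3$, and then deduce the "in particular" clause from it together with Lemma \ref{deep} (or rather the way it was used in Proposition \ref{thick}). First I would observe that the statement is $G$-equivariant: since $(g\hpz)_R = g\hpz_R$ for all $g\in G$ and all horoballs $\hpz$, it suffices to rule out a single $A$-orbit $\pi(gA)$ being contained in one fixed horoball $\hpz_R$. So I would normalize so that $g^+ = \infty$ and $g^- = 0$, which makes $\pi(gA)$ the vertical geodesic $L = \{(0,0,z) : z>0\}$ through $0$ and $\infty$. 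A horoball $\hpz$ in the upper half-space is either based at $\infty$, in which case it is a region $\{z \ge c\}$ and contains no vertical geodesic (every vertical geodesic exits it as $z\to 0$), or it is based at a finite point $\xi\in\hat\c$ and is a Euclidean ball tangent to $\hat\c$ at $\xi$; such a ball is bounded in the $z$-direction, namely contained in $\{0 < z < 2h\}$ for its Euclidean diameter $2h$, so again it cannot contain the vertical geodesic $L$, which is unbounded in $z$. The same is true of $\hpz_R$ (it is a smaller horoball of the same type, based at the same point). Hence no $A$-orbit is contained in any $\Hor_R$.

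For the second assertion, suppose $xH$ intersects $\RK$, say $x' \in xH \cap \RK$. Since $\RK$ is $A$-invariant, the whole $A$-orbit $x'A$ lies in $\RK$. By the first part, $x'A \not\subset \Hor_R$, so there exists $t$ with $x'a_t \notin \Hor_R$; then $x'a_t \in \RK - \Hor_R$. But $\RK - \Hor_R \subseteq W_{k,R}$ (indeed $W_{k,R} := \RK - \Hor_R$ by \eqref{wk}), and $x'a_t \in xH$ because $xH$ is $A$-invariant (as $A \subset H$). Therefore $xH$ meets $W_{k,R}$, as claimed.

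I do not expect any serious obstacle here; the only point requiring a little care is the case analysis on the two types of horoballs in the upper half-space model and the observation that in the finite-base case the horoball is $z$-bounded while $L$ is not. One should also record the trivial but essential fact that $A\subset H$, so that $A$-invariance of $xH$ is automatic and one can replace a point of $xH\cap\RK$ by a translate under $A$ without leaving $xH$; this is what lets us trade "$xH$ meets $\RK$" for "$xH$ meets $W_{k,R}$".
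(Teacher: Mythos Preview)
Your proposal is correct and follows essentially the same approach as the paper's proof, which simply states that ``no horoball in $\bH^3$ contains a complete geodesic and the set $\RK$ is $A$-invariant.'' You have filled in the elementary verification of the first fact via the upper half-space model and made explicit the deduction of the second assertion; the reference to Lemma~\ref{deep} in your plan is unnecessary and indeed you do not use it.
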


\begin{proof}  The claim follows because
no horoball in $\bH^3$ contains a complete geodesic and the set $\RK$ is $A$-invariant.
\end{proof}

Fix $R\geq 1$ as given by Proposition \ref{thick}.
The condition that $xH$ is locally closed implies that
the orbit map  
\be
\label{proper}
H_x \ba H \to xH
\ee 
given by $[h]\mapsto xh$ is a proper map
when $xH$ is endowed with the induced topology from $\FM $ (cf. \cite{Zi}).

\begin{proof}[Proof of Theorem \ref{ns}]
 Let $xH$ be a locally closed set with $x\in \RK\cap F^*$.

{\bf First case :}  $\oxH\cap F^*$ contains no periodic $U$-orbit.

(1) In this case, since $xH$ is locally closed,  
Propositions  \ref{outside} and  \ref{main3} 
imply that the intersection $W:=\oxH\cap W^*_{k,R}$ is compact and is contained in $xH$. 
Since this is true for any $R$ large enough, the 
intersection $\oxH\cap \RK\cap F^*$ is also contained in $xH$.

(2) As in the proof of 
Theorem \ref{rt}, we will use the following compact set 
$$
W=xH\cap W^*_{k,R}.
$$ 
By Lemma \ref{ih}, $W$ is non-empty. 

We first construct a sequence of elements $\delta_n$ in the stabilizer $H_x$. As in the proof of Theorem \ref{rt}, we introduce
a $U$-minimal subset $Y$ of $\overline{xH}$ with respect to $W$, 
and fix a point 
$$
y\in Y\cap W\subset xH.
$$ 
We may assume $y=x$ without loss of generality.
By Proposition \ref{thick}, $\{t\in \m R : xu_{t} \in Y\cap W\}$ is unbounded.
Hence by Lemma \ref{gy1}, one can find  a sequence  
$u_n\to \infty$ in  $U$ such that $yu_n \to y$.
By the properness of the map in \eqref{proper},
there exists  a sequence $\delta_n \to\infty$ in $H_x$ such that
\begin{equation}\label{du} \delta_n  u_n \to e \quad  \text{ in $H$}.\end{equation}

Let $J\subset H$ denote the Zariski closure of
$H_x$. We want to prove  $J=H$.

We first claim that $U\subset J$.
Since the homogeneous space $J\ba H$ is real algebraic, 
any $U$-orbit in $J\ba H$ is locally closed \cite[3.18]{BT}.  Therefore, since the sequence $[e]u_n$ converges to $[e]$ in $J\ba H$,
the stabilizer of $[e]$ in $U$ is non trivial and hence $ U \subset J$.

We now claim that  $J\not \subset AU$.   
Indeed, if the elements $\delta_n$ were in $AU$ 
one would write $\delta_n=a_n u'_n$ with $a_n\in A$ and $u'_n\in U$. 
Since $xH$ contains no periodic $U$-orbit, the stabilizer
$H_x$ does not contain any unipotent element, and hence $a_n \ne e$.
Since the sequence $a_n u'_n u_n$ converges to  $e$, the sequence $a_n$
also converges to  $e$. Therefore, $\delta_n$ 
is a sequence of  hyperbolic elements of a discrete subgroup of $AU$ whose
eigenvalues go to $1$. Contradiction.  
 
Since any algebraic subgroup of $H$ 
containing $U$ but not contained in $AU$
is only $H$ itself, we obtain  $J=H$. 
Therefore $ H_x$ is Zariski dense in $H$. 
\vs

{\bf Second case :}   $\oxH\cap F^*$ contains a periodic $U$-orbit $yU$.

(1) By Proposition \ref{outside}, the set $W:=\oxH\cap W_{k,R}$ is compact.
We claim that  $W$  is contained in $xH$. If not, the set $(\oxH-xH) \cap W^*_{k,R}$ would contain an element $x'$.
Since $xH$ is locally closed, it is not included in 
$\overline{x'H}$. By Proposition \ref{cd2}, any periodic $U$-orbit of $\oxH$ is contained in $xH$. Therefore 
\be
\label{xhw}
\overline{x'H}
\text{ contains no periodic $U$-orbit.}
\ee
Since $x'H$ can not be dense in $F_\Lambda$,  Theorem \ref{rt} 
implies that  $x'H$ is locally closed. 
By the first case considered above,
the stabilizer $H_{x'}$ is non-elementary.
Therefore, since $x'\in F_\Lambda^*$, Proposition \ref{latt} implies that $xH$ is dense in $F_\Lambda$. Since $xH$ is locally closed, this is a contradiction.
Therefore we obtain:
$$
W=xH\cap W^*_{k,R}.
$$
Since this is true for any $R$ large enough, the 
intersection $\oxH\cap (\RK\cap F^*)$ is contained in $xH$.

(2)  Since $xH$ is locally closed, by Proposition \ref{cd2}, 
$yU$ is contained in $xH$.
This proves that the stabilizer $H_x$ contains a non-trivial unipotent element.

We now construct a  non-trivial hyperbolic element $\delta\in H_x$. 
We write $x=[g]$ and denote by $C=(gH)^+$ the corresponding circle.
We use again the compact set $W$ given by \eqref{xhw}. 
Since $x$ is in $\RK$, the set 
$$
\{(gh)^+\in C: h\in H\text{ with } xh \in W\}
$$ is uncountable  and hence
contains at least two radial limit points. 
By considering an $A$-orbit which connects these two limit points,
we get $z\in xH $ such that $zA\subset xH\cap  (\RK\cap F^*)$. 
Since the set $\{t\in \m R: za_t\in \Hor_R\}$ is a disjoint union of  bounded open intervals,  
we can find a sequence $a_n\in A$ such that $za_n\in W$ and
$a'_n:=a_n^{-1} a_{n+1}\to \infty$.

Since $xH\cap W$ is compact, 
by passing to a subsequence the sequence 
$z_n:=za_n$ converges to a point
$z'\in xH\cap W$. 

Therefore we have
$$
z_n\to z'
\;\;\text{ and}\;\;
z_na'_n=z_{n+1}\to z'.
$$
By the properness of the map in \eqref{proper},
we can write $z_n =z'\varepsilon_n$ with $\varepsilon_n\to e$ in $H$.
Therefore the following product belongs to the stabilizer of $z'$~:
\begin{equation}
\label{daa} 
\delta_n:=  \varepsilon_n a'_n \varepsilon_{n+1}^{-1} \in H_{z'}.
\end{equation}
Since $a'_n\to\infty$,  the elements $\delta_n$ are non-trivial hyperbolic
elements of $H_{z'}$ for all $n$ large enough.
Since $z'\in xH$, 
the stabilizer $H_x$ also contains non-trivial hyperbolic elements.

Since a discrete subgroup of $H$ containing
simultaneously non-trivial unipotent elements
and non-trivial hyperbolic elements is non-elementary,
the group $H_x$ is non-elementary.

If $x=[g]$, then
$gHg^{-1}\cap \Gamma$ is non-elementary, and hence contains a Zariski dense finitely generated subgroup
of $gHg^{-1}$. 
Now  the last claim on the countability  follows because there are only countably many finitely generated subgroups of $\Gamma$ and
$H$ has  index two in its normalizer. \end{proof}

\section{A uniformly perfect subset of a circle} 
\label{secuniper}
In this section we give an interpretation of Theorems \ref{rt} and \ref{ns} 
in terms of $\Gamma$-orbits of circles.
\vs

The union 
$\RF_\infty M:=\cup_{k\geq 1}\RK$ can be described as
$$
\RF_\infty M=\{[g]\in \RFM: \text{$(gH)^+\cap \Lambda\supset S\supset \{g^\pm\}$ for a
uniformly perfect set $S$}\}.
$$
Hence an $H$-orbit $[g]H$ intersects 
$\RF_\infty M$ if and only if the intersection  $C\cap \Lambda$
contains a uniformly perfect subset $S$ where $C$ is the circle given by $(gH)^+$.

Putting together Theorems \ref{rt} and \ref{ns}, we obtain:
\begin{cor}
\label{rt22}  
Let $\Gamma\subset  G$ be a geometrically finite Zariski dense subgroup.  
If $x\in \RF_\infty M \cap F^*$, then
 ${xH}$ is  either locally closed
or dense in $\FL$. When $xH$ is locally closed, it is closed in $(\RF_\infty M)H\cap F^*$ and $H_x$ is non-elementary.
\end{cor}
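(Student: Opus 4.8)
The plan is to deduce Corollary \ref{rt22} directly from Theorems \ref{rt} and \ref{ns} together with the description of $\RF_\infty M$ recalled at the start of the section. Since $\RF_\infty M=\bigcup_{k\ge 1}\RK$, any point $x\in\RF_\infty M\cap F^*$ lies in $\RK\cap F^*$ for some $k\ge 1$. First I would invoke Theorem \ref{rt} for that particular $k$: it gives immediately that $xH$ is either locally closed or dense in $\FL$, which is the first assertion. Note that nothing about uniform perfectness is needed for this step beyond the membership $x\in\RK\cap F^*$; the reformulation in terms of $C\cap\Lambda$ containing a uniformly perfect set is just repackaging the condition $x\in\RF_\infty M$.

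For the second assertion, suppose $xH$ is locally closed. Then Theorem \ref{ns}, again applied with the $k$ for which $x\in\RK\cap F^*$, tells us that $\overline{xH}\cap(\RK)H\cap F^*\subset xH$ and that the stabilizer $H_x$ is non-elementary. This already gives non-elementarity of $H_x$. The only point requiring a small argument is upgrading ``closed in $(\RK)H\cap F^*$'' to ``closed in $(\RF_\infty M)H\cap F^*$''. Here I would use that Theorem \ref{ns} holds for \emph{every} $k'\ge 1$ with $x\in\RF_{k'}M\cap F^*$: if $x\in\RK\cap F^*$ and $k'\ge k$, then $\RK\subset\RF_{k'}M$, so in particular $x\in\RF_{k'}M\cap F^*$ as well, and hence $\overline{xH}\cap(\RF_{k'}M)H\cap F^*\subset xH$ for all $k'\ge k$. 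Taking the union over $k'\ge k$ and observing that $(\RF_\infty M)H=\bigcup_{k'\ge 1}(\RK)H=\bigcup_{k'\ge k}(\RF_{k'}M)H$ (the sequence $\RK$ being increasing in $k$), we conclude $\overline{xH}\cap(\RF_\infty M)H\cap F^*\subset xH$, i.e.\ $xH$ is closed in $(\RF_\infty M)H\cap F^*$.

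The final sentence of the corollary, about countably many locally closed orbits, is inherited verbatim from the corresponding ``in particular'' clause of Theorem \ref{ns}, since every locally closed $H$-orbit meeting $\RF_\infty M\cap F^*$ meets $\RK\cap F^*$ for some $k$, and there are countably many choices of $k$ and, by Theorem \ref{ns}, countably many such orbits for each fixed $k$; a countable union of countable sets is countable. There is essentially no obstacle here: the whole content of Corollary \ref{rt22} is already packaged in Theorems \ref{rt} and \ref{ns}, and the only genuine (but routine) point is the interchange of the union over $k$ with the closure/containment statements, which works precisely because the sets $\RK$ are nested and increasing in $k$. If I had to name a ``hard part'' it would only be making sure the reformulation of $x\in\RF_\infty M$ in terms of $(gH)^+\cap\Lambda$ containing a uniformly perfect set is correctly matched with Definition \ref{defthi}(4)--(5) and the definition \eqref{defk} of $\RK$ — but this identification is stated in the excerpt and can be cited rather than reproved.
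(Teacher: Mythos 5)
Your proposal is correct and matches the paper's approach exactly: the paper obtains Corollary \ref{rt22} by simply ``putting together'' Theorems \ref{rt} and \ref{ns}, and your interchange of the union over $k$ with the closure statement (using that the sets $\RK$ increase with $k$, so that any $x\in\RF_\infty M$ lies in $\RF_{k'}M$ for all large $k'$) is precisely the routine verification left implicit there. The only quibble is that your final paragraph proves a countability assertion that is not actually part of the statement of Corollary \ref{rt22}; it is harmless but superfluous.
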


Let $\mc C_{\rm perf}$ denote the set of all circles $C\in \mc C$ such that $C\cap \Lambda$ contains a 
uniformly perfect subset of $C$, and let 
$$\mc C_{\rm perf}^*:=\mc C_{\rm perf}\cap\mc C^*.$$
This is a $\Gamma$-invariant set and in general, this set is neither open nor closed in $\mc C$.
The following theorem is equivalent to Corollary \ref{rt22}.

\begin{thm} 
\label{circperf} 
Let $\Gamma\subset G$ be a geometrically finite Zariski dense subgroup.
For any $C\in \mathcal C_{\rm perf}^{*}$, the orbit 
$\Gamma C$ is either discrete or dense in $\mathcal C_\Lambda$.
Moreover, a discrete orbit $\Gamma C$ is closed in  
$\mathcal C_{\rm perf}^*$ and its stabilizer $\Gamma^C$ is non-elementary.
\end{thm}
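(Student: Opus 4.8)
The plan is to read off Theorem~\ref{circperf} from Corollary~\ref{rt22} through the duality between $\Gamma$-orbits of circles in $\mc C=G/H$ and $H$-orbits in the frame bundle $\FM=\Gamma\ba G$ recalled in Section~\ref{secprelim}. Under this duality a circle $C=(gH)^+$ is matched with the $H$-orbit $[g]H$; since the preimages of $\Gamma C\subset G/H$ and of $[g]H\subset\Gamma\ba G$ in $G$ coincide with the single double coset $\Gamma gH$, and a subset of either quotient is closed precisely when its preimage in $G$ is, the orbit $\Gamma C$ is closed (resp. locally closed, resp. dense in $\mc C_\Lambda$) if and only if $[g]H$ is closed (resp. locally closed, resp. dense in $F_\Lambda$). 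The duality carries $\mc C^*$ to $F^*$, $\mc C_\Lambda$ to $F_\Lambda$, and — by the characterization of $\RF_\infty M$ given at the opening of this section — $\mc C_{\rm perf}$ to the $H$-invariant set $(\RF_\infty M)H$; as $F^*$ is $H$-invariant, $\mc C_{\rm perf}^*$ corresponds to $(\RF_\infty M)H\cap F^*$.

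So I would fix $C\in\mc C_{\rm perf}^*$ together with a representative $g$ with $C=(gH)^+$. Since $C\cap\Lambda$ contains a uniformly perfect, hence infinite, subset $S$, the plane $\pi(gH)$ carries a complete geodesic both of whose endpoints lie in $S$; the frame $x$ it determines then lies in $\RF_\infty M\cap F^*$ with $xH=[g]H$. In particular $C\cap\Lambda\neq\emptyset$, so $\Gamma C\subset\mc C_\Lambda$ and $\overline{\Gamma C}\subset\mc C_\Lambda$. Now apply Corollary~\ref{rt22} to $x$: either $xH$ is dense in $F_\Lambda$, in which case the duality gives that $\Gamma C$ is dense in $\mc C_\Lambda$; or $xH$ is locally closed, closed in $(\RF_\infty M)H\cap F^*$, with $H_x$ non-elementary. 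In the latter case $\Gamma C$ is locally closed, hence discrete ($\Gamma$ being countable, $\Gamma C$ is automatically countable), and it is closed in $\mc C_{\rm perf}^*$; and since $H_x=g^{-1}\Gamma^C g$ with $\Gamma^C=\Gamma\cap gHg^{-1}$, the stabilizer $\Gamma^C$ is non-elementary as well. This yields all the assertions.

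There is no genuine obstacle here: the argument is the dictionary translation that the text already advertises by calling Theorem~\ref{circperf} ``equivalent to Corollary~\ref{rt22}''. The only points requiring care are bookkeeping ones — checking that $\mc C_{\rm perf}$ is exactly the set of circles whose $H$-orbit meets $\RF_\infty M$ (the displayed characterization of $\RF_\infty M$ opening the section), and tracking orientations so that the circle stabilizer $\Gamma^C$ and the frame stabilizer $H_x$ genuinely correspond under conjugation by $g$.
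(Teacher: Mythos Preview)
Your proposal is correct and matches the paper's approach: the paper itself gives no separate proof, declaring Theorem~\ref{circperf} ``equivalent to Corollary~\ref{rt22}'' via the $\Gamma$-orbit/$H$-orbit dictionary, and you have simply written out that dictionary translation carefully. The only minor remark is that your verification that $\mc C_{\rm perf}$ corresponds to $(\RF_\infty M)H$ and that $\Gamma^C$ is conjugate to $H_x$ is exactly the bookkeeping the paper leaves implicit.
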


\section{$H$-orbits contained in $\RFM$}
\label{secorbcon}
In this section, we prove Theorem \ref{m4}.
We assume that $\Gamma\subset G$ is geometrically finite and 
of infinite co-volume.  
We describe the $H$-orbits $xH$ which are contained in the renormalized frame bundle $\RFM$. 
Equivalently we describe the $\Gamma$-orbits $\Gamma C$ contained in the limit set $\Lambda$. 

\begin{thm}
\label{hr}
Let $\Gamma\subset G$ be a geometrically finite subgroup 
of infinite co-volume and let $x\in \FM$.
\begin{enumerate}
\item If  $xH$ is contained in $ \RFM$, then  $xH$ is closed and has finite volume.
\item If  $xH\subset \FM$ has finite volume then $xH$ is closed and contained in $\RFM$.
\item There are only finitely many $H$-orbits contained in $\RFM$.
\end{enumerate}
\end{thm}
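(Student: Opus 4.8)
The plan is to translate everything into the language of circles, dispatch (1) and (2) using the structure of Fuchsian stabilizers, and prove (3) by a compactness argument that feeds an accumulation of circles into the ``sweeping'' machinery of \S\ref{if}.

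\emph{Dictionary.} Write $x=[g]$ and let $C=(gH)^{+}$ be the associated oriented circle. Since $(gh)^{\pm}$ sweeps out all of $C$ as $h$ runs over $H$, one has $xH\subset\RFM$ if and only if $C\subset\Lambda$; and $xH$ has finite volume if and only if the Fuchsian group $\Gamma^{C}=\op{Stab}_{\Gamma}(C)$ is a lattice in $\PSL_{2}(\br)$, i.e.\ is finitely generated with $\Lambda(\Gamma^{C})=C$. Moreover a finite-volume $H$-orbit in $\Gamma\backslash G$ is automatically closed. Granting this, (2) is immediate: if $xH$ has finite volume then $C=\Lambda(\Gamma^{C})\subset\Lambda$, so $xH\subset\RFM$, and $xH$ is closed. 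Thus the content lies in (1) and (3).

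\emph{Part (1).} Assume $C\subset\Lambda$; we must show $\Gamma^{C}$ is a lattice in $\PSL_{2}(\br)$. If $C$ does not separate $\Lambda$, then $\Lambda$ lies in one of the two closed disks bounded by $C$; either $\Lambda=C$, in which case $\Gamma$ is a geometrically finite Fuchsian group with full limit set, hence a lattice and $\Gamma^{C}=\Gamma$ up to index two, or $C$ bounds a supporting half-space of $\hull(\Lambda)$, in which case Lemma \ref{gcf} gives that $\Gamma^{C}$ is finitely generated with $C\cap\Lambda=\Lambda(\Gamma^{C})\cup\Gamma^{C}\Lambda_{0}$ for a finite set $\Lambda_{0}$; since $C\cap\Lambda=C$ and a finitely generated Fuchsian group whose limit set has finite complement in its bounding circle has full limit set, $\Lambda(\Gamma^{C})=C$. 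If instead $C$ separates $\Lambda$, then the orbit $xH$ meets $\RF_\infty M$ (a circle being uniformly perfect as a subset of itself) and meets $F^{*}$, while $\oxH\subset\RFM\subsetneq\FL$ forces $xH$ not to be dense in $\FL$; so Corollary \ref{rt22} applies and $xH$ is locally closed, hence properly immersed, with $\Gamma^{C}$ non-elementary. Properness of $H_{x}\backslash H\to\Gamma\backslash G$, together with compactness of the fibres of $\Gamma\backslash G\to M$ and $H_{x}\backslash H\to\Gamma^{C}\backslash P$ (where $P$ is the plane bounded by $C$), shows that the totally geodesic immersion $\Gamma^{C}\backslash P\to M$ is proper with image in $\core(M)$, which has finite volume by geometric finiteness. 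One then argues: $\Lambda(\Gamma^{C})=C$ --- otherwise a gap is a nonempty open arc of $C\subset\Lambda$, and a geodesic ray towards a non-parabolic point of it is divergent in $\Gamma^{C}\backslash P$, hence by properness eventually trapped in a cusp of $\core(M)$, forcing that point to be parabolic, a contradiction --- and, using that $\RFM$ is compact modulo finitely many cusp neighbourhoods for geometrically finite $\Gamma$, that the surface $\Gamma^{C}\backslash P$, which equals its own convex core, has finite area. Hence $\Gamma^{C}$ is a lattice, $xH$ has finite volume, and $xH$ is closed.

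\emph{Part (3).} Suppose there are infinitely many distinct $H$-orbits $x_{n}H\subset\RFM$; by (1) each is closed of finite volume, with circles $C_{n}\subset\Lambda$ in distinct $\Gamma$-orbits and $\Lambda(\Gamma^{C_{n}})=C_{n}$. Apart from the degenerate case where $\Lambda$ is a round circle (then there is only one such orbit) and the finitely many orbits where $C_{n}$ bounds a supporting half-space --- these correspond to the finitely many components of $\partial\,\core(M)$ --- we may assume every $C_{n}$ separates $\Lambda$. By Lemma \ref{ih} each $x_{n}H$ meets the compact set $\RFM-\Hor_{R}$; choosing $z_{n}=[g_{n}]$ there and passing to a subsequence, $z_{n}\to z=[g]$ and $C_{n}\to D:=(gH)^{+}$ in $\mathcal C$; since $\bigcup_{n}x_{n}H$ has $H$-invariant closure inside $\RFM$, the orbit $zH$ lies in $\RFM$, so $D\subset\Lambda$. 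If $D$ does not separate $\Lambda$, Proposition \ref{gbb} applies (its hypothesis holds because $C_{n}\cap\Lambda=C_{n}\to D$ is uncountable) and yields $\overline{\bigcup_{n}\Gamma C_{n}}=\mathcal C_{\Lambda}$; if $D$ separates $\Lambda$, then by (1) $\Lambda(\Gamma^{D})=D$, so $\hull(B,\Lambda(\Gamma^{D}))=B$ for each disk $B$ bounded by $D$, and since each $C_{n}$ near $D$ enters such a $B$, Corollary \ref{boundary} again gives $\overline{\bigcup_{n}\Gamma C_{n}}=\mathcal C_{\Lambda}$. But $\bigcup_{n}\Gamma C_{n}$ is contained in the closed set $\{C\in\mathcal C:C\subset\Lambda\}$, so $\mathcal C_{\Lambda}\subset\{C:C\subset\Lambda\}$; taking any point of $\Lambda$ together with all circles through it forces $\Lambda=S^{2}$, contradicting infinite covolume. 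This proves (3).

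\emph{Main obstacle.} The step requiring genuine care is the finiteness of area in the separating case of Part (1): showing that a properly immersed totally geodesic surface with full limit set, sitting inside the finite-volume convex core, has finite area. This is precisely where the compact-modulo-cusps description of $\RFM$ for geometrically finite $\Gamma$, and the control on geodesic/horocyclic excursions into cusp neighbourhoods developed in \S\ref{secrethor}, must be brought to bear; everything else is the dictionary, the already established dichotomy (Theorems \ref{rt} and \ref{ns}), or the sweeping lemmas of \S\ref{if}. A secondary point in Part (3) is to observe that Corollary \ref{boundary} and Proposition \ref{gbb} hold for a sequence of circles drawn from arbitrarily many distinct $\Gamma$-orbits, which is what makes the accumulation argument close.
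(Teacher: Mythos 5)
Your overall architecture matches the paper's (which proceeds via Lemmas \ref{loc}, \ref{fv} and \ref{iso}), and your Parts (2) and (3) are essentially sound: (3) is a correct repackaging of Lemma \ref{iso} combined with the compactness of $\RFM-\Hor$. The problem is in the separating case of Part (1), at the step ``$xH$ is locally closed, hence properly immersed.'' Local closedness of $xH$ does \emph{not} imply that $xH$ is closed in $\FM$, and it is closedness --- not local closedness --- that is equivalent to properness of $H_x\ba H\to \Gamma\ba G$ and hence of the immersion $\Gamma^C\ba \hull(C)\to M$. A locally closed orbit is merely open in its closure, so a priori $\oxH-xH$ could be a nonempty closed union of other $H$-orbits; in that case every subsequent step of your argument (divergent rays being trapped in cusp neighborhoods, the finite-area estimate) loses its foundation. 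What Corollary \ref{rt22} and Theorem \ref{ns}(1) actually give is that $xH$ is closed in $(\RF_\infty M)H\cap F^*$; since $\oxH$ is contained in the closed $H$-invariant set $F_0=\{y: yH\subset\RFM\}\subset\RK$ for every $k$, this only shows that any orbit in $\oxH-xH$ lies in $F_0-F^*$, i.e.\ corresponds to a boundary circle of a component of $\Omega$.

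The paper closes exactly this gap in the second half of Case 2 of the proof of Lemma \ref{loc}: such a boundary-circle orbit $yH$ is closed of finite volume, so if $yH\subset\oxH$ with $yH\neq xH$, then Corollary \ref{boundary} (applied to a sequence of circles in $\Gamma C$ converging to the boundary circle, whose stabilizer is a lattice in the corresponding $\PSL_2(\br)$) would force $xH$ to be dense in $\FL$, contradicting $\oxH\subset\RFM\subsetneq\FL$. You need to insert this argument before you may speak of properness. Two smaller remarks. In Part (2) you invoke the unproved general fact that a finite-volume $H$-orbit is automatically closed; this is avoidable, since your own computation already gives $C=\Lambda(\Gamma^C)\subset\Lambda$, hence $xH\subset\RFM$, and then closedness follows from Part (1), which is how the paper argues. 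Also, your route to finite area via first proving $\Lambda(\Gamma^C)=C$ (divergent rays must end at parabolic points) differs from the paper, which never needs that identity in Part (1): it deduces finite area directly from properness of $\Gamma^C\ba\hull(C)\to\core(M)$ and the finite volume of the core (Lemma \ref{fv}). Your version is workable once closedness is established, but it is an extra step rather than a shortcut.
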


This theorem implies that the closed subset 
$$
 F_0:=\{ x\in F : xH\subset \RFM\}
$$
is a union of finitely many closed $H$-orbits.
Equivalently,  the closed subset 
$$
\mc C_0:=\{C\in \mc C: C\subset \Lambda\}
$$
is a union of finitely many closed $\Gamma$-orbits.

We will deduce Theorem \ref{hr} from the following three lemmas.
We write $x=[g]$ and denote by $C$ the corresponding circle
$C:=(gH)^+$.

\begin{lem}
\label{loc}  Let $\Gamma\subset G$ be a geometrically finite subgroup 
of infinite co-volume.
Any $xH$  included in $\RFM$  is  closed.
\end{lem}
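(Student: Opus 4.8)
The plan is to show that $\overline{xH}$ contains no point of $xH$ in its boundary, by deriving a contradiction from the hypothesis that $xH\subset\RFM$ is not locally closed. Suppose $xH$ is not closed; since it is contained in the closed set $\RFM$, there is a sequence of distinct points $x h_n\to x'\in\RFM$ with $x'\notin xH$, and equivalently a sequence of distinct circles $C_n\in\Gamma C$ converging to a circle $D$ with $D\subset\Lambda$. Because every $C_n\subset\Lambda$ and $D\subset\Lambda$, the orbit $\Gamma C$ lives entirely inside $\mc C_0=\{C'\in\mc C:C'\subset\Lambda\}$, so $D\in\mc C_0$. The first move is to record that $x'\in\RFM$, so the corresponding circle $D$ lies in $\Lambda$; in particular $D$ does not separate $\Lambda$ (both complementary disks of $D$ miss $\Lambda$), so $D$ is contained in the boundary of a supporting hyperplane of $\op{hull}(\Lambda)$ — or is otherwise constrained to be a ``boundary-type'' circle of the convex hull.

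The key step is to invoke Lemma \ref{gcf}: since $\Gamma$ is geometrically finite and $D$ bounds a supporting hyperplane of $\op{hull}(\Lambda)$, the stabilizer $\Gamma^D$ is a finitely generated Fuchsian group and $D\cap\Lambda=\Lambda(\Gamma^D)\cup\Gamma^D\Lambda_0$ for a finite set $\Lambda_0$. But $D\subset\Lambda$ means $D\cap\Lambda=D$ is the \emph{entire} circle, which forces $\Lambda(\Gamma^D)=D$, i.e.\ $\Gamma^D$ is a \emph{lattice} in $G^D\cong\PSL_2(\br)$ (a non-elementary finitely generated Fuchsian group whose limit set is the full boundary circle is a lattice). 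Thus $D$ has non-elementary — indeed cocompact-up-to-cusps — stabilizer in $\Gamma$. Now the dichotomy of Corollary \ref{latt2} (or Proposition \ref{latt}, applied to the sequence $C_n\to D$) applies: either $\Gamma C$ is locally closed or $\Gamma C$ is dense in $\mc C_\Lambda$. Since $\Gamma C\subset\mc C_0\subset\mc C_\Lambda$ and $\mc C_0$ is a proper closed subset of $\mc C_\Lambda$ (here one uses that $\Gamma$ has infinite co-volume, so $\Lambda\ne S^2$ and there exist separating circles not contained in $\Lambda$, making $\mc C_0\ne\mc C_\Lambda$), density is impossible. Hence $\Gamma C$ is locally closed, equivalently $xH$ is locally closed; combined with $xH\subset\RFM$ and a standard properness argument (a locally closed $H$-orbit whose closure adds no new $H$-orbits) this yields that $xH$ is in fact closed, contradicting our assumption of a nontrivial accumulation.

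The main obstacle I anticipate is the transition from ``$D$ is a limit of translates $C_n$ of $C$ inside $\RFM$'' to ``$D$ bounds a supporting hyperplane of $\op{hull}(\Lambda)$'' — one must verify that a circle contained in $\Lambda$ which is a limit of distinct translates is genuinely of this boundary type, so that Lemma \ref{gcf} is applicable; this requires understanding the local structure of $\op{hull}(\Lambda)$ near such a circle and ruling out degenerate configurations. A secondary point to handle carefully is the case where $C$ itself already has non-elementary stabilizer versus the case where the non-elementarity only appears in the limit $D$; in the latter case one applies Proposition \ref{latt} directly to the sequence $C_n\to D$, which is exactly the scenario that proposition was designed for. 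Once these geometric inputs are in place, the homogeneous-dynamics dichotomy does the rest with no further work.
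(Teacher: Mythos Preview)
Your argument has a genuine gap at exactly the point you flagged, but the problem is more serious than you indicate. The assertion that a circle $D\subset\Lambda$ ``does not separate $\Lambda$'' is simply false in general: $D\subset\Lambda$ says nothing about whether the two complementary disks meet $\Lambda$. (Your parenthetical that ``both complementary disks of $D$ miss $\Lambda$'' would force $\Lambda\subset D$, contradicting Zariski density.) So the case split you need is precisely whether $D$ is separating or not, and you have only addressed the non-separating branch. Moreover, even in that branch your citation is wrong: Proposition~\ref{latt} and Corollary~\ref{latt2} both require the relevant circle to lie in $\mc C^*_\Lambda$, i.e.\ to be \emph{separating}, which is the opposite of what you just claimed about $D$. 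The correct tool when $D$ is a boundary circle with lattice stabilizer is Corollary~\ref{boundary}, and with that substitution the non-separating case does go through.

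The real difficulty is the case where $D$ (or $C$ itself) \emph{is} separating. To apply Proposition~\ref{latt} you would need $\Gamma^D$ non-elementary, and there is no elementary reason for this; Lemma~\ref{gcf} does not apply since $D$ is not a supporting-hyperplane boundary. The paper handles this by observing that any $x$ with $xH\subset\RFM$ lies in $\RK$ for every $k\ge 1$ (the return-time set $T_x$ is all of $\mathbb R$), and then invoking the machinery of Theorems~\ref{rt} and~\ref{ns}: if $C$ separates, these give that $xH$ is closed in $F_0^*$, and any further accumulation onto a non-separating boundary circle is ruled out via Corollary~\ref{boundary}. Your proposal tries to bypass this polynomial-divergence machinery, but without it there is no way to control the separating case.
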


\begin{proof}  Remember that, by assumption, the circle
$C$ is contained in the limit set $\Lambda$. Let $F_0^*:= F_0\cap F^*$.

{\bf Case 1:} $x\not\in F^*_0$. In this case the circle 
$C$ is a boundary circle.
This means that 
$C$ is included in $\Lambda$
and bounds a disk $B$ in $\Omega=S^2-\Lambda$. 
The surface 
$ \Gamma^C\ba\op{hull}(C)$ is then a connected component of the boundary 
of the core of $M$. Hence the orbit $xH$ is closed.

{\bf Case 2:} $x\in F^*_0$. 
By Definition  \ref{defk}, we have 
$$
F_0\subset \RK
$$
for all $k\geq 1$.
Since $x$ belongs to $\RK\cap F^*$,  Theorems \ref{rt} and \ref{ns}  
implies that either the orbit $xH$ is dense in $F_\Lambda$
or this orbit is closed in $F_0^*$.

Since $\Gamma$ is geometrically finite of infinite covolume, 
the limit set $\Lambda$ is not the whole sphere $S^2$ and therefore 
$\RFM$ is strictly smaller than $F_\Lambda$. 
Therefore the orbit $xH$ can not be dense in $F_\Lambda$.

Hence  $xH$ is closed in $F_0^*$. If $xH$ were  not closed in $F_0$,
there would exist an $H$-orbit $yH\subset \overline{xH}\subset F_0$ which does not lie in $F^{*}_0$.
By Case 1, the orbit $yH$ corresponds to a boundary circle 
and hence it is of finite volume. 
Then by Corollary \ref{boundary}, 
the orbit $xH$ has to be dense in $F_\Lambda$. Contradiction. 
\end{proof}

\begin{lemma}
\label{fv}
Let $\Gamma\subset G$ be a geometrically finite subgroup.
Any closed $xH$ contained in $\RFM$ has finite volume.
\end{lemma}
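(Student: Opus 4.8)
The plan is to prove the equivalent statement that the geodesic surface $P:=\pi(xH)=\Gamma^C\ba\hull(C)$ has finite area, where $C=(gH)^+$ and $\Gamma^C=gHg^{-1}\cap\Gamma$; indeed, since point stabilizers of $H$ on $\mathcal C$ are compact, "$xH$ has finite volume'' means $\vol(H_x\ba H)<\infty$, which holds iff $\Gamma^C$ is a lattice in $\op{Isom}^+(\hull(C))\cong\PSL_2(\R)$, i.e. iff $\operatorname{area}(\widetilde P)<\infty$ for $\widetilde P:=H_x\ba\bH^2$. The hypothesis $xH\subseteq\RFM$ says $(gh)^{\pm}\in\Lambda$ for all $h\in H$, and since $\{(gh)^+:h\in H\}=C$ this is exactly the assertion that $C\subseteq\Lambda$; hence $\hull(C)\subseteq\hull(\Lambda)$, so $P$ is a totally geodesic surface contained in $\core(M)$, and it is properly immersed because $xH$ is closed (cf. the properness of the orbit map \eqref{proper}).

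The main idea is a collar/volume estimate. Since $M$ is orientable and $\widetilde P$ is orientable, $P$ is two–sided, so the normal exponential map defines a smooth map $\Phi\colon\widetilde P\times[-1,1]\to M$ whose image lies in the $1$–neighborhood $N_1(P):=\{q\in M:d(q,P)\le 1\}$. In Fermi coordinates along a totally geodesic plane of $\bH^3$ the volume element is $\cosh^2(t)\,dA\,dt$, so $\operatorname{Jac}\Phi\ge 1$ everywhere. Moreover $N_1(P)\subseteq N_1(\core M)$ because $P\subseteq\core(M)$, and $\vol(N_1(\core M))<\infty$ because $\Gamma$ is geometrically finite. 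Granting for a moment a uniform bound $\#\Phi^{-1}(q)\le N_0$ for all $q\in M$, the area formula gives
\[
2\operatorname{area}(\widetilde P)\ \le\ \int_{\widetilde P\times[-1,1]}\operatorname{Jac}\Phi\,dA\,dt\ =\ \int_M \#\Phi^{-1}(q)\,dV(q)\ \le\ N_0\,\vol(N_1(\core M))\ <\ \infty,
\]
which is the assertion.

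The step I expect to be the main obstacle is the uniform multiplicity bound $\#\Phi^{-1}(q)\le N_0$. Upstairs this says that only boundedly many of the $\Gamma$–translates $\gamma\,\hull(C)$ come within distance $1$ of any given point of $\hull(\Lambda)$. Since $xH$ is closed, the family $\{\gamma\,\hull(C)\}$ (equivalently the orbit $\Gamma C\subseteq\mathcal C$) is locally finite, so the count is finite and, being $\Gamma$–invariant, descends to a finite function on the $1$–neighborhood of $\core(M)$ in $M$; it remains to bound it there. On the compact set $\RFM-\Hor_R$ (compact because $\Gamma$ is geometrically finite) this is immediate from local finiteness. Inside a cusp neighborhood $\Hor_R$, normalize the parabolic fixed point $\xi$ to $\infty$ and put $\Gamma_\xi=\op{Stab}_\Gamma(\xi)$; the translates $\gamma\,\hull(C)$ that come within distance $1$ of a point at height $y$ are either Euclidean hemispheres — of which only boundedly many (uniformly in $y$) qualify, by discreteness of $\Gamma$ — or vertical half–planes over lines $\gamma C\cap\c$, which fall into finitely many $\Gamma_\xi$–orbits, each a discretely spaced family of parallel lines and hence again contributing a bounded count. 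Combining the finitely many cusps with the compact part yields the uniform $N_0$, completing the proof.

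(An alternative route, parallel to Lemma \ref{loc}, is to argue by cases: if $x\notin F^{*}$ then $P$ is a boundary component of $\core(M)$, which has finite area by geometric finiteness; if $x\in F^{*}$ one combines the non-elementarity of $\Gamma^C$ from Theorem \ref{ns} with an analysis of the ends of $P$ inside the cusps of $M$, using $C\subseteq\Lambda=\Lambda_r\cup\Lambda_p$, the countability of $\Lambda_p$, and properness of $\pi|_{xH}$ to rule out funnels and to identify each end of $P$ with a finite–area cusp.)
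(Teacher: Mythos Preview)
Your collar argument is a genuinely different route from the paper's, but the step you flagged as the main obstacle does not go through as written. The claimed uniform multiplicity bound $\#\Phi^{-1}(q)\le N_0$ fails in rank-two cusps. If $C$ passes through a rank-two parabolic fixed point $\xi$ (normalized to $\infty$), the $\Gamma_\xi\cong\mathbb{Z}^2$--translates of the vertical half-plane over $C$ form a family of parallel half-planes with some Euclidean spacing $d>0$; the number of these within hyperbolic distance $1$ of a point at height $y$ is of order $y\sinh(1)/d$, which is unbounded as $y\to\infty$. Your sentence ``discretely spaced family of parallel lines and hence again contributing a bounded count'' is therefore incorrect in the rank-two case. (Your argument does work in rank-one cusps: there $C\subset\Lambda$ forces the underlying line to be parallel to the direction of translation, so the $\Gamma_\xi$-orbit is a single line, and the strip constraint on $\Lambda$ bounds the radii of the hemispherical translates.)

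The paper's proof avoids this entirely. It simply notes that closedness of $xH$ makes the map $\Gamma^C\backslash\hull(C)\to\core(M)$ proper, and then appeals to the explicit description of $\core(M)$ near rank-one and rank-two cusps to read off finite area of each end of the surface. No collar, no multiplicity count.

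Your collar approach can be rescued: replace the uniform bound by a direct estimate of $\int_M \#\Phi^{-1}(q)\,dV(q)$ in each cusp, where the linear growth of the multiplicity is dominated by the $y^{-3}$ decay of the volume element; but this is more work than the paper's two-line argument. Your parenthetical alternative is closer in spirit to the paper, but the detour through Theorem~\ref{ns} is unnecessary: properness alone (no non-elementarity of $\Gamma^C$) suffices to control the ends.
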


\begin{proof}
Since $xH$ is closed, the inclusion map $xH\to \RFM$ is  proper, and the corresponding map $\Gamma^C\ba\op{hull}(C)\to  \op{core}(M)$
is also  proper.  Since $\op{core}(M)$ has finite volume,
using explicit descriptions of $\op{core}(M)$ in the neighborhood  of both
rank-one  cusps and rank-two cusps as in 
\cite[\S 3]{Ma1}, we can deduce that 
the surface $ \Gamma^C\ba\op{hull}(C)$ has finite area.
\end{proof}

\begin{lem}\label{iso}
Let $\Gamma\subset G$ be a discrete group with limit set $\Lambda\ne S^2$. 
Then the following set $\mc C_a$ is discrete in $\mc C$~: 
$$
\mc C_a:=\{C\in \mathcal C: \Gamma^C\ba\op{hull}(C) \text{ is of finite area}\} .
$$
\end{lem}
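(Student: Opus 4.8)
The plan is to show that $\mc C_a$ is locally closed and countable in $\mc C$, which are precisely the two components of discreteness. For countability, observe that if $C \in \mc C_a$ then the surface $\Gamma^C \ba \op{hull}(C)$ has finite area, which by the structure theory of finitely generated Fuchsian groups forces $\Gamma^C$ to be a lattice in $G^C = \Isom^+(B,\rho_B)$ for a disk $B$ bounded by $C$; in particular $\Gamma^C$ is a finitely generated non-elementary subgroup of $\Gamma$. Since $\Gamma$ is countable, it has only countably many finitely generated subgroups, and a non-elementary Fuchsian group $\Gamma^C$ determines its limit set and hence the circle $C$ up to finite ambiguity (the stabilizer $\Gamma^C$ has finite index in its normalizer in $G$, which moves $C$ to finitely many circles). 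Hence $\mc C_a$ is countable.

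For local closedness, suppose $C_n \in \mc C_a$ with $C_n \to C$ in $\mc C$, and that $C \ne C_n$ for infinitely many $n$. I want to derive a contradiction with $C \in \mc C_a$, i.e., to show such accumulation cannot happen within $\mc C_a$; equivalently, each point of $\mc C_a$ is isolated in $\mc C_a$. If $C \notin \mc C_\Lambda$, then $C \cap \Lambda = \emptyset$, so $\op{hull}(C)$ has empty intersection with $\op{hull}(\Lambda)$ and $\Gamma^C$ is trivial, giving infinite area — so $C \notin \mc C_a$; thus we may assume $C \in \mc C_\Lambda$. Now if $C$ itself lay in $\mc C_a$, then $\Gamma^C$ is a lattice in $G^C$, and in particular $C$ separates $\Lambda$ (both complementary disks of a lattice circle meet $\Lambda$) or $C$ is a boundary circle with $\op{hull}(C)/\Gamma^C$ a closed surface. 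In the separating case, $C \in \mc C^*_\Lambda$ with non-elementary stabilizer, so by Corollary \ref{latt2} the orbit $\Gamma C$ is either discrete or dense in $\mc C_\Lambda$; since $\mc C_a$ is countable it cannot contain a dense orbit's worth of circles accumulating at $C$ unless $\Gamma C$ is discrete, contradicting $C_n \to C$ with $C_n$ distinct — unless the $C_n$ are not in $\Gamma C$. This last point is the gap to close: the accumulating $C_n$ need not lie on a single $\Gamma$-orbit.

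To handle that, I would argue directly with the geometry of supporting hyperplanes. The key observation is that a circle $C$ with $\Gamma^C \ba \op{hull}(C)$ of finite area must have $C \cap \Lambda$ be the full limit set of the lattice $\Gamma^C$ (together with finitely many parabolic points), so $\op{hull}(C) \subset \op{hull}(\Lambda)$; moreover $\Gamma^C \ba \op{hull}(C) \to \op{core}(M)$ is then a finite-area immersed surface, and its preimage in $\bH^3$ is an embedded or finitely-covered totally geodesic subspace. By the finite-area condition, the collection of such planes is discrete in the space of planes: a convergent sequence of distinct finite-area geodesic planes in $M$ would, after passing to a subsequence and using finiteness of the volume of a neighborhood of the core together with a thick-part argument (no plane of area $\le A$ can come arbitrarily $C^1$-close to another without coinciding, since their intersection would force a short closed geodesic on one of them, bounding the area from below), eventually coincide. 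I expect the main obstacle to be making this last compactness/rigidity step precise in the presence of cusps — one needs that finite-area totally geodesic surfaces immersed in the core, with area bounded in terms of how close $C_n$ is to $C$, cannot accumulate; this is where the explicit description of $\op{core}(M)$ near rank-one and rank-two cusps from \cite{Ma1} (as used in Lemma \ref{fv}) together with the Margulis lemma for the surface groups $\Gamma^{C_n}$ must be invoked to bound areas from below and conclude $C_n = C$ for large $n$.
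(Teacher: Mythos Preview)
Your proof has a genuine gap: the geometric rigidity argument you sketch at the end (that finite-area totally geodesic surfaces cannot accumulate) is left incomplete, as you yourself acknowledge. More importantly, you are working much harder than necessary and missing the observation that makes the lemma almost immediate.

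The paper's argument is short. If $C \in \mc C_a$, then $\Gamma^C$ is a lattice in $G^C$, so $\Lambda(\Gamma^C) = C$; since $\Lambda(\Gamma^C) \subset \Lambda$, this gives $C \subset \Lambda$. Now suppose $C \in \mc C_a$ is not isolated and take distinct $C_n \in \mc C_a$ with $C_n \to C$. Because $\Lambda(\Gamma^C) = C$, the hypothesis~\eqref{cnh} of Corollary~\ref{boundary} is automatic (the hull is the whole disk), so $\overline{\bigcup_n \Gamma C_n} \supset \mc C_\Lambda$. On the other hand every $C_n$ lies in $\Lambda$, hence so does every circle in this closure. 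Since $\Lambda \ne S^2$ there exist circles meeting $\Lambda$ but not contained in it; contradiction.

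You actually had the key fact in hand: you noted that $\op{hull}(C) \subset \op{hull}(\Lambda)$, which is essentially $C \subset \Lambda$. But you then dropped this thread for a harder compactness argument. The correct move after observing $C_n \subset \Lambda$ is to invoke Corollary~\ref{boundary}, not Corollary~\ref{latt2}: the issue you flagged (that the $C_n$ need not lie on a single $\Gamma$-orbit) is irrelevant, because the density machinery applies to the whole family $\bigcup_n \Gamma C_n$ at once, and the contradiction comes from this family being simultaneously dense in $\mc C_\Lambda$ and trapped in the proper closed set $\{D \in \mc C : D \subset \Lambda\}$.

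Your countability argument is correct but not needed: in a second-countable space, a set in which every point is isolated is automatically countable and locally closed.
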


\begin{proof}
Suppose $C\in \mc C_a$ is not isolated. Then there exists a sequence of distinct circles $C_n\in\mc C_a$ converging to $C$. Since $\Lambda(\Gamma^C)=C$, by Corollary \ref{boundary}, the closure of $\bigcup \Gamma C_n$ contains $\mathcal C_\Lambda$.  
On the other hand, any circle in this closure is contained in $\Lambda$.
This contradicts the assumption $\Lambda\ne S^2$. 
\end{proof}

\begin{proof}[Proof of Theorem \ref{hr}]
(1) follows from Lemmas \ref{loc} and \ref{fv}.

(2) Since the quotient $\Gamma^C\ba\op{hull}(C)$ has finite area,
the limit set $\Lambda(\Gamma^C)$ is equal to $C$ and 
the orbit $xH$ is contained in $\RFM$. By (1), $xH$ is  closed.

(3) By (1) and (2) we have the equality $\mc C_0=\mc C_a$. Therefore by Lemma \ref{iso},
the set $\mc C_0$ is closed and discrete. 

Now since $\Gamma$ is geometrically finite, by Lemma \ref{ih}, 
any $H$-orbit contained in $\RFM$ intersects the compact set $W:=\RFM-\Hor$.
Hence there exists a compact set $\mc K\subset \mc C$
such that, for any circle $C\in \mc C_0$, there exists $\gamma$ in $\Gamma$
such that $\gamma C$ is in  $\mc K$. 
Since the intersection $\mc C_0\cap \mc K$ is discrete and compact, it is finite.
Therefore $\Gamma$ has only finitely many orbits in $\mc C_0$,
and there are only finitely many $H$-orbits  in $F_0$.
\end{proof}

\section{Planes in geometrically finite acylindrical manifolds}
\label{secacyman}

In this section,
we assume that $M=\Gamma\ba \m H^3$ is a geometrically finite  acylindrical  manifold,
and we prove Theorems \ref{m1} and \ref{m2}.
Under this assumption, every separating $H$-orbit $xH$ intersects 
$\RK\cap F^*$ and hence $W_{k, R}^*$ for $k$ large enough (Corollary \ref{ffk}) which enables us to apply
Theorems \ref{rt} and \ref{ns}.
\vs

\subsection{Geometrically finite acylindrical manifolds} We begin with the definition of an acylindrical manifold.
 Let $D^2$ denote a closed $2$-disk and let $C^2=S^1\times [0,1]$ be a cylinder.
A compact $3$-manifold $N$ is called {\it acylindrical}
\begin{enumerate}
\item if the boundary of $N$ is incompressible, i.e., any continuous map $f:(D^2, \partial D^2)\to (N, \partial N)$
can be deformed  into $\partial{N}$ or equivalently if the inclusion $\pi_1(R)\to \pi_1(N)$
is injective for any component $R$ of $\partial N$; and
\item if any essential cylinder of $N$ is boundary parallel, i.e., any
continuous map  $f:(C^2, \partial C^2)\to (N, \partial {N})$, injective on $\pi_1$, can be deformed into
  $\partial{N}$.
\end{enumerate}

Let $M=\Gamma\ba \bH^3$ be a geometrically finite manifold, and consider
the associated Kleinian manifold
$$\overline{M}=\Gamma\ba (\bH^3\cup \Omega).$$

Recall that  a compact connected submanifold of $\op{Int} (\oM)\simeq M $ is called a core of $M$ if
the inclusion $\pi_1(N)\to \pi_1(M)$ is an isomorphism and each component of $\partial N$ is the full boundary of a non-compact component of $\op{Int}(\oM)-N$ \cite[3.12]{Ma1}. This is sometimes called the Scott core of $M$
and is unique up to a homeomorphism.

We say that a  geometrically finite manifold $M$ is {\it acylindrical} if its compact core $N$ is acylindrical (\cite{Th}, \cite[\S 4.7]{Ma1})

\begin{Rmk}\rm
The Apollonian manifold $\mathcal A\ba \bH^3$ is not acylindrical, because its compact core is a handle body of genus $2$, and hence it is not boundary incompressible.
\end{Rmk}

For the rest of this section, we let
$$\text{$M=\Gamma\ba \bH^3$ be a geometrically finite acylindrical manifold of infinite volume.}$$
It is convenient to choose an explicit model of the core of $M$. As $M$ is geometrically finite,
$\overline M$ is compact except for a finite number of rank one and rank two cusps.
The rank one cusps correspond to  pairs of punctures on $\partial \overline M$ which are arranged 
 so that each pair determines a solid pairing tube, and the rank two cusps determine solid cusp tori \cite[\S 3]{Ma1}. 
 We let $N_0$ be the core of $M$ which is obtained by removing the  interiors of all solid pairing tubes and solid cusp tori which can be chosen to be mutually disjoint.

  So $N_0$ is a compact submanifold of $\op{Int} \oM\simeq M$ whose boundary consists of finitely many closed surfaces of genus at least $2$
  with marked paring cylinders  and torus boundary components.

Write the domain of discontinuity $\Omega=S^2-\Lambda$ as a union of components:
$$\Omega=\cup_i B_i .$$

When all $B_i$ are round open disks, $M$ is called rigid acylindrical.

Let $\Delta_i$ denote the stabilizer of $B_i$ in $\Gamma$. 

A quasi-fuchsian group  is a Kleinian group which leaves a Jordan curve invariant.
A finitely generated quasi-fuchsian group whose limit set is the invariant Jordan curve is a quasi-conformal deformation of a lattice of $\PSL_2(\br)$ and
hence its limit set is a quasi-circle.
\begin{lemma} \label{ddd}
\begin{enumerate}
\item  For each $i$, $\Delta_i$ is a finitely generated quasi-fuchsian  group and $B_i$ is an open Jordan disk
with $\partial B_i=\Lambda(\Delta_i)$.

\item There are countably infinitely many $B_i$'s with finitely many $\Gamma$-orbits and $\Lambda=\overline{\bigcup_i \partial B_i}$.
\item For each  $i\ne j$, $\overline{B_i}\cap \overline{B_j}$ is either empty or a parabolic fixed point of rank one;
and any rank one parabolic fixed point  of $\Gamma$ arises in this way.
\item No subset of $\{\overline B_i\}$ forms a loop of tangent disks, i.e.,  if $B_{i_1}, \cdots, B_{i_\ell}$, $\ell \ge 3$ is a sequence of distinct disks such that
$\overline B_{i_j}\cap \overline B_{i_{j+1}} \ne \emptyset$ for all $j$, then $\overline{B_{i_{\ell}}}\cap \overline{B_{i_1}}=\emptyset$.

\end{enumerate}
\end{lemma}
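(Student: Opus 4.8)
The plan is to prove the four assertions of Lemma \ref{ddd} in order, drawing on the structure of the Kleinian manifold $\overline M$ and the acylindricality of its compact core $N_0$.

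\smallskip

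\emph{Part (1).} For each component $B_i$ of $\Omega$, the stabilizer $\Delta_i=\op{Stab}_\Gamma(B_i)$ acts on $B_i$, and the quotient $\Delta_i\ba B_i$ is a component of the conformal boundary $\Gamma\ba\Omega=\partial\overline M$. Since $\overline M$ is compact modulo finitely many cusps and the boundary components have genus $\ge 2$, each $\Delta_i$ is finitely generated. Because $B_i$ is a connected, simply connected invariant domain and $\Delta_i$ is a nonelementary Kleinian group (it is the fundamental group of a genus $\ge 2$ surface-with-cusps, hence nonelementary), a theorem of Maskit — or the standard fact that a finitely generated function group with a simply connected invariant component and no accidental parabolics is quasifuchsian, which holds here because $M$ is acylindrical so there are no accidental parabolics — gives that $\Delta_i$ is quasifuchsian with $\Lambda(\Delta_i)=\partial B_i$ a Jordan curve. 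I would cite \cite[\S 3--4]{Ma1} for the statement that, for $M$ acylindrical, each boundary subgroup is quasifuchsian, so $B_i$ is an open Jordan disk. Incompressibility of $\partial N_0$ is what guarantees $\pi_1(\partial_iN_0)\hookrightarrow\Gamma$ and that the invariant component is simply connected in the required way.

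\smallskip

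\emph{Part (2).} Finiteness of the number of $\Gamma$-orbits of the $B_i$ is just the statement that $\partial\overline M$ has finitely many components, which follows from $\overline M$ being compact-modulo-cusps (Ahlfors finiteness, or \cite[3.12]{Ma1}). There are infinitely many $B_i$ because $M$ has infinite volume, hence $\Omega\ne\emptyset$, hence $\Gamma$ is of the second kind and the orbit of any single disk already has infinitely many members (as $\Gamma$ is nonelementary); since $\Lambda\ne\emptyset$ none of these is all of $S^2$, so they genuinely proliferate. The identity $\Lambda=\overline{\bigcup_i\partial B_i}$ holds because $\Lambda$ has empty interior (as $\Gamma$ is geometrically finite of infinite covolume, $\Lambda\ne S^2$ and has measure zero), so every point of $\Lambda$ is a limit of points of $\Omega$, hence of boundary points $\partial B_i$; conversely each $\partial B_i\subset\Lambda$ and $\Lambda$ is closed.

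\smallskip

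\emph{Parts (3) and (4).} These are the heart of the lemma and where acylindricality is essential. For (3): if $\overline B_i\cap\overline B_j\ne\emptyset$ for $i\ne j$, the intersection is a single point $p$ (two distinct Jordan domains cannot share an arc of boundary without coinciding, given that they are components of $\Omega$), and $p\in\partial B_i\cap\partial B_j\subset\Lambda$. A point where two distinct components of $\Omega$ touch must be a parabolic fixed point — this is the standard picture of a rank-one cusp in a Kleinian manifold: the pairing tube joining two boundary punctures, one on $\Delta_i\ba B_i$ and one on $\Delta_j\ba B_j$ (which may have $\Delta_i\ba B_i=\Delta_j\ba B_j$). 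I would argue that otherwise, near $p$ one could find a small circle transverse to $\Lambda$ giving a compressing disk or an essential annulus in $N_0$, violating acylindricality; more directly, the fact that $\Omega$ is a disjoint union of Jordan disks with $\Lambda$ having no interior forces any touching point to be a cusp, by the local structure theory of geometrically finite groups (\cite{Bow}, \cite[\S 3]{Ma1}). That every rank-one parabolic fixed point arises this way is exactly the description of rank-one cusps via pairing tubes in \cite[\S 3]{Ma1}. For (4): a cyclic chain $\overline B_{i_1},\dots,\overline B_{i_\ell}$ of tangent disks with $\ell\ge 3$, closing up, would produce a loop in $\overline M$ passing through a sequence of pairing tubes that cannot be contracted into the boundary — precisely an essential cylinder (or torus) in $N_0$ that is not boundary-parallel, contradicting acylindricality of $N_0$. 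The cleanest way to make this rigorous: such a loop of tangent disks would, on passing to the cover, bound or be dual to an essential annulus between boundary components of $N_0$; by the definition of acylindrical this annulus is boundary-parallel, which forces the chain to degenerate ($\ell\le 2$, or the chain collapses to a boundary-parallel configuration), a contradiction. I would reference \cite{Th} and \cite[\S 4.7]{Ma1} for translating the combinatorics of tangent-disk chains into statements about essential cylinders.

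\smallskip

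\textbf{Main obstacle.} The genuinely delicate steps are (3) and (4), specifically the translation between the \emph{conformal} combinatorics on $S^2$ (how the closures $\overline B_i$ touch each other) and the \emph{topological} condition on $N_0$ (incompressible boundary, no essential non-peripheral cylinders). Part (3)'s converse — that \emph{every} rank-one parabolic appears as a tangency of two components — and Part (4)'s "no loops" statement both require carefully invoking Marden's/Maskit's description of pairing tubes and showing a loop of tangencies yields an essential cylinder. I expect the bulk of the rigorous argument to consist of citing \cite[\S 3--4]{Ma1}, \cite{Th}, and standard quasifuchsian-group theory rather than new computation, and indeed the authors may well just cite these; but the point requiring genuine care is verifying that the acylindrical hypothesis is used correctly to rule out loops in (4).
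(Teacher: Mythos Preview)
Your outline for (1) and (2) is essentially what the paper does, with only minor differences in emphasis (the paper first observes that incompressibility of $\partial N_0$ forces $\Lambda$ to be connected, hence each $B_i$ is simply connected, and then invokes Ahlfors \cite{Ah} and Maskit \cite{Mas} directly).

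For (3), however, you are missing the key tool. The paper does not argue heuristically that a tangency must be a cusp; instead it invokes Maskit's theorem on component subgroups \cite[Theorem~3]{Mas2}, which gives the exact identity
\[
\overline{B_i}\cap\overline{B_j}=\Lambda(\Delta_i)\cap\Lambda(\Delta_j)=\Lambda(\Delta_i\cap\Delta_j).
\]
This reduces the question to analyzing the discrete group $\Delta_i\cap\Delta_j$. Acylindricality then enters in a sharp algebraic way: a loxodromic element cannot stabilize two distinct components $B_i$ (otherwise one gets an essential non-peripheral cylinder), and no rank-two parabolic fixed point lies on any $\partial B_i$. Hence $\Delta_i\cap\Delta_j$ is either trivial or rank-one parabolic, and the displayed formula finishes. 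Your sketch (``otherwise one finds a compressing disk or essential annulus'') gestures at the loxodromic case, but without Maskit's identity you have no control over points of $\partial B_i\cap\partial B_j$ that might fail to be fixed by any nontrivial element of $\Delta_i\cap\Delta_j$; the formula is precisely what rules that out.

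For (4), you invoke the wrong hypothesis. The paper's argument does \emph{not} produce an essential cylinder and does \emph{not} use the acylindrical condition here; it produces a \emph{compressing disk} and contradicts boundary-incompressibility. A closed chain of tangent disks in $S^2$ yields a single Jordan curve passing through the $B_{i_j}$ and the tangency points; this descends to a nontrivial loop on $\partial N_0$ which bounds a disk in $N_0$ (its lift to $\bH^3$ is already closed, and $\bH^3$ is simply connected). Your proposed annulus construction is unclear---one loop does not give the two boundary circles an essential cylinder requires---and I do not see how to complete it. The compressing-disk argument is both simpler and what the authors actually use.
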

\begin{proof} Since $N_0$
is boundary incompressible, 
$\Lambda$ is connected. This implies that
each $B_i$ is simply connected. By Ahlfors \cite[Lemma 2]{Ah}, $\Delta_i$ is finitely generated and $B_i$ is a component of $\Omega(\Delta_i)$.
 Since $\Delta_i$ has two invariant components,
it is quasi-fuchsian and  $\partial B_i=\Lambda(\Delta_i)$ (cf. \cite[Theorem 3]{Mas}).
For (2), since $\Gamma$ is not quasi-fuchsian by the acylindricality assumption, there are countably infinitely many $B_i$'s. There are only finitely many
$\Gamma$-orbits of $B_i$'s by Ahlfors' finiteness theorem. 
Since  ${\bigcup_i \partial B_i}$ is a $\Gamma$-invariant subset of  $\Lambda$, the last claim in (2) is clear.

 For (3), note that   as $\Delta_i$ is a component subgroup of $\Gamma$, 
we have by \cite[Theorem 3]{Mas2}, \be\label{one} \Lambda(\Delta_i\cap \Delta_j)=\Lambda(\Delta_i)\cap \Lambda(\Delta_j).\ee

 On the other hand,  as $N_0$ is acylindrical,  a loxodromic element can preserve at most one of $B_i$'s.   As no rank two parabolic fixed point
 lies in $\bigcup_i\partial B_i$, it follows that for all $i\ne j$,
 $\Delta_i\cap \Delta_j$ is either trivial or the stabilizer of a parabolic limit point of $\Gamma$ of rank one.
 Therefore  the first claim in (3) follows from (1) and \eqref{one}. The second claim in (3) follows from a standard fact about a geometrically finite group
 \cite[\S 3]{Ma1}.

To prove (4), suppose not. Then 
one gets a non-trivial  loop on $\partial N_0$
which bounds a disk in $N_0$, contradicting the boundary-incompressible condition on $N_0$.
\end{proof}

\subsection{Lower bounds for moduli of annuli and Cantor sets}
The closures of the components of $\Omega$ may intersect with each other, but we can regroup the components of $\Omega$
into maximal trees of disks whose closures are not only disjoint but are uniformly apart in the language of moduli.

Fix a closed surface $S$ of $\partial N_0$ of genus at least $2$. Its fundamental group $\pi_1(S)$ injects to $\Gamma$
as $S$ is an incompressible surface.  By the construction of $N_0$,
 $S$ comes with finitely many marked cylinders $P_i$'s and each connected component of $S-\cup P_i$ corresponds to a component
 of $\partial \overline M$.   In view of $\partial \overline M =\Gamma \ba \Omega$, 
those components of $\Omega$
invariant by the image of $\pi_1(S)$ in $\Gamma$ can be described using the following equivalence relations: 
we let $B_j\sim B_k$ if their closures intersect each other. This spans an
 equivalence relation on the collection $\{B_i\}$.
 We write $$\Omega =\bigcup T_\ell$$ where $T_\ell$ is the union of all disks in the same equivalence class, i.e., $T_\ell$ is a maximal tree of disks.

The $T_\ell$'s fall into finitely many $\Gamma$-orbits corresponding to non-toroidal closed surfaces of $\partial N_0$, and
the stabilizer of ${T_\ell}$ in $\Gamma$ is conjugate to
$\pi_1(S_\ell)\subset \Gamma$ for a non-toroidal closed  surface $S_\ell$ of $\partial N_0$ and
has infinite index in $\Gamma$ (cf. \cite{Ab2}, \cite{Fl},  \cite{Wa}).
We denote by $\Gamma_\ell$ the stabilizer of $T_\ell$ in $\Gamma$.

The following lemma is a crucial ingredient of Theorem \ref{sier}.
\begin{lemma} \label{each} We have:
\begin{enumerate}
\item For each $\ell$, $\partial T_\ell=  \Lambda (\Gamma_\ell) $.
\item  For each $\ell$,  $\overline T_\ell$ is connected, locally contractible and simply connected.
\item For each $k\ne \ell$,
 $ \overline T_k\cap \overline T_\ell=\emptyset$. \end{enumerate}
\end{lemma}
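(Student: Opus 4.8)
The strategy is to translate each assertion about the trees $T_\ell$ into a statement about the stabilizers $\Gamma_\ell = \pi_1(S_\ell) \subset \Gamma$ and the component subgroups $\Delta_i$, using the structure from Lemma~\ref{ddd} together with the fact that $S_\ell$ is an incompressible surface whose fundamental group is quasi-fuchsian in $\Gamma$. For (1), the inclusion $\partial T_\ell \subset \Lambda(\Gamma_\ell)$ should follow because each $\overline{B_i}$ with $B_i \subset T_\ell$ has $\partial B_i = \Lambda(\Delta_i)$ (Lemma~\ref{ddd}(1)) and $\Delta_i \subset \Gamma_\ell$ when $B_i$ lies in the $\Gamma_\ell$-invariant tree, and the closure of $\bigcup_i \Lambda(\Delta_i)$ over the $\Delta_i \subset \Gamma_\ell$ is $\Gamma_\ell$-invariant; conversely, since $\Gamma_\ell$ is quasi-fuchsian with invariant Jordan curve equal to its limit set, and $T_\ell$ together with the ``other side'' are precisely the two invariant components of $\Omega(\Gamma_\ell)$, one gets $\Lambda(\Gamma_\ell) = \partial T_\ell$. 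I would cite \cite[Theorem 3]{Mas} (or \cite[Theorem 3]{Mas2}) for the statement that a component subgroup of a quasi-fuchsian group has its limit set equal to the common boundary of its two invariant components.

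For (2), since $\Gamma_\ell$ is finitely generated quasi-fuchsian, $\Lambda(\Gamma_\ell) = \partial T_\ell$ is a Jordan curve (a quasi-circle); hence $\overline{T_\ell}$ is the closure of one of the two Jordan domains it bounds, which is a closed topological disk, so it is connected, locally contractible, and simply connected. The only subtlety is to confirm that $\overline{T_\ell}$ really is this closed Jordan domain and not something larger — i.e., that no point of $\Lambda$ outside $\partial T_\ell$ lies in $\overline{T_\ell}$; but $\overline{T_\ell} = \overline{\bigcup_{B_i \subset T_\ell} B_i}$ and any limit point of this union lies in $\overline{\Omega(\Gamma_\ell)}$-closure considerations force it into $\Lambda(\Gamma_\ell)$. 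For (3), I would argue by contradiction: if $\overline{T_k} \cap \overline{T_\ell} \neq \emptyset$ for $k \neq \ell$, then since each $\overline{T_\ell}$ is a closed Jordan disk and the two interiors $T_k, T_\ell$ are disjoint (they are unions of distinct components of $\Omega$), the intersection must lie on the boundary Jordan curves $\partial T_k \cap \partial T_\ell$; a common boundary point would be either a tangency producing, via Lemma~\ref{ddd}(3), a rank-one parabolic fixed point shared between disks of two different equivalence classes — contradicting maximality of the trees, since $B_i \sim B_j$ whenever $\overline{B_i} \cap \overline{B_j} \neq \emptyset$ — or it would force a non-peripheral annulus/loop in $N_0$ joining the two distinct boundary surfaces $S_k, S_\ell$, contradicting acylindricality (every essential cylinder is boundary parallel).

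The main obstacle I expect is part (3): ruling out that the closures of two trees touch requires carefully connecting the purely boundary/conformal picture (tangent Jordan domains) to the 3-manifold topology of $N_0$ (incompressibility and acylindricality), essentially showing that a point of $\overline{T_k} \cap \overline{T_\ell}$ would have to be a parabolic fixed point whose rank-one cusp cylinder is an essential, non-boundary-parallel cylinder in $N_0$ joining two distinct components of $\partial N_0$, which is exactly what acylindricality forbids. The cleanest route is probably to first establish that $\overline{T_k} \cap \overline{T_\ell}$, if nonempty, is contained in $\Lambda_p$ (rank-one parabolic points), using Lemma~\ref{ddd}(3) applied across the two trees, and then translate the existence of such a shared parabolic into a forbidden essential cylinder; the incompressibility in Lemma~\ref{ddd} and the standard dictionary between rank-one cusps and pairing tubes on $\partial N_0$ (\cite[\S 3]{Ma1}) do the rest. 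Parts (1) and (2) I expect to be essentially formal once the quasi-fuchsian structure of $\Gamma_\ell$ is in hand.
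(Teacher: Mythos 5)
There is a genuine gap, and it sits at the heart of your argument for all three parts: you assert that $\Gamma_\ell$ is quasi-fuchsian with limit set a Jordan curve and that $\overline{T_\ell}$ is a closed Jordan domain. This is false whenever $T_\ell$ contains more than one disk, which is exactly the new situation this lemma is designed to handle. The individual component stabilizers $\Delta_i$ are quasi-fuchsian (Lemma \ref{ddd}(1)), but the tree stabilizer $\Gamma_\ell\cong\pi_1(S_\ell)$ contains \emph{accidental parabolics}: the tangency points $p_{ij}=\overline{B_i}\cap\overline{B_j}$ are fixed by elements of $\Gamma_\ell$ that are parabolic in $G$ but correspond to non-peripheral (hyperbolic) elements of the Fuchsian uniformization of $S_\ell$. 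Consequently $\Lambda(\Gamma_\ell)=\partial T_\ell$ is a ``necklace'' of tangent circles, not a quasi-circle, and $\overline{T_\ell}$ is a tree of tangent closed disks (already for two tangent disks it is not a manifold at the tangency point). Your proof of (2) therefore has no content as written: the actual argument builds a model $K_0$ by collapsing, in $\overline{\bH^2}$, the axes of the accidental parabolics, uses Floyd's theorem \cite{Fl} to produce an equivariant surjection of $\partial K_0$ onto $\Lambda(\Gamma_\ell)$, and then proves this map is \emph{injective} --- and it is precisely this injectivity step that consumes acylindricality (a failure of injectivity over a point outside $\bigcup_j\partial B_j$ yields an essential cylinder in $N_0$ that is not boundary parallel). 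Part (1), by contrast, is salvageable without quasi-fuchsianity: $\overline{T_\ell}$ is a nonempty closed $\Gamma_\ell$-invariant set, so it contains $\Lambda(\Gamma_\ell)$ by minimality of the limit set, and $\Lambda(\Gamma_\ell)\subset\Lambda(\Gamma)$ is disjoint from $T_\ell\subset\Omega$, giving $\Lambda(\Gamma_\ell)\subset\partial T_\ell$; the reverse inclusion is the one you correctly sketch via $\partial B_i=\Lambda(\Delta_i)$.

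For (3) you correctly flag the difficulty but your proposed reduction does not close it. A point of $\overline{T_k}\cap\overline{T_\ell}$ need not be a tangency $\overline{B_i}\cap\overline{B_j}$ of two specific disks from the two trees (which would indeed contradict maximality of the equivalence classes): it could be a limit of infinitely many shrinking disks from each tree, lying in no single $\partial B_i$, and Lemma \ref{ddd}(3) says nothing about such points. The paper's route is group-theoretic: acylindricality forces $\Gamma_k\cap\Gamma_\ell=\{e\}$, and the intersection formula $\Lambda(\Gamma_\ell)\cap\Lambda(\Gamma_k)=\Lambda(\Gamma_\ell\cap\Gamma_k)\cup P$ of \cite[Theorem 3.14]{MT} reduces the problem to showing the exceptional set $P$ (points whose two stabilizers generate a rank-two parabolic group) is empty, which follows from Floyd's theorem plus Lemma \ref{ddd}(3) since every parabolic fixed point of $\Gamma_\ell$ not coming from a tangency would again violate acylindricality. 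Without an ingredient of this kind --- controlling \emph{all} of $\Lambda(\Gamma_k)\cap\Lambda(\Gamma_\ell)$, not just the tangency points --- your argument for (3) does not go through.
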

\begin{proof}
Let $K:=\overline{T_\ell}$.
 and consider the
set $\mathfrak \{p_{ij}=\overline{B_i}\cap \overline{B_j}: i\ne j, B_i, B_j\subset T_\ell\}$.
Each $p_{ij}$ is fixed by a parabolic element, say, $\gamma_{ij}\in \Gamma_\ell$. 

By the uniformization theorem,
$\pi_1(S_\ell)$ can be realized as a cocompact lattice $\Sigma_\ell$ in $\PSL_2(\br)$ acting on $\bH^2$ as isometries. 
Let $c_{ij}\in \bH^2$ be the geodesic
stabilized by a hyperbolic element $\iota(\gamma_{ij})$ of $\Sigma_\ell$
for the isomorphism $\iota: \Gamma_\ell \simeq \Sigma_\ell$.

 Let $K_0$ be the compact set  obtained from 
 $\overline{\bH^2}$ by collapsing geodesic arcs $c_{ij}$ to single points.
We denote by $\partial K_0$ the image of $S^1$ in this collapsing process, so that
$K_0$ is an "abstract tree of disks" while $\partial K_0$ is an "abstract tree of circles".
As $\Gamma_\ell$ is  a finitely generated subgroup
of a geometrically finite group $\Gamma$, it itself is geometrically finite by a theorem of Thurston (cf. \cite[Theorem 3.11]{MT}).  So we can apply
a theorem of Floyd \cite{Fl}  to $\Gamma_\ell$ to obtain a continuous equivariant surjective map  $\psi_\ell :S^1=\partial \bH^2 \to \Lambda(\Gamma_\ell)$ conjugating $\Sigma_\ell$ to
  $\Gamma_\ell$,  which is $2$ to $1$ onto rank one parabolic fixed points of $\Gamma_\ell$ and injective everywhere else.
  Moreover, this map $\psi_\ell$ factors through the map $S^1\to \partial K_0$ described above, 
since each $p_{ij}$ is a rank one
parabolic fixed point of $\Gamma_\ell$.
It follows that $\Lambda(\Gamma_\ell) $ is equal to the closure of ${\cup_{B_j\subset T_\ell} \partial B_j}$, and hence
 $\Lambda(\Gamma_\ell)=\partial K$ follows as claimed in (1).

Denote by $\psi_\ell^*:\partial K_0\to \partial K$ the continuous surjective map induced by  $\psi_\ell$.
 We claim that $\psi_\ell^*$ is injective by the acylindrical condition on $N_0$.
Suppose not.
Then there exists  a  point $p\in \Lambda(\Gamma_\ell) - \cup _j\partial B_{j}$
over which $\psi_\ell^*$ is not injective. By the aforementioned Floyd's theorem, $p$
 is a fixed point of a parabolic element $\gamma\in \Gamma_\ell$ and 
the geodesic in $\bH^2$ whose two end points are mapped to $p$ by $\psi_\ell$
is stabilized by a hyperbolic element of $\Sigma_\ell$ corresponding to $\gamma$. Hence
this gives rise to  a non-trivial closed curve $\alpha$
in $S_\ell \subset \partial N_0$ which is homotopic to a boundary component $\beta$ of a pairing cylinder
 in $\partial N_0$.  Since $p\notin \cup_j \partial B_j$, $\alpha$ and $\beta$ are not homotopic within $S_\ell$, yielding an essential cylinder
 in $N_0$ which is not boundary parallel.
This contradicts the acylindrical condition on $N_0$.

Therefore $\psi_\ell^*$ is injective.  
Now the map $\psi_\ell^*:\partial K_0\mapsto \partial K$
can be extended to a continuous injective map  $ K_0 \mapsto K$ 
inducing homeomorphisms between each $\overline B_{0,j}$ and $\overline B_j$ where 
 $B_{0,j}$ denote the connected component
of $K_0 - \partial K_0$ above $B_j$.
Hence $K_0$ and $K$ are homeomorphic. This implies the claim (2).

   For (3), note that for all $\ell \ne k$, $$\Gamma_\ell \cap \Gamma_k =\{e\}$$ by the acylindrical condition on $N_0$.
   
  By \cite[Theorem 3.14]{MT}, we have
  $$\Lambda(\Gamma_\ell )\cap \Lambda(\Gamma_k)=\Lambda(\Gamma_\ell \cap \Gamma_k)\cup P$$
where $P$ is the set of $\zeta\in S^2-\Lambda(\Gamma_\ell \cap \Gamma_k)$ such that
$\op{Stab}_{\Gamma_\ell}\zeta$ and $\op{Stab}_{\Gamma_k}\zeta$ generates a parabolic subgroup of rank $2$ and 
$\op{Stab}_{\Gamma_\ell }\zeta \cap\op{Stab}_{\Gamma_k}\zeta =\{e\}$. 
  If $\zeta\in P$, then
  $\zeta$, being a parabolic fixed point of $\Gamma_\ell$, must arise as the intersection $\overline{B_i}\cap \overline B_j$ for some $i\ne j$
  by Floyd's theorem as discussed above. But
   every such point is a rank one parabolic fixed point of $\Gamma$ by Lemma \ref{ddd}(3); so $P=\emptyset$.
Hence
we deduce that $\Lambda(\Gamma_\ell )\cap \Lambda(\Gamma_k)=\emptyset$, which implies the claim (3)  by the claim (1).
\end{proof}

An annulus $A\subset S^2$ is an open region whose complement consists of two components.
\begin{cor}\begin{enumerate}
\item For each $\ell$, $S^2-\overline{T_{\ell}}$ is a disk.
\item For each $\ell \ne k$, $S^2 -(\overline{T_{\ell}}\cup \overline{T_k})$ is an annulus.
\end{enumerate}
\end{cor}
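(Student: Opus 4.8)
The plan is to deduce both claims from Lemma \ref{each} together with standard facts about the topology of $S^2$. For part (1), I would start from Lemma \ref{each}(2): $\overline{T_\ell}$ is connected, locally contractible (so locally connected), and simply connected. A compact, connected, locally connected, simply connected subset $K$ of $S^2$ whose complement is nonempty (here $S^2-\overline{T_\ell}\supseteq \overline{T_k}$ for any $k\ne \ell$, which is nonempty by Lemma \ref{each}(3)) has the property that $S^2-K$ is connected: indeed, if $S^2-K$ had two components $U_1,U_2$, a Mayer--Vietoris or Alexander duality argument would produce nontrivial reduced $\check H^0(S^2-K)\cong \tilde H_1(K)\ne 0$, contradicting simple connectivity (more concretely, one can invoke that a Peano continuum that does not separate $S^2$ is exactly the $\pi_1$-trivial/cell-like case, or argue directly that a separating Peano continuum must carry a nontrivial loop). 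So $S^2-\overline{T_\ell}$ is a connected open subset of $S^2$; since $\overline{T_\ell}$ is itself connected, the complement is simply connected as well, hence a disk by the Riemann mapping theorem / uniformization for planar domains.

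For part (2), the set in question is $S^2-(\overline{T_\ell}\cup \overline{T_k})$. By Lemma \ref{each}(3) the two compacta $\overline{T_\ell}$ and $\overline{T_k}$ are disjoint. Each of them is a nonseparating Peano continuum (by the argument just given), so removing the first leaves a disk $D=S^2-\overline{T_\ell}$, inside which $\overline{T_k}$ sits as a compact nonseparating subcontinuum. Removing a nonseparating continuum from an open disk leaves a connected open set; its complement in $S^2$ has exactly two components, namely $\overline{T_\ell}$ and $\overline{T_k}$, both connected. An open subset of $S^2$ whose complement has exactly two connected components is precisely an annulus (topologically a cylinder), which is the definition recalled just before the corollary. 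Alternatively one can phrase the whole thing via Alexander duality: $\tilde H_0(S^2-(\overline{T_\ell}\cup\overline{T_k}))\cong \check H^1(\overline{T_\ell}\cup\overline{T_k})\cong \check H^1(\overline{T_\ell})\oplus \check H^1(\overline{T_k})=0$ gives connectedness is false — rather one computes $\check H^1$ of the pair to see the complement has two components, and each piece of the complement is connected by Lemma \ref{each}(2).

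The main obstacle, I expect, is the purely point-set topology input: justifying cleanly that a simply connected Peano continuum in $S^2$ does not separate the sphere, and conversely that a continuum not separating $S^2$ has simply connected complement. This is classical (it follows from Alexander duality over $S^2$, or from the characterization of cellular/cell-like sets in $S^2$, or from the theory of prime ends), but it must be cited or argued carefully; the rest is bookkeeping with the three parts of Lemma \ref{each}. I would therefore organize the proof as: (i) record that each $\overline{T_\ell}$ is a nonseparating Peano continuum with simply connected complement, citing the relevant planar topology fact; (ii) conclude (1) immediately; (iii) for (2), apply (i) to $\overline{T_\ell}$ to get a disk $D$, then use disjointness (Lemma \ref{each}(3)) and the nonseparating property of $\overline{T_k}$ inside $D$ to see the complement has two connected pieces, i.e.\ is an annulus.

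\begin{proof}
By Lemma \ref{each}(2), each $\overline{T_\ell}$ is a compact, connected, locally connected and simply connected subset of $S^2$; by Lemma \ref{each}(3), $\overline{T_\ell}\cap\overline{T_k}=\emptyset$ for $k\ne\ell$, so in particular $S^2-\overline{T_\ell}\ne\emptyset$. A simply connected Peano continuum in $S^2$ does not separate $S^2$: if $S^2-\overline{T_\ell}$ had two or more components, Alexander duality would give $0\ne\tilde H_0(S^2-\overline{T_\ell})\cong\check H^1(\overline{T_\ell})$, contradicting $H^1(\overline{T_\ell})=0$. Hence $S^2-\overline{T_\ell}$ is a connected, simply connected planar domain, i.e.\ a disk; this proves (1).

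For (2), fix $\ell\ne k$ and set $D:=S^2-\overline{T_\ell}$, which is a disk by (1). The compactum $\overline{T_k}$ is contained in $D$ and, being a nonseparating subcontinuum of $S^2$, does not separate $D$ either; thus $D-\overline{T_k}=S^2-(\overline{T_\ell}\cup\overline{T_k})$ is connected and open. Its complement in $S^2$ is $\overline{T_\ell}\sqcup\overline{T_k}$, a disjoint union of two connected compacta. An open subset of $S^2$ whose complement has exactly two connected components is an annulus, so $S^2-(\overline{T_\ell}\cup\overline{T_k})$ is an annulus, proving (2).
\end{proof}
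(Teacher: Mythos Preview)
Your argument is correct and follows essentially the same route as the paper: Alexander duality (using that $\overline{T_\ell}$ is simply connected from Lemma~\ref{each}(2)) gives connectedness of $S^2-\overline{T_\ell}$, and then the Riemann mapping theorem (connected open subset of the plane with connected complement) gives that it is a disk; part (2) then follows from (1) together with the disjointness in Lemma~\ref{each}(3). The one step you leave implicit---``a nonseparating subcontinuum of $S^2$ does not separate $D$ either''---is correct but not automatic; it is another instance of Alexander duality applied to the disjoint union $\overline{T_\ell}\sqcup\overline{T_k}$, which you in fact sketch in your preamble, and the paper's own proof of (2) is equally terse on this point.
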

 
 \begin{proof} We may assume without loss of generality that $\overline{T_{\ell}}$ contains $\infty$ in $\hat{\c}=S^2$.
Now by (2) of Lemma \ref{each}, and by Alexander duality (cf. \cite[\S 3, Theorem 3.44]{Ha}),
$U:=S^2-\overline{T_{\ell}}$ is a connected open subset of $\c$.
By  the Riemann mapping theorem \cite[\S 6.1]{Ah1}, a connected open subset of $\c$
whose complement in $S^2$ is connected is a disk. As $\overline{T_{\ell}}$ is connected, it follows that $U$
is a disk, proving (1).

The claim (2) follows from (1) and  Lemma \ref{each}(3). \end{proof}

 When neither component of $S^2-A$ is a point, an annulus $A$ is conformally equivalent to a unique round annulus
 $\{z\in \c: 1<|z|<R\}$, and its modulus $\mod (A)$ is defined to be $\frac{\log R}{2\pi}$ \cite{LV}.

  \begin{thm} \label{sier} There exists $\delta>0$ such that
 \begin{equation} \label{pm} \inf_{\ell \ne k} \mod (S^2 -(\overline{T_\ell}\cup \overline{T_k}))\ge \delta .\end{equation}
\end{thm}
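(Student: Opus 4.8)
The plan is to reduce the uniform lower bound on moduli to a \emph{finiteness} statement, exploiting that the trees $T_\ell$ fall into only finitely many $\Gamma$-orbits. First I would record the following compactness principle for moduli: if $D_n, D'_n$ are two sequences of disjoint compact connected sets in $S^2$, each of which is not a single point, and $\mathrm{mod}(S^2-(D_n\cup D'_n))\to 0$, then (after passing to a subsequence and applying M\"obius transformations to normalize) the two families must ``come together'', i.e. there are points $p_n\in D_n$, $q_n\in D'_n$ with $p_n,q_n$ converging to a common point $\xi\in S^2$; this is the standard behaviour of degenerating annuli and follows from the conformal classification of annuli together with a normal-families argument. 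Thus a failure of \eqref{pm} would produce, after applying suitable $\gamma_n\in\Gamma$, sequences $T_{\ell_n}$ and $T_{k_n}$ whose closures asymptotically touch at a point of $S^2$.

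Next I would use that there are only finitely many $\Gamma$-orbits of trees (Lemma \ref{ddd}(2), reorganized as in the paragraph preceding Lemma \ref{each}). Writing $T_{\ell_n}=\gamma_n T_{a}$ and $T_{k_n}=\gamma_n\gamma'_n T_{b}$ for fixed representatives $T_a,T_b$ among the finitely many orbit representatives and suitable $\gamma_n\in\Gamma$, $\gamma'_n\in\Gamma$, the M\"obius-invariance of the modulus lets us discard $\gamma_n$ and assume $T_{\ell_n}=T_a$ is \emph{fixed}, while $T_{k_n}=\gamma'_n T_b$. Since $\overline{T_a}$ is compact and contains an open disk (indeed infinitely many), the degeneration $\mathrm{mod}(S^2-(\overline{T_a}\cup\overline{\gamma'_n T_b}))\to 0$ forces $\overline{\gamma'_n T_b}$ to be squeezed into a small neighbourhood of $\overline{T_a}$ — more precisely, either $\mathrm{diam}(\overline{\gamma'_n T_b})\to 0$ with the set accumulating on a point of $\overline{T_a}$, or $\overline{\gamma'_n T_b}$ converges (in the Hausdorff sense) to a set touching $\overline{T_a}$. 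In the first case, since the $\gamma'_n T_b$ are components of $\Omega$ (unions thereof) with uniformly bounded $\Gamma$-orbit combinatorics, small diameter forces $\gamma'_n T_b$ to lie deep in a cusp region; but a tree $T_b$ has $\partial T_b=\Lambda(\Gamma_b)$ with $\Gamma_b$ a surface group — its diameter cannot shrink without the basepoint escaping to a parabolic fixed point, and I would rule this out using the explicit geometry of the cusps as in \cite[\S 3]{Ma1}, exactly as in the proof of Lemma \ref{fv}.

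The remaining, and I expect hardest, case is when $\overline{\gamma'_n T_b}$ stays of definite size but its closure comes to touch $\overline{T_a}$ in the limit. Here I would argue that a genuine tangency $\overline{T_a}\cap\overline{\gamma'_n T_b}\ne\emptyset$ for some $n$ is impossible: distinct trees have disjoint closures (Lemma \ref{each}(3)), so $\overline{T_a}\cap\overline{\gamma'_n T_b}=\emptyset$ always, whence the infimum over any \emph{finite} set of pairs is a positive number; the only way \eqref{pm} can fail is through a limiting degeneration as above, and the compactness input says such a degeneration must asymptotically identify a point $p\in\partial T_a$ with a point $q_n\in\partial(\gamma'_n T_b)$, i.e. $q_n\to p$. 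But $p\in\Lambda(\Gamma_a)\subset\Lambda$, and $q_n$ is a limit point of $\gamma'_n\Gamma_b{\gamma'_n}^{-1}$; pulling back by $\gamma'_n$, the point ${\gamma'_n}^{-1}p$ lies within distance $\to 0$ of $\Lambda(\Gamma_b)$ while ${\gamma'_n}^{-1}\overline{T_a}$ must degenerate — and since $T_a$ has infinitely many disk components of definite relative size, this forces ${\gamma'_n}^{-1}$ to contract $\overline{T_a}$ onto a point of $\Lambda(\Gamma_b)$, i.e. ${\gamma'_n}^{-1}$ escapes every compact set while $\gamma'_n T_b\to T_a$; combined with discreteness of $\Gamma$ and the fact that $\overline{T_a}$ carries a nondegenerate conformal structure, one extracts a convergent subsequence of the normalized M\"obius maps contradicting properness of the $\Gamma$-action on $\Omega$. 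Assembling these cases, no degenerating sequence of pairs exists, so the infimum in \eqref{pm} is attained (over orbit representatives) and is therefore a positive $\delta$. I anticipate the main technical obstacle is making the ``trees cannot be conformally pinched'' step rigorous — isolating the quantitative non-degeneracy of $\overline{T_\ell}$ coming from its infinitely many disk components of comparable modulus — and I would lean on the quasi-Fuchsian structure of the $\Delta_i$ (Lemma \ref{ddd}(1)) together with the explicit cusp geometry to pin it down.
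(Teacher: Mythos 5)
Your reduction to finitely many orbit representatives and the normalization $T_{\ell_n}=T_a$ fixed, $T_{k_n}=\gamma'_nT_b$, matches the first step of the paper's proof. But the core of your argument does not close, and the problem is concentrated in your ``first case.'' You propose to \emph{rule out} the situation where $\operatorname{diam}(\overline{\gamma'_nT_b})\to 0$ with the sets accumulating at a point of $\overline{T_a}$, asserting that the diameter of a tree ``cannot shrink.'' This is false: there are infinitely many pairwise disjoint trees accumulating on all of $\Lambda=\overline{\bigcup_i\partial B_i}$, so the spherical diameters of the $\gamma'_nT_b$ do tend to zero and they do accumulate at points of $\partial T_a\subset\Lambda$. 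This shrinking has nothing to do with cusps or parabolic fixed points; it happens for any sequence $\gamma'_n\to\infty$ in $\Gamma$ whose repelling limit points stay away from $\overline{T_b}$. What actually has to be proved is that even as the diameter shrinks, the small tree stays at a distance from $\overline{T_a}$ that is large \emph{relative to its own diameter} — a quantitative statement your case analysis never produces. Your third case is similarly inconclusive: ``extracting a convergent subsequence of normalized M\"obius maps contradicting properness'' is exactly the step that needs a precise formulation, and nothing in the proposal supplies one.

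The missing idea is the paper's change of metric. Since $\Gamma_1=\Gamma_a$ acts cocompactly on the disk $V_1:=S^2-\overline{T_a}$, the quotient $V=\Gamma_1\backslash V_1$ is a \emph{closed} hyperbolic surface, and the disjointness of closures (Lemma \ref{each}(3)) lets each $T_\ell$ inject into $V$ with image $T'_\ell$. The paper then shows, by a properness/discreteness argument in $\bH^3$ (pulling points of $\operatorname{hull}(\partial T_\ell)$ back to a fixed compact set via elements of $\Gamma_{\ell_0}$), that the \emph{hyperbolic} diameter of $T'_\ell$ in $V$ tends to $0$. This is not a degeneration to be excluded but the mechanism that saves the bound: a subset of a closed hyperbolic surface with diameter less than a quarter of the injectivity radius sits inside an embedded disk of definite size, and the resulting embedded annulus lifts into $S^2-(\overline{T_a}\cup\overline{T_\ell})$, giving a uniform lower bound on the modulus. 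Your spherical-metric dichotomy cannot substitute for this, because in the spherical metric the relevant quantity is the ratio of the gap to the diameter, which your cases do not control; the hyperbolic metric of $V$ encodes exactly that ratio. Without this (or an equivalent quantitative non-degeneracy statement), the proof is incomplete.
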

\begin{proof}
Suppose that the claim does not hold. Since there are only finitely many $\Gamma$-orbits on $T_\ell$'s, and the $\Gamma$-action is conformal,
 without loss of generality, we may assume that there exists $T_{k_0}$ and $T_{\ell_0}$
 such that for some infinite sequence $T_\ell \in \Gamma(T_{\ell_0})$,
  $ \mod (S^2 -(\overline{T_{k_0}}\cup \overline{T_\ell }))\to 0$ as $\ell \to 0$.
  For ease of notation, we set $k_0=1$.
 
Consider the disk $V_1:=S^2-\overline {T_1}$. 
 Since $\Gamma_1$ is the stabilizer of $T_1$ with $\Lambda(\Gamma_1)=\overline T_1 -T_1$,  $\Gamma_1$ acts on $V_1$ properly discontinuously. Hence
by the uniformization theorem together with the fact that
 $\Gamma_1$ is conjugate to $\pi_1(S)$ of a non-toroidal closed surface $S$ of $\partial N_0$,
$V:=\Gamma_1\ba V_1$ is a closed
 hyperbolic surface with $\pi_1(V)=\Gamma_1$.

Since $T_{\ell}$'s have disjoint closures, each $T_{\ell}$ maps injectively into $V$. We  denote  its image by $T_{\ell}'$.

We claim that
the diameter of $T_{\ell}'$, measured in the hyperbolic metric of $V$, goes to $0$ as $\ell \to \infty$. 
As $T_{\ell}\in \Gamma (T_{\ell_0})$ by the assumption,
 we may write $T_{\ell}=\delta_\ell(T_{\ell_0})$ for $\delta_\ell \in \Gamma$.
Since $\Gamma_1$ acts cocompactly on $V_1$, there is a compact fundamental domain, say $F$ in $V_1$, such that $\gamma_\ell(\partial T_\ell) \cap F\ne \emptyset$ for some $\gamma_\ell\in \Gamma_1$. Suppose that the diameter of $T_\ell$ in the hyperbolic metric of $V_1$
 does not tend to $0$. Then by the compactness of $F$,
 up to passing to a subsequence, $\gamma_\ell (\partial T_{\ell})=\gamma_\ell \delta_\ell (\partial T_{\ell_0})$ converges to a closed
 set $L\subset \Lambda$  in the Hausdorff topology where
  $L$ has  positive diameter
 and intersects $F$ non-trivially.  In particular, $ \op{hull}(L)\cap \bH^3\ne \emptyset$.
 Fixing $x_0 \in \op{hull}(L)\cap \bH^3$, we have a sequence $s_\ell\in \op{hull}(\partial T_{\ell_0})\cap \bH^3$ such that  $\gamma_\ell\delta_\ell (s_\ell) \to x_0$. Moreover, we can find $\gamma_\ell' \in \Gamma_{\ell_0}$ so that $( \gamma_\ell ')^{-1} s_\ell$ belongs to a fixed compact subset, say, $K$, of $\bH^3$.
 Indeed, if the injectivity radius of $x_0$ is $\epsilon>0$, then all $s_\ell$ should lie in the $\epsilon/2$-thick part of the convex hull of $\Lambda(\Gamma_{\ell_0})$ on which $\Gamma_{\ell_0}$ acts cocompactly as it is geometrically finite.

 Hence $\gamma_\ell \delta_\ell \gamma_\ell' (K) $ accumulates on a neighborhood of $x_0$. As $\Gamma$ acts properly discontinuously on $\bH^3$, this means that $\{\gamma_\ell \delta_\ell \gamma_\ell'\}$ is a finite set, and hence
 $\gamma_\ell T_\ell=\gamma_\ell\delta_\ell \gamma_\ell' T_{\ell_0}$ is a finite set.
 It follows that, up to passing to a subsequence, for all $\ell$, $\gamma_\ell T_\ell$ is a constant sequence, containing a point in $F$.
  As $T_\ell$'s are disjoint, $\gamma_\ell\in \Gamma_1$ must be an infinite sequence.
 On the other hand, if $\gamma_\ell$ is an infinite sequence, $T_\ell \cap \gamma_\ell^{-1} F\ne \emptyset$ implies that
 $\overline T_\ell\cap \Lambda(\Gamma_1)\ne \emptyset$, yielding a contradiction.
This proves the claim. 

Now if the diameter of $T_\ell'$ is smaller than one quarter of the injectivity radius of $V$, then
 we can find a disk $D_\ell$ in $V$ containing $T_\ell'$ whose diameter is $1/2$ of the injectivity radius of $V$;
 this gives a uniform lower bound, say, $\delta_0$ for the modulus of $ S^2 -(\overline{T_1}\cup \overline{T_\ell})$ for all $ \ell\ge \ell_0$ for some $\ell_0>1$,
 contradicting our assumption. This proves the claim.
 \end{proof}

If $K$ is a Cantor set of a circle $C$, we say $K$ has modulus $\epsilon$ if
\be \label{cp} \inf_{i\ne j} \mod (S^2-(\overline I_i\cup \overline I_j))\ge \epsilon\ee
where $C-K=\bigcup I_j$ is a disjoint union of intervals with disjoint closures.
Note that a Cantor set $K\subset C$ has a positive modulus if and only if $K$ is a uniformly perfect subset as in Definition \ref{defthi}.

\begin{thm}  \label{sep} \label{cm}
Let $M$ be a geometrically finite acylindrical manifold of infinite volume. 
Then for any $C\in \CS$,
$C\cap \Lambda$ contains a Cantor set of modulus $\delta'$
where $\delta'>0$ depends only on $\delta$ as in \eqref{pm}. \end{thm}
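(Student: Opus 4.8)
The plan is to build a Cantor set $K\subset C\cap\Lambda$ whose complementary arcs in $C$ lie in \emph{pairwise distinct} trees of components, and then to read off the modulus bound directly from Theorem \ref{sier}.

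\textbf{Reduction.} Suppose $K\subset C\cap\Lambda$ is a Cantor set each component $I_j$ of whose complement $C-K$ lies in $\Omega$ and such that the map sending $I_j$ to the tree $T_{\ell(j)}$ containing it is injective. Then $\overline{I_j}\subset\overline{T_{\ell(j)}}$ automatically, since $I_j$ is a connected subset of $\Omega=\bigsqcup_\ell T_\ell$ and its two endpoints lie in $K\subset\Lambda\cap\overline{T_{\ell(j)}}=\partial T_{\ell(j)}$. For $i\neq j$, Lemma \ref{each}(3) gives $\overline{T_{\ell(i)}}\cap\overline{T_{\ell(j)}}=\emptyset$, so the core curve of the annulus $\mathcal A:=S^2-(\overline{T_{\ell(i)}}\cup\overline{T_{\ell(j)}})$ separates $\overline{I_i}$ from $\overline{I_j}$; hence $\mathcal A$ is essentially embedded in the annulus $S^2-(\overline{I_i}\cup\overline{I_j})$, and by monotonicity of the modulus of annuli (cf.\ \cite{LV}) together with Theorem \ref{sier},
\[
\mod\bigl(S^2-(\overline{I_i}\cup\overline{I_j})\bigr)\ \ge\ \mod\mathcal A\ \ge\ \delta .
\]
Thus $K$ has modulus $\ge\delta$ and the theorem follows with $\delta'=\delta$. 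So it suffices to construct such a $K$.

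\textbf{Construction.} I would build $K=\bigcap_{n\ge 0}K_n$ recursively, where each $K_n$ is a disjoint union of $2^n$ closed sub-arcs of $C$ with endpoints in $\Lambda$, and $K_{n+1}$ is obtained from $K_n$ by deleting from each closed arc $A$ of $K_n$ a component $G_A$ of $C\cap\Omega$ lying in $A^\circ$, subject to: (a) the tree containing $G_A$ has not been used at any earlier stage; (b) both components of $A-G_A$ again meet $\Lambda$ in an infinite set of the type needed to iterate; (c) $G_A$ is not too short, so that the arcs of $K_n$ shrink. Together with one component of $C\cap\Omega$ serving as $C-K_0$, the complementary arcs of $K$ are then exactly the arcs $G_A$, all of which are components of $C\cap\Omega$ sitting in pairwise distinct trees, so the hypotheses of the Reduction hold; conditions (b) and (c) make $K$ a Cantor set.

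\textbf{Main obstacle.} The crux is to verify that the choices in the recursion are always possible, i.e.\ that the recursion never stalls. Concretely, one must show that whenever $A\subset C$ is a closed sub-arc such that $A^\circ\cap\Lambda$ still separates $\Lambda$ ``across $A$'', the set $A^\circ\cap\Omega$ has components lying in infinitely many distinct trees, positioned so that removing a suitable one keeps both halves of the same type; in particular $C$ is never contained in $\overline{T_\ell}$ for a single tree $T_\ell$ (a fortiori $C\not\subset\Lambda$), and $C\cap\Lambda$ contains no arc --- otherwise $C\cap\Lambda$ could be confined to finitely many trees and the modulus argument would collapse. This is where acylindricity enters essentially: I would combine Theorem \ref{sier} (the uniform separation \eqref{pm} of the trees) with the connectedness of $\Lambda$ (established in the proof of Lemma \ref{ddd}), the genuinely tree-like structure of the $T_\ell$ (Lemma \ref{each}), and the absence of loops of tangent disks, i.e.\ incompressibility of the Scott core (Lemma \ref{ddd}(4)), to conclude that along any sub-arc across which $C$ locally separates $\Lambda$ the complement $C\cap\Omega$ must spread over infinitely many distinct trees in the required Cantor-like pattern. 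Once this is in place, the bookkeeping of the recursion above is routine, and the resulting $\delta'=\delta$ depends only on $\delta$.
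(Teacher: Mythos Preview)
There is a genuine obstruction that your ``Main obstacle'' paragraph does not anticipate. The Reduction hypothesis---that every component $I_j$ of $C-K$ lies in $\Omega$---together with $K\subset C\cap\Lambda$ forces $C-K\subset\Omega$, hence $C\cap\Lambda\subset K$, hence $K=C\cap\Lambda$ exactly. Thus the complementary arcs of $K$ must be \emph{all} the components of $C\cap\Omega$, and the injectivity of $I_j\mapsto T_{\ell(j)}$ then requires each tree $T_\ell$ to meet $C$ in at most one component. This fails in general: as soon as a tree $T_\ell$ contains two disks $B_1,B_2$ (i.e.\ as soon as $M$ has a rank-one cusp), a separating circle $C$ crossing both of them---whether or not through their tangency point---already contributes at least two components to $C\cap T_\ell$; and in the non-rigid case even a single quasidisk $B_i$ may meet $C$ in several arcs. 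Phrased in terms of your recursion: if the arcs of $K_n$ shrink to zero (as you need for $K$ to be a Cantor set), then every component of $C\cap\Omega$ is eventually one of the $G_A$, which is incompatible with condition~(a) once some tree contributes two components; if they do not shrink, $K$ is not totally disconnected.

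The paper's remedy is to replace single components of $C\cap\Omega$ by \emph{bridges}: open intervals $I=(a,b)\subset C$ with both endpoints in $\partial(C\cap T_\ell)$ for one common $\ell$, the interior of $I$ being allowed to contain points of $\Lambda$ and of other trees. One then greedily chooses $I_k$ of maximal length among all bridges disjoint from $I_1,\ldots,I_{k-1}$; this automatically gives $|I_k|\to 0$ and $\bigcup I_k$ dense in $C$, hence a genuine Cantor set $K=C-\bigcup I_k\subset\Lambda$. The price is that two bridges $I_i,I_j$ may now have the \emph{same} type $\ell$, so Theorem~\ref{sier} does not cover that pair; this case is handled by a short combinatorial argument (an earlier bridge $I_k$ with $1<k<i$, together with $I_1$, separates $I_i$ from $I_j$), yielding a universal lower modulus bound. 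Pairs of different type are bounded exactly as in your Reduction via Theorem~\ref{sier}.
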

\begin{proof}
This can be proved by a slight adaptation of the proof of \cite[Theorem 3.4]{MMO2}, in view of Theorem \ref{sier}. 
Recalling $\Omega=\bigcup_{\ell =1}^\infty T_\ell$,
we write $U:=C-\Lambda=\cup U_\ell$ where $U_\ell =C\cap T_\ell$. 
Note that distinct $U_\ell$ have disjoint closures.
We may assume that $U$ is dense without loss of generality. We will say that an open interval $I=(a,b)\subset C$
with $a\ne b$ is a bridge of type $\ell$ if $a, b\in \partial U_\ell$.
We can then construct a sequence of bridges $I_j$ with disjoint closures
 precisely in the same way 
 as in proof of \cite[Theorem 3.4]{MMO2}.  We first assume that
$|I_1|>|C|/2$ by using a conformal map $g\in G^C$. We let $I_2$ be a bridge of maximal length among all those which are disjoint from $I_1$
and of a different type from $I_1$. We then enlarge $I_1$ to a maximal bridge of the same type disjoint from $I_2$.
For $k\ge 3$, we proceed inductively to define $I_k$ to be a bridge of maximal length among all bridges disjoint from $I_1, \cdots, I_{k-1}$.
We have $|I_1|\ge |I_2|\ge |I_3|\cdots$,   
$|I_k|\to 0$ and $\cup I_k$ is dense in $C$. Now $K:= C-\cup I_k$ is a desired Cantor set; pick two indices
$i<j$.

If $I_i$ and $I_j$ have 
same types, 
then $\mod (S^2-(\overline I_i\cup \overline I_j))\ge \delta'$ for some universal constant $\delta'>0$, using the fact that there must be a bridge $I_k$ with $1<k<i$
such that $I_1\cup I_k$ separates $I_i$ from $I_j$.

Now if $I_i$ and $I_j$ have
different types, say $\ell $ and $k$, 
then as the annulus $S^2-(\overline T_\ell \cup \overline T_k)$ separates $\partial I_i$ from $\partial I_j$,
we have by \cite[Ch II, Thm 1.1]{LV},
$$\mod (S^2-(\overline I_i\cup \overline I_j))\ge \mod (S^2-(\overline T_\ell \cup \overline T_k))\ge \delta$$
proving the claim.
\end{proof}

\subsection{Applications to $H$-orbits}
The following corollary follows directly from Theorem \ref{sep} and Lemma \ref{ih}. 

\begin{cor} 
\label{ffk}
Let $M=\Gamma\ba \bH^3$ be a geometrically finite acylindrical manifold.
Then for all sufficiently large $k>1$,
$$
F^* \subset (\RF_k M)H.
$$
In particular, every $H$-orbit in $F^*$
intersects $W_{k, R}$ for any $R\ge 0$. 
\end{cor}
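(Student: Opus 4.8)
The plan is to deduce the corollary directly from the two results it cites: Theorem \ref{cm} (every separating circle meets $\Lambda$ in a Cantor set of modulus bounded below by a constant $\delta'$ depending only on $M$) and Lemma \ref{ih} (no horoball $\Hor_R$ contains an $A$-orbit, so any $H$-orbit hitting $\RF_k M$ already hits $W_{k,R}$). Unwinding the correspondence $\mathcal C^{*} = F^{*}/H$, a point $x = [g] \in F^{*}$ corresponds to the separating circle $C = (gH)^{+}$, and by Theorem \ref{cm} the intersection $C \cap \Lambda$ contains a Cantor set $S$ of modulus at least $\delta'$. Since a Cantor set of positive modulus is precisely a uniformly perfect set (as noted right after \eqref{cp}), and in fact $k_0$-uniformly perfect for a $k_0 = k_0(\delta')$ in the sense of Definition \ref{defthi}, the set $S$ is $k_0$-uniformly perfect with $k_0$ independent of $x$.

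First I would make the link between uniform perfectness of $C \cap \Lambda$ and membership in $\RF_k M$ precise. Recall from \S\ref{secuniper} that $[g]H$ meets $\RF_\infty M = \bigcup_{k \ge 1} \RK$ exactly when $C \cap \Lambda$ contains a uniformly perfect subset $S$ with $\{g^{\pm}\} \subset S$; more quantitatively, if $S$ is $k_0$-uniformly perfect then one can choose a frame $[g'] \in [g]H$ whose geodesic endpoints $g'^{\pm}$ lie in $S$, and for such a frame the return-time set $T_{[g']}$ contains a globally $k$-thick subset containing $0$ with $k$ controlled by $k_0$; hence $[g'] \in \RF_k M$. (This is just the translation between the two descriptions of $\RF_\infty M$ given at the start of \S\ref{secuniper}, applied with a uniform bound.) Because $\delta'$ in Theorem \ref{cm} depends only on $M$, the resulting $k$ can be chosen once and for all, large enough to work simultaneously for every $x \in F^{*}$. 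This gives $F^{*} \subset (\RF_k M)H$ for all sufficiently large $k$.

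Finally, for the "in particular" clause: fix such a $k$ and any $R \ge 0$, and let $xH \subset F^{*}$. By what we just proved there is $x' \in xH \cap \RF_k M$. Since $\RF_k M$ is $A$-invariant and, by Lemma \ref{ih}, no cusp neighborhood $\Hor_R$ can contain the whole $A$-orbit $x'A \subset \RF_k M$, there is some $t$ with $x' a_t \notin \Hor_R$; then $x' a_t \in \RF_k M - \Hor_R = W_{k,R}$, and $x' a_t \in xH$. Thus $xH$ meets $W_{k,R}$.

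The only genuinely substantive point — and the step I expect to require the most care — is the uniformity in the first paragraph: making sure that a lower bound $\delta'$ on the modulus of the Cantor set $C \cap \Lambda$ translates into a bound on the thickness constant $k$ in the definition of $\RF_k M$ that is independent of the circle $C$. This is where Definition \ref{defthi}(4)--(5) and the globally $k$-thick condition of \eqref{defk} must be matched up quantitatively; everything else is a routine use of $A$-invariance and the dictionary between $\Gamma$-orbits of circles and $H$-orbits of frames.
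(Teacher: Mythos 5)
Your proposal is correct and follows exactly the route the paper takes: the paper's proof is the one-line assertion that the corollary "follows directly from Theorem \ref{sep} and Lemma \ref{ih}", and you have correctly filled in the two implicit steps — that the uniform modulus bound $\delta'$ from Theorem \ref{cm} yields a single $k$ (via the dictionary between uniformly perfect subsets of $C\cap\Lambda$ containing $g^{\pm}$ and globally $k$-thick return-time sets, i.e.\ membership in $\RK$) working for every separating circle at once, and that Lemma \ref{ih} together with $A$-invariance of $\RK$ upgrades intersection with $\RK$ to intersection with $W_{k,R}$. The quantitative uniformity you flag as the delicate point is indeed the only real content, and your handling of it is what the paper leaves implicit.
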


\begin{thm} 
\label{mainacy} 
Let $M=\Gamma\ba \bH^3$ be a geometrically finite acylindrical manifold.
\begin{enumerate}
\item For  $x\in \FS$,  $xH$ is either closed in $F^*$ or dense in $F_\Lambda$.

\item If $xH$ is closed in $F^*$, the stabilizer $H_x=\{h\in H: xh=x\} $ is non-elementary.

\item There are only countably many closed $H$-orbits in $\FS$.
\item Any $H$-invariant subset  in $F^*$ is either a union of
 finitely many closed $H$-orbits in $F^*$, or
is dense in $F_\Lambda$.
\end{enumerate}
\end{thm}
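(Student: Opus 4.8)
The plan is to assemble this theorem from the machinery developed in Sections \ref{secperorb}--\ref{secnonele}, using Corollary \ref{ffk} as the bridge that brings the acylindrical hypothesis into play. The key observation is that, by Corollary \ref{ffk}, for all sufficiently large $k$ we have $F^* \subset (\RF_k M)H$; hence every $H$-orbit $xH$ with $x \in F^*$ intersects $\RF_k M \cap F^*$. This is precisely the hypothesis under which Theorems \ref{rt} and \ref{ns} apply, so the hard analytic work has already been done and what remains is to translate and package it.

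First I would fix $k$ large enough that Corollary \ref{ffk} holds, and replace $x$ by a point of $xH$ lying in $\RF_k M \cap F^*$ (possible since $\RF_k M$ is $A$-invariant and, by Lemma \ref{ih}, meets $W_{k,R}$). For \emph{(1)}: by Theorem \ref{rt}, $xH$ is either dense in $F_\Lambda$ or locally closed. If it is locally closed, then by Theorem \ref{ns}(1) it is closed in $(\RF_k M)H \cap F^* = F^*$ (the equality again being Corollary \ref{ffk}); this gives the dichotomy ``closed in $F^*$ or dense in $F_\Lambda$''. For \emph{(2)}: if $xH$ is closed in $F^*$ it is in particular locally closed and meets $\RF_k M \cap F^*$, so Theorem \ref{ns}(2) gives that $H_x$ is non-elementary. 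For \emph{(3)}: again Corollary \ref{ffk} says every closed $H$-orbit in $F^*$ meets $\RF_k M \cap F^*$, so the final ``in particular'' clause of Theorem \ref{ns} bounds the number of such orbits by the number of finitely generated subgroups of $\Gamma$, which is countable.

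Part \emph{(4)} requires slightly more: let $E \subset F^*$ be $H$-invariant and closed (one reduces to the closed case by passing to $\overline{E} \cap F^*$, noting $F^*$ is open). If $E$ contains some orbit $xH$ that is dense in $F_\Lambda$, then since $F^* \subset F_\Lambda$ is open and $E$ is closed in $F^*$... actually here one must be careful: density in $F_\Lambda$ does not immediately force $E \supset F^*$ unless $E$ is closed in $F_\Lambda$. So instead I would argue: if every orbit in $E$ is locally closed (hence closed in $F^*$ and with non-elementary stabilizer by (1),(2)), then the accumulation argument of Corollary \ref{latt2}/Proposition \ref{latt} shows distinct such orbits cannot accumulate on each other within $F^*$, so $E$ is a locally closed union of closed orbits; combined with the countability from (3) and a compactness argument using $W_{k,R}$ (each closed orbit meets the compact set $W_{k,R}$, and these meeting points form a discrete set), one concludes $E$ is a \emph{finite} union of closed $H$-orbits. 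Otherwise $E$ contains an orbit dense in $F_\Lambda$, and then — this is the step needing care — one uses that $E$ being dense in $F_\Lambda$ forces, via Proposition \ref{cd2} or the sweeping Proposition \ref{sweep} applied inside $\overline{E}$, that $\overline{E} \supset F_\Lambda$, hence $E$ is dense in $F_\Lambda$.

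The main obstacle I anticipate is the bookkeeping in \emph{(4)}: upgrading ``countably many closed orbits, none accumulating on another'' to ``finitely many'' requires knowing that the closed orbits in $E$ meet a fixed compact set in a discrete (hence finite) set, which uses both Corollary \ref{ffk} (to get into $W_{k,R}$) and the properness of the orbit maps $H_x\backslash H \to xH$; and one must correctly handle the boundary between ``$E$ accumulates somewhere in $F^*$'' versus ``$E$ has an orbit dense in all of $F_\Lambda$''. The rest is a faithful transcription of Theorems \ref{rt} and \ref{ns} through the translation $F^* = (\RF_k M)H \cap F^*$.
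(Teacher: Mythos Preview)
Your treatment of (1)--(3) is correct and is exactly what the paper does: Corollary \ref{ffk} gives $F^* \subset (\RF_k M)H$, so every $H$-orbit in $F^*$ meets $\RF_k M \cap F^*$, and then Theorems \ref{rt} and \ref{ns} (equivalently Corollary \ref{rt22}) immediately yield the dichotomy, the non-elementary stabilizer, and the countability.

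For (4), your worry in the ``dense'' branch is misplaced: if some $xH \subset E$ is dense in $F_\Lambda$ then $\overline{E} \supset \overline{xH} = F_\Lambda$, which is precisely the desired conclusion --- there is nothing more to do there. The real gap is in the ``all orbits closed'' branch. You pick representatives $z_i \in x_iH \cap W_{k,R}$ and assert that these form a discrete subset of the compact set $W_{k,R}$. But your non-accumulation argument via Proposition \ref{latt} only works \emph{within} $F^*$: it requires the limit circle to lie in $\mathcal{C}^*_\Lambda$. Nothing you have said prevents the $z_i$ from accumulating at some $z \in W_{k,R} \setminus F^*$, i.e.\ at a frame whose circle does not separate $\Lambda$, and then Proposition \ref{latt} does not apply. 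The paper closes this gap by invoking Proposition \ref{outside} (which rests on Proposition \ref{gbb}): assuming $X := \overline{E} \neq F_\Lambda$, one first shows that $X \cap W^*_{k,R}$ is compact, which forces any accumulation point of the $z_i$ to lie in $F^*$.

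Once that step is supplied, your route through Proposition \ref{latt} (more precisely, through Corollary \ref{boundary}, since the $C_i$ lie in distinct $\Gamma$-orbits rather than a single one) is a legitimate and somewhat cleaner alternative to what the paper does. The paper instead observes that the accumulation point $x \in W^*_{k,R}$ can be taken outside $E$ and then reruns the polynomial-divergence argument of Proposition \ref{main3} with $X = \overline{E}$ in place of a single orbit closure, which is heavier machinery than you need once the compactness of $X \cap W^*_{k,R}$ is in hand.
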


\begin{proof}  Claim (1) is immediate from Corollary \ref{rt22} and Corollary \ref{ffk}.

The statements (2) and (3)  follow from Theorem \ref{ns} 
combined with Corollary \ref{ffk}.

To prove (4), let $E\subset F^*$ be an $H$-invariant subset, and
let $X$ be the closure of $E$. Suppose $X$ is not $F_\Lambda$. Then
every $H$-orbit in $E$ is closed in $F^*$. We claim that there are only finitely many $H$-orbits in $E$. Suppose not. Then
 by Theorem \ref{mainacy},
the set $E$ consists of infinitely and countably many closed $H$-orbits in $F^*$, 
so $E=\bigcup_{i\geq 1} x_iH.$
We claim that $X=F_\Lambda$, which would yield a contradiction.
By Corollary \ref{ffk} and Lemma \ref{ih}, we may assume 
that all  $x_i$ belong to $W_{k,R}^*$ for all  sufficiently large $k>1$ and $R>0$. 
As in Proposition \ref{outside}, the set  $X\cap W_{k,R}^*$ is compact, and hence
the sequence $x_i$ has a limit, say, $x \in X\cap W_{k,R}^*$.
We may assume $x$ does not lie in $E$
by replacing $E$ by a smaller subset if necessary.
Hence $E$ is not closed. Now $(X-E)\cap W_k^*$ is non-empty, and
we can repeat the proof of Proposition \ref{main3}  to get $X=F_\Lambda$.
\end{proof}

Theorem \ref{m2} is a special case of the following
theorem:

\begin{thm} 
\label{mainacy2} 
Let $M=\Gamma\ba \bH^3$ be a geometrically finite rigid acylindrical  manifold.
For any $x\in \FS$, the orbit $xH$ is either closed or dense in $\FL$.

\end{thm}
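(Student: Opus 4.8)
The plan is to reduce Theorem \ref{mainacy2} to the dichotomy already established in Theorem \ref{mainacy}, and then to upgrade ``closed in $F^*$'' to ``closed in $F_\Lambda$'' using the rigidity hypothesis. Fix $x\in F^*$. By Theorem \ref{mainacy}(1), either $xH$ is dense in $F_\Lambda$ — in which case we are done — or $xH$ is closed in $F^*$. So assume $xH$ is closed in $F^*$; we must show $xH$ is in fact closed in $F_\Lambda$. Since $F^*$ is open in $\FM$ (hence open in $F_\Lambda$), the only way $xH$ can fail to be closed in $F_\Lambda$ is that $\overline{xH}$ (closure in $\FM$) meets $F_\Lambda-F^*$. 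So I would suppose, for contradiction, that there is a point $z\in (\overline{xH}-xH)\cap(F_\Lambda-F^*)$, write $z=[g]$ and let $C=(gH)^+$ be the corresponding circle; then $C\cap\Lambda\ne\emptyset$ but $C$ does not separate $\Lambda$.

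The key structural input is rigidity: every component $B_i$ of $\Omega$ is a round open disk, so by Lemma \ref{ddd}(1) each $\Delta_i=\Gamma^{B_i}$ is a lattice in $G^{B_i}\simeq\PSL_2(\mathbb R)$, and $\partial B_i=\Lambda(\Delta_i)$ is a genuine circle. A non-separating circle $C$ meeting $\Lambda$ must, since $\Lambda$ is connected (Lemma \ref{ddd} proof) and $\Omega$ is a dense union of round disks, either bound a component $B_i$ or be ``tangent to'' the limit set in a way controlled by the $\partial B_i$'s; more precisely, $C$ separates $\Lambda$ unless one of the two closed disks bounded by $C$ is disjoint from $\Lambda$, i.e.\ is contained in some $B_i$, forcing $C=\partial B_i$. (In the non-rigid case this is exactly where the argument breaks: $\partial B_i$ need not be a circle.) Thus $z$ lies on an $H$-orbit corresponding to a boundary circle $\partial B_i$, whose stabilizer $\Gamma^{\partial B_i}=\Delta_i$ is a \emph{lattice} in $G^{\partial B_i}$. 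Now I would invoke Proposition \ref{latt}: we have a sequence of distinct circles $C_n\in\Gamma C$ converging to $D:=\partial B_i$, and $\Gamma^D=\Delta_i$ is non-elementary (indeed a lattice). If $D$ separated $\Lambda$ we could apply Proposition \ref{latt} directly to conclude $\Gamma C$ is dense in $\mathcal C_\Lambda$, equivalently $xH$ is dense in $F_\Lambda$ — contradicting that $xH$ is closed in $F^*$ and hence (being an $H$-orbit through a separating frame with $x\in F^*$) not all of $F_\Lambda$. When $D=\partial B_i$ does not separate $\Lambda$, I would instead apply Corollary \ref{boundary} to the round disk $B=B_i$: since $\Gamma^B=\Delta_i$ is a lattice in $G^B$, condition \eqref{cnh} is automatic, and the circles $C_n$ accumulating on $\partial B_i$ eventually meet $\op{hull}(B_i,\Lambda(\Delta_i))$; hence the closure of $\bigcup\Gamma C_n\subset\overline{\Gamma C}$ contains $\mathcal C_\Lambda$, i.e.\ $\overline{xH}=F_\Lambda$, again a contradiction.

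Therefore no such $z$ exists, $\overline{xH}\subset xH\cup(\text{nothing outside }F^*)$, and $xH$ is closed in $F_\Lambda$. Combining the two alternatives: for any $x\in F^*$, the orbit $xH$ is either dense in $F_\Lambda$ or closed in $F_\Lambda$, which is the assertion of Theorem \ref{mainacy2}; Theorem \ref{m2} then follows by the standard translation between $H$-orbits in $\FM$ and geodesic planes in $M$ (a plane $P$ meeting $M^*$ corresponds to $x\in F^*$, $P$ closed in $M$ to $xH$ closed in $F_\Lambda$ or to a plane contained in the convex core handled by Theorem \ref{hr}, and $P$ dense in $M$ to $xH$ dense in $F_\Lambda$).

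I expect the main obstacle to be the geometric/topological claim that in the rigid case a circle $C$ with $C\cap\Lambda\ne\emptyset$ either separates $\Lambda$ or equals some $\partial B_i$ — in other words, pinning down exactly which $H$-orbits live in $F_\Lambda-F^*$ and verifying that each such orbit is the boundary of a round disk component of $\Omega$ with lattice stabilizer. This requires using connectedness of $\Lambda$ together with the precise structure of $\Omega$ as a dense union of round disks with pairwise closures meeting at most at rank-one parabolic points (Lemma \ref{ddd}), and checking that a non-separating circle meeting $\Lambda$ cannot be tangent to $\Lambda$ along a Cantor-like set in the rigid setting. Once that is in hand, the rest is a clean application of Proposition \ref{latt}, Corollary \ref{boundary}, and Theorem \ref{mainacy}.
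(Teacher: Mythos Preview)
Your overall strategy matches the paper's, but there is a genuine gap at the step you yourself flagged as the main obstacle. The claim that a non-separating circle $C\in\mathcal C_\Lambda$ must equal some $\partial B_i$ is \emph{false}. From the fact that one open disk $D$ bounded by $C$ misses $\Lambda$ you get $D\subset B_i$ for some $i$, hence $C\subset\overline{B_i}$; but since $C\cap\Lambda\ne\emptyset$ forces $C\cap\partial B_i\ne\emptyset$, and two distinct round circles in $S^2$ with one contained in the closed disk bounded by the other meet in at most one point, the conclusion is only that either $C=\partial B_i$ or $C$ is internally tangent to $\partial B_i$ at a single point $p$. The second case is exactly a horocycle in $B_i$ (for the hyperbolic metric $\rho_{B_i}$) based at $p$, and such circles certainly lie in $F_\Lambda- F^*$. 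For such $C$ the stabilizer $\Gamma^C$ is at best the parabolic group fixing $p$, hence elementary, so neither Proposition~\ref{latt} nor Corollary~\ref{boundary} applies directly with $D=C$.

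The paper closes this gap with one extra move. Since $M$ is rigid, $\Delta_i=\Gamma^{B_i}$ is a \emph{cocompact} lattice in $G^{B_i}$, so every point of $\partial B_i$ is a radial limit point of $\Delta_i$. Proposition~\ref{da} (Hedlund's theorem in this cocompact situation) then yields that $\Delta_i C$ is dense in $\mathcal H(B_i,\partial B_i)$, and in particular $\partial B_i\in\overline{\Delta_i C}\subset\overline{\Gamma C_x}$. Equivalently, the compact $H$-orbit $zH$ corresponding to $\partial B_i$ lies in $\overline{xH}$. Now Corollary~\ref{boundary} applies exactly as you intended: take $B=S^2-\overline{B_i}$ (which meets $\Lambda$ and has $\Gamma^B=\Delta_i$ a lattice), and note that the approximating circles from $\Gamma C_x$, being separating, cannot be contained in $\overline{B_i}$ and hence meet $B$. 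With this insertion of Proposition~\ref{da} between your identification of $B_i$ and your appeal to Corollary~\ref{boundary}, your argument becomes the paper's proof.
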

\begin{proof}
We suppose $xH$ is not closed in $F $. By Theorem \ref{mainacy}, 
there exists an $H$-orbit $yH\subset \overline{xH}$ which does not lie in $F^{*}$.
We write $y=[g]$ and set $C=(gH)^+$ to be the corresponding circle. Then the circle $C$ is 
contained in the closure of a component of the domain of discontinuity $\Omega$ 
and is tangent to its boundary $\partial \Omega$. Hence, 
by Proposition \ref{da} 
(due to Hedlund \cite{He} in this case), the closure 
$\overline{yH}$ contains a compact $H$-orbit $zH$. 
By Corollary \ref{boundary}, this forces $\overline{xH}=F_\Lambda$. 
\end{proof}

\section{Arithmetic and Non-arithmetic manifolds}\label{sarith}
In this section, we will prove Theorem \ref{arith1} and present a counterexample to it
when $M_0$ is not arithmetic.

A good background reference for this section is \cite{MR}.

\subsection{Proof Theorem \ref{arith1}.}
Recall $G=\PSL_2(\c)$ and $H=\PSL_2(\br)$.
A real Lie group $\tilde G\subset \GL_N(\br)$ is said to be a real algebraic group defined over
 $\q$ if there exists a polynomial ideal $I\subset \q [a_{ij}, \text{det}(a_{ij})^{-1}]$ in $N^2+1$ variables such that
$$\tilde G=\{(a_{ij}) \in \GL_N(\br): p(a_{ij}, \text{det}(a_{ij})^{-1})=0\text{ for all $p\in I$}\} .$$

Let $\Gamma_0 \subset G$ be an arithmetic subgroup. This means that
there exist a semisimple real algebraic group $\tilde G\subset \GL_N(\br)$ defined over $\q$ and
 a surjective real Lie group homomorphism $\psi: G\to \tilde G$  with compact kernel such
that $\psi(\Gamma_0)$ is commensurable with $\tilde G\cap \GL_N(\z)$ (cf. \cite{Bor}).



Let $\Gamma \subset \Gamma_0$ be a geometrically finite acylindrical group, and denote by $p:\Gamma\ba G \to \Gamma_0\ba G$ the canonical
projection map.
In order to prove Theorem \ref{arith1},  we first note that the orbit $xH$
is either closed in $F^*$ or dense in $F_\Lambda$ by Theorem \ref{mainacy}.

Suppose that $xH$ is closed  in $F^*$.
Write $x=[g]$ and let $C:=(gH)^+$ be the corresponding circle. 
Then,
by Theorem \ref{mainacy}, the stabilizer
$\Gamma^C$ of $C$ in $\Gamma$ is a non-elementary subgroup  
and hence is Zariski dense in $G^C$.
Hence the group $\psi(G^C)$ is defined over $\q$ and
the group $\psi(\Gamma_0^C)$ is  commensurable with $\psi(G^C)\cap \GL_N(\z)$.
By Borel and Harish-Chandra's theorem  \cite[Corollary 13.2]{Bor}, 
  $\Gamma_0^C$ is a lattice in $G^C$.
Hence the orbit $p(x)H$ has finite volume and is closed in $\Gamma_0\ba G$. 

When $x H$ is dense in $F_\Lambda$, 
its image $p(x)H$ is dense in $p(F_\Lambda)$.
On the other hand, $F_\Lambda$ has non-empty interior as $\Lambda$ is connected.
Therefore $p(F_\Lambda)\subset \Gamma_0\ba G$ is an $A$-invariant subset with non-empty interior.
Since $\Gamma_0\ba G$ contains a dense $A$-orbit, it follows that
the set $p(F_\Lambda)$ is dense in $\Gamma_0\ba G$.
Therefore the image
$p(x)H$ is also dense in $\Gamma_0\ba G$. The other implications are easy to see.

\subsection{Cutting finite volume hyperbolic manifolds}\label{cut}
In the rest of this section, we will construct an example of a {\it non-arithmetic} manifold $M_0$ and a rigid acylindrical manifold $M$ which covers $M_0$ for which Theorem \ref{arith1}
fails, as described in Proposition \ref{na}.

We  first explain how to construct
a geometrically finite {\it rigid} acylindrical manifold $M$  starting from a hyperbolic manifold $M_0$ of finite volume.
We also explain the construction of  an arithmetic hyperbolic manifold $M_0$ which admits a properly immersed  geodesic surface.
Let $\Gamma_0$ be a lattice in $G$ such that 
 $\Delta:=H\cap \Gamma_0$ 
is a cocompact lattice in $H$.
This gives rise to a properly immersed compact geodesic surface $S_0=\Delta\ba \bH^2$ in the orbifold  $M_0=\Gamma_0\ba \bH^3$.
According to \cite[Theorem 5.3.4]{MR}, by passing to a subgroup of finite index in $\Gamma_0$, we may assume that $M_0$
is a manifold and that $S_0$ is
  properly embedded
in $M_0$. 

We cut $M_0$ along $S_0$. 
The completion $N_0$ of a connected component, say, $M'$, of the complement $M_0-S_0$ is a hyperbolic
$3$-manifold whose boundary $\partial N_0$ is totally geodesic  and is the union
of one or two copies of $S_0$. 

Note that the fundamental group $\Gamma$ of $N_0$ can be considered as a subgroup of $\Gamma_0$.
Indeed, let $p_0:\m H^3\to M_0$ be the natural projection 
and $E_0$ be the closure of a connected component of $p_0^{-1}(M')$.
Since $E_0$ is convex,  $\Gamma$ 
can be identified with the stabilizer of $E_0$ in $\Gamma_0$. 
The complete  manifold $M=\Gamma\ba \bH^3$ is a
geometrically finite manifold whose convex-core boundary is isometric to $\partial N_0$, and which does not have any rank one cusp.
The domain of discontinuity $\Omega$ is a dense union of round open disks whose closures are mutually disjoint, that is,
$\Lambda$ is a Sierpinski curve. It follows that
 $M$ is a rigid  geometrically finite acylindrical  manifold.

Conversely,  any rigid acylindrical 
geometrically finite manifold $M$ is obtained that way. Indeed, the double $M_0$ of the convex core $N_0$
of $M$ along its totally geodesic boundary $\partial N_0$ is a finite volume hyperbolic $3$-manifold. 

\subsection{Comparing closures of geodesic planes} \label{go}
Let $$q=q(x_1, x_2, x_3, x_4)$$ be a quadratic form  with real coefficients and signature $(3,1)$. The hyperbolic space can be seen as
\be
\label{hqa}
\m H^3\simeq \{[v]: v\in \m R^4 \; ,\; q(v)<0\} 
\ee
where $[v]$ denotes the real line containing $v$,
and the group $G $ is isomorphic to the identity component  $\SO(q)^\circ$  of the special orthogonal group.
When $v\in \m R^4$ is such that $q(v)>0$, 
the restriction $q|_{v^\perp}$ 
is a quadratic form of signature $(2,1)$ where $v^\perp$ denotes the orthogonal complement to $v$ with respect to $q$.
Therefore 
$$
\m H_v:=v^\perp\cap\m H^3
$$ 
is a totally geodesic plane in $\m H^3$ and
the stabilizer $G_v= \SO(q|_{v^\perp})^\circ$ is isomorphic to $\PSL_2(\br)$. 
Hence the space $\mc C$ of circles $C\subset S^2$ 
or, equivalently, of
geodesic planes in $\m H^3$, can be seen as
$$
\mc C\simeq\{\m H_v: v\in \m R^4 \; ,\; q(v)>0\}.
$$

We now assume that the coefficients of the quadratic form $q$ belong to a totally real number field $\m K $
of degree $d$
such that for each non-trivial embedding $\sigma$ of $\m K$ in $\m R$,
 the quadratic form $q^\sigma$ has signature $(4,0)$ or $(0,4)$, so that the orthogonal group $\SO(q^{\sigma})$ is compact.

Now any arithmetic subgroup $\Gamma_0\subset G$ 
which contains a cocompact Fuchsian group
 is commensurable with
$$
G\cap \SL_4(\m O)
$$ 
where $G=\SO(q)^\circ$ and $\m O$ is the ring of integers of $\m K$ (cf. \cite[Section 10.2]{MR}).
In this case, the corresponding finite volume hyperbolic orbifold $M_0=\Gamma_0\ba \bH^3$ is  an {\it arithmetic} $3$-orbifold
with a properly immersed totally geodesic arithmetic surface. 
This orbifold is compact if and only if $q$ does not represent $0$ over $\m K$.

We introduce the set of rational positive lines:
$$
\mc C_\m K :=\{[v]: v\in \m K^4 \; ,\; q(v)>0\} ,$$
which can also be thought as the set of rational planes
$\{\m H_v: v\in \m K^4 \; ,\; q(v)>0\}$.

For $v\in \mc C_\m K$, the restriction $q|_{v^\perp}$ 
is  defined over $\m K$.
The group $G_v(\m O):=G_v\cap \SL_4(\m O)$ is an arithmetic subgroup of $G_v$. We call $P_v:=G_v(\m O)\ba \bH^2$ an arithmetic geodesic surface of $M_0$.
We recall that every properly embedded  geodesic plane $Q$ of $M_0$
is arithmetic, i.e. we have $Q=P_v$ for some $[v]\in \mc C_\m K$.

By passing to a finite cover, we assume $P_v$ is properly imbedded in $M_0$, and
let $\Gamma\subset \Gamma_0$ be the fundamental group of a component $M_0-P_v$ as discussed in the subsection \ref{cut}.
\label{secarithm}

\subsection{Arithmetic examples}
We begin by a very explicit example of arithmetic lattice with $\m K=\m Q$. 
We consider the following family of quadratic forms
$$q_a:= q_0(x_1, x_2, x_3) +ax_4^2 $$
where $q_0(x_1, x_2, x_3)=7 x_1^2+7x_2^2-x_3^2$ and  $a$ is a positive integer.

One can check, using the Hasse-Minkowski principle \cite[Theorem 8 in Chapter 4]{Ser}, that
\begin{enumerate}
\item $q_0$ does not represent $0$ over $\q$;
\item $q_a$ represents $0$ over $\q$ iff $a$ is a square mod $7$ and $a\ne -1$ mod $8$;
\end{enumerate}
We choose a positive integer $a$
satisfying condition (2). For instance  $a=1$ or $a=2$ will do.
We denote by $\Gamma_{a,0}:=\SO(q_a,\m Z)^\circ$ the corresponding arithmetic lattice and 
by $M_{a,0}=\Gamma_{a,0}\ba\m H^3$ the corresponding arithmetic hyperbolic orbifold.

We introduce the geodesic plane $\m H_{v_0}$ associated to 
the vector
$$
v_0=(0,0,0,1).
$$
Note that,
seen in the model \eqref{hqa} of $\m H^3$ given by the quadratic form $q_a$,
the geodesic plane  $\m H_{v_0}$ does not depend on $a$.
We fix the geodesic orbifold $S_{a,0}:=P_{v_0}$. 
Passing as above to a finite cover, we may assume that 
$M_{a,0}$ is a hyperbolic manifold and that  $S_{a,0}$ 
is a properly embedded compact geodesic surface.
After cutting along $S_{a,0}$, we get a hyperbolic $3$-manifold $N_{a,0}$ whose 
geodesic boundary $\partial N_{a,0}$ is a union of one or two copies of 
$S_{a,0}$. We denote by $\Gamma_a$ the fundamental group of $N_{a,0}$ and by 
$$
M_a:=\Gamma_a\ba \m H^3
$$ 
the corresponding geometrically finite
rigid acylindrical 
hyperbolic manifold whose convex core has compact boundary. 
As we have seen, the group $\Gamma_a$ is naturally a subgroup of $\Gamma_{a,0}$
and there is a natural covering map
$$
p_a:M_a\to M_{a,0}
$$
to which Theorem \ref{arith1} applies.

\subsection{Non-arithmetic examples}
The class of non-arithmetic hyperbolic $3$-manifolds  that we will now construct 
are some of those
introduced by Gromov and Piatetski-Shapiro in \cite{GP}.

Choose two positive integers $a,a'$ such that $a/a'\notin \q^2$, for instance $a=1$ and $a'=2$.
Since the surfaces  $S_{a,0}$ and $S_{a',0}$ are isometric,
and the boundaries $\partial N_{a,0}$ and $\partial N_{a',0}$ 
are union of one or two copies of these surfaces, 
we can glue one or two copies of the $3$-manifolds $N_{a,0}$  and $N_{a',0}$
along their boundaries 
and get a connected finite volume hyperbolic manifold $M_0$ with no boundary. 
We write 
$$
M_0=\Gamma_0\ba \m H^3.
$$ 
By \cite{GP}, the lattice $\Gamma_0$ of $G$  is  non-arithmetic. 
The group $\Gamma_a$ is also naturally a subgroup of $\Gamma_{0}$
and there is a natural covering map
$$
p:M_a\to M_0.
$$

Theorem \ref{nono} follows from the following:
\begin{prop}  
\label{na} Let $S$ denote the boundary of the convex core of $M_a$.
Let  $P\subset M_a$ be a geodesic plane that intersects  $S$ but is neither contained in $S$ nor orthogonal to $S$.
Then  the image $p(P)$  is dense in $M_0$. 

In particular, there exists a closed geodesic plane $P\subset M_a$
that intersects $M_a^*$ and whose image $p(P)$ is dense in $M_0$.
\end{prop}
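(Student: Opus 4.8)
The plan is to show that $p(P)$ cannot be closed in $M_0$, so that the dichotomy ``closed or dense'' for geodesic planes in finite volume hyperbolic $3$--manifolds (Ratner, Shah) forces $p(P)$ to be dense. Write $M_0=\Gamma_0\ba\bH^3$, $M=M_a=\Gamma_a\ba\bH^3$, let $x=[g]$ be a lift of $P$ and $C=(gH)^+$ the corresponding circle, so that $p(P)=:F$ corresponds to $\Gamma_0 C$. If $F$ is closed then, $M_0$ having finite volume, $\Gamma_0^C:=\op{Stab}_{\Gamma_0}(C)$ is a lattice in $G^C\simeq\PSL_2(\R)$, the stabilizer of $C$ in $G$; I want to contradict this. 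Since $p$ restricts to an isometry of $\op{core}(M)$ onto one of the pieces $N_{a,0}$ of $M_0$, the image $p(S)$ is the gluing surface; as $P$ meets $S$ transversally at a non--right angle and $P\not\subset S$, the surface $F$ meets $p(S)$ transversally and is not contained in it. The group $\Gamma_0$ carries the graph of groups dual to the gluing: vertex groups are the $\Gamma_0$--conjugates of $\Gamma_a$ and of $\Gamma_{a'}$, and edge groups are conjugates of the surface group $\pi_1(S_{a,0})$, which are lattices in the stabilizers of the $\Gamma_0$--translates of $\m H_{v_0}$ (the lifts of $p(S)$). Consider the action of $\Gamma_0^C$ on the Bass--Serre tree $T$.

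If $\Gamma_0^C$ fixes a vertex of $T$, then a $\Gamma_0$--conjugate of $\Gamma_0^C$ lies in $\Gamma_a$ (or in a conjugate of $\Gamma_{a'}$; symmetric), say $\gamma^{-1}\Gamma_0^C\gamma\subset\Gamma_a$. Since a lattice in $G^C$ has limit set the whole circle, $\gamma^{-1}C=\Lambda(\gamma^{-1}\Gamma_0^C\gamma)\subset\Lambda(\Gamma_a)$. By Lemma \ref{ddd}, $\Omega(\Gamma_a)$ is a disjoint union of round open disks $B_i$, so a circle inside $\Lambda(\Gamma_a)$ enters no $B_i$ and hence is linked with no $\partial B_i$; thus $\gamma^{-1}\m H_w$ crosses no $\Gamma_a$--translate of $\m H_{v_0}$, and translating back by $\gamma$ (which permutes the lifts of $p(S)$) shows $\m H_w$ crosses no lift of $p(S)$, so $F$ does not meet $p(S)$ transversally --- a contradiction. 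So $\Gamma_0^C$ fixes no vertex of $T$. Because $p(S)$ is embedded, its lifts are pairwise disjoint, so $F\cap p(S)$ is a non--empty multicurve of pairwise disjoint simple closed geodesics on $F$, cutting $F$ into subsurfaces lying in $N_{a,0}$ or in $N_{a',0}$ and alternating across the curves. If all $N_{a',0}$--subsurfaces were annuli, then each, being $\pi_1$--injective, is boundary parallel by acylindricity of $N_{a',0}$, and $F$ could be homotoped across them into $N_{a,0}$, again putting $\Gamma_0^C$ in a vertex group and contradicting the previous case; so there are subsurfaces $Y\subset N_{a,0}$ and $Y'\subset N_{a',0}$ with $\chi<0$. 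As totally geodesic surfaces are $\pi_1$--injective, $\pi_1(Y)$ and $\pi_1(Y')$ are non--abelian free, hence Zariski dense in $G^C$, and conjugate in $\Gamma_0$ into $\Gamma_a$, resp.\ into a conjugate of $\Gamma_{a'}$.

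Now pass to the arithmetic models. After a $\Gamma_0$--conjugation $\gamma_1$, $\gamma_1^{-1}\pi_1(Y)\gamma_1\subset\Gamma_a\subset\SO(q_a,\z)^\circ$ is Zariski dense in $G^{\gamma_1^{-1}C}$, so this group is defined over $\q$; hence the primitive integral vector $w_1$ with $\m H_{w_1}=\gamma_1^{-1}\m H_w$ lies in $\z^4$ and $q_a(w_1)=\lambda_1^2 q_a(w)\in\q_{>0}$. Picking a face $\m H_{\delta v_0}$ ($\delta\in\Gamma_a$, so $\delta v_0\in\z^4$) of the lift of $N_{a,0}$ attached to $Y$ that $\m H_{w_1}$ crosses, the angle $\theta$ at which $F$ crosses $p(S)$ along the matching curve satisfies, using $q_a(\delta v_0)=q_a(v_0)=a$,
\[
\cos^{2}\theta=\frac{q_a(w_1,\delta v_0)^{2}}{q_a(w_1)\,q_a(\delta v_0)}=\frac{q_a(w_1,\delta v_0)^{2}}{a\,q_a(w_1)} ,
\]
so in $\q_{>0}/(\q_{>0})^{2}$ one gets $[\cos^{2}\theta]=[a\,q_a(w_1)]=[a]\,[q_a(w)]$. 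The same computation on the $N_{a',0}$--side --- in the model $\SO(q_{a'})$, identified with $\SO(q_a)$ by the diagonal change of coordinates fixing $v_0^{\perp}$, under which the value of the plane is unchanged and $q_{a'}(v_0)=a'$ --- gives, along a suitable curve, $[\cos^{2}\theta]=[a']\,[q_a(w)]$. Since $P$ is not orthogonal to $S$, one can take such a curve with $\theta\neq\pi/2$, so $\cos^{2}\theta$ is a nonzero rational and the two identities combine to $[a]=[a']$, i.e.\ $a/a'\in\q^{2}$, contradicting the choice of $a,a'$. Hence $F=p(P)$ is dense in $M_0$. For the last assertion, I would take $w=(1,0,0,1)\in\q^4$: then $q_0(1,0,0)=7>0$ and $w_4\neq0$, so $\m H_w$ crosses $\m H_{v_0}$ and is not orthogonal to it, the associated plane $P\subset M$ meets $M^{*}$ and crosses $S$ non--orthogonally, and $C=\partial\m H_w$ separates $\Lambda(\Gamma_a)$ (being linked with a peripheral circle). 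Since $w\in\q^4$, the orbit $\Gamma_{a,0}C$ is closed in $\mc C$ and countable, and $\Gamma_a C\subset\Gamma_{a,0}C$; so $\Gamma_a C$ is not dense in the uncountable set $\mc C_{\Lambda(\Gamma_a)}$, whence by Theorem \ref{mainacy2} it is closed, i.e.\ $P$ is a closed plane --- and by the first part $p(P)$ is dense in $M_0$.

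The main obstacle is the analysis in the second case: extracting from the mere closedness of $p(P)$ the fact that the plane of $F$ becomes $\q$--rational with respect to \emph{both} forms $q_a$ and $q_{a'}$. This requires knowing that $F$ genuinely uses both pieces $N_{a,0},N_{a',0}$ two--dimensionally (a non--elementary intersection with each vertex group), where acylindricity is essential to exclude the annular degeneration, and that one can pin the \emph{same} crossing angle $\theta\neq\pi/2$ down from both sides; the quadratic--form bookkeeping (primitive integral representatives, the common value $q_a(w)$ of the plane in the two models, the role of $v_0$ with $q_a(v_0)=a$ and $q_{a'}(v_0)=a'$) is what makes the non--commensurability $a/a'\notin\q^2$ do the work.
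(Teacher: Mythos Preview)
Your overall strategy --- assume $p(P)$ is closed, deduce rationality of the plane in both arithmetic models, and derive a contradiction from the angle at a crossing with $p(S)$ --- matches the paper's. But the execution has a genuine gap in the square-class bookkeeping, and the Bass--Serre detour is both unnecessary and the source of the trouble.

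The problem is your claim that $[q_a(w_1)] = [q_a(w)]$ in $\mathbb{Q}_{>0}/(\mathbb{Q}_{>0})^2$, and that this ``value of the plane'' is the same on the $q_{a'}$ side. You write $w_1 = \lambda_1\gamma_1^{-1}w$ and $q_a(w_1)=\lambda_1^2 q_a(w)$. It is true that $\gamma_1\in\Gamma_0\subset\SO(q_a,\mathbb{R})$ preserves $q_a$, but $\gamma_1$ need not lie in $\SO(q_a,\mathbb{Q})$: elements of $\Gamma_{a'}\subset\Gamma_0$ are integral only in the $q_{a'}$ model, and in the $q_a$ model they involve $\sqrt{a'/a}$. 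So $\lambda_1$ is only a real number, $q_a(w)$ is not a rational invariant, and the square class $[q_a(w_1)]\in\mathbb{Q}^*/(\mathbb{Q}^*)^2$ genuinely depends on which piece $Y$ you picked. The same applies on the $q_{a'}$ side. Since your pieces $Y,Y'$ with $\chi<0$ need not be adjacent, you are comparing $[\cos^2\theta]$ and $[\cos^2\theta']$ at different crossing curves, computed from square classes that are not known to agree, so no contradiction follows.

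The paper avoids this entirely. Instead of tracking a global square-class invariant, it fixes a \emph{single} crossing curve $c$ of $Q=p(P)$ with $p(S)$ and observes that the adjacent pieces on the two sides give rational normals $v=(w,x_4)$ in the $q_a$ model and $v'=(w',x_4')$ in the $q_{a'}$ model, with $\mathbb{D}_w=\mathbb{D}_{w'}$ forcing $w'\propto w$ in $\mathbb{Q}^3$. Then the explicit formula $\cos^2\theta=\tfrac{ax_4^2}{q_0(w)+ax_4^2}=\tfrac{a'(x_4')^2}{q_0(w)+a'(x_4')^2}$ (same $q_0(w)$, same $\theta$) immediately gives $a/a'\in(\mathbb{Q}^*)^2$. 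The rationality of \emph{both} adjacent pieces at $c$ comes for free from Theorem~\ref{mainacy}(2): each piece lies in a geodesic plane in $M_a$ (resp.\ $M_{a'}$) that is closed in $M_a^*$ (resp.\ $M_{a'}^*$), hence has non-elementary stabilizer, regardless of whether that particular piece has $\chi<0$. This is exactly the step your Bass--Serre argument was trying to replace, but it only delivers rationality for \emph{some} piece on each side, not for the two pieces flanking a common curve.

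Your existence argument for the closed plane $P$ with $p(P)$ dense is fine.
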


We first need to compute the angle $\theta_{a,v}$ between two rational planes in $\m H^3$; the following lemma follows from a direct computation.

\begin{lemma}
\label{angle1} Let $v=(w,x_4)$ with $w\in \m R^3$, $x_4\in \m R$
with $q_a(v)>0$.

The intersection of the two geodesic planes  $\m H_v$ and $\m H_{v_0}$ is a geodesic line $\m D_w$, independent of
 $a$ and $x_4$.

The angle $\theta_{a,v}:=\angle(\m H_v,\m H_{v_0})$ between these
geodesic planes  is given by 
\be
\label{cos}
\cos^2(\theta_{a,v})=
\frac{\langle v_0, v\rangle_{q_a}^2}{q_a(v_0) q_a(v)} =
\frac{ax_4^2}{q_0(w)+ax_4^2}.
\ee
\end{lemma}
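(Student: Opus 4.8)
The plan is to run the ``direct computation'' announced in the statement inside the hyperboloid model \eqref{hqa} attached to $q_a$, exploiting that the normal vector $v_0=(0,0,0,1)$ has its last coordinate orthogonally decoupled from the first three. Throughout, write $\langle\cdot,\cdot\rangle_{q_a}$ for the bilinear form of $q_a$ and $\langle\cdot,\cdot\rangle_{q_0}$ for that of $q_0$, and recall that $q_a(v_0)=a>0$ and $q_a(v)=q_0(w)+a\,x_4^2$.

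\emph{First assertion.} Since $\langle v_0,(y,y_4)\rangle_{q_a}=a\,y_4$ one has $v_0^{\perp}=\br^3\times\{0\}$, whence $\m H_{v_0}=\{[(y,0)]:q_0(y)<0\}$ visibly does not involve $a$. Intersecting with $\m H_v=\{[u]:q_a(u)<0,\ \langle v,u\rangle_{q_a}=0\}$ and using $\langle v,(y,0)\rangle_{q_a}=\langle w,y\rangle_{q_0}$ gives
$$
\m H_v\cap\m H_{v_0}=\{[(y,0)]:q_0(y)<0,\ \langle w,y\rangle_{q_0}=0\},
$$
which depends on neither $a$ nor $x_4$. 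The restriction of $q_0$ to $w^{\perp_{q_0}}$ has signature $(1,1)$ precisely when $q_0(w)>0$, and in that case the displayed set is a geodesic line $\m D_w$, proving the first assertion (the remaining cases $q_0(w)\le 0$ correspond to coinciding, parallel, or ultraparallel planes and are excluded).

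\emph{Angle formula.} Here I would invoke the standard computation of dihedral angles in the hyperboloid model. Pick any $p\in\m D_w$, so $p\in\m H_v\cap\m H_{v_0}$ and $T_p\m H^3$ is identified with $p^{\perp_{q_a}}$ carrying the positive definite form $\langle\cdot,\cdot\rangle_{q_a}|_{p^{\perp}}$. Because $\langle p,v\rangle_{q_a}=\langle p,v_0\rangle_{q_a}=0$, both $v$ and $v_0$ already lie in $p^{\perp_{q_a}}$, and $v$ (resp.\ $v_0$) spans the line orthogonal inside $T_p\m H^3$ to $T_p\m H_v=p^{\perp}\cap v^{\perp}$ (resp.\ to $T_p\m H_{v_0}$). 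Hence, up to replacing it by its supplement, $\theta_{a,v}$ is the angle between $v$ and $v_0$ in this Euclidean inner product space, so that
$$
\cos^2\theta_{a,v}=\frac{\langle v_0,v\rangle_{q_a}^2}{q_a(v_0)\,q_a(v)} .
$$
Substituting $\langle v_0,v\rangle_{q_a}=a\,x_4$, $q_a(v_0)=a$, and $q_a(v)=q_0(w)+a\,x_4^2$ then yields $\cos^2\theta_{a,v}=ax_4^2/(q_0(w)+a\,x_4^2)$, as asserted.

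The argument is elementary linear algebra; the one step deserving attention is the identification of the ambient normals $v,v_0$ with the in-plane normals of $\m H_v,\m H_{v_0}$ at a point $p$ of their intersection --- i.e.\ that the $\langle\cdot,\cdot\rangle_{q_a}$-orthogonal projection of $v$ onto $T_p\m H^3$ is $v$ itself, which holds precisely because $p\in\m H_v$. This is exactly what reduces the dihedral angle to a ratio of values of $q_a$, making the whole statement a short direct computation.
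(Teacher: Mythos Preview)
Your proof is correct. The paper itself provides no argument beyond the phrase ``follows from a direct computation,'' and your write-up is precisely that computation: identify $v_0^\perp$, intersect with $v^\perp$ to see the independence from $a$ and $x_4$, and then invoke the standard dihedral-angle formula in the hyperboloid model to evaluate the ratio. Your additional remark that the intersection is a genuine geodesic line exactly when $q_0(w)>0$ (with the degenerate cases $q_0(w)\le 0$ excluded) is a useful clarification that the paper leaves implicit.
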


Therefore we have:
\begin{cor}
\label{angle2}  
Fix $w\in \m Q^3-\{0\}$ and set 
$$
\Theta_{a,w}:=\{ \theta_{a,(w, x)}:  x\in  \m Q-\{ 0\} \}.
$$
If ${a}/{a'}$ is not a square in $\m Q$, then
$$\Theta_{a,w}\cap \Theta_{a',w}=\emptyset .$$
\end{cor}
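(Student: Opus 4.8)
The plan is to reduce everything to the explicit formula for $\cos^{2}(\theta_{a,v})$ furnished by Lemma \ref{angle1}, together with the fact — already recorded in the excerpt — that $q_0$ does not represent $0$ over $\q$.

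First I would argue by contradiction. Suppose $\Theta_{a,w}\cap\Theta_{a',w}\neq\emptyset$, so that there are $x,x'\in\q-\{0\}$ with $\theta_{a,(w,x)}=\theta_{a',(w,x')}$; implicitly $q_a(w,x)>0$ and $q_{a'}(w,x')>0$, as otherwise these angles would not be defined. Equality of angles forces equality of $\cos^{2}\theta$, and by \eqref{cos} applied to $v=(w,x)$ and to $v=(w,x')$ this reads
\[
\frac{a x^{2}}{q_0(w)+a x^{2}}=\frac{a'(x')^{2}}{q_0(w)+a'(x')^{2}}.
\]
Both denominators are nonzero — in fact positive, being $q_a(w,x)$ and $q_{a'}(w,x')$ — so I may cross–multiply. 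Expanding, the common term $a a' x^{2}(x')^{2}$ cancels from both sides, leaving
\[
a x^{2}\, q_0(w)=a'(x')^{2}\, q_0(w).
\]

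Finally, since $q_0$ does not represent $0$ over $\q$ and $w\in\q^{3}-\{0\}$, we have $q_0(w)\neq 0$; cancelling this factor gives $a x^{2}=a'(x')^{2}$, hence $a/a'=(x'/x)^{2}$ is a square in $\q$, contradicting the hypothesis. There is no serious obstacle here: the only input beyond elementary algebra is the anisotropy of $q_0$ over $\q$ (which is precisely what licenses the cancellation of $q_0(w)$), and that is established earlier in the excerpt.
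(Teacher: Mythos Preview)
Your proof is correct and follows essentially the same approach as the paper's: both argue by contradiction, apply formula \eqref{cos} to conclude $ax^2=a'(x')^2$, and deduce $a/a'\in\q^2$. You have simply filled in the algebra and made explicit the use of the anisotropy of $q_0$ over $\q$ to cancel $q_0(w)$, a step the paper leaves implicit.
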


\begin{proof}
For $v=(w,x_4)$ and  $v'=(w,x'_4)$ with $x_4$ and $x'_4$ in $\m Q-\{ 0\}$, by Formula \eqref{cos},
an equality $\theta_{a,v}=\theta_{a',v'}$
would imply $a x_4^2=a'{x'_4}^2$, and hence $a/a'\in \q^2$.  This proves the claim. \end{proof}

\begin{proof}[Proof of Proposition \ref{na}]
By Theorem \ref{m2}, which is due to Shah and Ratner independently (\cite{Sh}, \cite{Rn})
for a hyperbolic manifold of finite volume, 
the geodesic plane $Q:=p(P)$ is either closed or dense in $M_0$. 
We assume by contradiction that $Q$ is  closed in $M_0$. 

Since $Q\cap N_{a,0}$ is closed, as explained in 
Theorem \ref{arith1},   $Q$ must be the image in $M_0$ 
of a rational plane $\m H_v$ with $v=(w,x_4)\in \m Q^4$,
in the model \eqref{hqa} of $\m H^3$ given by the quadratic form $q_a$.
Similarly, since $Q\cap N_{a',0}$ is closed,
$Q$ must be the image in $M_0$ 
of a rational plane $\m H_{v'}$ with $v'=(w',x'_4)\in \m Q^4$,
in the model of $\m H^3$ given by the quadratic form $q_{a'}$.

We can choose these lifts such that
the two intersection geodesics $\m D_w= \m H_{v}\cap \m H_{v_0}$ and $\m D_{w'}= \m H_{v'}\cap \m H_{v_0}$ are equal 
and the corresponding two angles between these intersecting planes are equal.
This says that 
$w$ and $w'$ are equal up to a multiplicative factor,
and that  $\theta_{a,v}=\theta_{a',v'}$. This contradicts 
Corollary \ref{angle2}.
\end{proof}

After seeing the result proven in the paper of Fisher, Lafont, Miller and Stover \cite{FLMS}, we realized that Proposition \ref{na}, 
Theorem \ref{mainacy}(2) and the main result in \cite{Sh} together can also be used to show that the non-arithmetic manifolds in section 12.5 can have at most finitely many properly immersed geodesic planes. For a more general result in this direction, see \cite{FLMS}.  


\begin{thebibliography}{10}


\bibitem{Ab2} W. Abikoff,
\newblock {\em Boundaries of Teichmuller spaces and on Kleinian groups III.} 
\newblock {Acta Math.,}  134 (1975), 221-237



\bibitem{Ah1} L. Ahlfors,
\newblock {\em Complex Anaylsis.} 
\newblock {McGraw-Hill}, 1979 

\bibitem{Ah} L. Ahlfors,
\newblock {\em Structure of finitely generated Kleinian groups.} 
\newblock {Acta Math.,}  122 (1969), 1-17.

\bibitem{Bor} A. Borel,
\newblock {\em Introduction aux groupes arithm\'etiques.} 
\newblock {Hermann} (1969).

\bibitem{Bow}  B. H. Bowditch,
 \newblock {\em Geometrical finiteness for hyperbolic groups.}
 \newblock { J. Funct. Anal.}, 113(2):245-317, 1993.


\bibitem{BT} A. Borel and J. Tits,
\newblock {\em Homomorphismes "abstraits" de groupes alg\'ebriques simples.}
\newblock { Ann. Math}  97, 1973, 499-571.


\bibitem{Da}
F. Dalbo,
\newblock {\em Topologie du feuilletage fortement stable.}
\newblock {Ann. Inst. Fourier.} Vol 50, 2000, p. 981-993.


\bibitem{Eb}
P. Eberlein,
\newblock {\em Geodesic flows on negatively curved manifolds, I.}
\newblock { Ann Math. J.} Vol 95, 1972, p. 492-510.

\bibitem{Ha}
A. Hatcher
\newblock {\em Algebraic topology.}
\newblock {Cambridge University Press}, 2002

\bibitem{He}
G. Hedlund,
\newblock {\em Fuchsian groups and transitive horocycles.}
\newblock { Duke Math. J.} Vol 2, 1936, p. 530-542



\bibitem{FLMS} D. Fisher. J. F. Lafont, N. Miller and M. Stover
\newblock {\em Finiteness of maximal geodesic submanifolds in hyperbolic hybrids.}
\newblock {Preprint}, 2018

\bibitem{Fl} W. J. Floyd,
\newblock {\em Group completions and limit sets of Kleinian groups.}
\newblock {Invent. Math.}, Vol 57 (1980), 205-218

\bibitem{GP}
M. Gromov and I. Piatetski-Shapiro,
\newblock {\em Non-arithmetic groups in Lobachevsky spaces.}
\newblock {Publ. Math. IHES} Vol 66 (1988), 93-103



\bibitem{LV}
O. Lehto and K. J. Virtanen,
\newblock {\em Quasiconformal mappings in the plane.}
\newblock {Springer-Verlag}, 1973


\bibitem{MR}
C. Maclachlan and A. Reid,
\newblock {\em The arithmetic of Hyperbolic 3-manifolds.}
\newblock {Springer}, GTM 219

\bibitem{Ma1}
A. Marden,
\newblock {\em Outer Circles.}
\newblock {Cambridge Univ. Press}  (2007)


\bibitem{Mas}
B. Maskit,
\newblock {\em On Boundaries of Teichmuller spaces and on Kleinian groups II.}
\newblock {Annals of Math} 91 (1970), 607-639.

\bibitem{Mas2}
B. Maskit,
\newblock {\em Intersections of component subgroups of Kleinian groups.}
\newblock {Discontinuous groups and Riemann surfaces} (Proc. Conf., Univ. Maryland, College Park, Md., 1973), pp. 349-367. Ann. of Math. Studies, No. 79, Princeton Univ. Press, 1974. 


\bibitem{Mar} G. Margulis,
\newblock {\em Indefinite quadratic forms and unipotent flows on homogeneous spaces.}
\newblock  {In Dynamical systems and ergodic theory} (Warsaw, 1986), Vol 23, Banach Center Publ. 1989.

\bibitem{MT} K. Matsuzaki and M. Taniguchi,
\newblock {\em Hyperbolic manifolds and Kleinian groups.}
\newblock {Oxford University Press}, 1998

\bibitem{Mc}
C. McMullen,
\newblock {\em Iteration on Teichmuller space.}
\newblock Inventiones, Vol 99 (1990), 425-454

\bibitem{MMO}
C. McMullen, A. Mohammadi and H. Oh,
\newblock {\em Geodesic planes in hyperbolic $3$-manifolds.}
\newblock {Inventiones Mathematicae,} Vol 209 (2017), 425-461

\bibitem{MMO2}
C. McMullen, A. Mohammadi and H. Oh,
\newblock {\em  Geodesic planes in the convex core of an acylindrical  $3$-manifold.}
\newblock Preprint, 2017 (arXiv:1802.03853)


\bibitem{Rn}
M. Ratner,
\newblock {\em Raghunathan's topological conjecture and distributions
of unipotent flows.}
\newblock {Duke Math. J.}  63 (1991), 235 -280.

\bibitem{Ser}
J.P. Serre,
\newblock {\em Cours d'arithm\'etique.} 
\newblock {Grad. Texts in Math,.} Springer-Verlag, (1973).
  
\bibitem{Sh} N. A. Shah,
\newblock {\em  Closures of totally geodesic immersions in manifolds of
constant negative curvature.}
\newblock {  In Group Theory from a Geometrical
Viewpoint (Trieste, 1990)}, 718-732. World Scientific, 1991.

\bibitem {Th1} W. P. Thurston,
\newblock{\em Hyperbolic geometry and $3$-manifolds.}
\newblock {In Low dimensional topology, Bangor, 1979), LMS Lecture Note Ser 48, CUP, Cambridge} (1982), 9-25

\bibitem {Th} W. P. Thurston,
\newblock{\em Hyperbolic structures on $3$-manifolds I: Deformaltions of acylindrical manifolds.}
\newblock {Annals. Math. 124 (1986), 203-246}



\bibitem {Wa} F. Waldhausen,
\newblock{\em On irreducible $3$-manifolds which are sufficiently large.}
\newblock {Annals. Math.} 87 (1968), 56-88

\bibitem{Zi} R. Zimmer,
\newblock {\em Ergodic theory and semisimple groups.}
\newblock {Birkhauser.} 1984 


\end{thebibliography}
\end{document}